\def\widebar{\accentset{{\cc@style\underline{\mskip10mu}}}}
\def\Widebar{\accentset{{\cc@style\underline{\mskip8mu}}}}
\newcommand{\norm}[1]{\left\Vert#1\right\Vert}
\newtheorem{Btheorem}{Theorem}
\newtheorem{thm}{Theorem}[section]
\newtheorem{defi}[thm]{Definition}
\newtheorem{cor}[thm]{Corollary}
\newtheorem{lem}[thm]{Lemma}
\newtheorem{lemma}[thm]{Lemma}
\newtheorem{prop}[thm]{Proposition}
\newtheorem{remark}[thm]{Remark}
\theoremstyle{definition}
\theoremstyle{remark}
\numberwithin{equation}{section}
\newcommand{\R}{{\mathbb R}}
\newcommand{\N}{{\mathcal N}}
\newcommand{\Z}{{\mathbb Z}}
\newcommand{\M}{{\mathcal M}}
\newcommand{\bs}{\begin{split}}
\newcommand{\es}{\end{split}}
\newcommand{\be}{\begin{eqnarray*}}
\newcommand{\ee}{\end{eqnarray*}}
\newcommand{\beq}{\begin{align}}
\newcommand{\eeq}{\end{align}}
\renewcommand{\a}{\alpha}
\renewcommand{\b}{\beta}
\newcommand{\8}{\infty}
\newcommand{\G}{G}
\newcommand{\m}{{\mu}}
\newcommand{\E}{{\mathbb{E}}}
\theoremstyle{plain}
\def\Q{\mathcal{Q}}
\def\1{\mathbf{1}}
\begin{document}
\setcounter{page}{1}
\title[Quantitative mean ergodic inequalities]
{Lamperti Operators, Dilation Theory, and Applications in Noncommutative Ergodic Theory}
\author[G. Hong]{Guixiang Hong}
\address{%
	Institute for Advanced Study in Mathematics\\
Harbin Institute of Technology\\
Harbin 150001\\
	China}

\email{gxhong@hit.edu.cn}
\author[W. Liu]{Wei Liu}
\address{School of Mathematics, Southwestern University of Finance and Economics\\
Chengdu 611130\\
China}
\email{lwei@swufe.edu.cn}
\author[S. Ray]{Samya Kumar Ray}
\address{%
	The Institute of Mathematical Sciences, Chennai, India and Homi Bhabha National
Institute, Mumbai, India \\
	}
\email{samya@imsc.res.in}
\author[B. Xu]{Bang Xu}
\address{%
School of Mathematical Sciences\\
Xiamen University\\
	Xiamen 361005\\
	China}
\email{bangxu@xmu.edu.cn}
\subjclass[2020]{Primary 46L52; Secondary 46L51, 46L53, 46L55.}
\keywords{Quantitative pointwise ergodic theorems, Non-doubling metric measure spaces, Groups of polynomial growth, Noncommutative square functions, Noncommutative $L_{p}$-spaces, dilation}

\date{\today. 
}
\begin{abstract}
In this paper, we develop a novel framework for quantitative mean ergodic theorems in the noncommutative setting, with a focus on actions of amenable groups and semigroups. We prove square function inequalities for ergodic averages arising from actions of groups of polynomial volume growth on a fixed noncommutative $L_p$-space for $1<p<\8$. To achieve this, we establish two endpoint estimates for a noncommutative square function on non-homogeneous space. Our approach relies on semi-commutative non-homogeneous harmonic analysis, including the non-doubling Calder\'on-Zygmund arguments for non-smooth kernels and $\mathrm{BMO}$ space theory, operator-valued inequalities related to balls and cubes in groups equipped with non-doubling measures, and a noncommutative generalization of the classical transference method for amenable group actions.  As an application, we establish a quantitative ergodic theorem for the ergodic averages associated with the positive power of modulus representation arising from a Lamperti representation on noncommutative $L_p$-spaces, extending some results in \cite{Templeman2015}. To obtain quantitative ergodic theorem for semigroups of operators, in this paper, we address the open question of extending dilation theorem of Fackler-Gl\"uck from single operators to commuting tuples on Banach spaces including noncommutative $L_p$-spaces. Indeed our approach provides genuine joint $N$-dilations for commuting families, unifying and extending the classical dilation theorems of 
Sz.-Nagy--Foia\c{s} and Ak\c{c}oglu--Sucheston for a natural class of commuting tuple of contractions extending the abstract dilation theorem of of Fackler-Gl\"uck for commuting tuple of contractions. This enables us to obtain a quantitative ergodic theorem for a large class of semigroups 
of operators on $\mathbb{R}^d_{+}$.


\end{abstract}

\maketitle

\section{Introduction}



The study of ergodic theory for group actions has followed a remarkable trajectory since its inception in the 1930s. The foundational works of Birkhoff~\cite{ Birk31} and von Neumann~\cite{von-Neu32} established ergodic theorems for 1-parameter flows, laying the groundwork for Wiener's extension to multiple commuting flows and ball averages on $\mathbb{R}^d$ and $\mathbb{Z}^d$~\cite{Wiener39}. The investigation of ergodic theorems for non-abelian group actions was initiated by Calderón \cite{Cal53}, who established results for averages over increasing families of compact symmetric neighborhoods of the identity satisfying a doubling condition. Motivated by Calder\'{o}n's result, ergodic theorems for various group actions have been extensively investigated. A notable example is the work of Breuillard \cite{Breuillard14} (see also Tessera \cite{Tessera07}), which proved that sequences of balls defined by any fixed word metric of polynomial growth satisfy the doubling condition. This finding yielded the corresponding ergodic theorems for ball averages and affirmatively resolved a long-standing open question posed by Calder\'{o}n~\cite{Cal53}. In the context of amenable group actions, Lindenstrauss~\cite{Lindenstrauss01} established the pointwise ergodic theorem for averages along tempered F{\o}lner sequences. We refer the reader to the survey works~\cite{AAB+, Nevo06} for more information and other generalizations.

The deep connection between ergodic theory and von Neumann algebras, dating back to the theory of rings of operators, has motivated key results in noncommutative ergodic theory. Early work on the mean ergodic theorem for $\mathbb{Z}$-actions by  Kov\'{a}cs and Sz\"{u}cs \cite{Ko-Sz} was later expanded by Lance~\cite{Lan76} and significantly generalized by Conze and Dang Ngoc for amenable groups~\cite{CoDa78} and Yeadon for positive Dunford-Schwartz operators \cite{Ye, Yean80}. For a comprehensive overview of the mean ergodic theorem, we refer to~\cite{Ja1,Ja2}. Meanwhile, pointwise ergodic theorems in noncommutative $L_p$-spaces were first established for $p=\8$~\cite{Lan76, Kum78, CoDa78,Wal97} and $p=1$~\cite{Ye}. A major breakthrough came from Junge and Xu, who extended these results to all $1<p<\8$
~\cite{JX}, thereby inspiring a wealth of subsequent work (see, e.g.~\cite{An,Be,HS,Hu,Li}). Recent developments have moved beyond this framework. Hong, Liao and Wang \cite{HLW} proved ergodic theorems for groups with polynomial growth, while Cadilhac and Wang \cite{CW22}  established pointwise convergence for amenable groups acting along regular filtered F{\o}lner sequences using quasi-tiling and noncommutative Calder\'{o}n-Zygmund techniques.

Quite recently, Hong, Liu and Xu \cite{HLX} established the first quantitative mean ergodic theorem under the noncommutative framework, specifically addressing the cases of power-bounded invertible operators and Lamperti contractions. To properly present the main results of \cite{HLX}, we first recall some necessary notions. Let $\mathcal{M}$ be a semifinite von Neumann algebra equipped with a normal semifinite faithful trace $\tau$. For $1 \le p < \infty$, we denote by $L_p(\mathcal{M})$ the associated noncommutative $L_p$-space, and by $L_p(\mathcal{M}; \ell^{rc}_{2})$ the corresponding Hilbert-valued space (see Section~\ref{pre} for precise definitions). Consider a bounded invertible operator $T$ on $L_p(\mathcal{M})$. The associated ergodic average $M_{n}(T)$ is defined as
$$M_{n}(T)=\frac{1}{2n+1}\sum_{k=-n}^{n}T^{k},\ \ \forall\ n\in\mathbb N.$$ 
The first main result of \cite{HLX} established a quantitative mean ergodic theorem for power-bounded invertible operators, stated as follows.
\vskip0.2cm
\noindent
\textbf{Theorem A}\ {\rm (\cite{HLX}).}
Let $1<p<\infty$. Suppose that 
\begin{equation*}
	\sup_{k\in\Z}\|T^{k}:L_p(\M)\rightarrow L_p(\M)\|<\infty.
\end{equation*}
Then there exists a positive constant $C_{p}$ {depending only on $p$} such that
\begin{equation}\label{result-1}
	\sup
	\big\|\big(M_{n_{i}}(T)x-M_{n_{i+1}}(T)x\big)_{i\in \mathbb N}\big\|_{L_p(\M;\ell^{rc}_{2})}\leq C_{p}\|x\|_{L_{p}(\M)},\forall x\in L_p(\M),
\end{equation}
where the supremum is taken over all the increasing subsequence $(n_{i})_{i\in\mathbb N}$ of positive integers.
\vskip0.2cm

The second main result of \cite{HLX} established a quantitative mean ergodic theorem for Lamperti operators. Recall that a bounded operator $T$ on $L_p(\M)$  is called a Lamperti operator if for any two $\tau$-finite projections $e$ and $f$ in $\mathcal{M}$ satisfying $ef=0$, the following orthogonality condition holds
$$(Te)^*Tf=Te(Tf)^*=0.$$
The following result is given as \cite[Theorem 1.4]{HLX}.

\vskip0.2cm
\noindent
\textbf{Theorem B}\ {\rm (\cite{HLX}).}
Let $1< p<\infty$. Suppose that $T$ belong to the class
\begin{equation}\label{con-lam}
	\mathfrak{S}=\overline{\mbox{conv}}^{\mbox{sot}}\{S:L_p(\mathcal M)\rightarrow L_p(\mathcal M)~\mbox{Lamperti contractions}\},
\end{equation}	
i.e., the strong operator topology closed convex hull of all Lamperti contractions on $L_p(\mathcal M)$.	{Then} there exists a positive constant $C_{p}$ depending only on $p$ such that for every increasing sequence of positive integers $(n_{i})_{i\in\mathbb N}$,
$$\big\|\big(M_{n_{i}}(T)x-M_{n_{i+1}}(T)x\big)_{i\in \mathbb N}\big\|_{L_p(\M;\ell^{rc}_{2})}\leq C_{p}\|x\|_{L_{p}(\M)}\;\forall x\in L_p(\M).$$
\vskip0.2cm

The main purpose of this paper is to extend the study of quantitative mean ergodic theorems to the noncommutative setting, with a particular focus on actions of amenable groups and semigroups. {Throughout this work}, let $G$ denote a locally compact group equipped with a left-invariant Haar measure $m$. Recall that for a von Neumann algebra $\M$, a map $\alpha:G\rightarrow \text{Aut}(\mathcal{M})$ is called an action if for every $u\in G$, $\alpha_u:\mathcal{M}
\to \mathcal{M}$ is a $*$-preserving automorphism; and for all $s,u\in G$, $\alpha_s\circ\alpha_u=\alpha_{su}$. In addition, if $\tau\circ \alpha_u=\tau$ for all $u\in G$, we say that $\alpha$ is a $\tau$-preserving action or an action by $\tau$-preserving automorphisms, denoted by $\alpha \curvearrowright (\mathcal{M},\tau)$. As is well-known, the $\tau$-preserving action $\alpha$ extends isometrically on the noncommutative $L_p(\mathcal{M})$-spaces for all $1 \leq p \leq \infty$ (see, e.g., \cite[Lemma 1.1]{JX}). The following is one of our main results.

\begin{thm}\label{main-thm2}
	Let $G$ be a group of polynomial growth with a symmetric compact generating set $V$. Define the ergodic average
	\begin{equation}\label{er-aver}
		A_n(\alpha)x=\frac{1}{m(V^n)}\int_{V^n}\alpha_gxdm(g),~x\in L_p(\M), n\in\mathbb N.
	\end{equation}
	The following statements hold:
	\begin{enumerate}[\noindent]
		\item \emph{(i)}~Fix $1<p<\8$. Let $\a$ be a strongly continuous and uniformly bounded action of $G$ on $L_p(\M)$. Then there exists a constant $C_p>0$ such that
		\begin{equation*}
			\sup_{(n_i)_i}\|(A_{n_{i+1}}x-A_{n_i}x)_i\|_{L_p(\M;\ell^{rc}_2)}\le C_p\|x\|_{p},\ \forall x\in L_{p}(\M),
		\end{equation*}
		where the supremum is taken over all the increasing subsequence $(n_i)_i\subseteq\mathbb N$.
		\item \emph{(ii)}~Let $\a$ be a strongly continuous action of $G$ on $\M$ by $\tau$-preserving automorphisms of $\M$. Then there exists a constant $c_p>0$ such that
		$$\sup_{(n_i)_i}\|(A_{n_{i+1}}x-A_{n_i}x)_i\|_{L_{p,\infty}(\M;
			\ell_{2}^{rc})}\leq c_{p} \|x\|_{p}, \; \forall x\in L_{p}(\M),$$
		with the supremum taken as above.
	\end{enumerate}
\end{thm}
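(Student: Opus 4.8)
The plan is to combine a noncommutative transference principle with two endpoint estimates for a model square function on $G$ whose kernels are the (non-smooth) normalized indicators of the balls $V^n$, and then to interpolate. Throughout, write $k_n=\frac{1}{m(V^n)}\mathbf 1_{V^n}$, so that $A_n(\alpha)$ is ``convolution by $k_n$'' performed through the representation $\alpha$. Since a group of polynomial growth is amenable, the first step is to use the noncommutative analogue of the Calder\'on transference method (developed later in the paper) to reduce both statements to the corresponding inequalities for the \emph{model operators}, namely the genuine convolution operators $\widetilde A_n\colon f\mapsto k_n\ast f$ acting on $L_p(\mathcal N)$ with $\mathcal N=L_\infty(G)\,\overline{\otimes}\,\mathcal M$. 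For part (i) one only transfers an $L_p$-bound, $1<p<\infty$, for which the uniform boundedness of $\alpha$ is exactly what is needed; for part (ii) one transfers the weak-type endpoint, which survives because a $\tau$-preserving action extends \emph{isometrically} to every $L_q(\mathcal M)$ (and unitarily to $L_2(\mathcal M)$).

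The second step is to pass from the supremum over \emph{all} increasing subsequences to a fixed dyadic skeleton. Regrouping the indices $n_i$ according to the dyadic block $[2^k,2^{k+1})$ containing them, a standard splitting bounds $\sup_{(n_i)_i}\bigl\|(\widetilde A_{n_{i+1}}f-\widetilde A_{n_i}f)_i\bigr\|_{L_p(\mathcal N;\ell_2^{rc})}$ by the sum of a \emph{lacunary} oscillation term, built from the differences $\widetilde A_{2^{k+1}}-\widetilde A_{2^k}$, and a \emph{short} oscillation term accounting for indices inside a single block; after one further dyadic subdivision of each block the short term is again a square function of the same shape. In either case the resulting operator is of the form $S_{\mathbf K}f=\bigl(\sum_\ell |Q_\ell f|^2\bigr)^{1/2}$ for an explicit family $Q_\ell$ of convolution operators whose kernels are differences and averages of the $k_n$; equivalently $S_{\mathbf K}$ is a noncommutative Littlewood--Paley operator with an $\ell_2$-valued convolution kernel $\mathbf K$ on $G$.

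The heart of the proof is then to establish, for such $S_{\mathbf K}$, the two endpoint estimates: (a) the weak-type bound $S_{\mathbf K}\colon L_1(\mathcal N)\to L_{1,\infty}(\mathcal N;\ell_2^{rc})$, and (b) $S_{\mathbf K}\colon L_\infty(\mathcal N)\to \mathrm{BMO}(\mathcal N;\ell_2^{rc})$. The $L_2(\mathcal N)$-boundedness that both estimates rest on comes from an almost-orthogonality/Fourier estimate on $G$: the symbols of the $Q_\ell$ satisfy a Littlewood--Paley-type bound $\sum_\ell|\widehat{Q_\ell}|^2\lesssim 1$, which is where the growth exponent $d$ (with $m(V^n)\asymp n^d$) and Breuillard's asymptotic regularity of the balls $V^n$ are used. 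Since each $k_n$ is merely an indicator, $\mathbf K$ enjoys only a H\"ormander--Dini-type regularity rather than smoothness, and the cube structure on $G$ adapted to a F\o lner tiling is non-homogeneous at intermediate scales; consequently (a) and (b) have to be proved through the non-doubling noncommutative Calder\'on--Zygmund machinery for non-smooth kernels and the associated BMO theory, together with the operator-valued ball/cube comparison inequalities on $G$ that we set up in the body of the paper. Interpolating between (a) and (b) (via noncommutative BMO--$L_p$ interpolation and the Marcinkiewicz theorem in the row/column setting) gives the $L_p$-bound for $S_{\mathbf K}$, $1<p<\infty$; combined with the transference of the first step this proves part (i), while (a) combined with the isometric transference proves part (ii).

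I expect the main obstacle to be precisely the verification of the non-smooth, non-doubling Calder\'on--Zygmund estimates on a general (possibly non-abelian) group of polynomial growth, and in particular the H\"ormander-type regularity of the kernel $\mathbf K$: the ``boundary layers'' $V^{n+1}\setminus V^n$ of the generating balls are not controlled a priori, so one must invoke the structure theory of polynomial growth groups --- Breuillard's scaling limit of $(\tfrac1n V^n)$ towards a Carnot-group ball, the comparability of $V^n$ with genuine metric balls, and the polynomial asymptotics $m(V^{n+1}\setminus V^n)=O\!\bigl(m(V^n)/n\bigr)$ --- and then carry the whole argument over to the operator-valued, non-doubling setting, where even the Calder\'on--Zygmund decomposition of an $L_1(\mathcal N)$-function has to be set up by hand. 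A secondary difficulty is making the transference step work in the row/column-valued ($\ell_2^{rc}$) category for a merely uniformly bounded action rather than an isometric one; this mismatch is the reason parts (i) and (ii) are stated separately, the weak-type endpoint in (ii) genuinely requiring the $\tau$-preserving hypothesis.
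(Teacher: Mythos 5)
Your global architecture does match the paper's: transference over F{\o}lner sets to reduce to the translation action on $\mathcal N=L_\infty(G)\overline{\otimes}\M$ (with the $\tau$-preserving hypothesis needed exactly for the weak-type transference in (ii)), a long/short splitting of the oscillation, endpoint weak $(1,1)$ and $L_\infty\to\mathrm{BMO}$ estimates via the non-doubling noncommutative Calder\'on--Zygmund machinery, and interpolation. However, there is a genuine gap in your reduction step. You claim that the supremum over \emph{all} increasing subsequences is bounded by a fixed ``lacunary oscillation term built from the differences $\widetilde A_{2^{k+1}}-\widetilde A_{2^k}$'' plus a short term. This inequality is false as stated: if $n_i$ and $n_{i+1}$ sit in blocks with lacunary endpoints $2^{k_i}\le 2^{l_i}$, then $\widetilde A_{2^{l_i}}-\widetilde A_{2^{k_i}}=\sum_{k=k_i}^{l_i-1}(\widetilde A_{2^{k+1}}-\widetilde A_{2^k})$, and the $\ell_2^{rc}$-norm of such block sums is not controlled by the $\ell_2^{rc}$-norm of the family of consecutive differences (Cauchy--Schwarz inside each block loses the unbounded factor $l_i-k_i$). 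This is precisely the point where the paper inserts the dyadic martingale comparison: with $\mathsf{E}_n$ the conditional expectations onto the Hyt\"onen--Kairema dyadic filtration, one writes $M_{\delta^{k_i}}-M_{\delta^{l_i}}=(M_{\delta^{k_i}}-\mathsf{E}_{k_i})-(M_{\delta^{l_i}}-\mathsf{E}_{l_i})+(\mathsf{E}_{k_i}-\mathsf{E}_{l_i})$; the terms $(\mathsf{E}_{k_i}-\mathsf{E}_{l_i})_i$ form again a martingale difference sequence, so their square function is bounded \emph{uniformly in the subsequence} by the noncommutative martingale inequalities of Pisier--Xu/Randrianantoanina, while $(M_{\delta^n}-\mathsf{E}_n)_n$ is estimated over all $n$ simultaneously, after which restriction to any subsequence is free by monotonicity of $\ell_2$-sums. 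Without this comparison (or a genuine variational substitute), your plan does not produce a bound uniform over subsequences; moreover the comparison is not cosmetic, since the paper's bad-part estimates hinge on cancellations of the form $\zeta\,\mathsf{E}_n b\,\zeta=0$ relative to this very filtration, and its Cuculescu/CZ decomposition is built on the same dyadic cubes.

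Two smaller issues. First, your $L_2$ input via ``symbols'' $\sum_\ell|\widehat{Q_\ell}|^2\lesssim1$ has no meaning on a general non-abelian group of polynomial growth; in the paper the $L_2$ bound comes from geometric almost-orthogonality (annular decay plus the boundary estimates for the dyadic cubes, Propositions \ref{boundary} and \ref{measure estimate of cube}), and since $L_2(\mathcal N)=L_2(G;L_2(\M))$ the semicommutative extension at $p=2$ is automatic, so no operator-valued Fourier analysis is needed or available. Second, the short (within-block) part is not ``a square function of the same shape'' treated by another CZ pass: individual differences at a single scale have no regularity gain, and the paper instead handles them by the within-scale lemmas (Lemma \ref{short-in}, the almost-orthogonality Lemma \ref{L1}, and an $L_2$-argument for the off-diagonal bad part), with BMO estimates proved separately. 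These parts of your outline would need to be replaced by arguments of that type.
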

\begin{remark}\rm
When $G=\Z$, $V=\{-1,1\}$ and $A_n=\frac1{2n+1}\sum^n_{k=-n} T^k$, part (i) of Theorem \ref{main-thm2} recovers  {\textbf{Theorem A}}. Thus, our result can be viewed as a quantitative mean ergodic theorem for bounded representations of polynomial growth groups on noncommutative $L_p(\M)$-spaces. This result is motivated by the foundational contributions of Hong, Liao and Wang \cite{HLW} on noncommutative maximal ergodic theorems for group actions. Moreover, we emphasize that Theorem \ref{main-thm2} applies not only to groups of polynomial growth but also extends to more general geometric groups, including homogeneous groups such as $\mathbb{R}^n$ and the Heisenberg group $\mathbb{H}^n$ when equipped with homogeneous metrics.
\end{remark}

Our second main result establishes a quantitative mean ergodic theorem for Lamperti representations. Recall that a strongly continuous uniformly bounded action $\alpha$ of $G$ on $L_p(\mathcal{M})$ is called a Lamperti representation if $\alpha_g$ is a Lamperti operator for every $g \in G$. Building upon the canonical decomposition theorem for such operators (Proposition \ref{moduluslamp}), we construct an associated modulus representation $|\alpha|$ and prove the following result, which generalizes Templeman's classical work \cite{Templeman2015} to the noncommutative setting and provides a substantial strengthening of \textbf{Theorem B} in this structured setting which can be derived for $p=\gamma.$
\begin{thm}\label{lamprepmainres1}
	Let $\mathcal{M}$ be a finite von Neumann algebra and $G$  be a group of polynomial growth with a symmetric compact generating set $V.$  Suppose that \(\alpha\) is a strongly continuous and uniformly bounded Lamperti representation of \(G\) on \(L_p(\mathcal M)\), where \(1 < p < \infty\). Let $A_n(\alpha)$ be defined in (\ref{er-aver}). Then for any $1\leq\gamma<\infty$ and $\mu=\frac{p}{\gamma}$, there exists \(C_{p,\gamma} > 0\) such that
	\[
	\sup_{(n_i)_i} 
	\Big\| \big( A_{n_{i+1}}(|\alpha|^{(\mu)})x - A_{n_i}(|\alpha|^{(\mu)})x \big)_i \Big\|_{L_\gamma(\mathcal M; \ell_2^{rc})} 
	\le C_{p,\gamma} \, \|x\|_\gamma, 
	\quad \forall x \in L_\gamma(\mathcal M).
	\]
	where the supremum is taken over all increasing sequences \((n_i)_i \subset \mathbb{N}\).
\end{thm}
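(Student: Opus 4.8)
The plan is to deduce Theorem~\ref{lamprepmainres1} from Theorem~\ref{main-thm2} once the right positivity-preserving representation has been constructed, with a separate factorization argument to cover the endpoint $\gamma=1$. The first step is the construction of the modulus representation. Building on the canonical decomposition of Proposition~\ref{moduluslamp}, one writes, for each $g\in G$, $\alpha_g=w_g\,\beta_g$, where $\beta_g$ is the positive part of $\alpha_g$ --- a weighted Jordan $*$-homomorphism carrying a trace-preserving Jordan $*$-isomorphism $J_g$ of $\M$ together with a Radon--Nikodym density $d_g$ --- and $w_g$ is a partial isometry. Since $\alpha$ is a group representation and the noncommutative linear modulus is multiplicative on Lamperti operators, the partial-isometry cocycle and the densities assemble consistently, so that $(\beta_g)_{g\in G}$ is again a representation of $G$; it is positivity-preserving, satisfies $\|\beta_g\|_{L_p(\M)\to L_p(\M)}=\|\alpha_g\|_{L_p(\M)\to L_p(\M)}$, hence is uniformly bounded, and it inherits strong continuity from $\alpha$. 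Replacing $d_g$ by $d_g^{\mu}$ (equivalently, attaching the exponent $1/\gamma=\mu/p$ to the density in place of $1/p$) yields the $\mu$-th power modulus representation $|\alpha|^{(\mu)}=(\beta_g^{(\mu)})_{g\in G}$, which is a strongly continuous, uniformly bounded, positivity-preserving representation of $G$ on $L_\gamma(\M)$, $\gamma=p/\mu$, with $\sup_g\|\beta_g^{(\mu)}\|_{L_\gamma(\M)\to L_\gamma(\M)}\le\big(\sup_g\|\alpha_g\|_{L_p(\M)\to L_p(\M)}\big)^{\mu}$.

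For $1<\gamma<\infty$ the theorem then follows immediately: apply Theorem~\ref{main-thm2}(i) to the strongly continuous, uniformly bounded representation $|\alpha|^{(\mu)}$ of the polynomial-growth group $G$ on $L_\gamma(\M)$, which gives the claimed inequality with $C_{p,\gamma}=C_\gamma\big(\sup_g\|\alpha_g\|_{L_p(\M)\to L_p(\M)}\big)^{\mu}$, $C_\gamma$ being the constant furnished by Theorem~\ref{main-thm2}(i). Every $\gamma\in(1,\infty)$ is reached this way --- including $\gamma>p$, since $\beta_g^{(\mu)}$ is bounded on $L_\gamma(\M)$ for all $\mu>0$ --- so after this step only the value $\gamma=1$ remains.

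For $\gamma=1$, i.e. $\mu=p$, Theorem~\ref{main-thm2}(i) is not available, and a weak-type estimate via Theorem~\ref{main-thm2}(ii) does not help, since $|\alpha|^{(p)}$ need not restrict to a $\tau$-preserving action of $\M$. Here I would exploit the module structure of the modulus operators: factor $x\in L_1(\M)$ as $x=yz$ with $y=u|x|^{1/2}$ and $z=|x|^{1/2}$ from the polar decomposition $x=u|x|$, so that $y,z\in L_2(\M)$ and $\|y\|_2\|z\|_2=\|x\|_1$; since $\beta_g^{(p)}(yz)$ is built from $\beta_g^{(p/2)}(y)$ and $\beta_g^{(p/2)}(z)$ through the (multiplicative) Jordan-isomorphism part and the splitting $d_g^{p}=d_g^{p/2}d_g^{p/2}$ of the density, one can estimate the difference sequence $\big(A_{n_{i+1}}(|\alpha|^{(p)})x-A_{n_i}(|\alpha|^{(p)})x\big)_i$ in $L_1(\M;\ell_2^{rc})=L_1(\M;\ell_2^{r})+L_1(\M;\ell_2^{c})$, by a noncommutative Cauchy--Schwarz argument adapted to the averages over $V^{n_i}$, through the product of the $L_2(\M;\ell_2^{c})$- and $L_2(\M;\ell_2^{r})$-norms of the corresponding difference sequences for $|\alpha|^{(p/2)}$ applied to $y$ and to $z$; these are controlled by the already-established case $\gamma=2$, yielding $C_{p,1}\|x\|_1$ (alternatively, one can reduce the $\gamma=1$ estimate to a reverse-martingale square-function inequality via a dilation of the averages, using the noncommutative Burkholder--Davis--Gundy inequality at $L_1$). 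The main obstacle is the first step together with this endpoint factorization: one must verify that the positive parts of a uniformly bounded Lamperti group representation genuinely compose to a representation and that the $\mu$-power rescaling of the densities is consistent across the entire $L_\gamma$-scale, so that the decomposition $x=yz$ interacts correctly with $|\alpha|^{(p)}$ --- this is exactly where the Lamperti hypothesis, rather than mere positivity, is used; granting that structural input, the appeal to Theorem~\ref{main-thm2} is routine.
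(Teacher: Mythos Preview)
For $1<\gamma<\infty$ your plan coincides with the paper's: Section~\ref{laprepsec} builds $|\alpha|$ from the Lamperti decomposition (Proposition~\ref{moduluslamp}), shows $g\mapsto|\alpha_g|$ is again a Lamperti representation, then defines $|\alpha|^{(\mu)}$ and proves it is a strongly continuous uniformly bounded representation on $L_\gamma(\M)$; the result is then stated as Corollary~\ref{lamprepmainres} and follows by a direct appeal to Theorem~\ref{main-thm2}(i). (A cosmetic discrepancy: the paper records $\sup_g\|\alpha_g^{(\mu)}\|_{\gamma\to\gamma}=\sup_g\|\alpha_g\|_{p\to p}$ rather than your bound $(\sup_g\|\alpha_g\|)^{\mu}$, but this is irrelevant to the argument.)

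Your endpoint $\gamma=1$ argument, however, has a genuine gap. The factorization $x=yz$ together with $d_g^{\,p}=d_g^{\,p/2}d_g^{\,p/2}$ does \emph{not} produce a usable product structure on the averages. First, $J_g$ is only a Jordan $*$-homomorphism, so $J_g(yz)\neq J_g(y)J_g(z)$ in general. Second, and more decisively, even if one had $\beta_g^{(p)}(yz)=\beta_g^{(p/2)}(y)\,\beta_g^{(p/2)}(z)$ for each $g$, the average
\[
A_n\big(|\alpha|^{(p)}\big)(yz)=\frac{1}{m(V^n)}\int_{V^n}\beta_g^{(p/2)}(y)\,\beta_g^{(p/2)}(z)\,dm(g)
\]
is an integral of products, not a product of integrals; neither $A_n$ nor the differences $A_{n_{i+1}}-A_{n_i}$ factor through the corresponding objects for $y$ and $z$, so there is no Cauchy--Schwarz step that converts the $L_1(\M;\ell_2^{rc})$-norm of the difference sequence into a product of two $L_2(\M;\ell_2^{rc})$-norms. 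The alternative you mention (dilate and invoke reverse-martingale BDG at $L_1$) would require dilating $|\alpha|^{(p)}$ to a $\tau$-preserving action, which is exactly what is not available here. It is worth noting that the paper itself gives no separate argument for $\gamma=1$: Corollary~\ref{lamprepmainres} is stated for $1\le\gamma<\infty$, but the only tool it rests on is Theorem~\ref{main-thm2}(i), which requires $\gamma>1$; the endpoint thus looks like an overstatement rather than a proved case.
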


We now turn our attention to semigroups of Lamperti operators. Let $1\leq p<\infty.$ Denote $\mathbb{R}^d_{+}:=\{(t_1,\dots,t_d):t_i\geq 0,\ 1\leq i\leq d\}.$ Let  $\alpha$ be a strongly continuous uniformly bounded action of $\mathbb{R}^d_{+}$ on $L_p(\mathcal{M})$. Define for any $t>0,$ \[
A_{t}(\alpha) x: = \frac{1}{t^d} \int_{0}^{t}\dots\int_{0}^{t} \alpha_{(s_1,\dots,s_d)}(x) \, ds_1\dots ds_d, \quad x \in L_p(\mathcal M).\]
\begin{thm}\label{semiofR6d1} Let $1<p\neq 2<\infty$ and $\alpha:\mathbb{R}^d_{+}\to \mathcal{L}(L_p(\mathcal M))$ be a strongly continuous semigroup of contractions such that for all $k\in\mathbb N,$ the commuting tuple of contractions $\mathbf{T}_{\frac{1}{\mathbf{k}}}:=(\alpha_{(\frac{1}{k},\dots,0)},\dots,\alpha_{(0,\dots,\frac{1}{k}}))$ admits a joint dilation to some other noncommutative $L_p$-space, then 
	\begin{equation}\label{lampertisemigroup}
		\sup_{(t_i)_i} 
		\Big\| \big( A_{t_{i+1}}(\alpha)x - A_{t_{i}}(\alpha)x \big)_i \Big\|_{L_p(\mathcal M; \ell_2^{rc})} 
		\le C \, \|x\|_p, 
		\quad \forall x \in L_p(\mathcal M).
	\end{equation}
	where the supremum being taken over all increasing sequences \((t_i)_{i \in \mathbb{N}} \subset (0, \infty)\)
	
\end{thm}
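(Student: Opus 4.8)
\emph{Plan.} The idea is to discretize the continuous one–sided multiparameter averages into powers of a single commuting tuple, to transport that tuple through the joint dilation to a commuting family of invertible isometries, and then to invoke the case $G=\mathbb Z^d$ of Theorem \ref{main-thm2}(i) for the resulting isometric representation.

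\emph{Step 1: discretization.} Fix $k\in\mathbb N$, put $T_l:=\alpha_{(0,\dots,1/k,\dots,0)}$ and $\mathbf T_{1/k}=(T_1,\dots,T_d)$. Splitting $[0,\tfrac nk]^d$ into $n^d$ subcubes of side $1/k$ and using the semigroup law on each one, one obtains the exact identity
\[
A_{n/k}(\alpha)=M^{(d)}_n(\mathbf T_{1/k})\circ A_{1/k}(\alpha),\qquad M^{(d)}_n(\mathbf T)x:=\frac1{n^d}\sum_{0\le j_1,\dots,j_d<n}T_1^{j_1}\!\cdots T_d^{j_d}x ,
\]
where $A_{1/k}(\alpha)$ is a contraction of $L_p(\mathcal M)$. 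Since $\alpha$ is strongly continuous, for a fixed finite increasing window $t_1<\dots<t_R$ and $k$ large the points $\lfloor kt_i\rfloor/k$ are still strictly increasing, $A_{\lfloor kt_i\rfloor/k}(\alpha)x\to A_{t_i}(\alpha)x$ in $L_p(\mathcal M)$, and the $L_p(\mathcal M;\ell_2^{rc})$-norm of a finite tuple depends continuously on its entries; letting $k\to\infty$ and then taking the supremum over windows and over sequences, it suffices to bound, uniformly in $k$,
\[
\sup_{(n_i)_i}\Big\|\big(M^{(d)}_{n_{i+1}}(\mathbf T_{1/k})y-M^{(d)}_{n_i}(\mathbf T_{1/k})y\big)_i\Big\|_{L_p(\mathcal M;\ell_2^{rc})}\le C\,\|y\|_p,\qquad y\in L_p(\mathcal M).
\]

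\emph{Step 2: dilation and Lamperti structure.} By hypothesis $\mathbf T_{1/k}$ admits a joint dilation: there are a noncommutative $L_p$-space $L_p(\widetilde{\mathcal M}_k)$, commuting invertible isometries $U_1,\dots,U_d$ of it, and bounded maps $J_k:L_p(\mathcal M)\to L_p(\widetilde{\mathcal M}_k)$ and $Q_k:L_p(\widetilde{\mathcal M}_k)\to L_p(\mathcal M)$ with $Q_kJ_k=\mathrm{id}$ and $T_1^{j_1}\!\cdots T_d^{j_d}=Q_k\,U_1^{j_1}\!\cdots U_d^{j_d}\,J_k$ for all $\mathbf j\in\mathbb N^d$, the maps $J_k,Q_k$ being completely bounded with norms bounded independently of $k$ (as the constructions provide); if only a joint $N$-dilation is available, this identity holds for $\|\mathbf j\|_\infty\le N$, one proves the estimate for averages $M^{(d)}_n$ with $n\le N$ with a constant independent of $N$, and lets $N\to\infty$. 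Thus $M^{(d)}_n(\mathbf T_{1/k})=Q_k\,M^{(d)}_n(\mathbf U)\,J_k$, and by complete boundedness of $J_k,Q_k$ the claim of Step 1 reduces to the same square function estimate for the one-sided cube averages $M^{(d)}_n(\mathbf U)=\frac1{n^d}\sum_{\mathbf j\in\{0,\dots,n-1\}^d}U_1^{j_1}\!\cdots U_d^{j_d}$ of the commuting tuple of invertible isometries $\mathbf U=(U_1,\dots,U_d)$. Since $1<p\neq2<\infty$, the Banach--Lamperti theorem makes each $U_l$ a Lamperti operator, and Proposition \ref{moduluslamp} provides the canonical decomposition $U_l=w_l\,\Phi_l$ with $w_l$ unitary and $\Phi_l$ the $L_p$-realization of a trace-preserving Jordan $*$-automorphism, compatible with $U_lU_{l'}=U_{l'}U_l$; in particular $\mathbf U$ generates a Lamperti, uniformly bounded representation $\beta:\mathbb Z^d\to\mathcal L(L_p(\widetilde{\mathcal M}_k))$.

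\emph{Step 3: the $\mathbb Z^d$ quantitative mean ergodic theorem for one-sided cubes, and the main obstacle.} The sets $Q_n=\{0,\dots,n-1\}^d$ form an exhausting, doubling F{\o}lner sequence of $\mathbb Z^d$, and $M^{(d)}_n(\mathbf U)=\frac1{m(Q_n)}\sum_{g\in Q_n}\beta_g$. Running the argument behind Theorem \ref{main-thm2}(i) — transference of the averaging problem to $\mathbb Z^d$, the two endpoint estimates for the noncommutative square function on the non-homogeneous space $(\mathbb Z^d,\ell^\infty)$, and the operator-valued ball/cube inequalities — with the symmetric generating set replaced by the F{\o}lner cubes $Q_n$, yields $\sup_{(n_i)_i}\|(M^{(d)}_{n_{i+1}}(\mathbf U)z-M^{(d)}_{n_i}(\mathbf U)z)_i\|_{L_p(\widetilde{\mathcal M}_k;\ell_2^{rc})}\le C_{p,d}\|z\|_p$ with $C_{p,d}$ independent of $k$; combined with Steps 1--2 this gives \eqref{lampertisemigroup}. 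The delicate point is precisely that one cannot simply recenter $M^{(d)}_n(\mathbf U)$ to a symmetric average: the resulting translation factor $U_1^r\!\cdots U_d^r$ depends on $n$, and such index-dependent isometric factors, inserted entrywise, fail to be uniformly bounded on the $\ell_2^{rc}$-valued $L_p$-space (for $p>2$ one produces, using a trace-preserving automorphism of large order, configurations with unbounded ratio), so the one-sided averages must be handled directly. This forces one to check that the non-homogeneous Calder\'on--Zygmund/transference machinery of Theorem \ref{main-thm2} genuinely goes through (i) for the one-sided F{\o}lner cubes $Q_n$ of $\mathbb Z^d$, whose doubling and regularity properties are shared with symmetric cubes, and (ii) for the representation $\beta$ carrying a Jordan/cocycle decoration rather than being an action by $*$-automorphisms — and it is here that the hypothesis $p\neq2$, through the Lamperti decomposition of Proposition \ref{moduluslamp} together with the row/column self-duality of $\ell_2^{rc}$ (which lets one absorb the anti-homomorphic and unitary parts of each $U_l$), makes the estimates robust.
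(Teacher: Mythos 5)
Your overall skeleton matches the paper's: discretize the continuous averages to the tuple $\mathbf T_{1/\mathbf k}$ (your exact factorization $A_{n/k}(\alpha)=M^{(d)}_n(\mathbf T_{1/k})A_{1/k}(\alpha)$ plus a limiting argument is a harmless variant of the paper's Riemann-sum approximation in Proposition \ref{discretetocont}), then use the joint dilation to write $A_n(\mathbf T_{1/\mathbf k})=Q\,A_n(\mathbf U)\,J$ and transport the square-function bound through the contractions $Q,J$ (which only needs Lemma \ref{ex-lin}, not complete boundedness). Up to that point you are reproducing the paper's argument.

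The gap is at the decisive step: the square-function inequality for the one-sided cube averages of the commuting tuple of isometries $\mathbf U$. The paper disposes of this by Lemma \ref{isometrytup}, proved as in \cite[Lemma 7.3]{HLX}; you instead propose to rerun the transference/Calder\'on--Zygmund machinery of Theorem \ref{main-thm2}(i) for $\mathbb Z^d$ with the one-sided F{\o}lner cubes $Q_n=\{0,\dots,n-1\}^d$, and you explicitly flag --- but never carry out --- the verification that this machinery ``genuinely goes through.'' As it stands this is not a proof. Concretely: (i) the joint dilation hypothesis provides isometries, not invertible ones, so there is no $\mathbb Z^d$-representation $\beta$ to which Theorem \ref{main-thm2} or the transference Theorem \ref{thm:trans} could be applied (you would first have to extend the commuting isometries to commuting invertible isometries, which you assert but do not do); (ii) Theorems \ref{thm:trans} and \ref{main-thm1} are formulated for balls of an invariant metric satisfying geometric doubling and annular decay, and the one-sided cubes $Q_n$ are not balls of a word metric --- you yourself rule out the naive recentering via the factor $U_1^r\cdots U_d^r$, so the very estimate you need is left open; (iii) the structural input you invoke is misstated: Proposition \ref{moduluslamp} defines $|T|=bJ$, and the decomposition $T=wbJ$ of Theorem \ref{thm:lamperti-decomposition} gives a partial isometry $w$, unitary only if $T$ is surjective, while the claim that ``row/column self-duality of $\ell_2^{rc}$ absorbs the anti-homomorphic and unitary parts'' is an unsubstantiated slogan rather than an argument. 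In short, the reductions are right, but the key lemma (the analogue of Lemma \ref{isometrytup}) is replaced by a plan whose obstacles you identify without resolving, so the proof is incomplete.
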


Recall that in \cite[Theorem~1.4]{HLX} (\textbf{Theorem B}), a quantitative ergodic theorem was established 
for the \emph{absolutely convex hull} of Lamperti contractions. 
A fundamental technique in proving such quantitative ergodic inequalities 
was the \emph{dilation} of this class of contractions to isometries, 
as achieved earlier in \cite{HongRayWang2023}. For general semigroups of contractions, however, such dilation theorems are not available, 
even in the setting of Hilbert spaces. The best known result in this direction concerns semigroups of contractions over 
\emph{ordered abelian semigroups}, established in \cite{ML65}. 
By the classical dilation theorems of Sz.-Nagy--Foia\c{s} and And\"o~\cite{Nagyfoias, Ando63}, one can dilate semigroups of contractions indexed by $\mathbb{N}$ or $\mathbb{N}^2$ to semigroups of isometries. However, this fails already in three variables, as shown in \cite{VA74, VA76} (see also \cite{GuR18} for numerous counterexamples). Establishing dilation theorems for various classes of commuting tuples of contractions 
remains an active area of research in operator theory. 
We refer to the survey of Shalit~\cite{Shalit21} for a comprehensive overview 
and several open problems related to this theme. In the setting of contractions on classical $L_p$-spaces for $1 < p < \infty$, 
the most important dilation theorem is due to Ak\c{c}oglu and Sucheston~\cite{AkcogluSucheston1977}, 
which, in turn, yields a maximal ergodic theorem for this class of contractions. To the best of our knowledge, no general dilation theorem exists for tuples of contractions 
on $L_p$-spaces with $1 < p < \infty$, except for certain \emph{loose dilation} results such as those in \cite{Monpalray, MoRa}. We refer to \cite{kan78erglamperti, fendler97dilation,fendler98dilation,peller76vn,peller76poly,peller81vn,
	peller85polyschatten} and references therein for more on dilation on classical $L_p$-spaces. The situation for noncommutative $L_p$-spaces is even more delicate: 
as shown by Junge and Le~Merdy~\cite{JUL07}, there is no reasonable analogue 
of the Ak\c{c}oglu--Sucheston dilation theorem in this setting. The very first dilation theorem for contractions on noncommuttaive $L_p$-spaces for a single $1<p<\infty$ was constructed in \cite{HongRayWang2023}. In a remarkable paper, Fackler and Gl\"uck~\cite{FacklerGluck2019} proved several 
dilation theorems for general Banach spaces. Their results unify both the Sz.-Nagy--Foia\c{s} and Ak\c{c}oglu--Sucheston dilation theorems under a single abstract framework. However, their results are restricted to the single-variable case and do not yield any dilation theorems for commuting tuples. 
Indeed, as the authors themselves note \cite[Section 8]{FacklerGluck2019}, ``\emph{...we do not know whether commutative dilation theorems can be derived from our simultaneous dilation results.}'' 
In the present paper, we extend their main theorem to the setting of commuting tuples. In particular, we establish the following result.
\begin{Btheorem}\label{thm:convex-dilation}
	Fix $p \in (1,\infty)$ and let $\mathcal{X}$ be a class of Banach spaces stable under 
	finite $\ell_p$-sums. 
	Let $X \in \mathcal{X}$, and let $(\mathcal{T}_1,\ldots,\mathcal{T}_n)$ be a tuple of 
	pairwise commuting families of bounded linear operators on $X$ which admit a 
	joint simultaneous $N$-dilation in $\mathcal{X}$. 
	Then every tuple of operators $\mathbf{T} = (T_1,\dots,T_n)$, 
	where each $T_i$ belongs to the convex hull of $\mathcal{T}_i$ for $1 \leq i \leq n$, 
	also admits a joint $N$-dilation in $\mathcal{X}$.
\end{Btheorem}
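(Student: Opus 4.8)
The plan is to exploit the given joint simultaneous $N$-dilation of the families to pass from $\mathbf{T}$ to a \emph{single commuting tuple of convex combinations of commuting invertible isometries}, and then to dilate that tuple by an $\ell_p$-ampliation argument. First, since $T_i$ lies in the convex hull of $\mathcal{T}_i$, write $T_i=\sum_{l=1}^{m_i}\lambda_{i,l}S_{i,l}$ with $S_{i,l}\in\mathcal{T}_i$, $\lambda_{i,l}\ge 0$, $\sum_l\lambda_{i,l}=1$ (a finite convex combination). Because distinct families commute, $\mathbf{T}$ is itself a commuting tuple: $T_iT_j=\sum_{l,l'}\lambda_{i,l}\lambda_{j,l'}S_{i,l}S_{j,l'}=\sum_{l,l'}\lambda_{i,l}\lambda_{j,l'}S_{j,l'}S_{i,l}=T_jT_i$ for $i\neq j$. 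Let $(Y,J,Q,(U_{i,S}))$ be a joint simultaneous $N$-dilation of $(\mathcal{T}_1,\dots,\mathcal{T}_n)$: $Y\in\mathcal{X}$; $J\colon X\to Y$ isometric and $Q\colon Y\to X$ a contraction with $QJ=\mathrm{id}_X$; each $U_{i,S}$ ($S\in\mathcal{T}_i$) an invertible isometry of $Y$; $U_{i,S}$ and $U_{j,S'}$ commuting whenever $i\neq j$; and (recalling that a simultaneous dilation dilates all finite \emph{words}, sending generators to invertible isometries) every product $S^{(1)}_1\cdots S^{(1)}_{k_1}\cdots S^{(n)}_1\cdots S^{(n)}_{k_n}$ with $S^{(i)}_t\in\mathcal{T}_i$ and $\sum_i k_i\le N$ is equal to $Q\,U_{1,S^{(1)}_1}\cdots U_{1,S^{(1)}_{k_1}}\cdots U_{n,S^{(n)}_1}\cdots U_{n,S^{(n)}_{k_n}}\,J$.

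Next, set $V_i:=\sum_{l=1}^{m_i}\lambda_{i,l}U_{i,S_{i,l}}\in\mathcal{L}(Y)$. Then $\|V_i\|\le 1$, and $V_iV_j=V_jV_i$ for $i\neq j$ since the constituent isometries commute across families; so $(V_1,\dots,V_n)$ is a commuting tuple of contractions on $Y\in\mathcal{X}$, each $V_i$ being a finite convex combination of invertible isometries of $Y$, these isometries commuting across distinct $i$'s. Expanding powers, invoking the word-multiplicativity term by term, and refactoring the resulting sum by plain distributivity (using $\big(\sum_l\lambda_{i,l}U_{i,S_{i,l}}\big)^{k_i}=\sum_{|w|=k_i}\lambda_{i,w}U_{i,w}$), one obtains
\[
T_1^{k_1}\cdots T_n^{k_n}=Q\,V_1^{k_1}\cdots V_n^{k_n}\,J
\]
whenever $\sum_i k_i\le N$. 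Hence it suffices to produce a joint $N$-dilation $(\widetilde{Y},\widetilde{J},\widetilde{Q},W_1,\dots,W_n)$ of $(V_1,\dots,V_n)$ inside $\mathcal{X}$: composing, $(\widetilde{Y},\widetilde{J}J,Q\widetilde{Q},W_1,\dots,W_n)$ is then a joint $N$-dilation of $\mathbf{T}$, since $Q\widetilde{Q}\,\widetilde{J}J=QJ=\mathrm{id}_X$ and the composites are contractive resp.\ isometric.

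It therefore remains to prove a self-contained claim: \emph{a commuting tuple $(V_1,\dots,V_n)$ of contractions on a space $Y\in\mathcal{X}$, each $V_i$ a finite convex combination of invertible isometries of $Y$ with these isometries commuting across distinct $i$'s, admits a joint $N$-dilation in $\mathcal{X}$.} This can be carried out either (a) by applying the Fackler--Gl\"uck single-operator $N$-dilation one coordinate at a time — dilate $V_1$ to an invertible isometry $W_1$ on $Y_1\in\mathcal{X}$, use that this dilation is a finite $\ell_p$-ampliation of $Y$ and hence functorial to lift $V_2,\dots,V_n$ (each of which commutes with $V_1$) to commuting contractions on $Y_1$ that still commute with $W_1$ and remain convex combinations of the lifted invertible isometries, iterate over $i=1,\dots,n$, and finally compose the $n$ embeddings and $n$ projections and read off the dilation identity from the intertwining relations; or (b) in one shot, taking $\widetilde{Y}$ to be a finite weighted $\ell_p$-sum of copies of $Y$ indexed by $n$ independent blocks of ``coin-flip'' coordinates (one block per family, with weights the $\lambda_{i,l}$), equipped with $n$ shift-type invertible isometries $W_1,\dots,W_n$, the $i$-th acting on the $i$-th block and applying the $U_{i,S_{i,l}}$'s, which pairwise commute precisely because the blocks are disjoint and the underlying isometries commute across families. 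Either way only finitely many $\ell_p$-sums occur, so $\widetilde{Y}\in\mathcal{X}$, and the identities for $V_1^{k_1}\cdots V_n^{k_n}$ (for $\sum_i k_i\le N$) follow.

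The genuine work is in this last step. In approach (a) the main obstacle is to verify the \emph{functoriality} of the single-operator dilation together with all side conditions: the dilating space is built from the base by finite $\ell_p$-ampliations in such a way that every operator commuting with the dilated contraction lifts, the lift being linear, multiplicative, norm-preserving, sending invertible isometries to invertible isometries and convex combinations to convex combinations, and intertwining the embedding and the projection — exactly what is needed for the $n$-fold iteration to close up and for $W_1,\dots,W_n$ to commute; this forces one to work with the internal structure of the Fackler--Gl\"uck construction rather than invoke it as a black box. In approach (b) the delicate point is the familiar one in Ak\c{c}oglu--Sucheston/Rota-type constructions, namely arranging invertibility of the shift isometries using only finitely many coordinates — which is precisely where the restriction to \emph{$N$-dilations}, as opposed to genuine ones, is essential. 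The only other point requiring care is degree bookkeeping: if the convention for tuples demands $k_i\le N$ in each coordinate rather than $\sum_i k_i\le N$, one takes the ambient simultaneous dilation of order $nN$ and each single-operator step of order $N$; in all cases the constants are independent of $x$ and of the tuple.
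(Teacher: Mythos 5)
Your strategy coincides with the paper's: your step of composing with the given joint simultaneous $N$-dilation so as to replace $\mathbf{T}$ by the commuting tuple $V_i=\sum_l\lambda_{i,l}U_{i,S_{i,l}}$ of convex combinations of isometries on $Y$ is exactly the paper's opening reduction (``we may assume each $\mathcal{T}_i$ consists of isometries''), your identity $T_1^{k_1}\cdots T_n^{k_n}=Q\,V_1^{k_1}\cdots V_n^{k_n}\,J$ for $\sum_i k_i\le N$ is correct, and your route (b) is, in outline, the paper's construction. The issue is that the decisive step is only sketched: asserting that ``the identities for $V_1^{k_1}\cdots V_n^{k_n}$ follow'' defers precisely the content of the paper's proof. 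Concretely, the paper takes the unweighted space $\ell_p^{N^n m^{nN}}$ of copies of the base space, indexed by words $(\alpha_1,\dots,\alpha_n)$ with $\alpha_r:[N]\to[m]$ \emph{and} by cyclic position coordinates $(i_1,\dots,i_n)\in[N]^n$ (your sketch omits the position block and the $1/N$ averaging altogether); the weights $\prod_r\Lambda(\alpha_r)/N^n$ are split between $J$ and $Q$ as a $1/p$-power and a $1/q$-power, which is what makes $J$ isometric and $Q$ contractive via H\"older; $W_r$ multiplies by $T_{r,\alpha_r(i_r)}$ and applies the $N$-cycle $\sigma$ to the $r$-th position coordinate; $W_rW_s=W_sW_r$ uses exactly cross-family commutativity (within-family commutativity is never needed); and the identity $Q\,W_1^{k_1}\cdots W_n^{k_n}J=V_1^{k_1}\cdots V_n^{k_n}$ for all $k_r\le N$ is obtained by applying, separately in each block, the single-variable Fackler--Gl\"uck cyclic-averaging identity $V^k=\sum_{\alpha}\frac{\Lambda(\alpha)}{N}\sum_{j=1}^{N}\prod_{l=1}^{k}U_{\alpha(\sigma^{j-1}(l))}$ and factoring the resulting product over $r$. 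Filling in these points is the actual work; none of it is automatic from the single-operator statement used as a black box.

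Two smaller remarks. You attribute to the given simultaneous $N$-dilation properties that its definition does not guarantee ($J$ isometric, the $U_{i,S}$ invertible); fortunately nothing in your argument genuinely uses them (and $QJ=\mathrm{id}_X$ does follow from the length-zero word), so this is cosmetic, but it should not be stated as part of the hypothesis. Your alternative route (a) --- iterating the one-variable dilation and lifting the remaining operators by ``functoriality'' of the Fackler--Gl\"uck construction --- is not the paper's route and, as you yourself concede, would require proving structural lifting properties that are not available off the shelf; if you complete route (b) as above, route (a) is unnecessary.
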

We refer to Section~\ref{dilationfamily} for unexplained notation. 
As an immediate corollary, we obtain the following: Let $1 < p < \infty$, and suppose that for each $i = 1, \ldots, n$, 
$\mathcal{U}_i$ is a family of pairwise commuting \emph{isometries} acting on 
a noncommutative $L_p$-space (or Hilbert space) $L_p(\mathcal{M})$. 
If $\mathbf{T} = (T_1, \ldots, T_n)$ is an $n$-tuple of operators such that 
each $T_i$ belongs to the \emph{convex hull} of $\mathcal{U}_i$, 
then $\mathbf{T}$ admits a \emph{joint $N$-dilation} on a larger noncommutative 
$L_p$-space (or Hilbert space) for every $N \in \mathbb{N}$. This theorem appears to be new even in the context of Hilbert spaces 
and should be of independent interest to operator theorists. As an application we prove a quantitative ergodic theorem for semigroup of operators on $\mathbb{R}^d_{+}$ in Theorem \ref{semiofR6d1}.

Our approach to the proof of Theorem \ref{main-thm2} is the semi-commutative harmonic analysis {and} noncommutative Calder\'on transference techniques developed in \cite{HLW,HLX}. To that end, we set some notation. Let $d$ be an invariant metric on $G$. For $r>0$ and $h\in G$,  define the ball $B(h,r)=\{g\in G:d(g,h)\le r\}$, and {we will write $B_r$ for simply whenever $h=e$, where $e$ is the identity of $G$.}  Let $\mathcal{S}_{\mathcal M}$ be the collection of operators with $\tau$-finite support in $\mathcal M$. {For} $r>0$ and $f:G\rightarrow \mathcal{S}_{\mathcal M}$, {the Hardy-Littlewood average operator is defined by}
\begin{equation}\label{averaging-op}
	M_rf(h)=\frac{1}{m(B(h,r))}\int_{B(h,r)}f(g)dm(g).
\end{equation}
{We also recall} that $G$ is called amenable if it admits a F${\o}$lner sequence $(F_n)_{n\in \mathbb N}$, that is, for every $g\in G$,
\begin{align}\label{asymptotically invariant}
	\lim_{n\rightarrow\infty} \frac{m((F_n g) \bigtriangleup F_n)}{m(F_n)}= 0,
\end{align}
where $\bigtriangleup$ denotes the usual symmetric difference of two sets. Our second result is a transference principle stated as follows. In the suquel, we denote $\mathcal{N}=L_\infty(\G)\overline{\otimes}\M$ equipped with the tensor trace $\varphi=\int_{G}\otimes \tau$. 
\begin{thm}\label{thm:trans}
	{Let $G$ be an amenable group equipped with an invariant metric $d$}. For $1\leq p<\8$ and $x\in L_p(\M)$, define the ergodic average $A_r$ as
	\begin{equation*}
		A_rx=\frac{1}{m(B_r)}\int_{B_r}\a_gxdm(g),~r>0.
	\end{equation*}
	\begin{enumerate}[\noindent]
		\item \emph{(i)}~Let $1<p<\8$ {and let} $\a$ be a strongly continuous and uniformly bounded action of $G$ on $L_p(\M)$. Assume that there exists a positive constant $C_p$ such that
		\begin{equation*}
			\|(M_{r_i}f-M_{r_{i+1}}f)_{i}\|_{L_p(\mathcal N;\ell_2^{rc})}\le C_p\|f\|_p,~\forall f\in L_p(\mathcal N).
		\end{equation*}
		Then there exists {an absolute} constant $C>0$ such that
		\begin{equation*}
			\|(A_{r_i}x-A_{r_{i+1}}x)_{i}\|_{L_p(\M;\ell_2^{rc})}\le CC_p\|x\|_p,~\forall x\in L_p(\M).
		\end{equation*}
		\item \emph{(ii)}~Let $1\le p<\8$ {and let} $\a$ a strongly continuous action of $G$ on $\M$ induced by $\tau$-preserving automorphisms of $\M$.
		Assume that  there exists a positive constant $C'_p$ such that
		\begin{equation*}
			\|(M_{r_i}f-M_{r_{i+1}}f)_{i}\|_{L_{p,\infty}(\mathcal N;\ell_2^{rc})}\le C'_p\|f\|_p,~\forall f\in L_p(\mathcal N),
		\end{equation*}
		Then there exists {an absolute} constant $C'>0$ such that
		$$\|(A_{r_{i+1}}x-A_{r_i}x)_i\|_{L_{p,\infty}(\M;
			\ell_{2}^{rc})}\leq C'C'_{p} \|x\|_{p}, \; \forall x\in L_{p}(\M).$$
	\end{enumerate}
\end{thm}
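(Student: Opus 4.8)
The plan is to adapt the classical Calderón transference method to the noncommutative vector-valued setting, realizing the ergodic averages $A_r$ on $L_p(\M)$ as compressions of the Hardy--Littlewood averages $M_r$ on $L_p(\N)$, where $\N = L_\infty(G)\overline{\otimes}\M$. First I would fix a F\o lner sequence $(F_n)_n$ for $G$ and, for a fixed $x\in L_p(\M)$ and a fixed finite range $r_1 < \cdots < r_m$ of radii, choose $n$ large enough that the ``core'' $F_n' := \{h\in F_n : B(h, r_m)\subseteq F_n\}$ still has nearly full measure, i.e. $m(F_n')/m(F_n) \to 1$; this is exactly where amenability and the invariance of the metric $d$ enter, since $B(h,r) = hB_r$ by left-invariance, so the ``thickened boundary'' $F_n\setminus F_n'$ is controlled by a finite union of translates' symmetric differences with $F_n$. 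Next, define the transfer map $\Phi_n : L_p(\M)\to L_p(\N)$ by $(\Phi_n x)(h) = \1_{F_n}(h)\,\alpha_{h^{-1}}(x)$ (using that $\alpha$ extends to a uniformly bounded, or in case (ii) isometric, representation on $L_p(\M)$); then $\|\Phi_n x\|_{L_p(\N)}^p = \int_{F_n}\|\alpha_{h^{-1}}x\|_p^p\,dm(h) \le K^p\, m(F_n)\,\|x\|_p^p$ with $K = \sup_g\|\alpha_g\|$ (and $K=1$ in case (ii)).

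The key algebraic identity is that for $h\in F_n'$ one has $(M_r \Phi_n x)(h) = \alpha_{h^{-1}}(A_r x)$: indeed, for such $h$ the ball $B(h,r)$ lies in $F_n$, so the indicator drops out, and
\[
(M_r\Phi_n x)(h) = \frac{1}{m(B(h,r))}\int_{B(h,r)} \alpha_{g^{-1}}(x)\,dm(g) = \frac{1}{m(B_r)}\int_{B_r}\alpha_{(hg)^{-1}}(x)\,dm(g) = \alpha_{h^{-1}}(A_r x),
\]
where I used the substitution $g \mapsto hg$ and left-invariance of $m$ and of $d$. Consequently, restricting the difference sequence $(M_{r_i}\Phi_n x - M_{r_{i+1}}\Phi_n x)_i$ to the subalgebra $L_\infty(F_n')\overline{\otimes}\M$ and applying the $\ell_2^{rc}$-valued square function hypothesis over $\N$, one gets
\[
\int_{F_n'}\big\|(\alpha_{h^{-1}}(A_{r_i}x - A_{r_{i+1}}x))_i\big\|_{L_p(\M;\ell_2^{rc})}^p\,dm(h) \le \big\|(M_{r_i}\Phi_n x - M_{r_{i+1}}\Phi_n x)_i\big\|_{L_p(\N;\ell_2^{rc})}^p \le C_p^p\,\|\Phi_n x\|_p^p.
\]
Now on the left I would bound $\|\alpha_{h^{-1}}(y)\|_{L_p(\M;\ell_2^{rc})} \ge K^{-1}\|y\|_{L_p(\M;\ell_2^{rc})}$ (using that $\alpha_{h^{-1}}$, acting diagonally on the column/row structure, is invertible with norm $\le K$; in case (ii) it is a trace-preserving $*$-automorphism, hence an isometry on $L_p(\M;\ell_2^{rc})$), so the left side is $\ge K^{-p}\,m(F_n')\,\|(A_{r_i}x - A_{r_{i+1}}x)_i\|_{L_p(\M;\ell_2^{rc})}^p$. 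Combining with $\|\Phi_n x\|_p^p \le K^p\,m(F_n)\,\|x\|_p^p$ and dividing by $m(F_n')$ yields
\[
\|(A_{r_i}x - A_{r_{i+1}}x)_i\|_{L_p(\M;\ell_2^{rc})} \le K^2\,\Big(\frac{m(F_n)}{m(F_n')}\Big)^{1/p} C_p\,\|x\|_p;
\]
letting $n\to\infty$ sends the ratio to $1$, and since the estimate is uniform over all finite subsequences $r_1<\cdots<r_m$, a standard exhaustion/monotone-limit argument in $L_p(\M;\ell_2^{rc})$ (this is an $\ell_2$-valued norm, so finite-subsequence suprema increase to the full-sequence value) gives the claim with $C = K^2$ — an absolute constant when $\alpha$ is a contractive representation, as here. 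For part (ii) the identical scheme runs with $L_{p,\infty}$ in place of $L_p$; the only change is that one cannot integrate $p$-th powers of the quasinorm, so instead I would work directly with the operator-space/weak-type structure: transference for weak-type $(p,p)$ square-function bounds follows the same compression identity, but the passage from the estimate over $\N$ to the estimate over $\M$ uses the fact that $L_{p,\infty}(\N;\ell_2^{rc}) = L_{p,\infty}(L_\infty(F_n')\overline\otimes\M;\ell_2^{rc})$ restricts isometrically (up to the trace normalization $m(F_n')$) and that the $*$-automorphisms $\alpha_{h^{-1}}$ preserve the weak $L_p$-quasinorm; here one must be a little careful because $\|\cdot\|_{p,\infty}$ is only a quasinorm, so the ``averaging over $h\in F_n'$'' step should be replaced by choosing a single good $h$ via a Fubini/Chebyshev argument on $\N$, which is the standard fix and costs only an absolute constant.

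The main obstacle I anticipate is the weak-type transference in part (ii): unlike the $L_p$ case, there is no clean $p$-th-power integral identity, so one must phrase the argument at the level of distribution functions (or, better, via the definition of $L_{p,\infty}(\N;\ell_2^{rc})$ through the parametrized decomposition $y = y' + y''$ and the functional $\sup_{\lambda}\lambda\,\varphi\{\ldots\}$) and verify that compressing to the corner $L_\infty(F_n')\overline\otimes\M$ and then ``slicing'' to recover a genuine estimate on $L_{p,\infty}(\M;\ell_2^{rc})$ does not lose more than an absolute constant — the subtlety being that the square-function space $L_{p,\infty}(\N;\ell_2^{rc})$ is itself defined by an infimum over row/column decompositions, and one has to check these decompositions can be chosen compatibly with the compression. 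Everything else (the F\o lner core estimate, the compression identity, the isometry of $\tau$-preserving automorphisms on the Hilbert-module-valued $L_p$-spaces) is routine once the noncommutative $L_p(\M;\ell_2^{rc})$ machinery from Section~\ref{pre} is in place.
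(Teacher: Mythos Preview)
Your approach to part (i) is essentially the paper's: the same Calder\'on transference via a truncated transfer function, the same compression identity, and the same F\o lner/amenability limiting argument. Two small points: (a) your algebraic identity $(M_r\Phi_n x)(h)=\alpha_{h^{-1}}(A_r x)$ is off for nonabelian $G$ with your choice $\Phi_n x(h)=\1_{F_n}(h)\alpha_{h^{-1}}x$ --- the substitution gives $\alpha_{g'^{-1}h^{-1}}$, not $\alpha_{h^{-1}g'}$. The paper avoids this by using $\alpha_h$ (no inverse), defining $F_{AK}(g)=\chi_{AK}(g)\alpha_g x$ and getting $\alpha_h A_{r_i}x=M_{r_i}F_{AK}(h)$ cleanly. (b) Your displayed inequality $\int_{F_n'}\|(\cdot)_i\|_{L_p(\M;\ell_2^{rc})}^p\,dm(h)\le\|(\cdot)_i\|_{L_p(\N;\ell_2^{rc})}^p$ is not immediate, because the optimal row/column decomposition in $L_p(\N;\ell_2^{rc})$ need not be constant in $h$; the paper handles this by invoking the noncommutative Khintchine inequalities (Proposition~\ref{nonkin}) to identify $L_p(\N;\ell_2^{rc})\approx L_p(G;L_p(\M;\ell_2^{rc}))$ up to constants, together with Lemma~\ref{ex-lin} for the diagonal extension of $\alpha_h$.

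For part (ii) you correctly locate the obstacle --- $\|\cdot\|_{p,\infty}$ is only a quasinorm and the row/column infimum need not slice --- but your proposed fix (``choose a single good $h$ via Fubini/Chebyshev'') is not what works, and it is not clear it can be made to work as stated. The paper's key device is Proposition~\ref{auto}: a $\tau$-preserving $*$-automorphism commutes with Borel functional calculus, so for \emph{every} $h$ in the core one has the exact equality
\[
\tau\Big(\chi_{(\lambda,\infty)}\Big(\big(\textstyle\sum_i|b_i|^2\big)^{1/2}\Big)\Big)
=\tau\Big(\chi_{(\lambda,\infty)}\Big(\big(\textstyle\sum_i|a_i(h)|^2\big)^{1/2}\Big)\Big),
\]
where $b_i=A_{r_i}x-A_{r_{i+1}}x$ and $a_i(h)=M_{r_i}F_{AK}(h)-M_{r_{i+1}}F_{AK}(h)$. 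Averaging over $h\in A$ then gives an \emph{identity} (not an inequality) between the $\M$-side distribution function and $\tfrac{1}{m(A)}$ times a $\N$-side distribution function; no ``good $h$'' selection is needed. For $1\le p<2$ the paper then resolves your flagged compatibility issue by fixing one near-optimal decomposition $a_i=f_i+g_i$ on $\N$, applying the distribution-function identity separately to the column and row square functions, and controlling the resulting four terms by $\|(f_i)\|_{L_{p,\infty}(\N;\ell_2^r)}+\|(g_i)\|_{L_{p,\infty}(\N;\ell_2^c)}$ via the quasi-triangle inequality. This is the precise content behind your phrase ``$*$-automorphisms preserve the weak $L_p$-quasinorm,'' but it is the pointwise commutation with spectral projections --- not merely preservation of the quasinorm --- that makes the argument go through.
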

\begin{remark}\rm
If $G=\Z$, then Theorem \ref{thm:trans}(i) recovers~\cite[Proposition 1.6]{HLX}. Moreover, when $\M$ is a commutative algebra, Theorem \ref{thm:trans}(ii) seems new even in the commutative case. This transfer principle differs subtly from those in \cite{HLW} and \cite{HLX}, as it originates from Nevo's foundational work on ergodic theorems for group actions~\cite{Nevo06}.

On the other hand, it is well-known that if $G$ is a group with polynomial volume growth, then it is amenable (cf. \cite{Gui}).  In particular, the family of balls $\{B_r\}_{r>0}$ generated by any word metric on $G$ forms a F${\o}$lner sequence (cf. \cite{Breuillard14,Tessera07}).
\end{remark}

\medskip

{With Theorem \ref{thm:trans} in hand, we are reduced to showing the strong type $(p,p)$ and weak type $(1,1)$ estimates of the corresponding operator-valued square function.} Interestingly, {we may study the boundedness theory of such square function} in a much more general setting. To see this, we call $(G,d)$ satisfies the geometric doubling property if there exists a integer $D>0$ such that for every ball $B(h,r)$ can be covered by at most $D$ balls $B(h_i,r/2)$, i.e.,
\begin{equation}\label{geo-doubling}
	B(h,r)\subseteq \bigcup_{1\le i\le{D}}B(h_i,r/2).
\end{equation}
Let $r_0>0$ and $\epsilon\in(0,1]$. {The pair $(G,d,m)$ is said to satisfy} the $(\epsilon,r_0)$-annular decay property if there exists a constant $K>0$ such that for all $h\in G$, $r\in (r_0,\infty)$ and $s\in (0,r]$,
\begin{equation}\label{decay property}
	m(B(h,r+s))-m(B(h,r))\le K\bigg(\frac{s}{r}\bigg)^{\epsilon}m(B(h,r)).
\end{equation}

We now present the third conclusion and the reader is referred to Section~\ref{BMO} for the definition of $\mathrm{BMO}$ spaces.
\begin{thm}\label{main-thm1}
	{Assume that $(\G,d,m)$ satisfies}~\eqref{geo-doubling} and \eqref{decay property}. Let $M_r$ be an average operator defined by~\eqref{averaging-op}. Let $(r_{i})_{i\in\mathbb N}\subseteq(r_0,\8)$ an increasing sequence and set $T_i=M_{r_{i+1}}-M_{r_i}$ for each $i$. Then for $1\le p\le \8$ the following assertions hold with a positive constant $C_p$ independent of $T_i$ and $f$:
	\begin{enumerate}[\noindent]
		\item \emph{(i)}~for $p=1$,
		$$\| (T_{i}f)_{i\in\mathbb N}\|_{L_{1,\8}(\mathcal{N};
			\ell_{2}^{rc})}\leq C_{p} \|f\|_{1},~\forall f\in L_{1}(\mathcal N);$$
		\item\emph{(ii)}~for $p=\8$,
		$$\Big\|\sum_{i:i\in\mathbb N} T_i \hskip-1pt f \otimes e_{1i}
		\Big\|_{\mathrm{BMO}(\mathcal{A})} +\Big\|\sum_{i:i\in\mathbb N} T_i \hskip-1pt f \otimes e_{i1}\Big\|_{\mathrm{BMO}(\mathcal{A})} \leq C_p
		\|f\|_\infty,\; \forall f\in L_{\infty}(\mathcal N);$$
		\item \emph{(iii)}~for $1<p<\8$,
		$$\| (T_{i}f)_{i\in\N}\|_{L_p(\mathcal{N};
			\ell_{2}^{rc})}\leq C_{p} \|f\|_{p}, \; \forall f\in L_{p}(\mathcal N),$$
	\end{enumerate}
	where  $\mathcal{A}=\mathcal{N}\overline{\otimes}\mathcal{B}(\ell_2)$ with the tensor trace $\varphi\otimes tr$ and $tr$ is the canonical trace on $\mathcal{B}(\ell_2)$.
\end{thm}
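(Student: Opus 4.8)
The plan is to treat the three assertions as endpoint results for an operator-valued singular integral and then interpolate. First I would set up the semicommutative Calder\'on--Zygmund machinery adapted to the non-doubling space $(G,d,m)$: the kernel of $T_i = M_{r_{i+1}}-M_{r_i}$ is $K_i(h,g) = \frac{\1_{B(h,r_{i+1})}(g)}{m(B(h,r_{i+1}))} - \frac{\1_{B(h,r_i)}(g)}{m(B(h,r_i))}$, and I would package the whole sequence $(T_i)_i$ as a single operator $\mathbf{T}f = \sum_i T_i f \otimes e_{1i}$ (and its column analogue) acting into $\mathcal{A} = \mathcal{N}\overline{\otimes}\mathcal{B}(\ell_2)$. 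The $\ell_2$-valued kernel is $\mathbf{K}(h,g) = (K_i(h,g))_i$. The first technical step is the size estimate $\|\mathbf{K}(h,g)\|_{\ell_2} \lesssim \frac{1}{m(B(h,d(h,g)))}$, which follows because at most finitely many $i$ have $r_i \approx d(h,g)$ actually contributing, after using that the telescoping structure together with the annular decay property \eqref{decay property} controls the overlap: for $g$ at distance $\rho$ from $h$, only scales $r_i$ with $r_i \geq \rho$ contribute a nonzero $K_i(h,g)$, and the normalizing factors $1/m(B(h,r_i))$ decay geometrically past $\rho$ thanks to \eqref{decay property} (a standard consequence is a reverse-doubling/lower mass bound). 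The second technical step is the H\"ormander-type regularity estimate in the $g$-variable, $\int_{d(h,g)\geq 2d(g,g')} \|\mathbf{K}(h,g)-\mathbf{K}(h,g')\|_{\ell_2}\, dm(g) \lesssim 1$, which is exactly where the annular decay exponent $\epsilon$ enters quantitatively: the difference of two normalized ball indicators at nearby centers is supported on a thin annulus of width $\sim d(g,g')$, whose mass is bounded by $K(d(g,g')/r)^\epsilon m(B(h,r))$, and summing the resulting geometric-type series in the scales $r_i$ converges because $\epsilon > 0$. Geometric doubling \eqref{geo-doubling} is what lets me run the Calder\'on--Zygmund decomposition at all (existence of dyadic-type cubes / a Whitney covering with bounded overlap) in the absence of the classical doubling condition.

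Next I would prove the $L_2$-boundedness, i.e. assertion (iii) at $p=2$, as the anchor. Here I would not use the singular-integral structure directly but rather a direct square-function argument: by the telescoping identity $\sum_i \|T_i f\|_2^2 \leq \sup_{(r_i)} \sum_i \|M_{r_{i+1}}f - M_{r_i}f\|_2^2$, and on $L_2(\mathcal{N})$ the averages $M_r$ form a family one can compare to a semigroup or estimate via the spectral/Fourier side; alternatively, on $L_2$ one reduces to the scalar-valued inequality (which is classical under geometric doubling + annular decay, essentially Duoandikoetxea--Rubio de Francia type square function bounds on non-doubling spaces) and tensorizes, since $L_2(\mathcal{N};\ell_2^{rc}) = L_2(\mathcal{N}) \otimes_2 \ell_2$ with both row and column structures coinciding at $p=2$. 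With $L_2$-boundedness plus the kernel estimates, the semicommutative non-doubling Calder\'on--Zygmund theorem (the version with BMO target, as developed in the references cited in the abstract and to be recalled in Section \ref{BMO}) yields simultaneously the weak-$(1,1)$ bound (i) and the $L_\infty \to \mathrm{BMO}(\mathcal{A})$ bound (ii) — one applies it once to $\mathbf{T}$ built from $e_{1i}$ (row/BMO$^c$ side) and once from $e_{i1}$ (column/BMO$^r$ side), and the weak-$(1,1)$ statement in $\ell_2^{rc}$ follows by taking the intersection/sum as in the definition of the $\ell_2^{rc}$ norm. Finally, assertion (iii) for general $1<p<\infty$ follows by interpolation: between the weak-$(1,1)$ endpoint (i) and the $L_\infty\to\mathrm{BMO}$ endpoint (ii), using the standard interpolation scale $(\mathrm{BMO}, L_1)_{\theta} = L_p$ for semicommutative spaces (real or complex interpolation with BMO replacing $L_\infty$), separately for the row and column pieces and then combining.

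The main obstacle I expect is the H\"ormander regularity estimate summed over the full sequence of scales $(r_i)_i$ uniformly. The subtlety is that the sequence $(r_i)$ is arbitrary and can accumulate, so the "thin annulus" widths $r_{i+1}-r_i$ are not comparable to the scales $r_i$ themselves; one must exploit the telescoping cancellation $M_{r_{i+1}} - M_{r_i} = \sum$ (or integral) of infinitesimal increments and integrate the annular-decay estimate $dm(B(h,r))/dr \lesssim m(B(h,r))/r$ in $r$ rather than summing in $i$. Concretely, I would write $K_i(h,g)$ as a telescoping integral $\int_{r_i}^{r_{i+1}} \partial_r\big(\1_{B(h,r)}(g)/m(B(h,r))\big)\,dr$ (distributionally), estimate the $\ell_2$-norm in $i$ by an $L^2(dr)$ or $L^1(dr)$ norm of the density via Cauchy--Schwarz against the disjointness of the intervals $(r_i,r_{i+1})$, and then the annular decay property \eqref{decay property} gives the needed integrable-in-$r$ bound with the geometric-doubling constant controlling the passage to the operator-valued (matrix amplification) setting. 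Once this uniform-in-$(r_i)$ H\"ormander bound is secured, the rest is a careful but routine application of the existing non-doubling semicommutative Calder\'on--Zygmund theory and interpolation.
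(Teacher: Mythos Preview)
Your approach differs substantially from the paper's, and it has a genuine gap.

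The paper does \emph{not} attempt to verify an $\ell_2$-valued H\"ormander condition for the full kernel $\mathbf{K}(h,g)=(K_i(h,g))_i$ and then invoke a black-box semicommutative non-doubling Calder\'on--Zygmund theorem. Instead, it performs a \emph{long/short decomposition}: each interval $[r_i,r_{i+1})$ is split according to whether it contains a dyadic point $\delta^k$, reducing to two subproblems. The ``long'' one compares $M_{\delta^n}$ to the dyadic conditional expectation $\mathsf{E}_n$ and is handled via almost-orthogonality against the martingale filtration (Propositions~\ref{boundary}--\ref{measure estimate of cube} on boundary mass of Hyt\"onen--Kairema cubes are essential here). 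The ``short'' one lives entirely within a single dyadic annulus $[\delta^n,\delta^{n+1})$, where the telescoping structure and annular decay directly control sums over arbitrarily many $r_i$. Only \emph{after} this reduction does the paper run the noncommutative Calder\'on--Zygmund decomposition of Cadilhac--Conde-Alonso--Parcet, treating the good part $g$, the diagonal bad part $b_d$, and the off-diagonal bad part $b_{\it off}$ separately by hand.

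Your gap is in the sentence ``the semicommutative non-doubling Calder\'on--Zygmund theorem \dots\ yields simultaneously the weak-$(1,1)$ bound (i) and the $L_\infty\to\mathrm{BMO}$ bound (ii)''. No such black box exists with H\"ormander regularity as the only kernel hypothesis. In the noncommutative setting, the off-diagonal term $b_{\it off}=\sum_k(q_kfp_k+p_kfq_k)$ is the well-known obstruction: it has neither an $L_2$ bound nor the cancellation needed to run the classical argument, and H\"ormander alone does not suffice. The paper devotes separate, operator-specific arguments to it (see \S4.1.2 and especially \S4.2.2, where the short-variation $b_{\it off}$ estimate uses an $L_2$ method with the almost-orthogonality Lemma~\ref{L1} rather than anything resembling a H\"ormander bound). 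Your proposal does not address this term at all. Secondarily, your proposed fix for the uniform-in-$(r_i)$ H\"ormander estimate---writing $K_i$ as $\int_{r_i}^{r_{i+1}}\partial_r(\cdot)\,dr$---is not rigorous here: in a metric measure space with only \eqref{decay property}, $r\mapsto m(B(h,r))$ need not be differentiable, and the annular decay gives only a H\"older-type increment bound. The long/short split is precisely the device that circumvents this, replacing your continuous-$r$ argument with a comparison to the dyadic martingale at lacunary scales plus a within-scale estimate where arbitrarily dense $(r_i)$ are harmless because the full increments telescope.
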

Here, $\mathrm{BMO}(\mathcal{A})$ denotes the dyadic $\mathrm{BMO}$ space, which is defined in Subsection~\ref{BMO-est}. The group structure of $G$ is not essential here; in fact, Theorem~\ref{main-thm1} extends to general metric measure spaces. Moreover, Theorem~\ref{main-thm1} remains valid for any regular Borel measure when $1\le p\le 2$ (see Section~\ref{weak-type}). By contrast, the conditions \eqref{decay property} are intrinsically tied to the geometric structure of $G$. Canonical examples include groups of polynomial growth equipped with the word metric and length spaces. See, e.g., \cite{Hong-Liu21} for more details.
 
\bigskip

\begin{remark}\label{app}
		We remark that the infinite summations appeared in all the aforementioned theorems should be understood as a result of the corresponding uniform boundedness for all finite summations by the standard approximation arguments (see e.g. \cite[Section 6.A]{JMX}). So, as in \cite{HX,HLX}, we will not explain the convergence of infinite sums appearing in the paper if there is no ambiguity.
\end{remark}

An outline of this paper is as follows. In Section \ref{pre}, we review the definition of noncommutative $L_p$-spaces and Hilbert-valued $L_p$-spaces. We then introduce the `dyadic cubes' constructed by Hyt\"{o}nen and Kairema~\cite{Hyt-Anna12} in metric space and present some technical lemmas. This section also gives the content of noncommutative Calder{\'o}n-Zygmund decomposition for non-doubling measure recently obtained in \cite{Cad-Alo-Par21}. We end this section by discussing the annular decay property and providing some examples which satisfy conditions \eqref{geo-doubling} and \eqref{decay property}. In particular, we verify that the polynomial growth group equipped with a word metric satisfies the doubling measure property, ($\epsilon$,1)-annular decay property and \eqref{asymptotically invariant}. As a consequence, Theorem \ref{main-thm2} holds by virtue of Theorem \ref{thm:trans} and \ref{main-thm1}.  Section \ref{weak-type} is devoted to the proof of weak type $(1,1)$ estimates announced in Theorem \ref{long-ineq} and Theorem  \ref{short-ineq}, while the proof of BMO estimates of  Theorem \ref{long-ineq} and \ref{short-ineq} is given in Section \ref{BMO}.  In section \ref{thisissec5}, we  prove Theorem \ref{thm:trans}. In Section~\ref{laprepsec}, we prove quantitative ergodic theorems for positive powers of the modulus of Lamperti representations. 
Section~\ref{dilationfamily} is devoted to the proof of our dilation theorem. 
Finally, in Section~\ref{quanergforopsemi}, we illustrate how quantitative ergodic theorems for semigroups over $\mathbb{R}^d_{+}$ can be deduced via discretization.

 Throughout this paper, let $C$ denote a positive constant that may vary from line to line, while $c_p$ and $C_p$ denote a positive constant possibly depending on the subscripts. Also, let the notation $X\lesssim Y$ mean that $X\le CY$ for some inessential constant $C>0$ and $X\thickapprox Y$ mean that both $X\lesssim Y$ and $Y \lesssim X$ hold.

After completing a preliminary version of this paper, we learned that some intermediate results leading to part (iii) of Theorem \ref{main-thm1} has been independently obtained by \cite{BikramSaha2025}.


\section{Preliminaries and some technical lemmas}\label{pre}
\subsection{Noncommutative $L_p$-spaces}\quad

\medskip

Throughout this paper, let $\mathcal M$ be a von Neumann algebra equipped with a normal semifinite faithful (\textit{n.s.f.}) trace $\tau$ and $\mathcal M_+$ be the positive part of $\mathcal M$. Let $\1_{\mathcal M}$ denote the unit element in $\M$. Given $x\in \mathcal M_+$, the support of $x$, denoted by $s(x)$, is defined to be the smallest projection $a$ in $\mathcal M$ such that $xa=x=ax$. We denote by $\mathcal{S}_{\mathcal M_+}$ the set of all $x\in\mathcal M_+$ such that $\tau(s(x))<\8$, and by $\mathcal{S}_{\mathcal M}$ the linear span of $\mathcal{S}_{\mathcal M_+}$. Then $\mathcal{S}_{\M}$ is a $w^{*}$-dense $\ast$-subalgebra of $\M$. Let $1\le p<\8$, the noncommutative $L_p$-space $L_p(\M,\tau)$ is defined by the completion of $\mathcal{S}_{\M}$ with respect to the norm
\begin{equation*}
  \|x\|_p=\tau(|x|^p)^{\frac{1}{p}},~\qquad~\mbox{where}~|x|=(x^*x)^{\frac{1}{2}}.
\end{equation*}
For $p=\8$, we set $L_{\8}(\mathcal M,\tau)=\mathcal M$ equipped with the operator norm.

Let $L_{0}(\M)$ be the set of the $\ast$-algebra of \emph{$\tau$-measurable} operators. The noncommutative weak $L_p$-space $L_{p,\8}(\mathcal M)$ is defined as the set of all $x\in L_0(\mathcal M)$ equipped the following finite quasi-norm
\begin{equation*}
 \|x\|_{p,\8}=\sup_{\lambda>0}\lambda\tau(\chi_{(\lambda,\8)}(|x|))^{\frac{1}{p}}.
\end{equation*}
In particular, for weak $L_1$-space, one has the basic fact (cf.~\cite{Su12}): for any $x_1,x_2\in L_{1,\8}(\mathcal M)$ and $\lambda>0$, 
\begin{equation}\label{distrib}
  \tau(\chi_{(\lambda,\8)}(|x_1+x_2|))\leq\tau(\chi_{(\lambda/2,\8)}(|x_1|))+\tau(\chi_{(\lambda/2,\8)}(|x_2|)).
\end{equation}

For more explanation of noncommutative $L_p$-spaces and weak $L_p$-spaces we refer to \cite{Pis-Xu03,Fack-Kos86}.

\subsection{Noncommutative Hilbert-valued $L_p$-spaces}\quad

\medskip

Let $1\leq p<\infty$ and $(x_{n})$ be a finite sequence in $L_{p}(\mathcal M)$. Define
$$\|(x_{n})\|_{L_{p}(\mathcal M; \ell_{2}^{r})}=\Big\|\Big(\sum_{n}|x^{\ast}_{n}|^{2}\Big)^{\frac{1}{2}}\Big\|_{p},\quad\|(x_{n})\|_{L_{p}(\mathcal M; \ell_{2}^{c})}=\Big\|\Big(\sum_{n}|x_{n}|^{2}\Big)^{\frac{1}{2}}\Big\|_{p}.$$
Let $L_{p}(\mathcal M; \ell_{2}^{r})$ (resp. ${L_{p}(\mathcal M; \ell_{2}^{c})}$) be the completion of all finite sequences in $L_p(\mathcal M)$ with respect to $\|\cdot\|_{{L_{p}(\mathcal M; \ell_{2}^{r})}}$ (resp. $\|\cdot\|_{{L_{p}(\mathcal M; \ell_{2}^{c})}}$). The mixed space $L_p(\mathcal{M};\ell_{2}^{rc})$ is defined as follows:
\begin{itemize}
	\item If $1\leq p<2$, let
	$$L_p(\mathcal{M};
	\ell_{2}^{rc})=L_{p}(\mathcal M; \ell_{2}^{r})+L_{p}(\mathcal M; \ell_{2}^{c})$$
	equipped with the sum norm
	$$\|(x_{n})\|_{L_p(\mathcal{M};
		\ell_{2}^{rc})}=\inf\{\|(y_{n})\|_{L_{p}(\mathcal M; \ell_{2}^{r})}+\|(z_{n})\|_{L_{p}(\mathcal M; \ell_{2}^{c})}\},$$
	where the infimun runs over all decompositions $x_{n}=y_{n}+z_{n}$ in $L_{p}(\mathcal{M})$.
\item If $2\leq p\leq\infty$, let
	$$L_p(\mathcal{M};
	\ell_{2}^{rc})=L_{p}(\mathcal M; \ell_{2}^{r})\cap L_{p}(\mathcal M; \ell_{2}^{c})$$
	equipped with the intersection norm
	$$\|(x_{n})\|_{L_p(\mathcal{M};
		\ell_{2}^{rc})}=\max\{\|(x_{n})\|_{L_{p}(\mathcal M; \ell_{2}^{r})},\|(x_{n})\|_{L_{p}(\mathcal M; \ell_{2}^{c})}\}.$$
\end{itemize}
Clearly, $L_2(\mathcal{M}; \ell_{2}^{r}) = L_2(\mathcal{M};\ell_{2}^{c})=L_2(\mathcal{M};\ell_{2}^{rc})$. Replacing the $L_p$-norm with the weak $L_p$-norm, one can also use the above procedure to define the noncommutative Hilbert-valued weak $L_p$-spaces.
 For instance, the $L_{1,\8}(\M;\ell_2^{rc})$ is defined by
\begin{equation*}
  \|(x_n)\|_{L_{1,\8}(\M;\ell_2^{rc})}=\inf_{x_n=y_n+z_n}\{\|(y_n)\|_{L_{1,\8}(\M;\ell_2^{r})}+\|(z_n)\|_{L_{1,\8}(\M;\ell_2^{c})}\}.
\end{equation*}
We refer the reader to Pisier-Xu's book \cite{Pis-Xu03} for the properties of such spaces. In particular, the following noncommutative Khintchine inequalities will be frequently used, see~\cite{LP86,LP-Pi91,Pis98, C1} for the proof.

\begin{prop}\label{nonkin}
Let $(\varepsilon_n)$ be a Rademarcher sequence on probability space $(\Omega, P)$. Let $1\le p<\8$ and $(x_n)$ be a sequence in $L_p(\M;\ell^{rc}_2)$. Then there exist two positive constants $c_p$ and $c^\prime_p$ such that
\begin{equation*}
  c_p\|(x_n)\|_{L_p(\mathcal M;\ell^{rc}_2)}\le \bigg\|\sum_{n}\varepsilon_nx_n\bigg\|_{L_p(L_{\infty}(\Omega)\overline{\otimes}
\mathcal M))}\le  c^\prime_p\|(x_n)\|_{L_p(\mathcal M;\ell^{rc}_2)}.
\end{equation*}
The above estimate also holds if one replaces the $L_p$-spaces by  $L_{1,\infty}$-space.
\end{prop}

We also need another class of non-commutative Hilbert-valued $L_p$-spaces. Let $(\Sigma, \mu)$ be a $\sigma$-finite measurable space and $0 < p \leq \infty$. The column space $L_p(\M; L^{c}_2(\Sigma))$ consists of operator-valued functions $f$ for which the norm (or $p$-norm, when $0 < p < 1$) is finite. The norm is defined as follows
\begin{equation*}
    \|f\|_{L_p(\M;L^{c}_2(\Sigma))} = \left\|\int_\Sigma f^*(w)f(w) d\mu(w)\right\|_p.
\end{equation*}
The key property for our purpose is the following H\"{o}lder's inequality (see, e.g., \cite[Proposition 1.1]{M}).
\begin{prop}\label{Holder-ineq}
Let $0 \leq p, q, r \leq \infty$ satisfy $\frac{1}{r} = \frac{1}{p} + \frac{1}{q}$. Then for any $f \in L_p(\mathcal{M}; L^{c}_2(\Sigma))$ and $g \in L_q(\mathcal{M}; L^{c}_2(\Sigma))$, the following inequality holds
\begin{equation*}
\left\| \int_\Sigma f^*(w)g(w)\,d\mu(w) \right\|_r \leq \left\| \left( \int_\Sigma |f(w)|^2\,d\mu(w) \right)^{\frac{1}{2}} \right\|_p \left\| \left( \int_\Sigma |g(w)|^2\,d\mu(w) \right)^{\frac{1}{2}} \right\|_q.
\end{equation*}
\end{prop}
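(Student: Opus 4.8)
\textbf{Proof proposal for Proposition \ref{Holder-ineq}.}

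The plan is to reduce the vector-valued H\"older inequality to the scalar noncommutative H\"older inequality by a standard polar-type factorization, exploiting that $L_p(\mathcal M; L_2^c(\Sigma))$ is built from the single operator $\big(\int_\Sigma |f(w)|^2\,d\mu(w)\big)^{1/2}$. First I would dispose of trivial cases: if either norm on the right-hand side is infinite there is nothing to prove, and if $r=\infty$ (so $p=q=\infty$) the inequality follows directly from Cauchy--Schwarz in the Hilbert module $L_2^c(\Sigma)$ together with submultiplicativity of the operator norm. So assume $0<r<\infty$ and both right-hand factors finite. Write $F=\big(\int_\Sigma |f(w)|^2\,d\mu(w)\big)^{1/2}$ and $H=\big(\int_\Sigma |g(w)|^2\,d\mu(w)\big)^{1/2}$; these are positive $\tau$-measurable operators with $\|F\|_p,\|H\|_q<\infty$, and the goal is $\big\|\int_\Sigma f^*(w)g(w)\,d\mu(w)\big\|_r\le \|F\|_p\|H\|_q$.

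The key step is the pointwise (in $w$) operator Cauchy--Schwarz estimate: for the element $\int_\Sigma f^*(w)g(w)\,d\mu(w)$, one has, as an operator inequality in $L_0(\mathcal M)$,
\[
\Big(\int_\Sigma f^*(w)g(w)\,d\mu(w)\Big)^*\Big(\int_\Sigma f^*(w)g(w)\,d\mu(w)\Big)\;\le\; \Big\|\int_\Sigma |g(w)|^2\,d\mu(w)\Big\|_\infty \;\int_\Sigma |f(w)|^2\,d\mu(w),
\]
but since the $L_\infty$-norm of $H^2$ may be infinite this crude bound is insufficient; instead I would use the two-sided factorization inherent in column $L_p$-theory. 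Concretely, view $w\mapsto f(w)$ and $w\mapsto g(w)$ as elements of the $\mathcal M$-$\mathcal M$ correspondence $L_2^c(\Sigma)\otimes L_0(\mathcal M)$, and note that by the noncommutative Cauchy--Schwarz inequality for the $\mathcal M$-valued inner product $\langle f,g\rangle=\int_\Sigma f^*(w)g(w)\,d\mu(w)$ there exists a contraction $u\in\mathcal M$ (a partial isometry arising from the polar decomposition of $\langle f,g\rangle$, extended if necessary) together with the standard module inequality
\[
\Big|\int_\Sigma f^*(w)g(w)\,d\mu(w)\Big| \;\le\; F\,u\,H \quad\text{for some contraction } u,
\]
which is exactly the content of the noncommutative Cauchy--Schwarz inequality in Hilbert $C^*$-modules (see Lance, or the operator-valued version used throughout \cite{M, Pis-Xu03}). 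Applying the tracial H\"older inequality $\|xyz\|_r\le\|x\|_p\|y\|_\infty\|z\|_q$ to the right-hand side with $x=F$, $y=u$, $z=H$ gives
\[
\Big\|\int_\Sigma f^*(w)g(w)\,d\mu(w)\Big\|_r \le \|F\,u\,H\|_r \le \|F\|_p\,\|u\|_\infty\,\|H\|_q \le \|F\|_p\,\|H\|_q,
\]
which is the claimed inequality; here I would invoke the identity $\||T|\|_r=\|T\|_r$ and monotonicity of the $r$-quasi-norm on positive operators to pass from $|\langle f,g\rangle|\le FuH$ to the norm estimate (valid for all $0<r\le\infty$, using quasi-triangle/monotonicity arguments for $r<1$).

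The main obstacle is making the factorization $|\langle f,g\rangle|\le FuH$ rigorous in the $\tau$-measurable setting when $F,H$ need not be bounded and $\mu$ is only $\sigma$-finite: one must either first truncate via spectral projections $F_N=F\,\chi_{[0,N]}(F)$, $H_N=H\,\chi_{[0,N]}(H)$ and a net exhausting $\Sigma$, establish the inequality for the bounded truncations using the $C^*$-module Cauchy--Schwarz inequality, and then pass to the limit using normality of the trace and Fatou-type lower semicontinuity of the $L_r$-quasi-norm; or else appeal directly to the general module Cauchy--Schwarz inequality for the $L_0(\mathcal M)$-valued inner product, which holds at the level of measurable operators. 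I expect the truncation-and-limit route to be the cleaner one to write out, and it is entirely routine once the bounded case is in hand. I would also remark that the case $r=p$, $q=\infty$ (respectively $r=q$, $p=\infty$) recovers the familiar estimate $\|\langle f,g\rangle\|_p\le\|F\|_p\|g\|_{L_\infty(\mathcal M;L_2^c(\Sigma))}$ used repeatedly in the sequel.
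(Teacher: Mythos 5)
The paper offers no proof of this proposition at all: it is quoted directly from Mei \cite[Proposition~1.1]{M}, so there is no internal argument to compare with, and your proposal is a genuine proof attempt of a cited fact. Your overall strategy---a module Cauchy--Schwarz/factorization step followed by the tracial H\"older inequality $\|xyz\|_r\le\|x\|_p\|y\|_\infty\|z\|_q$ (valid for all $0<p,q,r\le\infty$), with truncation and Fatou-type limits to handle unbounded $F,H$---is sound and does yield the proposition. The one step that is wrong as literally written is the inequality $\bigl|\int_\Sigma f^*(w)g(w)\,d\mu(w)\bigr|\le F\,u\,H$: the right-hand side is in general neither self-adjoint nor positive, so this is not a meaningful operator inequality, and it is not what the Hilbert-module Cauchy--Schwarz inequality asserts. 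What you actually need, and what is true, is the factorization in \emph{equality} form: writing $X=\int_\Sigma f^*(w)g(w)\,d\mu(w)$, the $2\times2$ matrix with diagonal entries $F^2$, $H^2$ and off-diagonal entries $X$, $X^*$ is positive (it is the integral of the pointwise positive Gram matrices built from the row $(f(w),g(w))$), and the standard Douglas-type lemma for positive $2\times2$ operator matrices then gives $X=F\,u\,H$ for some contraction $u\in\mathcal M$ (first for the bounded truncations $F_N,H_N$ you introduce, then in the limit). With $X=FuH$ in hand, your H\"older step concludes immediately, and the detour through $\||T|\|_r=\|T\|_r$ and monotonicity becomes unnecessary. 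For comparison, the proof behind the citation \cite{M} is shorter and avoids both the factorization lemma and the unboundedness issues: one identifies $f$ and $g$ with column operators in $L_p(\mathcal M\overline{\otimes}\mathcal{B}(L_2(\Sigma)))$ and $L_q(\mathcal M\overline{\otimes}\mathcal{B}(L_2(\Sigma)))$, observes that $\int_\Sigma f^*g\,d\mu$ appears as the relevant corner of the product of these two elements, whose moduli are exactly $(\int_\Sigma|f|^2\,d\mu)^{1/2}$ and $(\int_\Sigma|g|^2\,d\mu)^{1/2}$ sitting in a corner, and applies the ordinary noncommutative H\"older inequality in the amplified algebra. Your route is heavier but correct once the Cauchy--Schwarz step is restated as the factorization above.
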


\subsection{`Dyadic cubes' and boundary properties}\quad

\medskip

In this subsection, we revisit the `dyadic cubes' constructed on the metric space $(\G,d,m)$ that satisfy the geometric doubling condition~\eqref{geo-doubling} and the $(\epsilon,r_0)$-annular decay property~\eqref{decay property}. Note that $(\G,d,m)$ may not necessarily be a measure doubling space. As a result, the 'dyadic cubes' lose the boundary property (cf. \cite[Theorem 11 (3.6)]{Christ90}) of Christ-type dyadic cubes constructed in doubling metric measure spaces. By leveraging the refined lemmas from~\cite{Hong-Liu21}, which address boundary conditions in dyadic cubes, balls, and their specific configurations, we derive key technical lemmas to establish Theorems~\ref{long-ineq} and~\ref{short-ineq}. These lemmas could be regarded as results in vector-valued non-homogeneous harmonic analysis with the underlying Banach spaces being noncommutative $L_p$ spaces. 
Since the estimates are delicate, we will fix throughout the paper several constants---including $k_1,n_0,n_1, c_0, C_0, C_1, L_0,L_1,\delta$---that depend on conditions~\eqref{geo-doubling},~\eqref{decay property}, and the construction.

As is well-known that the geometrically doubling property extends the doubling measure property (cf. ~\cite{Coifman-Weiss71}). Consequently, the metric space $(G,d,m)$ satisfying conditions \eqref{geo-doubling} and \eqref{decay property} may constitute a non-doubling measure space. However, a straightforward calculation reveals that \eqref{decay property} implies the following property: For any $h\in \G$ and $r_0<r\le R<\8$,
\begin{align}\label{int}
\frac{m(B(h,R))}{m(B(h,r))}\lesssim\Big(\frac{R}{r}\Big)^{\epsilon}.
\end{align}
This estimation will be applied subsequently. Furthermore, the geometrically doubling condition immediately implies the following property.
\begin{prop}\label{geometry-doubling}
Let $(G,d)$ satisfy the geometrically doubling property, namely condition~\eqref{geo-doubling}. For any ball $B(h,R)$ with radius $R > 0$ and arbitrary $0 < r \leq R$, there exists a finite covering of $B(h,R)$ by at most $D^{[ \log_2(R/r)] + 1}$ balls of radius $r$.
\end{prop}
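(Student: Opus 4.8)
\textbf{Proof plan for Proposition~\ref{geometry-doubling}.}
The statement is a purely combinatorial iteration of the one-step geometric doubling hypothesis~\eqref{geo-doubling}. The plan is to cover $B(h,R)$ by halving the radius repeatedly until it drops below $r$, and then to bound the number of balls produced at each stage. First I would set $k=\lfloor\log_2(R/r)\rfloor+1$, so that $R/2^{k}<r\le R/2^{k-1}$; this is the number of halving steps needed.

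The key step is an induction on $j=0,1,\dots,k$ with the inductive hypothesis: $B(h,R)$ can be covered by at most $D^{j}$ balls of radius $R/2^{j}$. The base case $j=0$ is trivial since $B(h,R)$ covers itself. For the inductive step, suppose $B(h,R)\subseteq\bigcup_{i=1}^{D^{j}}B(h_i,R/2^{j})$; applying~\eqref{geo-doubling} to each $B(h_i,R/2^{j})$ produces a cover of that ball by at most $D$ balls of radius $R/2^{j+1}$, and taking the union over $i$ yields a cover of $B(h,R)$ by at most $D\cdot D^{j}=D^{j+1}$ balls of radius $R/2^{j+1}$, completing the induction.

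Finally, I would apply the hypothesis at $j=k$: $B(h,R)$ is covered by at most $D^{k}=D^{\lfloor\log_2(R/r)\rfloor+1}$ balls of radius $R/2^{k}<r$. Since a ball of radius $R/2^{k}$ is contained in a ball of radius $r$ with the same centre (as $R/2^{k}<r$), replacing each of these balls by the concentric ball of radius $r$ gives the desired covering by at most $D^{[\log_2(R/r)]+1}$ balls of radius $r$.

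There is essentially no obstacle here; the only point requiring mild care is the bookkeeping in the choice of $k$ and the observation that the final balls, having radius strictly less than $r$, can be enlarged to radius $r$ without increasing their number—so the count $D^{[\log_2(R/r)]+1}$ is exactly what the iteration delivers.
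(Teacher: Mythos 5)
Your proof is correct, and it is exactly the standard iteration of the doubling condition \eqref{geo-doubling} that the paper treats as immediate (no proof is given there); the bookkeeping with $k=\lfloor\log_2(R/r)\rfloor+1$ and the enlargement of the final balls to radius $r$ is handled properly. Nothing to add.
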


The following system of `dyadic cubes' in geometrically doubling space was constructed in \cite[Theorem 2.2]{Hyt-Anna12}.
\begin{prop}\label{dyadic cube}
Let $(\G,d)$ satisfy the geometrically doubling property, namely condition~\eqref{geo-doubling}. Fix constants $0<c_0<C_0<\8$ and $\delta>1$ such that
$$18C_0\delta^{-1}\le c_0.$$
Let $I_k$ be an index set for every $k\in\mathbb{Z}$ and $\{z_\alpha^k\in \G:\alpha\in I_k,k\in\Z\}$ an associated collection of points with the properties that
\begin{equation}\label{distance}
  d(z_\alpha^k,z_\beta^k)\ge c_0\delta^k~(\alpha\neq\beta),~\min_{\alpha}d(w,z_\alpha^k)<C_0\delta^k,~\forall~w\in \G,~k\in\mathbb{Z}.
\end{equation}
Set $a_0:=c_0/3$ and $C_1:=2C_0$. Then there exists a sets $\big\{Q_\alpha^k\big\}_{\alpha\in I_k}$, associating with the index set $\{z_\alpha^k\}_{\alpha\in I_k}$, satisfies the following  properties:
\begin{enumerate}[\noindent]
\item\emph{(i)}~$\forall~k\in\mathbb{Z}$, $\cup_{\alpha\in I_k} Q_\alpha^k=\G$;
\item \emph{(ii)}~if $k\ge l$ then either $Q_\alpha^l\subset Q_\beta^k$ or $Q_\alpha^l\cap Q_\beta^k=\emptyset$;
\item \emph{(iii)}~for each $(k,\alpha)$ and each $n>k$, there exists a unique $Q_\beta^n$ such that $Q_\alpha^k\subset Q_\beta^n$, and for $n=k+1$,  the element $Q_\beta^{k+1}$ 
 is designated as the parent of $Q_\alpha^{k}$, denoted by the symbol  $\widehat{Q}_\alpha^{k}$.
\item \emph{(iv)}~$B(z_\alpha^k, a_0\delta^k)\subseteq Q_\alpha^k\subseteq B(z_\alpha^k, C_1\delta^k)$. 
\end{enumerate}
\end{prop}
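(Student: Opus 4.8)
The plan is to recover the construction of Hyt\"onen and Kairema~\cite[Theorem 2.2]{Hyt-Anna12}; the quantitative hypothesis $18C_0\delta^{-1}\le c_0$ (which forces $\delta>18$) is precisely what makes the estimates below close up. First I would fix, for every $k\in\Z$, a \emph{parent map} $I_k\ni\alpha\mapsto\widehat\alpha\in I_{k+1}$ by declaring $z^{k+1}_{\widehat\alpha}$ to be a nearest point of $\{z^{k+1}_\beta\}_\beta$ to $z^k_\alpha$; the density half of \eqref{distance} gives $d(z^k_\alpha,z^{k+1}_{\widehat\alpha})<C_0\delta^{k+1}$. Composing these maps produces, for each pair of levels $k<l$, an ``ancestor'' map $I_k\to I_l$, and summing the geometric series yields the basic \emph{drift bound}: if $z^l_\beta$ is the level-$l$ ancestor of $z^k_\alpha$ then $d(z^k_\alpha,z^l_\beta)<\tfrac{C_0\delta}{\delta-1}\,\delta^{l}$, which, since $\delta>18$, is a controlled multiple of $\delta^l$ (in particular $<2C_0\delta^l=C_1\delta^l$). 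This equips $\bigcup_k(I_k\times\{k\})$ with a forest structure; the parent cube $\widehat{Q}^k_\alpha$ will be set equal to $Q^{k+1}_{\widehat\alpha}$.

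Next I would build the cubes by truncation and passage to the limit. For a fixed level $k$ and each $N\in\mathbb N$, take the Voronoi-type partition of $\G$ at the fine level $k-N$, assigning $w$ to the index $\gamma_N(w)\in I_{k-N}$ minimizing $d(w,z^{k-N}_{\gamma_N(w)})$ (ties broken once and for all by a fixed well-ordering of the index sets), and then set $Q^{k,N}_\alpha:=\{w:\text{the level-}k\text{ ancestor of }\gamma_N(w)\text{ equals }\alpha\}$. Carrying this out simultaneously for all intermediate levels $j\in[k-N,k]$ produces, for each fixed $N$, a genuine partition at each level that is nested across $j$ with parent $Q^{j+1,N}_{\widehat\alpha}$, and such that $B(z^j_\alpha,a_0\delta^j)$ lies in the level-$j$ Voronoi cell of $z^j_\alpha$ (because $a_0=c_0/3<c_0/2$ is below half the separation in \eqref{distance}). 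The key claim is that, for every $w$ and every $k$, the level-$k$ ancestor of $\gamma_N(w)$ stabilizes as $N\to\8$: increasing $N$ perturbs the relevant centers by only $O(\delta^{k-N})$, so the set of $w$ whose assignment can still change is squeezed into shrinking annuli around the finitely many level-$k$ centers near $w$, and with the chosen tie-breaking the indicator functions $\1_{Q^{k,N}_\alpha}$ converge pointwise. One then defines $Q^k_\alpha$ as the limiting set.

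It remains to check (i)--(iv). Properties (i)--(iii) pass to the limit from the finite stages, since at each $N$ the $Q^{k,N}_\alpha$ partition $\G$, are nested across levels, and have the prescribed parents, and these relations survive the pointwise limit of indicators. For (iv), the outer inclusion $Q^k_\alpha\subseteq B(z^k_\alpha,C_1\delta^k)$ follows from $d(w,z^k_\alpha)\le d\big(w,z^{k-N}_{\gamma_N(w)}\big)+(\text{drift})<C_0\delta^{k-N}+\tfrac{C_0\delta}{\delta-1}\delta^k$ by letting $N\to\8$ and using $\tfrac{\delta}{\delta-1}<2$. For the inner inclusion $B(z^k_\alpha,a_0\delta^k)\subseteq Q^k_\alpha$, if $d(w,z^k_\alpha)<a_0\delta^k$ then any competing level-$k$ ancestor $z^k_{\alpha'}$ of $\gamma_N(w)$ would satisfy $d(z^k_\alpha,z^k_{\alpha'})\le a_0\delta^k+d\big(w,z^{k-N}_{\gamma_N(w)}\big)+(\text{drift})$, and for $N$ large this is strictly below $c_0\delta^k$, contradicting the separation in \eqref{distance}; hence $w\in Q^k_\alpha$. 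This last step is exactly where the numerical calibration of $c_0$, $C_0$, $\delta$, $a_0=c_0/3$, $C_1=2C_0$ is used.

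The main obstacle is the stabilization step together with the simultaneous bookkeeping of all these constants: one must verify with care that the truncated assignments become \emph{eventually constant at every point} (so that the limit is a true partition, not merely a partition off a null set) and that the separation-versus-drift inequality leaves enough room for both the inner and the outer ball in (iv) at once. This is precisely the role of the factor $18$ in $18C_0\delta^{-1}\le c_0$. For the complete verification we refer to~\cite[Theorem 2.2]{Hyt-Anna12}.
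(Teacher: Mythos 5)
The paper offers no proof of this proposition at all: it is quoted (with adjusted constants) from Hyt\"onen--Kairema, so your closing deferral to \cite[Theorem 2.2]{Hyt-Anna12} is in line with what the paper itself does. The problem is the sketch you give before that deferral: it is not the Hyt\"onen--Kairema construction, and two of its steps do not hold as stated. The limiting procedure is the first gap. You define $Q^k_\alpha$ as a pointwise limit of ``Voronoi at level $k-N$, then pass to the level-$k$ ancestor'' assignments and justify stabilization by saying the unstable set is ``squeezed into shrinking annuli''. At stage $N$ the set of points whose assignment may still change is a neighbourhood of thickness $O(\delta^{k-N})$ of the (essentially fixed) interface between competing level-$k$ families; the intersection of these neighbourhoods over $N$ is the interface itself, which is in general nonempty, and for a point on it an arbitrary once-and-for-all tie-breaking does not prevent the ancestor from oscillating between two indices forever. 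Hyt\"onen--Kairema avoid limits of Voronoi partitions altogether: they build closed and open cubes directly from the partial order on the centers (closures of sets of descendant centers) and then extract the half-open partition by a selection made consistently across generations, which is precisely how exact partitioning and nestedness are secured.

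The second gap is quantitative: your argument for the inner inclusion $B(z^k_\alpha,a_0\delta^k)\subseteq Q^k_\alpha$ is numerically false. The drift of the ancestor chain from level $k-N$ all the way up to level $k$ is only bounded by $C_0\tfrac{\delta}{\delta-1}\delta^k$, and indeed the last step of the chain alone can be as large as $C_0\delta^k>c_0\delta^k$; since $c_0<C_0$, the bound $a_0\delta^k+C_0\delta^{k-N}+(\text{drift})$ never drops below $c_0\delta^k$, however large $N$ is, so no contradiction with the separation in \eqref{distance} arises. The correct mechanism stops the drift estimate at level $k-1$, where it is of order $C_0\delta^{k-1}\le\tfrac{c_0}{18}\delta^k$ thanks to $18C_0\delta^{-1}\le c_0$, and then uses the one-generation forced-adoption property of the parent map (a level-$(k-1)$ center within $\tfrac{c_0}{2}\delta^{k}$ of $z^k_\alpha$ must be assigned to $z^k_\alpha$, by the separation in \eqref{distance}), propagated by induction over generations; that is where the factor $18$ is actually spent, not in the one-shot triangle inequality you wrote. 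As it stands, your sketch neither reproduces the cited construction nor closes on its own, so the citation is carrying all the weight.
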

We emphasize that the geometric doubling property ensures the validity of the second inequality in~\eqref{distance}, while the small boundary property (see \cite[Theorem 11 (3.6)]{Christ90}) of dyadic cubes constructed via Proposition~\ref{dyadic cube} does not hold universally. Furthermore, Hong and Liu~\cite{Hong-Liu21} established distinct boundary properties of these cubes. To achieve our objectives, we synthesize the following four propositions which are established in \cite[Lemmas 2.6--2.7 and 2.9--2.10]{Hong-Liu21}.  

\begin{prop}\label{boundary}
	Fix $0<c_0<C_0<\8$ and $\delta>1$ in Proposition~\ref{dyadic cube}. Define
	\begin{align*}
		&L_0=[\log_\delta(12/c_0)]+1, L_1=[\log_\delta(36r_0/c_0)]+1.
	\end{align*}
	Let $k,L\in\mathbb{Z}$ satisfy $L_0<L<k+L_0-L_1$ and $\alpha\in I_k$. There exists a constant $\eta>0$ independent of $k$, $L$ and `dyadic cubes' $Q_\alpha^k$ such that
	\begin{equation*}
		\begin{split}
			&m\big(\{g\in Q_\alpha^k:d(h,\G\setminus Q_\alpha^k)\le \delta^{k-L}\}\big)\lesssim\delta^{-L\eta}m(Q_\alpha^k),\\
			&m\big(\{g\in \G\setminus Q_\alpha^k:d(h,Q_\alpha^k)\le \delta^{k-L}\}\big)\lesssim\delta^{-L\eta}m(Q_\alpha^k).
		\end{split}
	\end{equation*}
\end{prop}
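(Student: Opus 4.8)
\textbf{Proof strategy for Proposition~\ref{boundary}.}
The plan is to reduce the two inner/outer layer estimates to the annular decay property~\eqref{decay property} via the containment $B(z_\alpha^k, a_0\delta^k)\subseteq Q_\alpha^k\subseteq B(z_\alpha^k, C_1\delta^k)$ from Proposition~\ref{dyadic cube}(iv). First I would treat the inner layer: the set $\{g\in Q_\alpha^k: d(g,\G\setminus Q_\alpha^k)\le\delta^{k-L}\}$ is contained in an annulus of width comparable to $\delta^{k-L}$ about the boundary sphere of radius roughly $C_1\delta^k$ (or a scale between $a_0\delta^k$ and $C_1\delta^k$), so its measure is bounded by a difference $m(B(z_\alpha^k,R+s))-m(B(z_\alpha^k,R-s))$ with $R\thickapprox\delta^k$ and $s\thickapprox\delta^{k-L}$. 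The hypotheses $L_0<L<k+L_0-L_1$ are exactly what is needed to ensure that on the one hand $s\le R$ (so~\eqref{decay property} applies) and on the other hand $R>r_0$, so that one may legitimately invoke the $(\epsilon,r_0)$-annular decay property; this is where the explicit constants $L_0=[\log_\delta(12/c_0)]+1$ and $L_1=[\log_\delta(36r_0/c_0)]+1$ enter, and verifying these bookkeeping inequalities is a routine but careful computation.

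Next I would apply~\eqref{decay property} (together with~\eqref{int} to compare $m$ of balls of comparable radii, and the obvious fact that $m(Q_\alpha^k)\thickapprox m(B(z_\alpha^k,\delta^k))$ by (iv) and doubling-at-fixed-scale from~\eqref{int}) to obtain a bound of the form
\[
m\big(\{g\in Q_\alpha^k:d(g,\G\setminus Q_\alpha^k)\le\delta^{k-L}\}\big)\lesssim\Big(\frac{\delta^{k-L}}{\delta^k}\Big)^{\epsilon}m(B(z_\alpha^k,\delta^k))\lesssim\delta^{-L\epsilon}m(Q_\alpha^k),
\]
so that $\eta=\epsilon$ works. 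The outer layer $\{g\in\G\setminus Q_\alpha^k: d(g,Q_\alpha^k)\le\delta^{k-L}\}$ is handled symmetrically: it sits inside $B(z_\alpha^k,C_1\delta^k+\delta^{k-L})\setminus B(z_\alpha^k,C_1\delta^k)$, which again is an annulus of width $\lesssim\delta^{k-L}$ at a radius $\thickapprox\delta^k>r_0$, and~\eqref{decay property} gives the same $\delta^{-L\epsilon}$ gain. The key point, in both cases, is that the dyadic cube is trapped between two concentric balls whose radii differ only by a fixed multiplicative constant, so all layer sets of width $\delta^{k-L}$ are subsumed by genuine spherical annuli of the same width.

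The main obstacle is not any single estimate but the uniformity: one must check that the implied constants (hence $\eta$) depend only on $\epsilon$, $K$, $\delta$, $c_0$, $C_0$, $r_0$ and the doubling dimension $D$, and in particular are independent of $k$, $L$ and of which cube $Q_\alpha^k$ one picks. This forces one to be scrupulous about the range $L_0<L<k+L_0-L_1$ — it guarantees both $\delta^{k-L}\le C_1\delta^k$ (so $s\le R$, perhaps after absorbing constants) and $C_1\delta^k - \delta^{k-L}>r_0$ (so the inner sphere still has radius exceeding $r_0$) — and this is precisely the content hidden in the definitions of $L_0,L_1$. Since the statement is quoted verbatim from \cite[Lemmas 2.6--2.7 and 2.9--2.10]{Hong-Liu21}, I would at this point simply cite that reference for the detailed verification of the constants rather than reproduce it, noting only that~\eqref{decay property} and~\eqref{int} are the sole analytic inputs.
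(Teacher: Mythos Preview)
Your proof strategy has a genuine gap. The claimed containment
\[
\{g\in Q_\alpha^k:d(g,\G\setminus Q_\alpha^k)\le\delta^{k-L}\}\subseteq\{g:\,|d(g,z_\alpha^k)-R|\lesssim\delta^{k-L}\}
\]
for some fixed $R\thickapprox\delta^k$ is false. From $B(z_\alpha^k,a_0\delta^k)\subseteq Q_\alpha^k\subseteq B(z_\alpha^k,C_1\delta^k)$ one only deduces that a point $g\in Q_\alpha^k$ within $\delta^{k-L}$ of the complement must satisfy $a_0\delta^k-\delta^{k-L}\le d(g,z_\alpha^k)<C_1\delta^k$; this is a \emph{fat} annulus of width $\thickapprox(C_1-a_0)\delta^k$, not $\delta^{k-L}$. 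The boundary $\partial Q_\alpha^k$ is not a sphere and may meander anywhere inside this fat annulus --- think of an ordinary dyadic cube in~$\mathbb{R}^d$, whose faces are at many different distances from the center. The same defect occurs in your outer-layer argument: $g\notin Q_\alpha^k$ with $d(g,Q_\alpha^k)\le\delta^{k-L}$ only forces $g\in B(z_\alpha^k,C_1\delta^k+\delta^{k-L})\setminus B(z_\alpha^k,a_0\delta^k)$, again a fat annulus. Applying~\eqref{decay property} to a fat annulus yields no decay in $L$.

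The paper does not prove this proposition; it cites \cite[Lemmas~2.6--2.7]{Hong-Liu21}. The argument there (as in Christ~\cite{Christ90} for the doubling case) is not a direct annulus estimate but an \emph{iterative} one exploiting the nested hierarchy: one descends from generation $k$ to $k-1$, $k-2$, \ldots, showing at each step that a uniform fraction of the sub-cubes lie fully in the interior of $Q_\alpha^k$ (this is where the separation condition~\eqref{distance} and, in the non-doubling case, the annular decay~\eqref{decay property} enter). Geometric decay over $\thickapprox L$ generations then produces the factor $\delta^{-L\eta}$. Your final sentence suggests the cited reference merely checks constants along your outline; in fact it carries out a structurally different induction, and the sandwich property~(iv) alone is insufficient.
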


\begin{prop}\label{measure estimate of cube}
	Given a ball $B_{r}$ and a `dyadic cube' $Q^{k}_\alpha$, define
	$$\mathcal{H}(B_{r},Q^{k}_\alpha)=\{h\in Q_\alpha^{k}:B(h,r)\cap (Q^{k}_\alpha)^{c}\neq \emptyset\}.$$
	Set $n_0=\max\{L_1-L_0,0\}$. Then for every $n>n_0$, $k>L_0$ and $Q_\alpha^{n+k}$, we have
	\begin{equation*}
		m(\mathcal{H}(B_{\delta^{n}},Q^{n+k}_\alpha))\lesssim\delta^{-k\eta} m(Q^{n+k}_\alpha).
	\end{equation*}
\end{prop}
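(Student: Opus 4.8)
\textbf{Proof plan for Proposition \ref{measure estimate of cube}.}

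The plan is to bound the ``bad'' set $\mathcal{H}(B_{\delta^n},Q^{n+k}_\alpha)$ — the points of the cube $Q^{n+k}_\alpha$ whose $\delta^n$-ball pokes outside the cube — by an annular collar of the cube boundary, and then invoke Proposition \ref{boundary}. Specifically, observe that if $h\in Q^{n+k}_\alpha$ and $B(h,\delta^n)\cap (Q^{n+k}_\alpha)^c\neq\emptyset$, then there is a point $g\notin Q^{n+k}_\alpha$ with $d(h,g)\le\delta^n$, hence $d(h,\G\setminus Q^{n+k}_\alpha)\le\delta^n$. Thus
\[
\mathcal{H}(B_{\delta^n},Q^{n+k}_\alpha)\subseteq\{h\in Q^{n+k}_\alpha: d(h,\G\setminus Q^{n+k}_\alpha)\le\delta^n\}.
\]
Now I want to apply the first estimate in Proposition \ref{boundary} with the cube $Q^{n+k}_\alpha$ (so its ``$k$'' is $n+k$) and with a suitable ``$L$'' chosen so that $\delta^{(n+k)-L}$ is comparable to $\delta^n$; the natural choice is $L=k$, giving $\delta^{(n+k)-L}=\delta^n$ exactly. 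The conclusion of Proposition \ref{boundary} then reads $m(\{h\in Q^{n+k}_\alpha:d(h,\G\setminus Q^{n+k}_\alpha)\le\delta^n\})\lesssim\delta^{-k\eta}m(Q^{n+k}_\alpha)$, which is precisely what is claimed.

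The one genuine point to check is that the hypotheses of Proposition \ref{boundary} are met with this choice of parameters, i.e. that $L_0<L<(n+k)+L_0-L_1$ when $L=k$. The left inequality $L_0<k$ is exactly the standing assumption $k>L_0$. The right inequality becomes $k<n+k+L_0-L_1$, i.e. $n>L_1-L_0$, which follows from $n>n_0=\max\{L_1-L_0,0\}$. So both constraints are built into the hypotheses of the proposition, and the application is legitimate. (If $L_1-L_0\le 0$ one still has $n>n_0\ge 0$ and the inequality $k<n+k+L_0-L_1$ holds a fortiori.) I would also note in passing that since $\eta>0$ is the same constant as in Proposition \ref{boundary}, independent of everything relevant, the implied constant in $\lesssim$ is uniform in $n$, $k$, and the choice of cube.

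I do not anticipate a serious obstacle here: the content has been front-loaded into Proposition \ref{boundary}, and this proposition is essentially a bookkeeping corollary — the only care needed is the inclusion of $\mathcal{H}$ into the boundary collar (immediate from the triangle inequality and the definition of $\mathcal{H}$) and the verification of the index constraints $L_0<L<(n+k)+L_0-L_1$ under the stated hypotheses $n>n_0$, $k>L_0$. If anything, the subtle part is purely notational: making sure the roles of the indices in Proposition \ref{boundary} (there the cube is $Q_\alpha^k$ with collar scale $\delta^{k-L}$) are matched correctly to the present setting (cube $Q_\alpha^{n+k}$, collar scale $\delta^n$), which forces $L=k$. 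Once that substitution is made explicit, the estimate drops out directly from \cite[Lemma 2.6]{Hong-Liu21}.
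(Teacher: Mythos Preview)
Your proposal is correct: the inclusion $\mathcal{H}(B_{\delta^n},Q^{n+k}_\alpha)\subseteq\{h\in Q^{n+k}_\alpha:d(h,\G\setminus Q^{n+k}_\alpha)\le\delta^n\}$ is immediate, and your parameter matching $L=k$ in Proposition~\ref{boundary} is exactly right, with the hypotheses $k>L_0$ and $n>n_0$ precisely guaranteeing the constraint $L_0<L<(n+k)+L_0-L_1$. The paper itself does not supply a proof here but simply cites \cite[Lemma~2.7]{Hong-Liu21}; your derivation from Proposition~\ref{boundary} is the natural route and is how the result is obtained in that reference as well.
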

\begin{prop}\label{lem:decay}
	Let $r\ge2r_0$ and $s\in(0,r]$, then
	\begin{equation*}
		\begin{split}
			m(B(h, r+s)\setminus B(h,r-s))\lesssim\bigg(\frac{s}{r}\bigg)^{\epsilon}m(B(h,r)).
		\end{split}
	\end{equation*}
\end{prop}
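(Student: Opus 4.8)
The statement to prove, Proposition \ref{lem:decay}, asserts that under the $(\epsilon,r_0)$-annular decay property \eqref{decay property}, one has $m(B(h,r+s)\setminus B(h,r-s))\lesssim (s/r)^\epsilon m(B(h,r))$ for $r\ge 2r_0$ and $s\in(0,r]$.

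\medskip

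The plan is to decompose the symmetric annulus $B(h,r+s)\setminus B(h,r-s)$ into an ``outer'' part $B(h,r+s)\setminus B(h,r)$ and an ``inner'' part $B(h,r)\setminus B(h,r-s)$, and to estimate each piece using \eqref{decay property}. The outer part is essentially immediate: since $r\ge 2r_0 > r_0$ and $s\in(0,r]$, condition \eqref{decay property} applies directly and gives $m(B(h,r+s))-m(B(h,r))\le K(s/r)^\epsilon m(B(h,r))$. The only subtlety is that \eqref{decay property} is stated with increment $s$ satisfying $s\le r$, which is exactly our hypothesis, so nothing further is needed here.

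\medskip

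The inner part $m(B(h,r))-m(B(h,r-s))$ requires a little more care, because \eqref{decay property} controls forward increments of the radius, not backward ones. First I would dispose of the trivial range $s\ge r/2$: then $B(h,r)\setminus B(h,r-s)\subseteq B(h,r)$, and since $(s/r)^\epsilon \ge (1/2)^\epsilon$ is bounded below, the bound $m(B(h,r))\le 2^\epsilon (s/r)^\epsilon m(B(h,r))$ is automatic. So assume $s < r/2$, hence $r-s > r/2 \ge r_0$ (using $r\ge 2r_0$). Now apply \eqref{decay property} at radius $r' = r-s$ with increment $s$: since $s \le r' $ (indeed $s < r/2 < r-s$ as $s<r/2$... wait, need $s\le r-s$ i.e. $s\le r/2$, which holds), we get $m(B(h,r))-m(B(h,r-s)) = m(B(h,(r-s)+s))-m(B(h,r-s)) \le K(s/(r-s))^\epsilon m(B(h,r-s)) \le K\cdot 2^\epsilon (s/r)^\epsilon m(B(h,r))$, using $r-s > r/2$ and monotonicity $m(B(h,r-s))\le m(B(h,r))$. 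Summing the outer and inner estimates yields the claimed bound with an implied constant of the form $C(K,\epsilon)$.

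\medskip

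I do not anticipate a genuine obstacle here; the result is a routine consequence of \eqref{decay property} together with the standard trick of splitting a two-sided annulus and handling the backward direction by applying the forward estimate at the shrunken radius. The main point to be careful about is the bookkeeping on the admissible ranges of the radius (staying above $r_0$) and the increment (staying below the current radius), which is why the case split $s\gtrless r/2$ is needed; the hypothesis $r\ge 2r_0$ is precisely what makes $r-s$ stay above $r_0$ once $s<r/2$. One could alternatively iterate \eqref{decay property} over dyadic scales, but the direct two-term split above is cleaner and already gives the sharp exponent $\epsilon$.
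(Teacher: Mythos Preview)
Your argument is correct. The paper does not actually supply its own proof of this proposition; it is quoted from \cite{Hong-Liu21} (see the sentence preceding Propositions~\ref{boundary}--\ref{measure estimate of annulus}), so there is nothing to compare against in the paper itself. Your two-piece split of the annulus, with the direct application of \eqref{decay property} for the outer shell and the case split $s\gtrless r/2$ for the inner shell (using $r\ge 2r_0$ to keep the shrunken radius above $r_0$), is exactly the standard proof and yields the claimed bound with constant depending only on $K$ and $\epsilon$.
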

\begin{prop}\label{measure estimate of annulus}
	Let $A$ be a measurable set on $\G$, define
	\begin{align*}
		\mathcal{I}_1(A,k)=\bigcup_{{\begin{subarray}{c}
					\a\in I_k\\ \partial A\cap Q_\a^k\neq \emptyset
		\end{subarray}}} Q_\a^k.
	\end{align*}
	and
	\begin{align*}
		\mathcal{I}(A,k)=\bigcup_{{\begin{subarray}{c}
					\a\in I_k\\ \partial A\cap Q_\a^k\neq \emptyset
		\end{subarray}}} Q_\a^k\cap A,
	\end{align*}
	where $\partial A$ stands for the boundary of $A$. Set
	$$n_1=\min\{n\in\mathbb{N}:\delta^n\ge2r_0\}, k_1=\max\{k\in\mathbb{Z}:2C_1\delta^k\le 1\}.$$
	Assume that $n>n_1$ and $k<k_1$, then for any $h\in \G$, we have
	\begin{equation*}
		\begin{split}
			&\sup_{r\in[\delta^{n},\delta^{n+1}]}\frac{m(\mathcal{I}_1(B(h,r),n+k))}{m(B(h,r))}\lesssim\delta^{\epsilon k}\\
			&\sup_{r\in[\delta^{n},\delta^{n+1}]}\frac{m(\mathcal{I}(B(h,r),n+k))}{m(B(h,r))}\lesssim\delta^{\epsilon k}.
		\end{split}
	\end{equation*}
\end{prop}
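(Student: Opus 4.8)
The plan is to reduce both inequalities to the annular decay estimate of Proposition~\ref{lem:decay}: since $k<k_1$ the ``dyadic cubes'' at level $n+k$ are much smaller than $B(h,r)$, so only those contained in a thin annulus around the sphere $\{g\in\G:d(g,h)=r\}$ can meet the boundary $\partial B(h,r)$. Concretely, I would fix $h\in\G$ and $r\in[\delta^n,\delta^{n+1}]$ and set $s:=2C_1\delta^{n+k}$. The hypotheses on $n$ and $k$ are arranged exactly so that $(r,s)$ is admissible data for Proposition~\ref{lem:decay}: from $n>n_1=\min\{n\in\mathbb N:\delta^n\ge 2r_0\}$ one gets $r\ge\delta^n>\delta^{n_1}\ge 2r_0$, while from $k<k_1=\max\{j\in\Z:2C_1\delta^j\le 1\}$ one gets $2C_1\delta^k\le\delta^{-1}<1$, whence $0<s\le\delta^{-1}\delta^n<r$.

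Next I would establish the chain of inclusions
\[
\mathcal{I}(B(h,r),n+k)\ \subseteq\ \mathcal{I}_1(B(h,r),n+k)\ \subseteq\ B(h,r+s)\setminus B(h,r-s),
\]
the first being immediate from the definitions. For the second, let $Q_\a^{n+k}$ be a cube meeting $\partial B(h,r)$ and choose $w\in Q_\a^{n+k}$ with $d(w,h)=r$ (the topological boundary of the closed ball $B(h,r)$ is contained in the sphere of radius $r$). By Proposition~\ref{dyadic cube}(iv), $Q_\a^{n+k}\subseteq B(z_\a^{n+k},C_1\delta^{n+k})$, so $\mathrm{diam}\,Q_\a^{n+k}\le 2C_1\delta^{n+k}=s$; hence for every $g\in Q_\a^{n+k}$ the triangle inequality gives $|d(g,h)-r|=|d(g,h)-d(w,h)|\le d(g,w)\le s$, that is $g\in B(h,r+s)\setminus B(h,r-s)$.

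Finally I would invoke Proposition~\ref{lem:decay} with this $s\in(0,r]$ and $r\ge 2r_0$, combined with $s/r\le 2C_1\delta^{n+k}/\delta^n=2C_1\delta^k$, to obtain
\[
m\big(\mathcal{I}_1(B(h,r),n+k)\big)\le m\big(B(h,r+s)\setminus B(h,r-s)\big)\lesssim\Big(\frac sr\Big)^\epsilon m(B(h,r))\lesssim\delta^{\epsilon k}\,m(B(h,r)),
\]
with implicit constant depending only on $C_1$, $\epsilon$ and the constant of Proposition~\ref{lem:decay}; the same bound for $\mathcal{I}$ then follows from the first inclusion and monotonicity of $m$. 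Dividing by $m(B(h,r))$ and taking the supremum over $r\in[\delta^n,\delta^{n+1}]$ (the estimate being uniform in $r$) yields the assertion. The geometric containment step is elementary; the only point demanding care is the bookkeeping that certifies $(r,s)$ as admissible for Proposition~\ref{lem:decay}, namely that $n>n_1$ forces $r\ge 2r_0$ and $k<k_1$ forces $s<r$ while keeping $s/r\lesssim\delta^k$ with the correct power.
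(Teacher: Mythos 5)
Your argument is correct. Note that this paper does not actually prove Proposition \ref{measure estimate of annulus}: it is imported, together with Propositions \ref{boundary}--\ref{lem:decay}, from \cite{Hong-Liu21}, so there is no internal proof to compare against. Your derivation is the natural one and all the bookkeeping checks out: $n>n_1$ gives $r\ge\delta^n>\delta^{n_1}\ge 2r_0$; $k<k_1$ gives $2C_1\delta^k\le\delta^{-1}$, hence $s:=2C_1\delta^{n+k}\le\delta^{n-1}<r$ and $s/r\le 2C_1\delta^{k}$; Proposition \ref{dyadic cube}(iv) bounds $\operatorname{diam}Q_\a^{n+k}$ by $s$; and since the ball is closed, $\partial B(h,r)\subseteq\{g:d(g,h)=r\}$, so every cube meeting the boundary lies in an annulus of width $s$ about that sphere, after which Proposition \ref{lem:decay} gives the bound uniformly in $r\in[\delta^n,\delta^{n+1}]$, and the estimate for $\mathcal I$ follows from $\mathcal I\subseteq\mathcal I_1$.

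One cosmetic point: the triangle inequality places $g$ in the closed annulus $\{r-s\le d(\cdot,h)\le r+s\}$, which is marginally larger than $B(h,r+s)\setminus B(h,r-s)$ because of the sphere $\{d(\cdot,h)=r-s\}$. This is harmless: either apply Proposition \ref{lem:decay} with $2s$ in place of $s$ (still admissible, since $4C_1\delta^k\le 2\delta^{-1}<1$ thanks to $\delta\ge 18C_0/c_0>18$ from Proposition \ref{dyadic cube}), or observe that the annular decay property \eqref{decay property} forces spheres of radius larger than $r_0$ to be $m$-null, so the closed and half-open annuli have the same measure. Either fix is a one-line remark, not a gap.
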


\bigskip

Based on the four propositions above, we obtain the following technical lemmas.
\begin{lemma}\label{aver}
	Let $(\G,d)$ satisfy the geometric doubling property~\eqref{geo-doubling}.
Then for $f\in L_p(\N)$ with $p\in[1,\8]$,
	\begin{equation*}
		\sup_{r>0}\|M_rf\|_{p}\le D^{1/p}\|f\|_p,
	\end{equation*}
where $M_r$ was defined in~\eqref{averaging-op}.
\end{lemma}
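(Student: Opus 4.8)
The plan is to prove the $L_p$-boundedness of the Hardy--Littlewood average operator $M_r$ uniformly in $r$, with operator norm at most $D^{1/p}$, using the geometric doubling property together with a Fubini-type duality argument on the semicommutative von Neumann algebra $\mathcal N = L_\infty(G)\overline\otimes\mathcal M$. The key point is that $M_r$ acts only on the $L_\infty(G)$ part by averaging over balls, so it suffices to control how many times a given point $g$ can lie in a ball $B(h,r)$ as $h$ ranges over the relevant set --- but since the ambient measure is non-doubling this must be handled via a clean kernel estimate rather than a covering argument.

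First I would write $M_rf(h) = \int_G k_r(h,g) f(g)\, dm(g)$ with kernel $k_r(h,g) = \frac{1}{m(B(h,r))}\mathbf 1_{\{d(h,g)\le r\}}$. The crucial observation is the left-invariance: since $d$ is an invariant metric, $d(h,g)\le r \iff d(g^{-1}h, e)\le r$, hence $g \in B(h,r) \iff h \in B(g,r)$, so $k_r$ is ``symmetric'' in the sense that $\mathbf 1_{\{d(h,g)\le r\}} = \mathbf 1_{\{d(g,h)\le r\}}$. I would then estimate, for fixed $g$, the quantity $\int_G k_r(h,g)\, dm(h) = \int_{B(g,r)} \frac{dm(h)}{m(B(h,r))}$. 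The plan is to bound this by $D$ using the geometric doubling property: if $h \in B(g,r)$ then $B(g,r)\subseteq B(h,2r)$, and by Proposition~\ref{geometry-doubling} (the geometric doubling covering) one covers $B(g,r)$ --- and more relevantly relates $m(B(h,2r))$ to $m(B(h,r))$ --- so that $m(B(h,r)) \gtrsim D^{-1} m(B(g,r))$ for $h\in B(g,r)$; integrating $dm(h)$ over $B(g,r)$ then gives $\int_G k_r(h,g)\, dm(h) \le D$. (Here I would need to check the exact constant coming from Proposition~\ref{geometry-doubling} with $R=2r$, $r=r$, giving at most $D$ balls of radius $r$, hence $m(B(h,2r)) \le D \max_i m(B(h_i,r))$, and since $h\in B(g,r)\subseteq B(h,2r)$ one transfers the lower bound to $m(B(h,r))$.)

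With the two kernel bounds $\sup_h \int_G k_r(h,g)\,dm(g) \le 1$ (trivially, since $k_r(h,\cdot)$ integrates to $1$) and $\sup_g \int_G k_r(h,g)\, dm(h) \le D$ in hand, the Schur-test / interpolation argument applies: for $p=\infty$ the first bound gives $\|M_rf\|_\infty \le \|f\|_\infty$; for $p=1$ the second bound gives $\|M_rf\|_1 \le D\|f\|_1$; and complex interpolation (valid on noncommutative $L_p$-spaces, and compatible with the positivity of the kernel $M_r$) yields $\|M_rf\|_p \le D^{1/p}\|f\|_p$ for all $1\le p\le\infty$. Since all bounds are uniform in $r$, taking the supremum over $r>0$ gives the claim. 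One should either invoke the noncommutative version of the Schur test directly, or reduce to the scalar case by noting $M_r$ is a (completely) positive unital map on $L_\infty(G)$ tensored with the identity on $\mathcal M$ and using that such maps are contractive on $L_\infty$ and, after normalization, bounded on $L_1$ by the dual mass bound $D$.

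The main obstacle is the non-doubling nature of $m$: one cannot simply say $m(B(h,2r))\approx m(B(h,r))$. The resolution is precisely that the \emph{geometric} doubling (Proposition~\ref{geometry-doubling}) still gives the one-sided comparison $m(B(g,r)) \le m(B(h,2r)) \le D\,\sup_i m(B(h_i,r))$, and a short argument picks out the ball $B(h_i,r)$ containing a point that controls $m(B(h,r))$ from below --- alternatively, one argues more directly that for $h\in B(g,r)$, the doubling covering of $B(g,3r)$ by $D$ balls of radius $r$ must include one covering a definite portion, forcing $m(B(h,r))\ge D^{-1}m(B(g,r))$. I expect this comparison lemma to be the only genuinely non-routine step; once it is established, the Schur test and interpolation are completely standard. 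A subtlety to watch is that the statement asserts the clean constant $D^{1/p}$, so the covering argument must be arranged to lose exactly a factor $D$ in the $L_1$ estimate and nothing in the $L_\infty$ estimate.
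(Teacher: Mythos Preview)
Your overall strategy is exactly the paper's: establish the endpoint bounds $\|M_r\|_{L_\infty\to L_\infty}\le 1$ (trivial) and $\|M_r\|_{L_1\to L_1}\le D$, then interpolate. The paper simply cites \cite[Theorem~3.5]{AL19} for the $L_1$ bound, which is equivalent to your dual-mass (Schur) estimate
\[
\int_{B(g,r)}\frac{dm(h)}{m(B(h,r))}\le D.
\]

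However, your sketched proof of this integral bound has a genuine gap. You attempt to deduce it from a \emph{pointwise} comparison $m(B(h,r))\ge D^{-1}m(B(g,r))$ for $h\in B(g,r)$, arguing via $B(g,r)\subseteq B(h,2r)$ and a covering of $B(h,2r)$ by $D$ balls of radius $r$. But geometric doubling does \emph{not} give $m(B(h,2r))\le D\,m(B(h,r))$: the $D$ covering balls $B(h_i,r)$ are centered at \emph{other} points $h_i$, and for a non-doubling measure their masses may be enormous compared to $m(B(h,r))$. The pointwise lower bound you want is simply false in this generality.

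The correct argument (Aldaz's) avoids any pointwise comparison: cover $B(g,r)$ by at most $D$ balls $B(g_j,r/2)$. For $h\in B(g_j,r/2)$ one has $B(g_j,r/2)\subseteq B(h,r)$, hence $m(B(h,r))\ge m(B(g_j,r/2))$, and therefore
\[
\int_{B(g,r)}\frac{dm(h)}{m(B(h,r))}
\le \sum_{j=1}^{D}\int_{B(g_j,r/2)}\frac{dm(h)}{m(B(g_j,r/2))}
=\sum_{j=1}^{D}1\le D.
\]
This gives exactly the constant $D$ and completes your Schur-test plan. Apart from this fix, your proposal matches the paper's proof.
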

\begin{proof}	
For $p=1$, we have
	\begin{equation*}
		\|M_rf\|_{L_1(\N)}= \|M_rf\|_{L_1(G;L_1(\mathcal M))}\le \int_G\frac{1}{m(B(h,r))}\int_{B(h,r)}\|f(g)\|_{L_1(\mathcal M)}dm(g)dm(h).
	\end{equation*}
Observe that by \cite[Theorem 3.5]{AL19}
	\begin{equation*}
		\int_G\frac{1}{m(B(h,r))}\int_{B(h,r)}\|f(g)\|_{L_1(\mathcal M)}dm(g)dm(h)\le D\|f\|_{L_1(\N)}.
	\end{equation*}
	Thus $\|M_r\|_{L_1(\N)\rightarrow L_1(\N)}\le D$ for all $r>0$. On the other hand, $ \|M_r\|_{L_{\8}(\N)\rightarrow L_{\8}(\N)}\le 1$ for all $r>0$. It then follows from interpolation argument (cf. \cite{Pis-Xu03}) that  $\|M_r\|_{L_p(\N)\rightarrow L_p(\N)}\le D^{1/p}$ for all $p\in[1,\8]$.
\end{proof}

\begin{lem}
	Define
	\begin{align*}
		M_{r,n+k}f(h)=\frac{1}{m(B(h,r))} \int_{\mathcal{I}(B(h,r),n+k)}f(g)dm(g)
	\end{align*}
	and
	\begin{align*}
		M^1_{r,n+k}f(h)=\frac{1}{m(B(h,r))} \int_{\mathcal{I}_1(B(h,r),n+k)}f(g)dm(g).
	\end{align*}
	Let $p\in[1,2]$. Assume that $n>n_{r_0}$ and $k\in\mathbb Z$, then
	\begin{align}\label{L-ineq}
		\sup_{r\in[\delta^n,\delta^{n+1}]}\Big(\|M_{r,n+k}f\|_p+\|M^1_{r,n+k}f\|_p\Big)\lesssim D^{1/p}(1+\delta^k)^{\epsilon/p}\|f\|_p.
	\end{align}
	Furthermore, if $n>n_1$, $k<k_1$, then
	\begin{align}\label{L-ineq1}
		\sup_{r\in[\delta^n,\delta^{n+1}]}\Big(\|M_{r,n+k}f\|_p+\|M^1_{r,n+k}f\|_p\Big)\lesssim D^{1/p}\delta^{\epsilon k/p}\|f\|_p.
	\end{align}
\end{lem}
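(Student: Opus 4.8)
\textbf{Proof proposal for the Lemma (estimates \eqref{L-ineq} and \eqref{L-ineq1}).}

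The plan is to treat the two quantities $M_{r,n+k}f$ and $M^1_{r,n+k}f$ in exactly the same way, since the only difference is whether one integrates over $\mathcal{I}(B(h,r),n+k)$ or over the slightly larger set $\mathcal{I}_1(B(h,r),n+k)$; Proposition~\ref{measure estimate of annulus} controls both sets by the same bound, so we may as well argue with, say, $M_{r,n+k}$ and note the identical argument applies to $M^1_{r,n+k}$. First I would establish the case $p=1$ by a Fubini/duality computation on $\mathcal{N}=L_\infty(G)\overline\otimes\M$ in complete analogy with Lemma~\ref{aver}: writing the $L_1$-norm as $\int_G\|M_{r,n+k}f(h)\|_{L_1(\M)}\,dm(h)$ and using $\|M_{r,n+k}f(h)\|_{L_1(\M)}\le \frac{1}{m(B(h,r))}\int_{\mathcal{I}(B(h,r),n+k)}\|f(g)\|_{L_1(\M)}\,dm(g)$, I would swap the order of integration. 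Here the key geometric input is Proposition~\ref{measure estimate of annulus}, which gives $m(\mathcal{I}(B(h,r),n+k))\lesssim \delta^{\epsilon k}m(B(h,r))$ when $n>n_1$, $k<k_1$ (and, via \eqref{int} together with the annular decay bound of Proposition~\ref{lem:decay}, a bound of the shape $(1+\delta^k)^\epsilon m(B(h,r))$ in the general range $n>n_{r_0}$). Combined with the geometric-doubling bookkeeping that underlies the constant $D$ in Lemma~\ref{aver} (controlling how many balls $B(h,r)$ can contain a fixed point $g$ in a weighted sense), the double integral is dominated by $D\,\delta^{\epsilon k}\|f\|_{L_1(\mathcal N)}$, respectively $D(1+\delta^k)^\epsilon\|f\|_{L_1(\mathcal N)}$.

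Next I would do the trivial $p=\infty$ bound: for $f\in L_\infty(\mathcal N)$ one has $\|M_{r,n+k}f(h)\|_\infty \le \frac{m(\mathcal{I}(B(h,r),n+k))}{m(B(h,r))}\|f\|_\infty$, so Proposition~\ref{measure estimate of annulus} gives $\|M_{r,n+k}f\|_\infty\lesssim \delta^{\epsilon k}\|f\|_\infty$ in the range $n>n_1$, $k<k_1$, and $\lesssim (1+\delta^k)^\epsilon\|f\|_\infty$ in the general range. Then the full range $1\le p\le 2$ follows by interpolation between the $p=1$ and $p=\infty$ endpoints exactly as in the proof of Lemma~\ref{aver}: one obtains the factor $D^{1/p}$ from interpolating $D$ against $1$, and the factor $\delta^{\epsilon k/p}$ (resp. $(1+\delta^k)^{\epsilon/p}$) from interpolating $\delta^{\epsilon k}$ (resp. $(1+\delta^k)^\epsilon$) against $1$. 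Since the operators $M_{r,n+k}$ are positive (they are averaging operators against a positive kernel), there is no difficulty applying the Riesz--Thorin/complex interpolation machinery for noncommutative $L_p$-spaces as cited in \cite{Pis-Xu03}; one should take the supremum over $r\in[\delta^n,\delta^{n+1}]$ inside the pointwise estimates before interpolating, which is harmless because all constants produced by Propositions~\ref{lem:decay} and~\ref{measure estimate of annulus} are already uniform over that range of $r$.

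The main obstacle, and the only place requiring genuine care, is the $p=1$ step: one must verify that swapping the order of integration in $\int_G\frac{1}{m(B(h,r))}\int_{\mathcal{I}(B(h,r),n+k)}\|f(g)\|_{L_1(\M)}\,dm(g)\,dm(h)$ produces a weight $\int_{\{h:\, g\in\mathcal{I}(B(h,r),n+k)\}}\frac{dm(h)}{m(B(h,r))}$ that is uniformly bounded. In the doubling-free setting this is not automatic, but it follows from the same mechanism used in \cite[Theorem 3.5]{AL19} (invoked in Lemma~\ref{aver}) once one observes $\{h: g\in\mathcal{I}(B(h,r),n+k)\}\subseteq B(g,r)$, so the weight is at most $\int_{B(g,r)}\frac{dm(h)}{m(B(h,r))}$; using the geometric doubling property \eqref{geo-doubling} (equivalently Proposition~\ref{geometry-doubling}) to compare $m(B(h,r))$ with $m(B(g,r))$ up to the constant $D$ for $h\in B(g,r)$ — together with the annular-decay refinement from Proposition~\ref{lem:decay} to capture the extra $\delta^{\epsilon k}$ gain coming from the thinness of $\mathcal{I}(B(h,r),n+k)$ — bounds this weight by $D\delta^{\epsilon k}$ (resp. $D(1+\delta^k)^\epsilon$), which is exactly what is needed. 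Everything else is routine bookkeeping with the fixed constants $L_0,L_1,n_0,n_1,k_1,\delta$ already introduced.
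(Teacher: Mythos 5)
Your endpoint choice creates a genuine gap in the general-range estimate \eqref{L-ineq}, specifically for $M^1_{r,n+k}$. You correctly derive that the $L_\infty$ operator norm of $M^1_{r,n+k}$ is of size $(1+\delta^k)^{\epsilon}$ (since $\mathcal{I}_1(B(h,r),n+k)$ is a union of full cubes $Q^{n+k}_\alpha$, it is \emph{not} contained in $B(h,r)$, and for large positive $k$ its measure genuinely exceeds $m(B(h,r))$ by a factor comparable to $(1+\delta^k)^{\epsilon}$), but you then claim to interpolate ``$(1+\delta^k)^{\epsilon}$ against $1$''. These two statements are inconsistent: interpolating the $L_1$ bound $D(1+\delta^k)^{\epsilon}$ against the true $L_\infty$ bound $(1+\delta^k)^{\epsilon}$ only yields $D^{1/p}(1+\delta^k)^{\epsilon}$, not the claimed $D^{1/p}(1+\delta^k)^{\epsilon/p}$, and there is no trivial $L_\infty$ bound by a constant for $M^1_{r,n+k}$ when $k$ is large. (For $M_{r,n+k}$ your route is fine, because $\mathcal{I}(B(h,r),n+k)\subseteq B(h,r)$ gives an honest $L_\infty$ contraction; and for the refined estimate \eqref{L-ineq1} your argument also goes through, indeed with the slightly stronger constant $D^{1/p}\delta^{\epsilon k}$, since $\delta^{\epsilon k}\lesssim 1$ for $k<k_1$.)

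The paper avoids this by taking $p=2$, not $p=\infty$, as the second endpoint — which is also why the lemma is stated only for $p\in[1,2]$. At $p=2$ one applies Minkowski and Cauchy--Schwarz to get $\|\int_{\mathcal{I}_1}f\|_{L_2(\M)}^2\le m(\mathcal{I}_1)\int_{\mathcal{I}_1}\|f(g)\|^2_{L_2(\M)}dm(g)$, and then rewrites the resulting density as $\frac{m(\mathcal{I}_1(B(h,r),n+k))}{m(B(h,r))^2}\lesssim\frac{(1+\delta^k)^{\epsilon}}{m(B(h,r+2C_1\delta^{n+k}))}$ (resp.\ $\frac{\delta^{\epsilon k}}{m(B(h,r+2C_1\delta^{n+k}))}$ via Proposition~\ref{measure estimate of annulus}), so that the Fubini/AL19 weight is taken with respect to the \emph{enlarged} ball; this produces the $L_2$ constant $\big(D(1+\delta^k)^{\epsilon}\big)^{1/2}$, i.e.\ the square-root gain that your $L_\infty$ endpoint cannot capture. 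Interpolating this against the $p=1$ bound $D(1+\delta^k)^{\epsilon}$ gives exactly $D^{1/p}(1+\delta^k)^{\epsilon/p}$ for all $p\in[1,2]$. A related small slip in your $p=1$ step: the set $\{h: g\in\mathcal{I}_1(B(h,r),n+k)\}$ is contained in $B(g,r+2C_1\delta^{n+k})$, not in $B(g,r)$, so the weight must again be normalized by the enlarged ball; this is harmless but should be stated, and it is precisely how the paper runs the $L_1$ estimate.
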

\begin{proof}
We begin with the first inequality. For $p=1$, $n>n_{r_0}$ and $r\in[\delta^n, \delta^{n+1}]$, since $\mathcal{I}_1(B(h,r),n+k)\subseteq B(h,r+2C_1\delta^{n+k})$, we have from \eqref{int} that
\begin{equation}\label{meas}
  \frac{m(\mathcal{I}_1(B(h,r),k+n))}{m(B(h,r))}\le \frac{m(B(h,r+2C_1\delta^{n+k}))}{m(B(h,r))}\lesssim (1+\delta^k)^\epsilon.
\end{equation}
We then  apply \cite[Theorem 3.5]{AL19} again to deduce that
\begin{equation}\label{L1-esti}
	\begin{split}
		\|M^1_{r,n+k}f\|_1&\lesssim\int_G\frac{1}{m(B(h,r))}\int_{B(h,r+2C_1\delta^{n+k})}\|f(g)\|_{L_1(\M)}dm(g)dm(h)\\
		&\lesssim\int_G\frac{(1+\delta^k)^\epsilon}{m(B(h,r+2C_1\delta^{n+k}))}\int_{B(h,r+2C_1\delta^{n+k})}\|f(g)\|_{L_1(\M)}dm(g)dm(h)\\
		&\le D(1+\delta^k)^\epsilon\|f\|_1.
	\end{split}
\end{equation}
For $p=2$, by the Minkowski and Cauchy-Schwarz inequalities, we have
\begin{equation}\label{short1}
	\begin{split}
		&\Big\|\int_{\mathcal{I}_1(B(h,r),n+k)}f(g)dm(g)\Big\|^2_{L_2(\M)}\le \Big(\int_{\mathcal{I}_1(B(h,r),n+k)}\|f(g)\|_{L_2(\M)}dm(g)\Big)^2\\
		&\le m(\mathcal{I}_1(B(h,r),n+k))\int_{\mathcal{I}_1(B(h,r),n+k)}\|f(g)\|^2_{L_2(\M)}dm(g).
	\end{split}
\end{equation}
According to \eqref{meas} and the proof of Lemma \ref{aver}, we finally obtain
\begin{equation}\label{short}
\begin{split}
 \|M^1_{r,n+k}f\|^2_2&\le \int_G\frac{m(\mathcal{I}_1(B(h,r),n+k))}{m(B(h,r))^2}\int_{\mathcal{I}_1(B(h,r),n+k)}\|f(g)\|^2_{L_2(\M)}dm(g)dm(h)\\
 &\lesssim\int_G\frac{(1+\delta^k)^\epsilon}{m(B(h,r+2C_1\delta^{n+k}))}\int_{B(h,r+2C_1\delta^{n+k})}\|f(g)\|^2_{L_2(\M)}dm(g)dm(h)\\
 &\lesssim D(1+\delta^k)^\epsilon\|f\|^2_{L_2(\N)}.
\end{split}
\end{equation}
The above two estimates are also valid for $M_{r,n+k}f$. So by interpolation argument (cf. \cite{Pis-Xu03}), the first inequality is proved.

For $n>n_1$ and $k<k_1$, a more refined analysis is required compared to the case $n>n_{r_0}$. Observe that for $r\in[\delta^n, \delta^{n+1}]$, the inclusion $\mathcal{I}_1(B(h,r),n+k)\subseteq B(h,r+2C_1\delta^{n+k}) \setminus B(h,r-2C_1\delta^{n+k})$ holds. Define $A(h)$ as the annulus $A(h)={B(h,r+2C_1\delta^{n+k+1})\setminus B(h,r-2C_1\delta^{n+k+1})}$. 

When $p=1$, applying the argument of ~\eqref{L1-esti} and Fubini theorem, we obtain 
\begin{equation}\label{Lemm-1}
	\begin{split}
		\|M^1_{r,n+k}f\|_1&\lesssim\int_G\frac{1}{m(B(h,r))}\int_{A(h)}\|f(g)\|_{L_1(\M)}dm(g)dm(h)\\
		&=\int_G\int_G\frac{\chi_{A(g)}(h)}{m(B(h,r))}\|f(g)\|_{L_1(\M)}dm(h)dm(g).
	\end{split}
\end{equation}
From estimate (4.6) in \cite{Hong-Liu21}, it follows that
\begin{equation}\label{Lemm-2}
  \int_{G}\frac{\chi_{A(g)}(h)}{m(B(h,r))}dm(h)\lesssim D r^{\epsilon k}.
\end{equation}
Combining \eqref{Lemm-1} with \eqref{Lemm-2}, we conclude
\begin{equation}\label{ineq1}
	\begin{split}
		\sup_{r\in[\delta^n,\delta^{n+1}]}\|M^1_{r,n+k}f\|_1\lesssim D\delta^{\epsilon k}\|f\|_1.
	\end{split}
\end{equation}
We consider the case when $p=2$. From~\eqref{short}, we obtain
\begin{equation}\label{Lemm-3}
 \|M^1_{r,n+k}f\|^2_2\le \int_G\frac{m(\mathcal{I}_1(B(h,r),n+k))}{m(B(h,r))^2}\int_{\mathcal{I}_1(B(h,r),n+k)}\|f(g)\|^2_{L_2(\M)}dm(g)dm(h)
 \end{equation}
Proposition~\ref{measure estimate of annulus} shows that
\begin{equation}\label{meas1}
	\frac{m(\mathcal{I}_1(B(h,r),k+n))}{m(B(h,r))}\lesssim \delta^{\epsilon k}.
\end{equation}
Combining \eqref{Lemm-3} with \eqref{meas1} and applying the reasoning from \eqref{short}, we obtain
\begin{equation*}
 \|M^1_{r,n+k}f\|^2_2\lesssim D\delta^{\epsilon k}\|f\|^2_2.
\end{equation*}
The same argument applies to $M_{r,n+k}f$ since $\mathcal{I}(B(h,r),k+n)\subseteq \mathcal{I}_1(B(h,r),k+n)$. Therefore, the interpolation argument gives the second inequality.
\end{proof}

\begin{lem}\label{short-in}
Let $n>n_{r_0}$. Let $(r_i)_i$ be an increasing sequence positive numbers belonging to $[\delta^n,\delta^{n+1}]$. Then 
\begin{align*}
&\|\sum_i\varepsilon_i(M_{r_i}f-M_{r_{i+1}}f)\|_{L_1(L_\8(\Omega)\overline{\otimes}\N)}\lesssim D\delta^{\epsilon}\|f\|_{L_1(\N)},
\end{align*}
where $(\varepsilon_i)$ is the Rademacher sequence on a probablity space $(\Omega,P)$.
\end{lem}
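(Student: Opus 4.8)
The goal is an $L_1$-bound for the Rademacher sum $\sum_i \varepsilon_i(M_{r_i}f - M_{r_{i+1}}f)$ when all radii $r_i$ lie in a single dyadic-length window $[\delta^n,\delta^{n+1}]$. The natural strategy is to absorb the $\varepsilon_i$ into the kernel: write $\sum_i \varepsilon_i(M_{r_i}f - M_{r_{i+1}}f)(h) = \int_G K(h,g) f(g)\, dm(g)$ where, for fixed $h$, the scalar kernel is
\[
K(h,g) = \sum_i \varepsilon_i \Bigl(\frac{\chi_{B(h,r_i)}(g)}{m(B(h,r_i))} - \frac{\chi_{B(h,r_{i+1})}(g)}{m(B(h,r_{i+1}))}\Bigr).
\]
Since $(r_i)$ is increasing, the balls are nested, so for fixed $h$ and $g$ only finitely many consecutive terms are nonzero, and the telescoping structure together with the fact that all radii sit in $[\delta^n,\delta^{n+1}]$ will let us bound $|K(h,g)|$ pointwise by something of the order $\chi_{B(h,\delta^{n+1})}(g)/m(B(h,\delta^n))$ up to the annular-decay loss. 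The key point is that because all the radii are comparable (ratio at most $\delta$), the measures $m(B(h,r_i))$ are all comparable to $m(B(h,\delta^n))$ by \eqref{int}, with a loss of at most a fixed power $\delta^\epsilon$; this is exactly where the annular decay property \eqref{decay property} enters and produces the $\delta^\epsilon$ in the statement.

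First I would fix $h$ and analyze $K(h,\cdot)$: using that $g$ determines a single index $j=j(h,g)$ with $g\in B(h,r_{j+1})\setminus B(h,r_j)$ (with the obvious conventions at the ends), the sum telescopes and one gets $|K(h,g)| \lesssim \bigl(m(B(h,r_{j(h,g)}))\bigr)^{-1} \lesssim \delta^{\epsilon}\, m(B(h,\delta^n))^{-1}\chi_{B(h,\delta^{n+1})}(g)$, the last step by \eqref{int} since $\delta^n \le r_{j} \le \delta^{n+1}$. (One has to be slightly careful: after conditioning on the Rademachers, $\sum_i \varepsilon_i(\cdots)$ is a \emph{signed} telescoping sum, but the triangle inequality in $L_1(\Omega)$ or simply taking absolute values inside reduces it to the same bound with the signs removed; since we only need an $L_1$-in-$\Omega$ estimate, passing $\E_\varepsilon$ through by Fubini and the triangle inequality suffices — no Khintchine is actually needed here, $L_1(\Omega)$-norm is bounded by the $\ell^1$ sum of absolute values.) Then
\[
\bigl\|\textstyle\sum_i \varepsilon_i(M_{r_i}f - M_{r_{i+1}}f)\bigr\|_{L_1(L_\infty(\Omega)\bar\otimes\N)} \le \int_G \int_G |K(h,g)|\, \|f(g)\|_{L_1(\M)}\, dm(g)\, dm(h).
\]

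Next I would swap the order of integration by Fubini and bound $\int_G |K(h,g)|\, dm(h)$ for fixed $g$. By the pointwise bound this is $\lesssim \delta^\epsilon \int_G \chi_{B(g,\delta^{n+1})}(h)\, m(B(h,\delta^n))^{-1}\, dm(h)$ (using $d(h,g)\le\delta^{n+1}\iff g\in B(h,\delta^{n+1})$ by symmetry of $d$), and this last integral is exactly of the type controlled in the proof of Lemma~\ref{aver} via \cite[Theorem 3.5]{AL19} — it is $\lesssim D$, since for $h\in B(g,\delta^{n+1})$ one has $B(g,\delta^{n+1})\subseteq B(h,2\delta^{n+1})$ so $m(B(h,\delta^n)) \gtrsim \delta^{-\epsilon} m(B(h,2\delta^{n+1})) \gtrsim \delta^{-\epsilon} m(B(g,\delta^{n+1}))$, whence the integrand is $\lesssim \delta^\epsilon m(B(g,\delta^{n+1}))^{-1}\chi_{B(g,\delta^{n+1})}(h)$, integrating to $\lesssim \delta^\epsilon$; combined with the geometric-doubling bookkeeping this gives the factor $D$. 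Putting these together yields $\lesssim D\,\delta^\epsilon \int_G \|f(g)\|_{L_1(\M)}\, dm(g) = D\,\delta^\epsilon \|f\|_{L_1(\N)}$, as claimed.

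The main obstacle is keeping the measure-comparison constants honest: one must verify that \emph{every} $m(B(h,r_i))$ appearing (for all $i$, with $r_i$ ranging over the whole window) is comparable to $m(B(h,\delta^n))$ with a uniform loss $\delta^\epsilon$, and similarly that the "dual" integral $\int_G m(B(h,\delta^n))^{-1}\chi_{B(g,\delta^{n+1})}(h)\,dm(h)$ is genuinely $O(D)$ and not merely $O(D\log\delta)$ or worse — this is the non-doubling subtlety, and it is precisely what \eqref{int} and the weighted inequality from \cite{AL19} are designed to handle. There is no summability issue in $i$ because the telescoping collapses the sum pointwise; the whole estimate is ultimately a single-scale estimate dressed up with signs, which is why only the one-power loss $\delta^\epsilon$ survives.
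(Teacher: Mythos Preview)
Your proof is correct and takes essentially the same route as the paper: both discard the Rademacher signs via the triangle inequality, exploit the telescoping structure coming from the nested balls $B(h,r_i)$, and conclude using \eqref{int} together with the Aldaz-type $L_1$ bound behind Lemma~\ref{aver}. The paper organizes the telescoping through the decomposition \eqref{aver-in} (measure-difference term plus annulus term, summed separately over $i$) rather than a single pointwise kernel bound, but the underlying computation is the same; note also that your first $\delta^\epsilon$ is unnecessary, since $r_j\ge\delta^n$ already gives $m(B(h,r_j))^{-1}\le m(B(h,\delta^n))^{-1}$ directly.
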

\begin{proof}
Note that
\begin{equation}\label{aver-in}
	\begin{split}
		M_{r_i}f(h)-M_{r_{i+1}}f(h)
		&=\Big(\frac{1}{m(B(h,r_i))}-\frac{1}{\m(B(h,r_{i+1}))}\Big)\int_{B(h,r_i)}f(g)dm(y)\\
		&\ \ +\frac{1}{m(B(h,r_{i+1}))}\int_{B(h,r_{i+1})\setminus B(h,r_i)}f(g)dm(y).
	\end{split}
\end{equation}
Therefore, by the Minkowski inequality,
\begin{equation*}
	\begin{split}
		&\|M_{r_i}f-M_{r_{i+1}}f\|_{L_1(\mathcal N)}\\
		&\le \int_G\Big(\frac{1}{m(B(h,r_i))}-\frac{1}{m(B(h,r_{i+1}))}\Big)\int_{B(h,r_i)}\|f(g)\|_{L_1(\M)}dm(g)dm(h)\\
		&\ \ +\int_G\frac{1}{m(B(h,r_{i+1}))}\int_{B(h,r_{i+1})\setminus B(h,r_i)}\|f(y)\|_{L_1(\M)}dm(g)dm(h).
	\end{split}
\end{equation*}
Then summing over all $i$, one gets
\begin{align*}
	&\sum_i\|M_{r_i}f-M_{r_{i+1}}f\|_{L_1(\mathcal N)}\le\int_G\frac{2}{m(B(h,\delta^{n}))}\int_{B(h,\delta^{n+1})}\|f(g)\|_{L_1(\M)}dm(g)dm(h).
\end{align*}
Combining \eqref{int} with Lemma \ref{aver}, we have
\begin{align*}
	&\int_G\frac{1}{m(B(h,\delta^{n}))}\int_{B(h,\delta^{n+1})}\|f(g)\|_{L_1(\M)}dm(g)dm(h)\\
	&\lesssim \delta^{\epsilon} \int_G\frac{1}{m(B(h,\delta^{n+1}))}\int_{B(h,\delta^{n+1})}\|f(g)\|_{L_1(\M)}dm(g)dm(h)\\
	& \lesssim D\delta^{\epsilon}\|f\|_1,
\end{align*}
and the lemma is proved.
\end{proof}
\subsection{Noncommutative  Calder{\'o}n-Zygmund decomposition}\label{nonmar}\quad

\medskip

In this subsection, we introduce the noncommutative Calder{\'o}n-Zygmund decomposition with the non-doubling measure recently developed in~\cite{Cad-Alo-Par21}. In order to introduce this decomposition, we first state the resulting martingale in noncommutative setting. For $k\in\mathbb Z$, we denote by $\mathcal{F}_k$ the $\sigma$-algebra generated by the `dyadic cubes' $\{Q_\alpha^{k}:\alpha\in I_k\}$. Let $\mathcal{F}=\cup_{k\in\Z}\mathcal{F}_k$. Set $\N_k=L_{\8}(\G,\mathcal{F}_k,m)\overline{\otimes}\M$. It is clear that $(\N_k)_{k\in\Z}$ is a sequence of decreasing von Neumann subalgebra of $\N=L_{\8}(\G,m)\overline{\otimes}\M$ and $\cup_{k\in\Z}\N_k$ is weak* dense in $\N$. Let $(\mathsf{E}_k)_{k\in\Z}$ be a sequence of conditional expectations of $\N$ with respect to the filtration $(\N_k)_{k\in\Z}$.  Moreover, let $p\in[1,\8)$ and $f\in L_p(\mathcal N)$, we have
\begin{equation}\label{con-exp}
  \mathsf{E}_kf=\sum_{\alpha\in I_k}f_{Q_\alpha^k} \chi_{Q_\alpha^k},
\end{equation}
where $\chi_{Q_\alpha^k}$ is the characteristic function of $Q_\alpha^k$ and
\begin{equation*}
  f_{Q_\alpha^k}=\frac{1}{m(Q_\alpha^k)}\int_{Q_\alpha^k}f(w)dm(w).
\end{equation*}
It is easily seen that $\mathsf{E}_k\circ \mathsf{E}_j=\mathsf{E}_{\max(k,j)}$. Let $f\in L_p(\N)$, one can easily check that $(\mathsf{E}_kf)_k$ is an $L_p$-reverse martingale, namely
\begin{equation}\label{mar}
 \sup_{k}\|\mathsf E_{k}f\|_{p}\le\|f\|_p.
\end{equation}
For abbreviation, we write $f_k$ instead of $\mathsf{E}_k f$.

According to the argument given in~\cite{Alo-San16}, we assume that $m(G)=\8$. Define a dense subset of $L_1(\mathcal N)_+$
\begin{equation*}
  \mathcal{N}_{c,+}=L_1(\mathcal{N})\cap\{f|f\in\mathcal{N}_+,~\overrightarrow{\mbox{supp}}f~\mbox{is compact}\},
\end{equation*}
where $\overrightarrow{\mbox{supp}}f=\mbox{supp}\|f\|_{L_1(\mathcal M)}$. Fix $f\in\mathcal{N}_{c,+}$ and $\lambda>0$. By this assumption and the argument stated  in \cite[Lemma 3.1]{Pracet08}, there exists $m_{\lambda}(f)\in\mathbb{N}$ such that $f_k\le\lambda\1_{\N}$ for all $k> m_\lambda(f)$. The following Cuculescu's theorem \cite{Cuc} can be found in \cite[Lemma 3.1]{Pracet08}.
\begin{lem}\label{Cu}
Given $f\in\mathcal{N}_{c,+}$ and let $(f_k)_k$ be the associated martingale with respect to the filtration $(\N_k)_k$. Fix $\lambda>0$, there exists a $m_{\lambda}(f)\in\mathbb{N}$ and an increasing sequence projections $(q_k)_{k\in\mathbb{Z}}$ given by
\begin{equation*}
   q_k=\left\{
         \begin{array}{ll}
           \1_{\mathcal N}, & \hbox{$k>m_{\lambda}(f)$;} \\
           \chi_{(0,\lambda]}(f_k), & \hbox{$k=m_{\lambda}(f)$;}\\
           \chi_{(0,\lambda]}(q_{k+1}f_kq_{k+1}), & \hbox{$k<m_{\lambda}(f)$.}
         \end{array}
       \right.
\end{equation*}
 such that
\begin{enumerate}[\noindent]
  \item\emph{(i)}~$q_k$ is a projection in $\mathcal{N}_k$ and commutes with $q_{k+1}f_kq_{k+1}$;
  \item\emph{(ii)}~$q_kf_kq_k\le \lambda q_k$ for each $k\in\mathbb Z$;
  \item\emph{(iii)}~set $q=\bigwedge_{k}q_k$, then
  \begin{equation}\label{weak-est-marti}
      \|qf_kq\|_{\mathcal N}\le \lambda~\mbox{and}~\varphi(\1_{\mathcal N}-q)\le\frac{\|f\|_1}{\lambda}.
  \end{equation}
\end{enumerate}
\end{lem}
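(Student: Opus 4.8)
The plan is to run the classical Cuculescu construction \cite{Cuc} for the reverse martingale $(f_k)_{k\in\Z}$, the only twist relative to the usual statement being that the filtration $(\mathcal{N}_k)_{k\in\Z}$ is \emph{decreasing} in $k$: since every dyadic cube at level $k$ sits inside a unique cube at level $k+1$ (Proposition~\ref{dyadic cube}(iii)), we have $\mathcal{F}_{k+1}\subseteq\mathcal{F}_k$ and hence $\mathcal{N}_{k+1}\subseteq\mathcal{N}_k$. I would first check that the recursion is well posed and yields (i), reading it uniformly as $q_k=q_{k+1}-\chi_{(\lambda,\infty)}(q_{k+1}f_kq_{k+1})$ (with $q_{m_\lambda(f)+1}:=\1$; this agrees with $\chi_{(0,\lambda]}(q_{k+1}f_kq_{k+1})$ off any zero eigenspace and equals $\1$ whenever $f_k\le\lambda\1$). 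For $k<m_\lambda(f)$ one has $q_{k+1}\in\mathcal{N}_{k+1}\subseteq\mathcal{N}_k$ and $f_k=\mathsf{E}_kf\in\mathcal{N}_k$, so $A:=q_{k+1}f_kq_{k+1}$ is a positive element of $\mathcal{N}_k$, its spectral projection $q_k$ lies in $\mathcal{N}_k$ and commutes with $A$, and $q_k\le q_{k+1}$ because $\chi_{(\lambda,\infty)}(A)\le q_{k+1}$. Together with $q_{m_\lambda(f)}\le\1$ and $q_k=\1$ for $k>m_\lambda(f)$, this shows $(q_k)_k$ is increasing and proves (i).

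Property (ii) is pure functional calculus. For $k>m_\lambda(f)$, $q_kf_kq_k=f_k\le\lambda\1=\lambda q_k$ by the defining property of $m_\lambda(f)$. For $k\le m_\lambda(f)$, using $q_kq_{k+1}=q_k$ one writes $q_kf_kq_k=q_k\big(q_{k+1}f_kq_{k+1}\big)q_k$; since $q_k$ commutes with the positive operator $A:=q_{k+1}f_kq_{k+1}$ and is the part of its spectral resolution in $[0,\lambda]$, the functional calculus gives $q_kAq_k=q_kA\le\lambda q_k$.

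For (iii), set $q=\bigwedge_kq_k=\lim_{k\to-\infty}q_k$ (a decreasing SOT-limit of projections). From $q\le q_k$ and (ii), $0\le qf_kq=q\big(q_kf_kq_k\big)q\le\lambda q\le\lambda\1$, hence $\|qf_kq\|_{\mathcal{N}}\le\lambda$. For the trace bound, put $d_k=q_{k+1}-q_k\ge0$; these are pairwise orthogonal projections and $\sum_kd_k=\1-q$ (the series telescopes since $q_k=\1$ for large $k$), so by normality $\varphi(\1-q)=\sum_k\varphi(d_k)$. By construction $d_k=\chi_{(\lambda,\infty)}(q_{k+1}f_kq_{k+1})$, so $d_kf_kd_k=d_k\big(q_{k+1}f_kq_{k+1}\big)d_k\ge\lambda d_k$, i.e. $\lambda\varphi(d_k)\le\varphi(d_kf_kd_k)$. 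Since $d_k\in\mathcal{N}_k$ and $\mathsf{E}_k$ is $\varphi$-preserving, $\varphi(d_kf_kd_k)=\varphi\big(\mathsf{E}_k(d_kfd_k)\big)=\varphi(d_kfd_k)=\varphi(d_kf)$, the last equality by traciality. Summing and using $0\le\sum_kd_k\le\1$ with $f\ge0$,
\[
\lambda\,\varphi(\1-q)\le\sum_k\varphi(d_kf)=\varphi\Big(\big(\textstyle\sum_kd_k\big)f\Big)\le\varphi(f)=\|f\|_1 ,
\]
which gives $\varphi(\1-q)\le\|f\|_1/\lambda$ and completes (iii).

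These steps are essentially formal; the one point needing genuine care is that $\mathcal{N}=L_\infty(G)\overline{\otimes}\mathcal{M}$ is only semifinite, so one must ensure every trace occurring is finite and that the recursion stabilizes at $\1$ for large $k$. This is exactly the role of the standing hypothesis $f\in\mathcal{N}_{c,+}$ and of the integer $m_\lambda(f)$: compactness of the support of $\|f\|_{L_1(\mathcal{M})}$ forces $f_k\le\lambda\1$, hence $q_k=\1$, once $k$ is large, after which the construction runs downward over a region of finite $\varphi$-measure. These measure-theoretic reductions are carried out in \cite[Lemma~3.1]{Pracet08} (following \cite{Alo-San16}), and I would simply invoke them. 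Granting this bookkeeping, the substance of the lemma is the single telescoping line displayed above.
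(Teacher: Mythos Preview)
Your argument is correct and is precisely the standard Cuculescu construction adapted to this reverse filtration. The paper does not actually prove this lemma: it simply records it as ``Cuculescu's theorem \cite{Cuc}'' and refers to \cite[Lemma~3.1]{Pracet08} for the present formulation, so there is nothing to compare beyond noting that your write-up is what one would find upon unpacking those citations.
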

More specifically, we have the following expression of $q_k$ for every $k\in\mathbb{Z}$,
\begin{equation*}
  q_k=\sum_{\alpha\in I_k}q_{Q_\alpha^k}\chi_{Q_\alpha^k},
\end{equation*}
where $q_{Q_\alpha^k}$ is a projection in $\mathcal M$ defined by
\begin{equation*}
  q_{Q_\alpha^k}=\left\{
                   \begin{array}{ll}
                     \1_{\mathcal{M}}, & \hbox{if $k> m_{\lambda}(f)$;} \\
                     \chi_{(0,\lambda]}(f_{Q_\alpha^k}),&\hbox{if $k=m_{\lambda}(f)$;}\\
                     \chi_{(0,\lambda]}(q_{\widehat{Q}_\alpha^k}f_{Q_\alpha^k}q_{\widehat{Q}_\alpha^k}), & \hbox{if $k<m_{\lambda}(f)$.}
                   \end{array}
                 \right.
\end{equation*}
According to the Cuculescu construction, one can see that these projections satisfy
\begin{equation}\label{Cucul}
 q_{Q_\alpha^k}~\mbox{commutes with}~q_{\widehat{Q}_\alpha^k}f_{Q_\alpha^k}q_{\widehat{Q}_\alpha^k},\ \ \ q_{Q_\alpha^k}\le q_{\widehat{Q}_\alpha^k},\ \ \ q_{Q_\alpha^k}f_{Q_\alpha^k}q_{Q_\alpha^k}\le \lambda q_{Q_\alpha^k}.
\end{equation}
Set $p_k=q_{k+1}-q_k$, then
\begin{equation}\label{decom}
 p_k=\sum_{\a\in I_k}(q_{\widehat{Q}_{\a}^k}-q_{Q_{\a}^k})\chi_{Q_{\a}^k}:=\sum_{\a\in I_k}p_{{Q}_{\a}^k}\chi_{Q_{\a}^k},\quad\mbox{and}\quad\sum_{k} p_k=\1_{\mathcal N}-q.
\end{equation}

We now present the noncommutative Calder\'on-Zygmund decomposition given in~\cite{Cad-Alo-Par21}.
\begin{prop}\label{CZ}
With the notations given in the Cuculescu construction and let $f\in\mathcal{N}_{c,+}$. Set
 \begin{align*}
&g:=qfq+\sum_{k}\mathsf{E}_{k+1}(p_kfp_k),\\
&b_d:=\sum_{k}b_{d,k}=\sum_{k}\big(p_kfp_k-\mathsf{E}_{k+1}(p_kfp_k)\big),\\
&b_{\textit{off}}:=\sum_{k}b_{\textit{off},k}=\sum_{k}\big(q_kfp_k+p_kfq_k\big).
\end{align*}
Then
\begin{enumerate}[\noindent]
   \item\emph{(i)}~$f=g+b_{d}+b_{\textit{off}}$;
    \item\emph{(ii)}~$\|g\|_{1}\le \|f\|_{1}$ and $\|g\|^2_{2}\le 6\lambda\|f\|_{1}$;
    \item \emph{(iii)}~for every $k\in\Z$,
    $$b_{d,k}=\sum_{\a\in I_{k}}b_{d,Q_{\a}^{k}}:=\sum_{\a\in I_{k}}\Big(p_{{Q}_{\a}^{k}}fp_{{Q}_{\a}^{k}}\chi_{Q_{\a}^{k}}-\frac{m({Q}_{\a}^{k})}
{m(\widehat{Q}_{\a}^{k})}p_{{Q}_{\a}^{k}}f_{Q_{\a}^{k}}p_{{Q}_{\a}^{k}}\Big)\chi_{\widehat{Q}_{\a}^{k}}$$
    and
    $$\int_{\G}b_{d,Q_{\a}^{k}}=0,~\qquad~ \|b_d\|_1\le\sum_{k}\sum_{\a\in I_{k}}\|b_{d,Q_{\a}^{k}}\|_1\le 2\|f\|_1$$
     \item \emph{(iv)}~for every $k\in\Z$,
     $$b_{\textit{off},k}=\sum_{\a\in I_{k}}b_{\textit{off},Q_{\a}^{k}}:=\sum_{\a\in I_k}(p_{Q_{\a}^k}fq_{Q_{\a}^k}+q_{Q_{\a}^k}fp_{Q_{\a}^k})\chi_{{Q}_{\a}^{k}}$$
     and
     $$\int_{\G}b_{\textit{off},Q_{\a}^{k}}=0.$$
  \end{enumerate}
\end{prop}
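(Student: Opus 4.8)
The plan is to verify items (i)–(iv) by direct computation, exploiting the structure of the Cuculescu projections $q_k$, $p_k = q_{k+1}-q_k$ recorded in \eqref{decom} and \eqref{Cucul}, and the conditional-expectation formula \eqref{con-exp}. The starting point is the telescoping identity $\sum_k p_k = \1_{\mathcal N} - q$, which gives the resolution of the identity $q + \sum_k p_k = \1_{\mathcal N}$. Sandwiching $f$ on both sides and expanding yields
\[
f = qfq + \sum_k q f p_k + \sum_k p_k f q + \sum_{k,l} p_k f p_l .
\]
For item (i) I would first argue that the off-diagonal sum in $k\neq l$ of $\sum_{k,l} p_k f p_l$ must be grouped correctly: since the $p_k$ are pairwise orthogonal one has $\sum_{k,l}p_k f p_l$, and the decomposition in the statement keeps only the diagonal terms $\sum_k p_k f p_k$ inside $g$ (after applying $\mathsf E_{k+1}$) and inside $b_d$ (the fluctuation part), while the terms $q f p_k + p_k f q$ are exactly $b_{\textit{off},k}$ — note here the subtle point that $q = \bigwedge_k q_k \le q_k$ so that the ``$q$'' appearing multiplied against $p_k$ can be replaced by $q_k$ up to reorganizing, which is precisely how one passes from $qfp_k$ to $q_k f p_k$; this reindexing, together with absorbing $\sum_k p_k f p_k - \mathsf E_{k+1}(p_k f p_k)$ into $b_d$ and $\sum_k \mathsf E_{k+1}(p_k f p_k)$ into $g$, gives $f = g + b_d + b_{\textit{off}}$. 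Care must be taken that the cross terms $p_k f p_l$ with $k \neq l$ genuinely vanish or are reabsorbed — this is where I expect the bookkeeping to be most delicate, and it is the main obstacle: one must show the identity holds at the level of the sums without spurious terms, using $p_k p_l = 0$ for $k\neq l$ and the nesting $q \le \cdots \le q_{k+1} \le q_k \le \cdots$.

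For item (ii), the bound $\|g\|_1 \le \|f\|_1$ follows from positivity of $f$ together with the facts that $q\cdot q \le \1$, that $\mathsf E_{k+1}$ is trace-preserving and positive, and that the pieces $qfq$ and $p_k f p_k$ are mutually ``orthogonally supported'' enough to add their traces; concretely $\|g\|_1 = \varphi(g) = \varphi(qfq) + \sum_k \varphi(p_k f p_k) = \varphi\big((q + \sum_k p_k) f (q + \sum_k p_k)\big) \le \varphi(f) = \|f\|_1$ using orthogonality of $q, p_k$. For the $L_2$ bound I would use the Cuculescu estimate \eqref{weak-est-marti} together with the pointwise control \eqref{Cucul} — namely $q_{k+1} f_k q_{k+1} \le \lambda q_{k+1}$ on each dyadic cube — to show $\|g\|_2^2 = \varphi(g^2) \le 6\lambda \varphi(f) = 6\lambda\|f\|_1$; this is the standard computation splitting $g^2$ into $(qfq)^2$, cross terms, and $\sum_k (\mathsf E_{k+1}(p_k f p_k))^2$, each estimated by $\lambda$ times a further trace bounded by $\|f\|_1$, and should go through exactly as in \cite{Cad-Alo-Par21}.

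For items (iii) and (iv), the explicit formulas for $b_{d,k}$ and $b_{\textit{off},k}$ come from unwinding \eqref{con-exp}: since $p_k = \sum_{\alpha\in I_k} p_{Q_\alpha^k}\chi_{Q_\alpha^k}$ and $\mathsf E_{k+1}$ averages over the parent cubes $\widehat{Q}_\alpha^k$, one gets $\mathsf E_{k+1}(p_k f p_k) = \sum_{\alpha\in I_k}\frac{m(Q_\alpha^k)}{m(\widehat{Q}_\alpha^k)} p_{Q_\alpha^k} f_{Q_\alpha^k} p_{Q_\alpha^k}\,\chi_{\widehat{Q}_\alpha^k}$ (using $p_{Q_\alpha^k}\in\mathcal M$ is constant on $Q_\alpha^k$ and $p_{Q_\alpha^k}\le q_{\widehat Q_\alpha^k}$, so it survives the averaging appropriately), which yields the stated formula for $b_{d,Q_\alpha^k}$. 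The mean-zero property $\int_G b_{d,Q_\alpha^k} = 0$ is then immediate: integrating $p_{Q_\alpha^k} f p_{Q_\alpha^k}\chi_{Q_\alpha^k}$ over $G$ gives $m(Q_\alpha^k) p_{Q_\alpha^k} f_{Q_\alpha^k} p_{Q_\alpha^k}$, which exactly cancels the integral of the second term. The $L_1$ bound $\|b_d\|_1 \le \sum_k\sum_\alpha \|b_{d,Q_\alpha^k}\|_1 \le 2\|f\|_1$ follows from the triangle inequality together with $\|\mathsf E_{k+1}(p_k f p_k)\|_1 \le \|p_k f p_k\|_1$ and $\sum_k \|p_k f p_k\|_1 = \varphi(\sum_k p_k f p_k) \le \varphi(f) = \|f\|_1$ by orthogonality. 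Item (iv) is analogous: the formula for $b_{\textit{off},Q_\alpha^k}$ reads off from the product structure, and $\int_G b_{\textit{off},Q_\alpha^k} = 0$ holds because $b_{\textit{off}}$ is, by construction, already of the form $\mathsf E_{k+1}$-difference — more precisely one checks $q_k f p_k + p_k f q_k$ has vanishing $\mathsf E_{k+1}$-conditional expectation on each cube since $p_{Q_\alpha^k} q_{Q_\alpha^k} = 0$ forces the relevant average to vanish — so integrating over $\widehat Q_\alpha^k$ gives zero. Throughout, the only genuinely subtle point remains the reindexing between $q$ and $q_k$ in item (i); once that is settled the rest is routine trace manipulation.
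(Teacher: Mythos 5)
Your overall plan (direct verification from the Cuculescu data, with the $L_2$ bound for $g$ deferred to \cite{Cad-Alo-Par21}) is reasonable --- the paper itself only invokes \cite{Cad-Alo-Par21} for (i)--(ii) and checks (iii)--(iv) by hand --- but two of your steps do not hold as written. The first is item (i): after expanding $f$ against $\1_{\mathcal N}=q+\sum_k p_k$ you are left with the cross terms $p_kfp_l$ ($k\neq l$) and with $qfp_k+p_kfq$, you assert that the latter ``are exactly $b_{\textit{off},k}$'', and you leave the former to ``vanish or be reabsorbed''. They do not vanish ($p_kp_l=0$ gives $\varphi(p_kfp_l)=0$, not $p_kfp_l=0$), and $qfp_k+p_kfq\neq q_kfp_k+p_kfq_k$ in general; what saves the identity is that $q_k=q+\sum_{l<k}p_l$ (monotonicity of the Cuculescu projections together with \eqref{decom}), whence $\sum_k\big(q_kfp_k+p_kfq_k\big)=\sum_k\big(qfp_k+p_kfq\big)+\sum_{k\neq l}p_kfp_l$ --- precisely the reabsorption you flag as ``the main obstacle'' but never carry out. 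A cleaner route, avoiding the bookkeeping entirely, is to telescope: $f-qfq=\sum_k\big(q_{k+1}fq_{k+1}-q_kfq_k\big)=\sum_k\big(q_{k+1}fp_k+p_kfq_k\big)$, and then write $q_{k+1}=q_k+p_k$ to get $f=qfq+\sum_kp_kfp_k+\sum_k\big(q_kfp_k+p_kfq_k\big)$; adding and subtracting $\mathsf{E}_{k+1}(p_kfp_k)$ gives (i). As it stands, the decisive identity in (i) is acknowledged but not proved.

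The second problem is your justification of $\int_{\G}b_{\textit{off},Q_\alpha^k}=0$ in (iv): the orthogonality $p_{Q_\alpha^k}q_{Q_\alpha^k}=0$ does \emph{not} force $q_{Q_\alpha^k}f_{Q_\alpha^k}p_{Q_\alpha^k}=0$, since $f_{Q_\alpha^k}$ sits between the two projections and does not commute with them. The cancellation genuinely uses the commutation property \eqref{Cucul}: $q_{Q_\alpha^k}$ is a spectral projection of $q_{\widehat{Q}_\alpha^k}f_{Q_\alpha^k}q_{\widehat{Q}_\alpha^k}$ and hence commutes with it, so, using $q_{Q_\alpha^k}q_{\widehat{Q}_\alpha^k}=q_{Q_\alpha^k}$ and $q_{\widehat{Q}_\alpha^k}p_{Q_\alpha^k}=p_{Q_\alpha^k}$, one gets $q_{Q_\alpha^k}f_{Q_\alpha^k}p_{Q_\alpha^k}=q_{Q_\alpha^k}\big(q_{\widehat{Q}_\alpha^k}f_{Q_\alpha^k}q_{\widehat{Q}_\alpha^k}\big)p_{Q_\alpha^k}=\big(q_{\widehat{Q}_\alpha^k}f_{Q_\alpha^k}q_{\widehat{Q}_\alpha^k}\big)q_{Q_\alpha^k}p_{Q_\alpha^k}=0$, and likewise for the adjoint term (the cases $k\ge m_\lambda(f)$ are trivial or handled with $q_{\widehat{Q}_\alpha^k}=\1_{\M}$); integrating over $Q_\alpha^k$ then gives the mean-zero property. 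Your remaining items are essentially correct and match the paper's computation: the formula and cancellation in (iii) follow from \eqref{con-exp}, and the $L_1$ bounds from trace preservation, provided you also note in (ii) that the cross terms $\varphi(qfp_k)$ and $\varphi(p_kfp_l)$ vanish by traciality, so that $\varphi(g)=\varphi(qfq)+\sum_k\varphi(p_kfp_k)=\varphi(f)$.
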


The following lemma will be useful in the weak type $(1,1)$ estimates.
\begin{lem}\label{supp lemma}
Set
 \begin{equation*}
 \widetilde{Q}_\alpha^k=\{w\in \G:d(w,z_\alpha^k)\le 4C_1\delta^{k+1}\}.
\end{equation*}
Define $k_2=\min\{k:a_0\delta^{k}>r_0\}$ and
\begin{equation*}
  \zeta=\1_{\mathcal{N}}-\bigg(\bigvee_{k> k_2}\bigvee_{\alpha\in I_k}p_{{Q}_\alpha^k}\chi_{\widetilde{Q}_\alpha^k}\bigg)\bigvee\bigg(\bigvee_{k\leq k_2}\bigvee_{\alpha\in I_k}p_{Q_\alpha^k}\chi_{{Q}_\alpha^k}\bigg).
\end{equation*}
Then
\begin{enumerate}[\noindent]
  \item~\emph{(i)} $\varphi(\1_{\mathcal{N}}-\zeta)\lesssim \frac{2(K+1)(4C_1\delta/a_0)^{\epsilon}}{\lambda}\|f\|_{1}$;
  \item~\emph{(ii)} for every $k>k_2$
  \begin{align*}
  	&\zeta(h)b_{d,k}(g)\chi_{B(h,2C_1\delta^{k+1})}(g)\zeta(h)=0,~\qquad~\zeta(h)b_{\textit{off},k}
  	(g)\chi_{B(h,2C_1\delta^{k+1})}(g)\zeta(h)=0;
  \end{align*}
  and for every $k\le k_2$
  \begin{align*}
  	&\zeta(h)b_{d,k}(g)\chi_{Q_{\a}^k(h)}(g)\zeta(h)=0,~\qquad~\zeta(h)b_{\textit{off},k}
  	(g)\chi_{Q_{\a}^k(h)}(g)\zeta(h)=0,
  \end{align*}
  where $Q_{\a}^k(h)$ denotes the `dyadic cube' $Q_{\a}^k$ in $\mathcal{F}_k$ containing $h$.
\end{enumerate}
\end{lem}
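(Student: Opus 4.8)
The statement has two parts: a measure estimate $\varphi(\1_{\mathcal N}-\zeta)\lesssim \lambda^{-1}\|f\|_1$ and a family of vanishing relations for the bad parts $b_{d,k}$, $b_{\textit{off},k}$ cut off by balls (or dyadic cubes). I would treat them in that order, since the second part is essentially bookkeeping once one understands the geometry behind the definition of $\zeta$.

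\emph{Part (i): the measure estimate.} The projection $\zeta$ is the complement of a supremum of two families of ``enlarged'' projections. For the first family, each $p_{Q_\alpha^k}\chi_{\widetilde Q_\alpha^k}$ sits over the enlarged cube $\widetilde Q_\alpha^k=B(z_\alpha^k,4C_1\delta^{k+1})$, whereas the Cuculescu jump projection $p_{Q_\alpha^k}$ naturally lives over $Q_\alpha^k\subseteq B(z_\alpha^k,C_1\delta^k)$ by Proposition~\ref{dyadic cube}(iv). The plan is: first bound $\varphi(\1-q)=\sum_k\varphi(p_k)\le \|f\|_1/\lambda$ using \eqref{weak-est-marti} and \eqref{decom}; then, passing from $Q_\alpha^k$ to $\widetilde Q_\alpha^k$, use $\varphi(p_{Q_\alpha^k}\chi_{\widetilde Q_\alpha^k})=m(\widetilde Q_\alpha^k)\,\tau(p_{Q_\alpha^k})$ against $\varphi(p_{Q_\alpha^k}\chi_{Q_\alpha^k})=m(Q_\alpha^k)\,\tau(p_{Q_\alpha^k})$ and control the ratio $m(\widetilde Q_\alpha^k)/m(Q_\alpha^k)\le m(B(z_\alpha^k,4C_1\delta^{k+1}))/m(B(z_\alpha^k,a_0\delta^k))$. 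By the key growth estimate \eqref{int} (equivalently \eqref{decay property}), this ratio is $\lesssim (4C_1\delta^{k+1}/(a_0\delta^k))^\epsilon=(4C_1\delta/a_0)^\epsilon$ as soon as the relevant radii exceed $r_0$, which is exactly why the threshold $k_2=\min\{k:a_0\delta^k>r_0\}$ appears: for $k>k_2$ one is in the regime where \eqref{int} applies. For $k\le k_2$ one keeps the un-enlarged cubes $Q_\alpha^k$, so no growth estimate is needed there and the bound is just $\sum_{k\le k_2}\varphi(p_k)\le\|f\|_1/\lambda$. Summing a supremum of projections is dominated by the sum of their traces (subadditivity of $\varphi$ on projections), so altogether $\varphi(\1-\zeta)\lesssim (K+1)(4C_1\delta/a_0)^\epsilon\lambda^{-1}\|f\|_1$, with the extra $(K+1)$ coming from the annular constant in \eqref{decay property} hidden inside \eqref{int}. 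I would write this with the two families handled in parallel and take the larger constant.

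\emph{Part (ii): the vanishing relations.} Here the point is purely geometric support-tracking. Recall from Proposition~\ref{CZ}(iii)--(iv) that $b_{d,k}$ is a sum over $\alpha\in I_k$ of terms supported (in the space variable) on $\widehat Q_\alpha^k$ with internal operator factor $p_{Q_\alpha^k}$, and similarly $b_{\textit{off},k}$ is supported on $Q_\alpha^k$ with an operator factor of the form $p_{Q_\alpha^k}(\cdots)$ or $(\cdots)p_{Q_\alpha^k}$. Fix $h$ and $g$. For $k>k_2$, if $g\in B(h,2C_1\delta^{k+1})$ and $g$ lies in the support of the $\alpha$-th summand of $b_{d,k}$ (i.e. $g\in\widehat Q_\alpha^k\subseteq B(z_\alpha^{k+1},C_1\delta^{k+1})$, noting $\widehat Q_\alpha^k$ is a level-$(k+1)$ cube), then by the triangle inequality $d(h,z_\alpha^k)\le d(h,g)+d(g,z_\alpha^k)\le 2C_1\delta^{k+1}+2C_1\delta^{k+1}\le 4C_1\delta^{k+1}$ — I would double-check the constants against Proposition~\ref{dyadic cube}(iv), but this is the shape of it — so $h\in\widetilde Q_\alpha^k$, hence $p_{Q_\alpha^k}\chi_{\widetilde Q_\alpha^k}(h)=p_{Q_\alpha^k}$, hence $\zeta(h)\le \1-p_{Q_\alpha^k}$ annihilates the operator factor $p_{Q_\alpha^k}$ on both sides. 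Thus each summand dies, and since distinct $\alpha$ give disjoint space-supports only one summand is active at a time, giving $\zeta(h)b_{d,k}(g)\chi_{B(h,2C_1\delta^{k+1})}(g)\zeta(h)=0$; the argument for $b_{\textit{off},k}$ is identical since it carries the same factor $p_{Q_\alpha^k}$. For $k\le k_2$, the cutoff is by the dyadic cube $Q_\alpha^k(h)\ni h$ instead of a ball; if $g\in Q_\alpha^k(h)$ then $g$ and $h$ lie in the \emph{same} level-$k$ cube, so the only summand of $b_{d,k}$ (resp. $b_{\textit{off},k}$) whose support meets $g$ is the one indexed by that same $\alpha$ (using the nesting/partition property Proposition~\ref{dyadic cube}(i)--(iii)), and $\zeta(h)\le \1-p_{Q_\alpha^k}\chi_{Q_\alpha^k}(h)$ kills its operator factor as before.

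\emph{Expected main obstacle.} The genuinely delicate point is Part~(i), specifically the passage from $Q_\alpha^k$ to the enlarged $\widetilde Q_\alpha^k$: one must ensure that the radius-ratio estimate \eqref{int} is applied only when both radii are above the threshold $r_0$, which is exactly what the split at $k_2$ arranges, and one must track the constant $(4C_1\delta/a_0)^\epsilon$ and the factor $K+1$ correctly through the annular decay hypothesis. Part~(ii) is routine once the constants in the triangle-inequality step are pinned down against Proposition~\ref{dyadic cube}(iv), and I would be careful there that $\widehat Q_\alpha^k$ is a cube of level $k+1$, so its enclosing ball has radius $C_1\delta^{k+1}$, which is what makes the constant $4C_1\delta^{k+1}$ in the definition of $\widetilde Q_\alpha^k$ the right one.
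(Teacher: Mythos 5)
Your proposal is correct and follows essentially the same route as the paper's proof: for (i) you use $\varphi\big(p_{Q_\alpha^k}\chi_{\widetilde Q_\alpha^k}\big)=\tau(p_{Q_\alpha^k})\,m(\widetilde Q_\alpha^k)$, compare $m(\widetilde Q_\alpha^k)$ with $m(Q_\alpha^k)\ge m(B(z_\alpha^k,a_0\delta^k))$ via Proposition~\ref{dyadic cube}(iv) and \eqref{int} (the split at $k_2$ being exactly what legitimizes \eqref{int}) to extract the factor $(4C_1\delta/a_0)^{\epsilon}$, and then sum with \eqref{decom} and \eqref{weak-est-marti}; for (ii) the same triangle-inequality computation (since $d(g,z_\alpha^k)\le d(g,\widehat z_\alpha^k)+d(\widehat z_\alpha^k,z_\alpha^k)\le 2C_1\delta^{k+1}$, the total is indeed $\le 4C_1\delta^{k+1}$, matching the paper) puts $h\in\widetilde Q_\alpha^k$, so $\zeta(h)\le \1_{\mathcal M}-p_{Q_\alpha^k}$ annihilates the factor $p_{Q_\alpha^k}$ sitting on a side of every summand. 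One small inaccuracy, harmless for the argument: the summands $b_{d,Q_\alpha^k}$ are supported on the parent cubes $\widehat Q_\alpha^k$, so siblings' supports are \emph{not} disjoint and (for $k\le k_2$) the cutoff $\chi_{Q_\alpha^k(h)}$ does not isolate a single summand of $b_{d,k}$; disjointness is not needed, however, since the sandwiching by $\zeta(h)$ kills each relevant summand individually, which is precisely how the paper proceeds.
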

\begin{proof}
Note that
\begin{align*}
  \varphi(\1_{\N}-\zeta)&\le\sum_{k> k_2}\sum_{\alpha\in I_k}\varphi(p_{Q_\alpha^k}\chi_{\widetilde{Q}_\alpha^k})+\sum_{k< k_2}\sum_{\alpha\in I_k}\varphi(p_{Q_\alpha^k}\chi_{Q_\alpha^k})\\
&\le \sum_{k> k_2}\sum_{\alpha\in I_k}\tau(p_{Q_\alpha^k})\frac{m(\widetilde{Q}_\alpha^k)}{m(Q_\alpha^k)}m(Q_\alpha^k)+\sum_{k\leq k_2}\sum_{\alpha\in I_k}\tau(p_{Q_\alpha^k})m({Q_\alpha^k})\\
&\lesssim(4C_1\delta/a_0)^{\epsilon}\sum_{k> k_2}\sum_{\alpha\in I_k}\tau(p_{Q_\alpha^k})m({Q_\alpha^k})+\sum_{k\leq k_2}\sum_{\alpha\in I_k}\tau(p_{Q_\alpha^k})m({{Q}_\alpha^k})\\
&\lesssim\frac{(4C_1\delta/a_0)^{\epsilon}}{\lambda}\|f\|_{1},
\end{align*}
where we used Proposition~\ref{dyadic cube}(iv) and~\eqref{int} in the thrid inequality,~\eqref{decom} and~\eqref{weak-est-marti} in the last inequality.  The estimate (i) is proved.

For the assertion (ii), we first focus on the case $k>k_2$. By Proposition \ref{CZ}(iii), the term $b_{d,k}$  admits the decomposition $b_{d,k}=\sum_{\a\in I_{k}}b_{d,Q_{\a}^{k}}$,  where each $b_{d,Q_{\a}^k}$ has support contained in $\widehat{Q}_{\a}^{k}$. Without loss of generality, assume there exists an $\a\in I_k$ such that $B(h,2C_1\delta^{k+1})\cap \widehat{Q}_{\a}^{k}\neq\emptyset$. Taking $g\in B(h,2C_1\delta^{k+1})\cap \widehat{Q}_{\a}^{k}$. According to Proposition~\ref{dyadic cube}(iv), we have
\begin{equation*}
	d(h,{z}_{\a}^k)\le d(h,g)+d(g,\widehat{z}_{\a}^k)+d(\widehat{z}_{\a}^k,{z}_{\a}^k)\le 4C_1\delta^{k+1}.
\end{equation*}
This yields $h\in\widetilde{Q}_{\a}^k$. Since $p_{{Q}_\alpha^k}\leq\1_{\M}-\zeta(h)$, we immediately obtain  $\zeta(h)\le\1_{\M}-p_{{Q}_\alpha^k}$. Consequently,
\begin{align*}
	&\zeta(h)b_{d,Q_{\a}^k}(g)\chi_{B(h,2C_1\delta^{k+1})}(g)\zeta(h)\\
	&=\zeta(h)(\1_{\M}-p_{{Q}_\alpha^k})b_{d,Q_{\a}^k}(g)\chi_{B(h,2C_1\delta^{k+1})}(g)(\1_{\M}-p_{{Q}_\alpha^k})\zeta(h)=0
\end{align*}
The vanishing property $\zeta(h)b_{d,k}(g)\chi_{B(h,2C_1\delta^{k+1})}(g)\zeta(h)=0$ is established. Note that each $b_{\textit{off},Q_{\a}^k}$ is supported on ${Q}_{\a}^{k}$. Following the same methodology employed for $b_{d,Q_{\a}^{k}}$, we establish the vanishing property $\zeta(h)b_{\textit{off},Q_{\a}^k}(g)\chi_{B(h,2C_1\delta^{k+1})}(g)\zeta(h)=0$. It follows that the vanishing property of off-diagonal terms $\zeta(h)b_{\textit{off},k}
  	(g)\chi_{B(h,2C_1\delta^{k+1})}(g)\zeta(h)=0$ is established.

It remains to prove the assertion (ii) for the case $k\le k_2$. Fix $h$. Let $Q_{\a}^k$ denote the unique dyadic cube in $\mathcal{F}_k$ containing $h$. By the fact that $\zeta\le \1_{\N}-p_{{Q}_\alpha^k}\chi_{{Q}_\alpha^k}$, one can easily check that $\zeta(h)b_{d,Q_{\a}^k}(g)\chi_{Q_{\a}^k(h)}(g)\zeta(h)=\zeta(h)b_{\textit{off},Q_{\a}^k(h)}(g)\zeta(h)=0$. Hence $\zeta(h)b_{d,k}(g)\chi_{Q_{\a}^k(h)}(g)\zeta(h)=\zeta(h)b_{\textit{off},k}
  	(g)\chi_{Q_{\a}^k(h)}(g)\zeta(h)=0$. This completes the proof.
\end{proof}

\section{Proof Strategy of Theorem \ref{main-thm1}}
In the following, we focus on the proof of Theorem~\ref{main-thm1}. Motivated by the study of the variational inequalities~ (see e.g. \cite{Bour89}), the square function being considered can be split into the `long one' and `short one'. More precisely, for an interval $I_i=[r_i,r_{i+1})$, one has the following two cases:
\begin{enumerate}[(1)]
	\item [$\bullet$] Case 1: $I_i$ contains no `dyadic point' $\delta^k$, that is, for any $k\in\mathbb N$, $\delta^k\notin I_i$;
	\item [$\bullet$] Case 2: $I_i$ contains at least one `dyadic point' $\delta^k$ for $k\in\mathbb N$.
\end{enumerate}
Here $\delta>1$ is a constant depending on $\G$ that will be determined in Proposition~\ref{dyadic cube}. By above classification, we then divide the interval $I_i=[r_i,r_{i+1})$ into disjoint parts
\begin{equation}\label{divide}
	[r_i,r_{i+1})=[r_i,\tilde{r}_i)\cup[\tilde{r}_i,\tilde{\tilde{r}}_{i})\cup[\tilde{\tilde{r}}_{i},r_{i+1}),
\end{equation}
where $\tilde{r}_i$ and $\tilde{\tilde{r}}_{i}$ is determined by the law: if $I_i$ belongs to Case 1, set $\tilde{r}_i=\tilde{\tilde{r}}_{i}=r_{i+1}$; if $I_i$ belongs to Case 2, set $\tilde{r}_i=\delta^{k_i}:=\min\{\delta^{k}:\delta^k\in I_i\}$ and $\tilde{\tilde{r}}_{i}=\delta^{l_i}:=\max\{\delta^{k}:\delta^k\in \bar{I}_i\}$ in which $\bar{I_i}$ is the closure of $I_i$. According to~\eqref{divide}, we now introduce two collections of intervals with respect to $\{[r_i,r_{i+1})\}_i$:
\begin{enumerate}[(1)]
	\item [$\bullet$] ${\mathrm S}$ consists of all intervals $I_i$ belonging to Case 1, or $[{r}_{i},{\tilde{r}}_{i})$, $[{\tilde{\tilde{r}}}_{i},r_{i+1})$ in~\eqref{divide}. 
	\item [$\bullet$] ${\mathrm L}$ consists of all intervals $[\tilde{r}_i, \tilde{\tilde{r}}_{i})$ in~\eqref{divide}.
\end{enumerate}
Then by above decomposition of intervals and the quasi-triangle inequality for weak $L_1$-norm, we deduce that
\begin{equation*}
	\begin{split}
		&\|(M_{{r_{i}}}f-M_{{r_{i+1}}}f)_{i\in\mathbb N}\|_{L_{1,\8}(\mathcal{N};\ell_{2}^{rc})}\le 3\|(M_{{r_{i}}}f-M_{{\tilde{r}_i}}f)_{i\in\mathbb N}\|_{L_{1,\8}(\mathcal{N};\ell_{2}^{rc})}\\
		&+3\|(M_{{\tilde{r}_{i}}}f-M_{{\tilde{\tilde{r}}_{i}}}f)_{i\in\mathbb N}\|_{L_{1,\8}(\mathcal{N};\ell_{2}^{rc})}+3\|(M_{{\tilde{\tilde{r}}_{i}}}f-M_{{r_{i+1}}}f)_{i\in\mathbb N}\|_{L_{1,\8}(\mathcal{N};\ell_{2}^{rc})}\\
		&\leq3\|(M_{{\tilde{r}_i}}f-M_{{\tilde{\tilde{r}}_{i:}}}f)_{i:[\tilde{r}_i,\tilde{\tilde{r}}_{i})\in L}\|_{L_{1,\8}(\mathcal{N};\ell_{2}^{rc})}+6\|(M_{{s_{i}}}f-M_{{\tilde{s}_{i}}}f)_{i:[s_i,\tilde{s}_i)\in\mathrm S}\|_{L_{1,\8}(\mathcal{N};\ell_{2}^{rc})}.
	\end{split}
\end{equation*}

Consider the term $\|(M_{{\tilde{r}_i}}f-M_{{\tilde{\tilde{r}}_{i:}}}f)_{i:[\tilde{r}_i,\tilde{\tilde{r}}_{i})\in L}\|_{L_{1,\8}(\mathcal{N};\ell_{2}^{rc})}$ firstly. By the law, we wirte $[\tilde{r}_i,\tilde{\tilde{r}}_i)=[\delta^{k_i},\delta^{l_i})$, and decompose
\begin{equation*}
	M_{\delta^{k_i}}f-M_{\delta^{l_i}}f=M_{\delta^{k_i}}f-\mathsf{E}_{k_i}f+\mathsf{E}_{k_i}f-\mathsf{E}_{l_i}f+M_{\delta^{l_i}}f-\mathsf{E}_{l_i}f,
\end{equation*}
where $(\mathsf{E}_kf)_k$ is the sequence of conditional expectations defined in Section \ref{pre}. Consequently, there exists a sequence of positive integers $k_1<l_1\leq k_2<l_2< \dotsm\leq k_i<l_i\leq\dotsm$ such that
\begin{equation*}
	\begin{split}
		&\|(M_{{\tilde{r}_i}}f-M_{{\tilde{\tilde{r}}_{i:}}}f)_{i:[\tilde{r}_i,\tilde{\tilde{r}}_{i})\in L}\|_{L_{1,\8}(\mathcal{N};\ell_{2}^{rc})}\le
		3\|(M_{\delta^{k_i}}f-\mathsf{E}_{k_i}f)_i\|_{L_{1,\8}(\mathcal{N};\ell_{2}^{rc})}\\
		&+3\|(\mathsf{E}_{k_i}f-\mathsf{E}_{l_i}f)_i\|_{L_{1,\8}(\mathcal{N};\ell_{2}^{rc})}
		+3\|(M_{\delta^{l_i}}f-\mathsf{E}_{l_i}f)_i\|_{L_{1,\8}(\mathcal{N};\ell_{2}^{rc})}\\
		&\le 6\|(M_{\delta^n}f-\mathsf{E}_nf)_{n>n_{r_0}}\|_{L_{1,\8}(\mathcal N;\ell_2^{rc})}+3\|(\mathsf{E}_{k_i}f-\mathsf{E}_{l_i}f)_i\|_{L_{1,\8}(\mathcal{N};\ell_{2}^{rc})},
	\end{split}
\end{equation*}
where $n_{r_0}$ is the unique integer such that $\delta^{n_{r_0}}<r_0\leq \delta^{n_{r_0}+1}$ and the last inequality follows from the fact that $n\mapsto(\sum_{i=0}^n|x_i|^2)^{1/2}$ is increasing. It then follows that
\begin{equation}\label{decomp}
	\begin{split}
		&\|(M_{{r_{i}}}f-M_{{r_{i+1}}}f)_{i\in\mathbb N}\|_{L_{1,\8}(\mathcal{N};\ell_{2}^{rc})}\le 6\|(M_{{s_{i}}}f-M_{{\tilde{s}_{i}}}f)_{i:[s_i,\tilde{s}_i)\in\mathrm S}\|_{L_{1,\8}(\mathcal{N};\ell_{2}^{rc})}\\
		&+9\|(\mathsf{E}_{k_i}f-\mathsf{E}_{l_i}f)_i\|_{L_{1,\8}(\mathcal{N};\ell_{2}^{rc})}+18\|(M_{\delta^n}f-\mathsf{E}_nf)_{n>n_{r_0}}\|_{L_{1,\8}(\mathcal N;\ell_2^{rc})}.
	\end{split}
\end{equation}

By using the above argument, it is not difficult to see that~\eqref{decomp} also holds for $\|\cdot\|_{L_p(\mathcal{N}; \ell_{2}^{rc})}$ and $\|\cdot\|_{\mathrm{BMO}_{d}(\mathcal{A})}$ via the triangle inequalities with possibly different constants. On the other hand,  $(\mathsf{E}_{k_i}f-\mathsf{E}_{l_{i}}f)_{{i}}$ forms a new sequence of martingale differences, and the analogue of Theorem~\ref{main-thm1} for martingale differences have already been established in \cite{Ran1}, see also \cite{PX,Ran2}.

Therefore, it is sufficient to estimate the `long one'  $(M_{\delta^n}f-\mathsf{E}_nf)_{n>n_{r_0}}$ and `short one' $(M_{{s_{i}}}f-M_{{\tilde{s}_{i}}}f)_{i:[s_i,\tilde{s}_i)\in\mathrm S}$. {We begin by addressing the `long one' and proceed to derive a more general result.}
\begin{thm}\label{long-ineq}
	Let $1\le p\le \8$. Define the operator 
	$$Lf=\sum_{n\in\mathbb{N}} v_n(M_{\delta^{n+n_{r_0}}}f-\mathsf{E}_{n+n_{r_0}}f),$$
	 where $(v_n)_{n\in\mathbb{N}}$ is the sequence of bounded numbers in $\ell_{\infty}$. Then the following assertions hold with a positive constant $C_p$ independent of $L_n$ and $f$:
	\begin{enumerate}[\noindent]
		\item~\emph{(i)} for $p=1$, $$\|Lf\|_{L_{1,\infty}(\mathcal{N})}\le  C_{p}\|f\|_{1},\; \forall f\in L_{1}(\mathcal N);$$
		\item~\emph{(ii)} for $p=\infty$,
		\begin{align*}
			\|Lf\|_{\mathrm{BMO}_{d}(\mathcal{N})}\le  C_{p}\,\|f\|_\infty,\; \forall f\in L_{\infty}(\mathcal N);
		\end{align*}
		\item~\emph{(iii)} for $1<p<\infty$, $$\|Lf\|_{L_p(\mathcal{N})}\le  C_{p} \|f\|_{p}, \; \forall f\in L_{p}(\mathcal N).$$
	\end{enumerate}
\end{thm}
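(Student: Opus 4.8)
The plan is to regard the operator in question as $L=\sum_{m}v_mD_m$, where $D_m:=M_{\delta^{m}}-\mathsf E_{m}$ (after reindexing, $m>n_{r_0}$) and $\sup_m|v_m|\le1$, and to treat it as a semi-commutative singular integral on the non-homogeneous space $(\G,d,m)$ with scalar kernel
\[
K(h,g)=\sum_{m}v_m\Bigl(\frac{\chi_{B(h,\delta^{m})}(g)}{m(B(h,\delta^{m}))}-\frac{\chi_{Q_m(h)}(g)}{m(Q_m(h))}\Bigr),
\]
$Q_m(h)$ being the level-$m$ `dyadic cube' containing $h$. Since $B(z_\alpha^m,a_0\delta^m)\subseteq Q_\alpha^m\subseteq B(z_\alpha^m,C_1\delta^m)$ (Proposition~\ref{dyadic cube}(iv)), $D_m$ only ``sees'' the boundary strips of the level-$m$ cubes, and this is the source of all the decay. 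For the $L_2$-boundedness of $L$ (uniform in $(v_m)$) I would decompose $f=\sum_l d_l$ into dyadic martingale differences $d_l=\mathsf E_lf-\mathsf E_{l+1}f$ relative to $(\mathcal F_k)$ and prove the quasi-orthogonality estimate $\|D_md_l\|_2\lesssim\delta^{-c|m-l|}\|d_l\|_2$ for some $c>0$: if $m\ge l+1$ then $\mathsf E_md_l=0$ and, since $d_l$ has vanishing $\mathsf E_{l+1}$-average, $M_{\delta^m}d_l=M_{\delta^m,l+1}d_l$, so \eqref{L-ineq1} supplies the gain; if $m\le l$ then $\mathsf E_md_l=d_l$ and $M_{\delta^m}d_l-d_l$ is supported on the sets $\mathcal H(B_{\delta^m},Q^l_\alpha)$, whose measures are bounded by Proposition~\ref{measure estimate of cube}, so Lemma~\ref{aver} again supplies the gain (the finitely many out-of-range scales are absorbed by the trivial bounds in Lemma~\ref{aver} and \eqref{mar}). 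Upgrading this to $\|D_m^*D_{m'}\|_{L_2\to L_2}+\|D_mD_{m'}^*\|_{L_2\to L_2}\lesssim\delta^{-c|m-m'|}$, by the same boundary and annular estimates applied to the four compositions of $M_{\delta^m},M_{\delta^{m'}},\mathsf E_m,\mathsf E_{m'}$, a Cotlar--Stein argument gives $\|L\colon L_2(\mathcal N)\to L_2(\mathcal N)\|\lesssim1$.

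\textbf{Weak type $(1,1)$.} By density and scaling I would fix $\lambda>0$ and $f\in\mathcal{N}_{c,+}$, and apply the non-doubling noncommutative Calder\'on--Zygmund decomposition $f=g+b_d+b_{\textit{off}}$ of Proposition~\ref{CZ}. For the good part, Chebyshev's inequality and the $L_2$-bound give $\varphi(\chi_{(\lambda,\infty)}(|Lg|))\le\lambda^{-2}\|Lg\|_2^2\lesssim\lambda^{-2}\|g\|_2^2\le6\lambda^{-1}\|f\|_1$. For the bad parts I would introduce the Cuculescu projection $q$ (Lemma~\ref{Cu}) and the auxiliary projection $\zeta$ of Lemma~\ref{supp lemma}; since $\varphi(\1_{\mathcal N}-q\wedge\zeta)\lesssim\lambda^{-1}\|f\|_1$, it remains to bound $\|\zeta\,Lb_d\,\zeta\|_1$ and $\|\zeta\,Lb_{\textit{off}}\,\zeta\|_1$ by $\|f\|_1$. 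The pseudo-localisation Lemma~\ref{supp lemma}(ii) forces $\zeta(h)D_mb_{d,k}(h)\zeta(h)=\zeta(h)D_mb_{\textit{off},k}(h)\zeta(h)=0$ unless $\delta^m\gtrsim\delta^{k}$; for the surviving scales, the cancellations $\int b_{d,Q^k_\alpha}=\int b_{\textit{off},Q^k_\alpha}=0$ (Proposition~\ref{CZ}(iii)--(iv)) together with a H\"ormander-type bound on $K$ — which in this non-doubling, non-smooth setting is furnished by Propositions~\ref{boundary} and \ref{measure estimate of annulus} rather than by smoothness — produce a summable factor $\delta^{-c(m-k)}$, and $\sum_k\sum_\alpha\|b_{d,Q^k_\alpha}\|_1\le2\|f\|_1$ closes the estimate for $b_d$; the off-diagonal part $b_{\textit{off}}$ is handled analogously, but additionally requires an $L_2$-argument based on the Cuculescu projections (as in \cite{Cad-Alo-Par21}) to compensate for the absence of an $L_1$-bound. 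Adding the column and row contributions gives (i).

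\textbf{BMO estimate and interpolation.} For (ii), fix a level-$k$ cube $Q=Q^k_\alpha$ and write $f=f\chi_{Q^*}+f\chi_{(Q^*)^c}$ with $Q^*$ a fixed dilate of $Q$; the local piece reduces, via the $L_2$-bound and $\|f\chi_{Q^*}\|_2^2\le m(Q^*)\|f\|_\infty^2$, to the comparability $m(Q^*)\approx m(Q)$, which holds by \eqref{int}. For the far piece $f_2$: when $m\le k$, $D_mf_2$ vanishes on $Q$ outside the strip $\{h\in Q:d(h,\G\setminus Q)\lesssim\delta^m\}$, of measure $\lesssim\delta^{-(k-m)\eta}m(Q)$ (Proposition~\ref{boundary}), while $\|D_mf_2(h)\|_{\mathcal M}\le2\|f\|_\infty$ there; when $m>k$, $\mathsf E_mf_2$ is constant on $Q$ and $M_{\delta^m}f_2$ oscillates on $Q$ by $\lesssim\delta^{-\epsilon(m-k)}\|f\|_\infty$ (Proposition~\ref{lem:decay}). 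Summing the square contributions over $m$ and adding the analogous row estimate bounds $m(Q)^{-1}\int_Q|Lf-(Lf)_Q|^2\,dm$ and its row analogue by $\|f\|_\infty^2\1_{\mathcal M}$, which is (ii). Finally (iii) follows by interpolating (i), (ii) and the $L_2$-bound — either through the standard noncommutative interpolation between $L_1(\mathcal N)$ and $\mathrm{BMO}_{d}(\mathcal N)$, or by Marcinkiewicz interpolation of (i) with $L_2$ for $1<p<2$ and duality for $2<p<\infty$, noting that $L^*=\sum_m v_m(M_{\delta^m}^*-\mathsf E_m)$ obeys the same estimates because $m(B(h,r))\approx m(B(g,r))$ whenever $d(h,g)\le r$, again by \eqref{int}.

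\textbf{Main obstacle.} The crux is the weak type $(1,1)$ estimate: the non-doubling geometry removes the usual ``dilate-the-cube'' localisation, so all decay over scales must be extracted from the boundary and annular estimates of Propositions~\ref{boundary}--\ref{measure estimate of annulus}; the kernel $K$ is genuinely non-smooth (built from normalised indicators of balls and cubes), so the H\"ormander condition has to be verified directly in this form; and in the noncommutative setting one cannot restrict to a sublevel set but must work throughout with the projections $q$ and $\zeta$ and the pseudo-localisation property of Lemma~\ref{supp lemma}(ii), with the off-diagonal term $b_{\textit{off}}$ demanding its own $L_2$-type treatment.
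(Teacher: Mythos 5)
Your proposal is correct and follows essentially the same route as the paper: Calder\'on--Zygmund decomposition with the Cuculescu projections and the localizing projection $\zeta$ of Lemma~\ref{supp lemma} for the weak type $(1,1)$ bound, an almost-orthogonality $L_2$ estimate driven by Propositions~\ref{boundary}, \ref{measure estimate of cube} and \ref{measure estimate of annulus}, a local/far splitting with the annular decay property for the $\mathrm{BMO}$ bound, and interpolation (real with $L_2$ for $1<p<2$, complex with $\mathrm{BMO}$ for $2<p<\infty$) for (iii). Two small cautions: for the long operator the paper handles $b_{\textit{off}}$ by an $L_1$ estimate using H\"older together with the Cuculescu bound $\|q_kf_kq_k\|\le\lambda$ (the genuine $L_2$ almost-orthogonality argument is reserved for the short square function), and your optional duality shortcut for $p>2$ is exactly the argument the paper points out is ineffective in this non-doubling setting, so the $L_2$--$\mathrm{BMO}$ interpolation should be regarded as the actual proof of (iii).
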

{It remains to consider the `short one'.} By an abuse of notation, we rewrite the sequence $\{s_0,\tilde{s}_0,s_1,\tilde{s}_1,\cdots,s_i,\tilde{s}_i,\cdots\}$ in order as $\{s_1,s_2,\cdots,s_i,s_{i+1},\cdots\}$; we then denote the collection of such intervals $[s_i,s_{i+1})$ as $\mathrm S$. We abbreviate each interval $[s_i,s_{i+1})$ as $i$ and denote the collection of all such $i$ by $\mathcal S$. Let $T_i=M_{s_i}-M_{s_{i+1}}$, and let $(T_if)_{i\in\mathcal S}$ denote $(M_{s_i}f-M_{s_{i+1}}f)_{i:[s_i,s_{i+1})\in\mathrm S}$. Since the mapping $n\mapsto(\sum_{k=1}^n|x_k|^2)^{1/2}$ is increasing, it suffices to show the following result.
\begin{thm}\label{short-ineq}
	Let $1\le p\le \8$. Let $T_i$ and $\mathcal S$ defined as above. Then the following assertions hold with a positive constant $C_p$ independent of $T_i$ and $f$:
	\begin{enumerate}[\noindent]
		\item~\emph{(i)} for $p=1$, $$\|(T_if)_{i\in \mathcal S}\|_{L_{1,\infty}(\mathcal{N};
			\ell_{2}^{rc})}\le  C_{p}\|f\|_{1},\; \forall f\in L_{1}(\mathcal N);$$
		\item~\emph{(ii)} for $p=\infty$,
		\begin{align*}
			\Big\|\sum_{i\in\mathcal S} T_if\otimes e_{1i}\Big\|_{\mathrm{BMO}_{d}(\mathcal{A})}&+\Big\|\sum_{i\in\mathcal S} T_if \otimes e_{i1}\Big\|_{\mathrm{BMO}_{d}(\mathcal{A})}\le  C_{p}\,\|f\|_\infty,\; \forall f\in L_{\infty}(\mathcal N);
		\end{align*}
		\item~\emph{(iii)} for $1<p<\infty$, $$\|(T_if)_{i\in \mathcal S}\|_{L_p(\mathcal{N};
			\ell_{2}^{rc})}\le  C_{p} \|f\|_{p}, \; \forall f\in L_{p}(\mathcal N).$$
	\end{enumerate}
\end{thm}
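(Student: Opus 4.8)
The plan is to deduce part (iii) from the two endpoint estimates (i) and (ii) by interpolation in the noncommutative $L_p(\mathcal N;\ell_2^{rc})$ versus $\mathrm{BMO}$ scale (cf. \cite{JMX,Ran1}), so the real content lies in the weak type $(1,1)$ bound and the $\mathrm{BMO}$ bound. Both are Calder\'on--Zygmund type arguments adapted to the non-doubling geometry, and both use as a preliminary ingredient the auxiliary estimate $\|(T_if)_{i\in\mathcal S}\|_{L_2(\mathcal N;\ell_2^{rc})}\lesssim\|f\|_2$. To establish it, I would group $\mathcal S$ into the blocks $\mathcal S_n$ of indices $i$ with $[s_i,s_{i+1})\subseteq[\delta^{n},\delta^{n+1})$; inside a block the splitting \eqref{aver-in} writes $T_if(h)$ as a sum of a coefficient-difference term and a thin-annulus term, and the disjointness of the annuli $B(h,s_{i+1})\setminus B(h,s_i)$ together with Proposition~\ref{lem:decay}, the volume comparison \eqref{int} and the $L_2$ estimates for the truncated averages of Section~\ref{pre} bound $\sum_{i\in\mathcal S_n}\|T_if\|_2^2$. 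The blocks must then be summed using an almost-orthogonality argument across dyadic scales — comparing the averages $M_{\delta^{n}}$ with the conditional expectations $\mathsf E_{n}$ and invoking the martingale square-function inequalities, as in Theorem~\ref{long-ineq} and in \cite{HLW,HLX} — and it is precisely here that the annular decay \eqref{decay property} contributes the geometric-series factor making the sum over $n$ converge. This orthogonality step is the subtlest point of the $L_2$ bound.

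For the weak type $(1,1)$ estimate (i), I would linearize by the noncommutative Khintchine inequality in $L_{1,\infty}$ (Proposition~\ref{nonkin}), reducing matters to $\big\|\sum_i\varepsilon_iT_if\big\|_{L_{1,\infty}(L_\infty(\Omega)\overline{\otimes}\mathcal N)}$, and then apply the non-doubling noncommutative Calder\'on--Zygmund decomposition of Proposition~\ref{CZ} at height $\lambda$, $f=g+b_d+b_{\mathit{off}}$. The good part is dispatched by the $L_2$ estimate and Chebyshev, since $\|g\|_2^2\lesssim\lambda\|f\|_1$. For the bad parts one uses the projection $\zeta$ of Lemma~\ref{supp lemma}: its complement satisfies $\varphi(\mathbf 1-\zeta)\lesssim\|f\|_1/\lambda$, and on $\zeta\mathcal N\zeta$ one shows $\|(\zeta T_ib\,\zeta)_i\|_{L_{1,\infty}(\mathcal N;\ell_2^{rc})}\lesssim\|f\|_1$. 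Here the atoms $b_{d,Q_\alpha^k}$, $b_{\mathit{off},Q_\alpha^k}$ whose cube scale $\delta^{k}$ is comparable to or larger than the band scale $\delta^{n}$ of the relevant $T_i$ are annihilated by the localization property Lemma~\ref{supp lemma}(ii), while for atoms at the smaller scales $\delta^{k}\ll\delta^{n}$ one exploits the cancellation $\int_G b_{\bullet,Q_\alpha^k}=0$ together with the regularity of the kernel of $T_i$ on mean-zero data; this regularity is what Propositions~\ref{boundary}, \ref{measure estimate of cube} and~\ref{measure estimate of annulus} were designed to provide, yielding a decay of order $\delta^{(k-n)\eta}$ (or $\delta^{\epsilon(k-n)}$) that is summable and compatible with the $\ell_2$ index.

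For the $\mathrm{BMO}$ estimate (ii), the decisive simplification is that each $T_i=M_{s_i}-M_{s_{i+1}}$ annihilates constants, so for a fixed dyadic cube $Q$ one may replace $f$ by $f-f_{\widehat Q}$ on a dilate $\widehat Q$. Splitting $f-f_{\widehat Q}=(f-f_{\widehat Q})\chi_{\widehat Q}+(f-f_{\widehat Q})\chi_{\widehat Q^{c}}$, the local term is controlled by the $L_2$ estimate, since $\|(f-f_{\widehat Q})\chi_{\widehat Q}\|_2^2\lesssim m(Q)\|f\|_\infty^2$ and hence $m(Q)^{-1}\sum_i\|T_i((f-f_{\widehat Q})\chi_{\widehat Q})\|_2^2\lesssim\|f\|_\infty^2$; the far term is handled by a H\"older-type regularity estimate for the kernels of $T_i$, namely that for $h,h'\in Q$ and $g$ outside $\widehat Q$ the double difference of $K_{s_i}-K_{s_{i+1}}$ is small, again via the boundary Propositions~\ref{boundary}, \ref{measure estimate of cube}, \ref{measure estimate of annulus} and the annular decay, the crucial feature being the extra factor $\delta^{(j-n)}$ (with $\delta^{j}$ the scale of $Q$) that renders the sum over $n\ge j$ convergent. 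The row version is obtained symmetrically. With (i) and (ii) in hand, (iii) follows by interpolation.

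The step I expect to be the main obstacle is the far-field/kernel-regularity analysis of the averaging differences $T_i$ in the absence of a doubling measure: there is no smooth convolution kernel, only the quantitative boundary information of Propositions~\ref{boundary}, \ref{measure estimate of cube}, \ref{measure estimate of annulus}, and these estimates must simultaneously be strong enough to overcome the accumulation over the infinitely many small scales $\delta^{k}\ll\delta^{n}$ and structured enough to survive the passage to the mixed $\ell_2^{rc}$ norm through the $L_{1,\infty}$ Khintchine inequality. Coordinating this scale-by-scale decay with the Calder\'on--Zygmund projection $\zeta$ — checking that the ``large-scale'' atoms are genuinely killed and that the bookkeeping of bands $n$ against cube levels $k$ closes — together with the cross-scale orthogonality needed for the underlying $L_2$ bound, is where the technical weight of the proof is concentrated.
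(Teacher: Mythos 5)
Your skeleton — Khintchine linearization, the non-doubling Calder\'on--Zygmund decomposition of Proposition~\ref{CZ}, the projection $\zeta$ of Lemma~\ref{supp lemma}, Chebyshev plus the $L_2$ square-function bound for the good part, a local/far splitting for BMO, and interpolation for (iii) — is the same as the paper's. The genuine gap is your treatment of the off-diagonal bad part. You propose to handle $b_{\mathit{off}}$ exactly like $b_d$: annihilate the large scales via Lemma~\ref{supp lemma}(ii) and use the cancellation $\int_G b_{\mathit{off},Q_\alpha^k}=0$ together with boundary/annular-decay regularity to get a summable $L_1$ decay in the scale gap. This cannot close, because unlike the diagonal part, for which Proposition~\ref{CZ}(iii) gives $\sum_{k,\alpha}\|b_{d,Q_\alpha^k}\|_1\le 2\|f\|_1$, there is no estimate of the form $\sum_k\|b_{\mathit{off},k}\|_1\lesssim\|f\|_1$: Cauchy--Schwarz only yields $\|q_kfp_k\|_1\le\|f\|_1^{1/2}\|p_kfp_k\|_1^{1/2}$, which is not summable in $k$, so a per-atom $L_1$ bound with geometric decay has nothing to be summed against. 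The paper's proof of \eqref{weak-seq2} therefore abandons the $L_1$/cancellation route for the short square function altogether: it works in $L_2$, invokes the almost-orthogonality Lemma~\ref{L1}, and crucially uses the Cuculescu structure — $\|q_{k+n}f_{k+n}q_{k+n}\|_{\M}\le\lambda$, giving $|q_{k+n}g_{k+n}p_{k+n}|^2\le\lambda\,p_{k+n}f_{k+n}p_{k+n}$ as in \eqref{boff-2}, combined with $\sum_k\|p_kfp_k\|_1\le\|f\|_1$ — to obtain $\sum_{n>n_2}\sum_{i\in\mathcal S_n}\|\zeta T_ib_{\mathit{off}}\zeta\|_2^2\lesssim\lambda\|f\|_1$. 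The extra factor $\lambda$ coming from Cuculescu's construction, not the mean-zero property of the atoms, is what makes the off-diagonal term work, and it is absent from your plan. (In the long-one case, Theorem~\ref{long-ineq}, the paper's alternative is a weighted Cauchy--Schwarz using $\lambda^{1/2}\|p_{k+n}\|_1^{1/2}\|p_{k+n}fp_{k+n}\|_1^{1/2}$ — again not a cancellation argument.)

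Two further points in your BMO sketch need real work. First, a per-index oscillation bound for the far part is insufficient: a band $\mathcal S_n$ may contain arbitrarily many indices, so one must bound the \emph{square sum} over $i\in\mathcal S_n$ by $(\delta^{k+1}/\delta^{n})^{\epsilon}$; the paper does this in \eqref{S-BMO-1} by splitting $\mathcal S_n$ into small-increment indices $J_1$ (controlled by a telescoping integral estimate based on the annular decay \eqref{decay property}) and the at most $(\delta-1)\delta^{\epsilon n}/(2C_1\delta^{k+1})^{\epsilon}$ large-increment indices $J_2$. Second, the row estimate is not obtained ``symmetrically'': $\big|\sum_i T_i(f_1^*)\otimes e_{1i}\big|^2=\sum_{i,j}T_i(f_1)T_j(f_1^*)\otimes e_{ij}$ is not diagonal, and its local part requires the duality computation against $\ell_2(L_2(\M))$ via the $L_2$ square-function bound, as in the proof of \eqref{SBMO-1}. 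Your interpolation step for (iii) does match the paper (real interpolation with $L_2$ below $2$, complex interpolation with BMO above $2$), but as it stands the weak type $(1,1)$ input is not established.
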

\medskip

\begin{remark}{\rm
In Theorem~\ref{long-ineq}, when $(v_n)$ is a Rademacher sequence $(\varepsilon_n)$, then by the noncommutative Khintchine inequalities (see Proposition~\ref{nonkin}), Theorem~\ref{long-ineq}(i) and (iii) immediately yield the following two inequalities:
$$\|(M_{\delta^n}f-\mathsf{E}_nf)_{n>n_{r_0}}\|_{L_{1,\8}(\mathcal N;\ell_2^{rc})}\lesssim \|f\|_{1} \quad \mbox{and} \quad \|(M_{\delta^n}f-\mathsf{E}_nf)_{n>n_{r_0}}\|_{L_{p}(\mathcal N;\ell_2^{rc})}\lesssim \|f\|_{p}.$$
We also note that while Theorem \ref{long-ineq}(ii) does not provide the $\mathrm{BMO}$ estimate for the `long one' $(M_{\delta^n}f-\mathsf{E}_nf)_{n>n_{r_0}}$. Although this estimate is similar to the one in Theorem~\ref{short-ineq}(ii), a detailed analysis will be presented in Subsection~\ref{BMO-est}.

Theorem \ref{long-ineq} was first established by Jones and Rosenblatt \cite[Theorem 2]{JR02} in the commutative setting. Later, the fourth author extended this result to the operator-valued setting for the translation action of $\mathbb{R}^d$ \cite{Xu23}. Theorem \ref{short-ineq} originated from the work of the first, second, and fourth authors on quantitative mean ergodic inequalities \cite{HLX}, building on Jones et al.'s study of square function inequalities for ergodic averaging \cite{JOR96}. These two theorems significantly refine and generalize these earlier results.
}
\end{remark}

The proof strategy for Theorem \ref{long-ineq} and \ref{short-ineq}  draws on ideas from \cite{HLX,Xu23}. However, conditions \eqref{decay property} and \eqref{geo-doubling} do not guarantee that the space $(G,d,m)$ is a doubling metric measure space, which introduces new challenges. For instance, the dual argument used to establish $L_p$ estimation for high-index $2<p<\infty$ in \cite{Xu23} is ineffective.
The `dyadic cubes' constructed in Proposition \ref{dyadic cube} may lack the small boundary property (cf.~\cite[Theorem 11]{Christ90}. Moreover,  the noncommutative Calder\'{o}n-Zygmund decomposition for homogeneous spaces becomes insufficient. Consequently, our analysis necessitates a careful geometric treatment of cubes and balls, in conjunction with the noncommutative non-doubling Calder\'{o}n-Zygmund theory.

\section{Proof of Theorem \ref{long-ineq} and \ref{short-ineq}: Weak type $(1,1)$ estimates}\label{weak-type}
In this section, we prove the weak type $(1,1)$ estimates for Theorem~\ref{long-ineq} and \ref{short-ineq}. We start by proving the estimate for the `long one', namely Theorem~\ref{long-ineq}, as the corresponding result for Theorem~\ref{short-ineq} (the `short one') can be derived analogously to the `long one' via the following lemma.
		\begin{lem}\label{kin1-weak}
		 Let $(\varepsilon_i)_{i\in\mathcal{S}}$ be a Rademacher sequence on a fixed probability space $(\Omega,P)$ and $T_i$ and $\mathcal{S}$ be defined in Theorem~\ref{short-ineq}. Define
		$$\mathcal{T}f=\sum_{i\in \mathcal{S}}\varepsilon_iT_{i}f.$$
		Then
		$$\|(T_if)_{i\in \mathcal{S}}\|_{L_{1,\infty}(\mathcal{N};\ell_{2}^{cr})}\thickapprox
		\|\mathcal{T}f\|_{L_{1,\infty}(L_{\infty}(\Omega)\overline{\otimes}\mathcal N)},$$
		where $L_{\infty}(\Omega)\overline{\otimes}\mathcal N$ is a tensor von Neumann algebra equipped with trace $\widetilde{\varphi}=\varphi\otimes P$.
	\end{lem}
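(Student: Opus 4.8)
The plan is to obtain this as an immediate consequence of the noncommutative Khintchine inequality in the weak space $L_{1,\infty}$, which is precisely the statement appended to Proposition~\ref{nonkin} (originally due to Cadilhac~\cite{C1}): for any \emph{finite} sequence $(u_i)_{i\in F}$ in $L_{1,\infty}(\mathcal N)$ one has
$$c\,\|(u_i)_{i\in F}\|_{L_{1,\infty}(\mathcal N;\ell_2^{rc})}\le\Big\|\sum_{i\in F}\varepsilon_i u_i\Big\|_{L_{1,\infty}(L_\infty(\Omega)\overline{\otimes}\mathcal N)}\le c'\,\|(u_i)_{i\in F}\|_{L_{1,\infty}(\mathcal N;\ell_2^{rc})}$$
with absolute constants $c,c'>0$.

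First I would fix a finite subset $F\subseteq\mathcal S$ and apply the above to $u_i=T_if$ for $i\in F$; recalling that $\ell_2^{cr}=\ell_2^{rc}$ as mixed spaces (the two notations denote the same sum/intersection space), this gives exactly the asserted equivalence with $\mathcal Tf$ replaced by the partial sum $\sum_{i\in F}\varepsilon_iT_if$ and $(T_if)_{i\in\mathcal S}$ replaced by $(T_if)_{i\in F}$. To remove the finiteness restriction I would invoke the convention of Remark~\ref{app}: both $\|\mathcal Tf\|_{L_{1,\infty}(L_\infty(\Omega)\overline{\otimes}\mathcal N)}$ and $\|(T_if)_{i\in\mathcal S}\|_{L_{1,\infty}(\mathcal N;\ell_{2}^{cr})}$ are understood as the suprema of the corresponding expressions over finite truncations of $\mathcal S$, so that taking $\sup_{F\subseteq\mathcal S}$ in the finite estimate yields the claim with the same constants $c,c'$.

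I do not anticipate a genuine obstacle: once Cadilhac's weak-type Khintchine inequality is available, the lemma is essentially a formality. The only points requiring a word of care are that $L_{1,\infty}$ is a quasi-Banach (not Banach) space, so that the passage to infinite $\mathcal S$ should be phrased through the finite-truncation supremum rather than through norm convergence, and that the bookkeeping of the $\ell_{2}^{rc}$/$\ell_{2}^{cr}$ conventions is kept consistent throughout.
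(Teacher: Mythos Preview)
Your proposal is correct and is essentially the paper's own argument: the paper states that the lemma ``is an immediate consequence of the noncommutative Khintchine inequalities (Proposition~\ref{nonkin})'', whose last sentence records Cadilhac's $L_{1,\infty}$ version. Your extra care with the finite-truncation convention of Remark~\ref{app} and the $\ell_2^{rc}/\ell_2^{cr}$ bookkeeping is appropriate but does not add any new ingredient beyond what the paper invokes.
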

	The aforementioned lemma is an immediate consequence of the noncommutative Khintchine inequalities (Proposition \ref{nonkin}). 
	
The procedure to show weak type $(1,1)$ estimates is now standard. Let $f\in L_1(\N)$. Since $f$ can be decomposed into with linear combination four positive elements, namely $f=f_1-f_2+i(f_3-f_4)$ with $f_j\ge 0$ and $\|f_j\|_1\le\|f\|_1$ for $j=1,2,3,4$,  it is enough to prove weak type $(1,1)$ estimates by assuming that $f$ is positive. Moreover, $\N_{c,+}$ is dense in $L_1(\N)_+$, so we may just consider $f\in \N_{c,+}$. Now fix $f\in \N_{c,+}$ and $\lambda>0$. In what follows, the symbol $T$ will denote either the operator $L$ or $\mathcal{T}$, while ${\varphi}_1$ will denote either the trace $\varphi$ or $\widetilde{\varphi}$. By combining the distribution inequality~\eqref{distrib} with the noncommutative Calder\'on-Zygmund decomposition (Proposition~\ref{CZ}), we conclude that
	\begin{equation*}
		{\varphi}_1(\chi_{(\lambda,\8)}(|Tf|))\le {\varphi}_1(\chi_{(\lambda/3,\8)}(|Tg|))+{\varphi}_1(\chi_{(\lambda/3,\8)}(|Tb_d|))
		+{\varphi}_1(\chi_{(\lambda/3,\8)}(|Tb_{\it{off}}|)).
	\end{equation*}
	Therefore, by Lemma \ref{kin1-weak}, the weak type $(1,1)$ estimate of $(T_if)_i$ reduces to prove
	\begin{equation}\label{weak}
		\begin{split}
			&{\varphi}_1(\chi_{(\lambda,\8)}(|Tb_d|))\lesssim \frac{\|f\|_1}{\lambda}\\
			&{\varphi}_1(\chi_{(\lambda,\8)}(|Tb_{\it{off}}|))\lesssim \frac{\|f\|_1}{\lambda}\\
			&{\varphi}_1(\chi_{(\lambda,\8)}(|Tg|))\lesssim \frac{\|f\|_1}{\lambda}.
		\end{split}
	\end{equation}

\subsection{Proof of Theorem \ref{long-ineq}(i)}\quad

\medskip
We begin with the following estimate: Let $L_n=M_{\delta^{n+n_{r_0}}}-\mathsf{E}_{n+n_{r_0}}$ and $n_2=\max\{n_{r_0}, n_1, k_2\}$, where $k_2$ is the constant defined in Lemma \ref{supp lemma}. Then we have
\begin{equation}\label{weak-ineq1}
\begin{split}
	\Big\|\sum_{0<n\le n_2-n_{r_0}}v_n L_nf\Big\|_
	{L_{1,\8}(L_{\infty}(\Omega)\overline{\otimes}\mathcal N)}\lesssim\|f\|_1.
	\end{split}
\end{equation}
It is clear that
\begin{equation*}
	\Big\|\sum_{0<n\le n_2-n_{r_0}}v_n L_nf\Big\|_
	{L_{1,\8}(L_{\infty}(\Omega)\overline{\otimes}\mathcal N)}\le  \Big\|\sum_{0<n\le n_2-n_{r_0}}v_n L_nf\Big\|_
	{L_{1}(L_{\infty}(\Omega)\overline{\otimes}\mathcal N)},
\end{equation*}
and using the triangle inequality for $L_1$ norm $\|\cdot\|_{L_{1}(L_{\infty}(\Omega)\overline{\otimes}\mathcal N)}$, we get
\begin{align*}
	\Big\|\sum_{0<n\le n_2-n_{r_0}}v_n L_nf\Big\|_
	{L_{1}(L_{\infty}(\Omega)\overline{\otimes}\mathcal N)}&\le \|(v_n)_n\|_{\ell^\infty}\sum_{0<n\le n_2-n_{r_0}}\|L_nf\|_1\\
	&\le (n_2-n_{r_0})(D+1)\|(v_n)_n\|_{\ell^\infty}\|f\|_1,
\end{align*}
where the last inequality follows from Lemma \ref{aver} and \eqref{mar}. Hence, \eqref{weak-ineq1} is proved.

By \eqref{weak-ineq1}, we can restrict our analysis to the operator $Lf=\sum_{n>n_2}v_n(M_{\delta^{n}}f-\mathsf{E}_{n}f)$. Furthermore, according to \eqref{weak}, it remains to demonstrate that
\begin{align}
	&\label{weak-eq1}{\varphi}(\chi_{(\lambda,\8)}(|Lb_d|))\lesssim \frac{\|f\|_1}{\lambda}\\
	&\label{weak-eq2}{\varphi}(\chi_{(\lambda,\8)}(|Lb_{\it{off}}|))\lesssim \frac{\|f\|_1}{\lambda}\\
	&\label{weak-eq3}{\varphi}(\chi_{(\lambda,\8)}(|Lg|))\lesssim \frac{\|f\|_1}{\lambda}.
\end{align}

\subsubsection{\bf{Weak type estimate for diagonal terms $b_d$}: \eqref{weak-eq1}}\quad

\medskip

Using the projection $ \zeta$ introduced in Lemma~\ref{supp lemma}, we decompose $Lb_{d}$ in the following way
$$Lb_{d} = (\1_\mathcal{N} - \zeta) Lb_{d} (\1_\mathcal{N} - \zeta) + \zeta Lb_{d} (\1_\mathcal{N}-\zeta)
+ (\1_\mathcal{N}-\zeta)Lb_{d}\zeta + \zeta Lb_{d}\zeta.$$
Lemma~\ref{supp lemma}(i) shows 
\begin{equation}\label{bd-in}
	\begin{split}
		{\varphi} \big(\chi_{(\lambda,\infty)}(|Lb_{d}|)\big)\lesssim&\ \varphi (\1_\mathcal{N}-\zeta) +{\varphi} \big(\chi_{(\lambda/4,\infty)}(|\zeta Lb_{d}\zeta|)\big)\\
		\lesssim&\ \frac{\|f\|_{1}}{\lambda}+{\varphi} \big(\chi_{(\lambda/4,\infty)}(|\zeta Lb_{d}\zeta|)
		\big).
	\end{split}
\end{equation}
Consequently, it suffices to establish the following bound
\begin{align}\label{bd}
	{\varphi} \big(\chi_{(\lambda,\infty)}(|\zeta Lb_{d}\zeta|)\big)\lesssim\frac{ \|f\|_{1}}{\lambda}.
\end{align}

\begin{proof}[Proof of estimate~\eqref{bd}]
Observe that
	\begin{align*}
		{\varphi} \big(\chi_{(\lambda,\infty)}(|\zeta Lb_{d}\zeta|)\big)&\le\frac{\|\zeta Lb_{d}\zeta\|_{L_{1}(L_{\infty}(\Omega)\overline{\otimes}\mathcal N)}}{\lambda}\\
		&\le \frac{\|(v_n)_n\|_{\ell^{\infty}}\sum_{n>n_{2}}\|\zeta (M_{\delta^n}b_d-\mathsf{E}_nb_d)\zeta\|_1}{\lambda}.
	\end{align*}
To complete the proof, it is sufficient to demonstrate the equality \begin{equation}\label{b-d-es}
		\sum_{n>n_{2}}\|\zeta(M_{\delta^n}b_d-\mathsf{E}_nb_d)\zeta\|_1\lesssim \|f\|_1.
	\end{equation}
Before establishing \eqref{b-d-es}, we first claim that
	\begin{equation}\label{canc-1}
		\forall n>n_2,~\zeta\mathsf{E}_nb_{\it{d}}\zeta=0.
	\end{equation}
	Indeed, applying Proposition \ref{CZ}, we write $b_{\it{d}}=\sum_kb_{\it{d},k}=\sum_{k}\big(p_kfp_k-\mathsf{E}_{k+1}(p_kfp_k)\big)$. Hence if we proved that $\forall k,\zeta\mathsf{E}_nb_{\it{d,k}}\zeta=0$, then the claim is proved. To prove this fact, we split $k$ into two cases: $k<n$ and $k\geq n$. For $k<n$, applying the conditional expectation property (see Subsection \ref{nonmar}) directly gives $\mathsf{E}_nb_{\it{d,k}}=\mathsf{E}_n(p_kfp_k)-\mathsf{E}_n(\mathsf{E}_{k+1}(p_kfp_k))=\mathsf{E}_n(p_kfp_k)-\mathsf{E}_{n}(p_kfp_k)=0$. For $k\ge n$, it follows from the definition of $n_2$ that $k>k_2$. By the definition of conditional exception $\mathsf{E}_n$ (see \eqref{con-exp}), we observe that	
	\begin{equation}\label{express-1}
	\begin{split}
		&\zeta(h)\mathsf{E}_n(b_{\textit{d,k}})(h)\zeta(h)\\
		&=\sum_{\alpha\in I_n}\frac{1}{m(Q_\alpha^n)}\int_{Q_\alpha^n}\zeta(h)b_{\textit{d,k}}(g)\zeta(h)dm(g)\chi_{Q_\alpha^n}(h).
		\end{split}
	\end{equation}
Observe that for any  $h\in Q_\a^n$, Proposition \ref{dyadic cube}(iv) combined with a straightforward calculation calculation shows that for all $g\in Q_\a^n$, $d(g,h)\leq d(g,z_\a^n)+d(z_\a^n,h)\leq 2C_1\delta^n$. This directly yields the containment relation $Q_\a^n\subseteq B(h,2C_1\delta^n)$. By invoking Lemma \ref{supp lemma}(ii), we immediately establish the vanishing property $\zeta\mathsf{E}_nb_{\it{d,k}}\zeta=0$, which completes the proof of claim \eqref{canc-1}. 
	
	Applying the claim \eqref{canc-1}, \eqref{b-d-es} reduces to establishing the inequality
\begin{equation}\label{b-d-es-1}
\sum_{n>n_{2}}\|\zeta M_{\delta^n}b_d\zeta\|_1 \lesssim \|f\|_1.
\end{equation}	
Following the decomposition in Proposition \ref{CZ}, we write $b_d=\sum_{k\in\Z}\sum_{\a\in I_k}b_{d,Q_{\a}^k}$. By applying the trigonometric inequality of $L_1$-norm, we demonstrate that the proof of \eqref{b-d-es-1} can be reduced to verifying
\begin{equation}\label{b-d-es-2}	
\sum_{n>n_2}\sum_{k\in\Z}\sum_{\a\in I_{k+n}}\|\zeta M_{\delta^n}b_{d, Q_\a^{k+n}}\zeta\|_1\lesssim\|f\|_1.
\end{equation}
Define
	$$k_3=\min\{k\in\mathbb{N}:~C_1\delta^{k+1}>2~\&~2C_1\delta^{-k+1}<1\},~k_4=\max\{k_2,k_3\}.$$
We will prove that
		\begin{equation}\label{est-bd}
		\|\zeta M_{\delta^n}b_{d, Q_\a^{k+n}}\zeta\|_1\lesssim a(k)\|b_{d,Q_{\a}^{k+n}}\|_1,
	\end{equation}
	where
	\begin{equation*}
		a(k)=
		\left\{
		\begin{array}{ll}
			0, & \hbox{$k>k_4$;} \\
			{D}, & \hbox{$-k_4\le k\le k_4$;} \\
			{D\delta^{\epsilon k}}, & \hbox{$k<-k_4$.}
		\end{array}
		\right.
	\end{equation*}
Temporarily assuming this result, we can immediately derive \eqref{b-d-es-2} by combining \eqref{est-bd} with Lemma~\ref{CZ}(iii). More precisely,
\begin{align*}
\sum_{n>n_2}\sum_{k\in\Z}\sum_{\a\in I_{k+n}}\|\zeta M_{\delta^n}b_{d, Q_\a^{k+n}}\zeta\|_1&\lesssim	\sum_{n>n_2}\sum_{k\in\Z}\sum_{\a\in I_{k+n}}a(k)\|b_{d,Q_{\a}^{k+n}}\|_1\\
&\leq\bigg(\sum_{k\in\Z}a(k)\bigg)\bigg(\sum_{n}\sum_{\a\in I_{n}}\|b_{d,Q_{\a}^{n}}\|_1\bigg)\\
&\lesssim\|f\|_1.
\end{align*}

We now proceed to prove \eqref{est-bd}.\\
	
	{\noindent\bf{Case}} $k>k_4$. From the proof of Lemma~\ref{supp lemma}(ii), we have $\zeta(h)b_{d,Q_\a^{k+n}}(g)\chi_{B(h,2C_1\delta^{k+n+1})}(g)\zeta(h)=0$. This immediately 
	implies
	\begin{align*}
		&\zeta(h)M_{\delta^n}(b_{\textit{d},Q_{\a}^{k+n}})(h)\zeta(h)\\
		&=\frac{1}{m(B(h,\delta^n))}\int_{B(h,\delta^n)}\zeta(h)b_{\textit{d},Q_{\a}^{k+n}}(g)
		\chi_{B(h,2C_1\delta^{k+n+1})}(g)\zeta(h)dm(g)\\
		&=0.
	\end{align*}
	The desired result is  established.\\
		
	{\noindent \bf{Case}} $-k_4\le k\le k_4$. By Lemma~\ref{aver}, we have
	\begin{align*}
		\|\zeta M_{\delta^n}b_{d, Q_\a^{k+n}}\zeta\|_1\le D\|b_{d,Q_{\a}^{k+n}}\|_1.
	\end{align*}\\
	{\noindent\bf{Case}} $k<-k_4$. Set $\mathcal{I}(B(h,r),Q)=\{Q\cap B(h,r): Q\cap\partial B(h,r) \neq\emptyset\}$. Observe that $b_{d,Q_{\a}^{k+n}}$  has support contained in $\widehat{Q}^{k+n}_\alpha$, and satisfies the vanishing property $\int_{\widehat{Q}^{k+n}_\alpha}b_{d,Q_{\a}^{k+n}}=0$. Then
	\begin{equation*}
		M_{\delta^n}(b_{d,Q_{\a}^{k+n}})(h)=\frac{1}{m(B(h,\delta^n))}\int_{\mathcal{I}(B(h,\delta^n),\widehat{Q}^{k+n}_\alpha)}
		b_{d,Q_{\a}^{k+n}}(g)dm(g).
	\end{equation*}
	Note that $\mathcal{I}(B(h,\delta^n),\widehat{Q}^{k+n}_\alpha)\subseteq\{g:\delta^n-2C_1\delta^{n+k+1}\le d(g,h)\le\delta^n+2C_1\delta^{n+k+1}\}$. It follows from~\eqref{ineq1} that
	\begin{align*}
		&\|\zeta M_{\delta^n}(b_{d,Q_{\a}^{k+n}})\zeta\|_{L_1(\N)}\le \| M_{\delta^n}(b_{d,Q_{\a}^{k+n}})\|_{L_1(\N)}\\
		&\le \int_{\G}\frac{1}{m(B(h,\delta^n))}\int_{\delta^n-2C_1\delta^{n+k+1}\le d(g,h)\le\delta^n+2C_1\delta^{n+k+1}}\|b_{d,Q_{\a}^{k+n}}(g)\|_{L_1(\M)}
		dm(g)dm(h)\\
		&\lesssim D\delta^{\epsilon k}\|b_{d,Q_{\a}^{k+n}}\|_{L_1(\N)},
	\end{align*}
	which completes the proof.
\end{proof}

\medskip

\subsubsection{\bf{Weak type estimate for off-diagonal terms $b_{\it{off}}$: \eqref{weak-eq2}}}\quad

\medskip

Our objective is to establish the inequality
\begin{align}\label{b-off}
	{\varphi} \big(\chi_{(\lambda,\infty)}(|\zeta Lb_{\it{off}}\zeta|)\big)\lesssim\frac{ \|f\|_{1}}{\lambda}.
\end{align}
We are now prepared to prove the estimate \eqref{b-off}.
\begin{proof}[Proof of estimate~\eqref{b-off}]
Similar to the estimate of $\eqref{bd}$, it suffices to show
\begin{equation}\label{b-off-es}
		\sum_{n>n_{2}}\|\zeta(M_{\delta^n}b_{\it{off}}-\mathsf{E}_nb_{\it{off}})\zeta\|_1\lesssim \|f\|_1.
	\end{equation}	
Prior to establishing \eqref{b-off-es}, we introduce an analogous claim similar to \eqref{canc-1}
\begin{equation}\label{canc-2}
\forall n>n_2,~\zeta\mathsf{E}_n b_{\text{off}}\zeta=0.
\end{equation}
This claim follows directly from the proof of \eqref{canc-1} by decomposing $b_{\text{off}}$ as $b_{\it{off}}=
\sum_{k}b_{\textit{off},k}=\sum_{k}\big(q_kfp_k+p_kfq_k\big)$, where the detailed verification is omitted for brevity.

Given the expression $b_{\it{off}}=\sum_kb_{\it{off},k}=\sum_{k}\big(q_kfp_k+p_kfq_k\big)$ and Claim \eqref{canc-2}, by using a similar approach as in \eqref{b-d-es-1}, we conclude that the estimate \eqref{b-off-es} reduces to proving the inequality
\begin{equation}\label{l-boff-1}
\sum_{n>n_2}\sum_{k\in\Z}\sum_{\a\in I_{k+n}}\|\zeta M_{\delta^n}b_{\it{off},k+n}\zeta\|_1\lesssim\|f\|_1.
\end{equation}
Observing that \eqref{decom} implies
\begin{equation}\label{decom-ineq}
	\sum_n\|p_nfp_n\|_1\le\|f\|_1,
\end{equation}
we conclude that to prove \eqref{l-boff-1}, it suffices to show
	\begin{equation}\label{b-off-es1}
		\|\zeta M_{\delta^n}b_{\it{off},k+n}\zeta\|_1\lesssim  a(k)\lambda^{\frac12} \|p_{k+n}\|^{\frac12}_1\|p_{k+n}fp_{k+n}\|^{\frac12}_1,
	\end{equation}
	where
	\begin{equation*}
		a(k)=\left\{
		\begin{array}{ll}
			0, & \hbox{$k>k_4$;}\\
			D(1+\delta^{k_4})^{\epsilon}, & \hbox{$-k_4\le k\le k_4$;} \\
			D\delta^{\epsilon k}, & \hbox{$k<-k_4$.}
		\end{array}
		\right.
	\end{equation*}
Assume this result momentarily. Combining \eqref{decom} and \eqref{weak-est-marti} with inequality~\eqref{decom-ineq}, one can immediately obtain
\begin{align*}
	\sum_{n>n_2}\sum_{k\in\Z}\|\zeta M_{\delta^n}b_{\it{off},k+n}\zeta\|_1&\lesssim \sum_{n>n_2}\sum_{k\in\Z}a(k)\lambda^{\frac12} \|p_{k+n}\|^{\frac12}_1\|p_{k+n}fp_{k+n}\|^{\frac12}_1\\
	&\leq\lambda^{\frac12}\bigg(\sum_{n>n_2}\sum_{k\in\Z} a(k)\|p_{k+n}\|_1\bigg)^{\frac12}\bigg(\sum_{n>n_2}\sum_{k\in\Z} a(k)\|p_{k+n}fp_{k+n}\|_1\bigg)^{\frac12}\\
&\lesssim \lambda^{\frac12} \frac{\|f\|^{\frac12}_1}{\lambda^\frac12}\|f\|^\frac12_1\\
&=\|f\|_1. 
\end{align*}
	
We now proceed to prove~\eqref{b-off-es1}.\\

{\noindent \bf{Case}} $k>k_4$. By Proposition \ref{CZ}(iii), we express $b_{\textit{off},k+n}=\sum_{\a\in I_{k+n}}b_{\textit{off},Q_{\a}^{k+n}}$, noting that for each $b_{\textit{off},Q_{\a}^{k+n}}$, 
\begin{align*}
		\zeta(h)M_{\delta^n}(b_{\textit{off},Q_{\a}^{k+n}})(h)\zeta(h)=\frac{1}{m(B(h,\delta^n))}\int_{B(h,\delta^n)}\zeta(h)b_{\textit{off},Q_{\a}^{k+n}}(g)
		\chi_{B(h,2C_1\delta^{k+n+1})}(g)\zeta(h)dm(g).
	\end{align*}
By applying Lemma~\ref{supp lemma}(ii), we obtain the identity $\zeta(h)b_{\it{off},Q_\a^{k+n}}(g)\chi_{B(h,2C_1\delta^{k+n+1})}(g)\zeta(h)=0$. Combining this with the previous result, we derive $\zeta(h)M_{\delta^n}(b_{\textit{off},Q_{\a}^{k+n}})(h)\zeta(h)=0$, which consequently implies $\zeta M_{\delta^n}b_{\it{off},k+n}\zeta=0$. Thus, the desired result is established.\\
	
	{\noindent \bf{Case}} $-k_4\le k\le k_4$. Recall that $b_{\textit{off},k+n}=q_{k+n}fp_{k+n}+p_{k+n}fq_{k+n}$, and by symmetry, we only need to consider the term $ \|\zeta M_{\delta^{n}}(p_{k+n}fq_{k+n})\zeta\|_1$. Note that
	\begin{equation*}
		M_{\delta^{n}}(p_{k+n}fq_{k+n})(w)=\frac{1}{m(B(h,\delta^n))}\int_{\mathcal{I}_1(B(h,\delta^n),k+n)}
		\mathsf{E}_{k+n}(p_{k+n}f\chi_{B(w,\delta^n)}q_{k+n})(g)dm(g).
	\end{equation*}
	Set $g(y)=f(y)\chi_{B(w,\delta^n)}(y)$, then by \eqref{L-ineq} for $p=1$, we have
	\begin{equation}\label{off-1}
		\|M_{\delta^{n}}(p_{k+n}fq_{k+n})\|_1\le D(1+\delta^{k_4})^{\epsilon}\|p_{k+n}g_{k+n}q_{k+n}\|_1.
	\end{equation}
	Applying the fact that the conditional expectation $\mathsf{E}_n$ is a positive operator and Lemma~\ref{Cu}(ii), we get
	\begin{equation}\label{boff-2}
		\begin{split}
			|q_{{k+n}}g_{{k+n}}p_{{k+n}}|&\le (p_{{k+n}}g_{{k+n}}p_{{k+n}})^{\frac12}\|q_{{k+n}}g_{{k+n}}q_{{k+n}}\|_{\M}^{\frac12}\\
			&\le (p_{{k+n}}f_{{k+n}}p_{{k+n}})^{\frac12}\|q_{{k+n}}f_{{k+n}}q_{{k+n}}\|^{\frac12}_{\M}\\
			&\le\lambda^{\frac12} (p_{{k+n}}f_{{k+n}}p_{{k+n}})^{\frac12}
		\end{split}
	\end{equation}
From the equality $\|p_{k+n}g_{k+n}q_{k+n}\|_1=\|q_{k+n}g_{k+n}p_{k+n}\|_1$,  together with \eqref{off-1}, \eqref{boff-2} and Proposition \ref{Holder-ineq}, we derive 	
\begin{align*}
		\|\zeta M_{\delta^{n}}(p_{k+n}fq_{k+n})\zeta\|_1&\le D(1+\delta^{k_4})^{\epsilon}\|p_{k+n}g_{k+n}q_{k+n}\|_1\\
		&\leq D(1+\delta^{k_4})^{\epsilon}\lambda^{\frac12} \|p_{{k+n}}f_{{k+n}}p_{{k+n}}\|^{\frac12}_{\frac12}\\
		&\leq D(1+\delta^{k_4})^{\epsilon}\lambda^{\frac12} \|p_{k+n}\|^{\frac12}_1\|p_{{k+n}}f_{{k+n}}p_{{k+n}}\|^{\frac12}_{1}.
			\end{align*}
By the trace-preserving property of $\mathsf{E}_{k+n}$, we have $\|p_{{k+n}}f_{{k+n}}p_{{k+n}}\|_1=\|p_{{k+n}}fp_{{k+n}}\|_1$, thereby proving the desired estimate.\\
	
	{\noindent \bf{Case}} $k< -k_4$. By extending the argument used for the case $-k_4\le k\le k_4$, we can adapt the proof by substituting \eqref{L-ineq} with \eqref{L-ineq1} for $p=1$, which ultimately yields
		\begin{align*}
		\| M_{\delta^{n}}(q_{k+n}fp_{k+n})\|_1\le D\delta^{\epsilon k}\|q_{k+n}g_{k+n}p_{k+n}\|_1.
	\end{align*}
	Then \eqref{b-off-es1} is proved.
		\end{proof}
	
	\subsubsection{\bf{Weak type estimate for the good function $g$}: \eqref{weak-eq3}}\quad
	
	\medskip
	To establish the estimate for the good part, we require the following proposition. 
	\begin{prop}\label{L-2}
		Let $h\in L_{2}(\mathcal N)$. Then
		$$\|Lh\|_{L_{2}(L_{\infty}(\Omega)\overline{\otimes}\mathcal N)}\lesssim\|h\|_{2}.$$
	\end{prop}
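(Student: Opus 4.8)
The plan is to prove Proposition~\ref{L-2} by an almost-orthogonality argument on the Hilbert space $L_2(\mathcal N)$. Set $S_m:=M_{\delta^m}-\mathsf E_m$; by Lemma~\ref{aver} and \eqref{mar} each $S_m$ is bounded on $L_2(\mathcal N)$ with norm at most $1+D^{1/2}$. Since $Lh=\sum_m v_m S_m h$ with $(v_m)\in\ell_\infty$ (the sum running over the relevant indices $m>n_2$), and since $Lh\in L_2(\mathcal N)$ so that $\|Lh\|_{L_2(L_\infty(\Omega)\overline{\otimes}\mathcal N)}=\|Lh\|_{L_2(\mathcal N)}$, it suffices to show
\[
\sup_{\|(v_m)\|_{\ell_\infty}\le 1}\ \Big\|\sum_m v_m S_m\Big\|_{L_2(\mathcal N)\to L_2(\mathcal N)}<\infty ,
\]
which moreover covers at once the randomized version with coefficients $\varepsilon_m(\omega)v_m$, uniformly in $\omega$. (When the $v_m$ form a Rademacher sequence, orthogonality reduces the claim to the square-sum bound $\sum_m\|S_m h\|_2^2\lesssim\|h\|_2^2$, which will already follow from the single-piece estimate below by a short Cauchy--Schwarz/Schur computation; only the general case requires the Cotlar--Stein lemma.)

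First I would split the identity of $L_2(\mathcal N)$ into the martingale differences $\mathsf D_n:=\mathsf E_n-\mathsf E_{n+1}$, $n\in\mathbb Z$. Using $\mathsf E_k\circ\mathsf E_j=\mathsf E_{\max(k,j)}$ one sees that the $\mathsf D_n$ are mutually orthogonal projections with $\sum_n\mathsf D_n=\mathrm{id}$ (here $m(\G)=\infty$ is used, as in Subsection~\ref{nonmar}) and $\sum_n\|\mathsf D_n h\|_2^2=\|h\|_2^2$. The core of the argument is the single-piece bound: there are constants $c>0$ and $\theta\in(0,1)$, depending only on $(\G,d,m)$, such that for every $m$ in the relevant range, every $n\in\mathbb Z$ and every $h\in L_2(\mathcal N)$,
\[
\|S_m\mathsf D_n h\|_2+\|S_m^{*}\mathsf D_n h\|_2\ \le\ c\,\theta^{|m-n|}\,\|\mathsf D_n h\|_2 .
\]
Granting this, one writes $S_m=\sum_n S_m\mathsf D_n=\sum_n \mathsf D_n S_m$, whence $S_m S_{m'}^{*}=\sum_n (S_m\mathsf D_n)(S_{m'}\mathsf D_n)^{*}$ and $S_m^{*}S_{m'}=\sum_n (\mathsf D_n S_m)^{*}(\mathsf D_n S_{m'})$, so the single-piece bound yields $\|S_m S_{m'}^{*}\|+\|S_m^{*}S_{m'}\|\lesssim (|m-m'|+1)\,\theta^{|m-m'|}$. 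Since $\sum_{j\ge 0}\big((j+1)\theta^{j}\big)^{1/2}<\infty$, the Cotlar--Stein lemma on $L_2(\mathcal N)$ then gives the required uniform bound on $\|\sum_m v_m S_m\|$, completing the proof.

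It therefore remains to prove the single-piece bound, and this is the step I expect to be the main obstacle: because $(\G,d,m)$ need not be a doubling measure space, the `dyadic cubes' of Proposition~\ref{dyadic cube} lack the classical small-boundary property, so the usual Littlewood--Paley estimate must be replaced by the refined boundary and annular-decay estimates of Propositions~\ref{boundary}, \ref{measure estimate of cube} and~\ref{measure estimate of annulus}. I would distinguish two regimes. If $m\le n$, then $\mathsf E_m\mathsf D_n h=\mathsf D_n h$, so $S_m\mathsf D_n h=(M_{\delta^m}-\mathrm{id})\mathsf D_n h$; since $\mathsf D_n h$ is constant on each level-$n$ cube (by \eqref{con-exp}), $M_{\delta^m}\mathsf D_n h(h)$ and $\mathsf D_n h(h)$ disagree only when $B(h,\delta^m)$ meets the complement of the level-$n$ cube containing $h$, i.e. on a set of measure $\lesssim\delta^{-(n-m)\eta}$ times the measure of that cube, by Proposition~\ref{measure estimate of cube} (together with Proposition~\ref{boundary}); passing to $|\mathsf D_n h|^{2}$ via Kadison's operator Cauchy--Schwarz inequality (Proposition~\ref{Holder-ineq}) and invoking the boundary-average bounds \eqref{L-ineq}--\eqref{L-ineq1} gives $\|S_m\mathsf D_n h\|_2\lesssim\delta^{-(n-m)\eta/2}\|\mathsf D_n h\|_2$. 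If $m>n$, then $\mathsf E_m\mathsf D_n h=0$ and $\mathsf E_{n+1}\mathsf D_n h=0$, so $S_m\mathsf D_n h=M_{\delta^m}\mathsf D_n h$ and $\int_{B(h,\delta^m)}\mathsf D_n h$ equals the integral of $\mathsf D_n h$ over the part of $B(h,\delta^m)$ covered by level-$(n+1)$ cubes straddling $\partial B(h,\delta^m)$; by Proposition~\ref{measure estimate of annulus} (equivalently, by \eqref{L-ineq1} with ball scale $m$ and cube level $n+1$) the corresponding averaging operator has $L_2(\mathcal N)$-norm $\lesssim D^{1/2}\delta^{\epsilon(n+1-m)/2}$. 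In both regimes the finitely many pairs $(m,n)$ with $|m-n|$ bounded are absorbed by the trivial bound $\|S_m\mathsf D_n\|\le 1+D^{1/2}$. Finally, the estimates for $S_m^{*}$ follow from the same computations applied to the adjoint average $M_{\delta^m}^{*}$, whose kernel $m(B(g,\delta^m))^{-1}\chi_{B(h,\delta^m)}(g)$ is pointwise comparable to that of $M_{\delta^m}$ by \eqref{int}; this produces $\theta=\max\{\delta^{-\eta/2},\delta^{-\epsilon/2}\}<1$ and closes the scheme.
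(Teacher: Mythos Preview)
Your proposal is correct and aligns with the approach the paper indicates: the paper omits the proof but points to \cite[Lemma~3.1]{Xu23} and singles out Propositions~\ref{boundary} and~\ref{measure estimate of cube} as the essential non-doubling inputs, which is precisely the almost-orthogonality scheme you describe---martingale-difference decomposition $h=\sum_n \mathsf D_n h$ plus geometric decay $\|S_m\mathsf D_n\|\lesssim\theta^{|m-n|}$ from the cube-boundary and annular estimates.

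Two small remarks. First, your invocation of \eqref{L-ineq}--\eqref{L-ineq1} in the $m\le n$ regime is slightly misplaced: those lemmas control integrals over $\mathcal I(B(h,r),\cdot)$ and $\mathcal I_1(B(h,r),\cdot)$, i.e.\ cubes straddling a \emph{ball} boundary, which is the mechanism for $m>n$. In the $m\le n$ case the correct computation is the one you sketch just before that sentence: $(M_{\delta^m}-\mathrm{id})\mathsf D_n h$ is supported on $\bigcup_\alpha\mathcal H(B_{\delta^m},Q_\alpha^n)$, and Proposition~\ref{measure estimate of cube} together with the fact that neighbouring level-$n$ cubes have comparable measure (via \eqref{int} and Proposition~\ref{dyadic cube}(iv)) gives the bound directly, without \eqref{L-ineq}--\eqref{L-ineq1}. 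Second, the Cotlar--Stein packaging you use for general bounded $(v_m)$ is a perfectly good way to go; one can also obtain the same conclusion by bounding $|\langle S_m h,S_{m'}h\rangle|$ directly via the decomposition $S_m=\sum_n S_m\mathsf D_n$, which is closer in spirit to the direct almost-orthogonality argument in \cite{Xu23} and to the paper's Lemma~\ref{L1}, but this is purely a matter of organisation.
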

	The proof strategy employed here closely follows ~\cite[Lemma 3.1]{Xu23}. It should be emphasized that the boundary conditions stipulated in Propositions \ref{boundary} and \ref{measure estimate of cube} constitute indispensable elements of this proof framework. For brevity, we omit the detailed proof steps and refer to the referenced works.

	\begin{proof}[Proof of estimate \eqref{weak-eq3}]
		Using the Chebychev inequality, Proposition~\ref{L-2} and Proposition~\ref{CZ}(ii), we have
		$${\varphi}(\chi_{(\lambda/2,\infty)}(|Lg|))\leq\frac{\|Lg\|^{2}_{2}}{\lambda^{2}}
		\lesssim\frac{\|g\|_{2}^{2}}{\lambda^{2}}\lesssim\frac{\|f\|_{1}}{\lambda},$$
		which completes the proof.
	\end{proof}

\medskip

As noted earlier, by Lemma \ref{kin1-weak}, the proof of Theorem \ref{short-ineq}(i) largely follows the framework of Theorem \ref{long-ineq}(i) with slight adjustments for the specific assumptions. However, the treatment of the term $b_{\it{off}}$ differs significantly from that of \eqref{weak-eq2}. Below we present the detailed derivation and outline the key steps.

\subsection{Proof of Theorem \ref{short-ineq}(i)}\quad

\medskip

Define $\mathcal{S}_n=\{i: [r_i,r_{i+1})\subseteq [\delta^n,\delta^{n+1})\}$. It is clear that $\mathcal{S}=\cup_{n>n_{r_0}}\mathcal{S}_n$. Then using Lemma~\ref{short-in}, one can see that
\begin{equation*}
	\sum_{n_{r_0}<n\le n_{2}}\|\sum_{i\in\mathcal{S}_n}\varepsilon_i T_if\|_{L_{1}(L_{\infty}(\Omega)\overline{\otimes}\mathcal N)}\le (n_2-n_{r_0})D\|f\|_1.
\end{equation*}
Based on the preceding estimation, it suffices to consider the operator $\mathcal{T}f=\sum_{n>n_2}\sum_{i\in\mathcal S_n}\varepsilon_i T_if$. Moreover, by~\eqref{weak}, the proof will be complete once we prove
\begin{align}
	&\label{weak-seq1}\widetilde{\varphi}(\chi_{(\lambda,\8)}(|\mathcal{T}b_d|))\lesssim \frac{\|f\|_1}{\lambda}\\
	&\label{weak-seq2}\widetilde{\varphi}(\chi_{(\lambda,\8)}(|\mathcal{T}b_{\it{off}}|))\lesssim \frac{\|f\|_1}{\lambda}\\
	&\label{weak-seq3}\widetilde{\varphi}(\chi_{(\lambda,\8)}(|\mathcal{T}g|))\lesssim \frac{\|f\|_1}{\lambda}.
\end{align}

We begin with the term \eqref{weak-seq1}.

\subsubsection{\bf{Weak type estimate for diagonal terms $b_d$: \eqref{weak-seq1}}}\quad

\medskip

Observe that \eqref{weak-seq1} reduces to
\begin{equation*}
	\frac{\|\sum_{i\in\mathcal S}\varepsilon_i\zeta T_ib_{d}\zeta\|_{L_{1}(L_{\infty}(\Omega)\overline{\otimes}\mathcal N)}}{\lambda}\lesssim \|f\|_1.
\end{equation*}
Furthermore, by applying the triangle inequality for the $L_1$ norm $\|\cdot\|_{L_{1}(L_{\infty}(\Omega)\overline{\otimes}\mathcal N)}$, it suffices to show
\begin{equation}\label{bd-short}
	\frac{\sum_{n>n_2}\|\sum_{i\in\mathcal S_n}\varepsilon_i\zeta T_ib_{d}\zeta\|_{L_{1}(L_{\infty}(\Omega)\overline{\otimes}\mathcal N)}}{\lambda}\lesssim \|f\|_1.
\end{equation}
\begin{proof}[Proof of estimate~\eqref{bd-short}]
	
Following analogous reasoning to that in~\eqref{bd}, to establish~\eqref{bd-short}, it is sufficient to verify
	\begin{equation}\label{bad-short}
		\sum_{i\in\mathcal{S}_n}\|\zeta(M_{r_i}b_{d,Q_{\a}^{k+n}}-M_{r_{i+1}}b_{d,Q_{\a}^{k+n}})\zeta\|_{1}\lesssim a(k)\|b_{d,Q_{\a}^{k+n}}\|_1,
	\end{equation}
	where
	\begin{equation*}
		a(k)=
		\left\{
		\begin{array}{ll}
			0, & \hbox{$k>k_4$;} \\
			{D\delta^{\epsilon}}, & \hbox{$-k_4\le k\le k_4$;} \\
			{D \delta^{\epsilon k}}, & \hbox{$k<-k_4$.}
		\end{array}
		\right.
	\end{equation*}\\
We now proceed to establish \eqref{bad-short}.\\

	{\noindent \bf{Case}} $k>k_4$. Following the proof for the case $k>k_4$ in estimate \eqref{est-bd}, we obtain that for all $r\in[\delta^n,\delta^{n+1}]$, $\zeta M_{r}b_{d,Q_{\a}^{k+n}}\zeta=0$, which consequently gives the estimate: $\forall i\in\mathcal{S}_n$, $\zeta(M_{r_i}b_{d,Q_{\a}^{k+n}}-M_{r_{i+1}}b_{d,Q_{\a}^{k+n}})\zeta=0$.\\
	
	{\noindent \bf{Case}} $-k_4\le k\le k_4$. The desired estimate is a direct consequence of Lemma \ref{short-in}.\\
	
	{\noindent \bf{Case}} $k<-k_4$. Since $b_{d,Q_\a^{k+n}}$ is supported on $\widehat{Q}_\a^{k+n}$ and $\int_{\widehat{Q}_\a^{k+n}}b_{d,Q_\a^{k+n}}=0$, so
	\begin{equation*}
		M_{r_i}b_{d,Q_\a^{k+n}}(h)=\int_{\mathcal{I}(B(h,r_i),\widehat{Q}^{k+n}_\alpha)}b_{d,Q_{\a}^{k+n}}(g)dm(g).
	\end{equation*}
	Additionally, \eqref{aver-in} yields the inequality that
	\begin{equation}\label{short-bd}
		\begin{split}
			&\|\zeta(h)(M_{r_i}b_{d,Q_{\a}^{k+n}}-M_{r_{i+1}}b_{d,Q_{\a}^{k+n}})(h)\zeta(h)\|_{L_1(\M)}\\
			&\le \|M_{r_i}b_{d,Q_{\a}^{k+n}}(h)-M_{r_{i+1}}b_{d,Q_{\a}^{k+n}}(h)\|_{L_1(\M)}\\
			&\le\Big(\frac{1}{m(B(h,r_i))}-\frac{1}{m(B(h,r_{i+1}))}\Big)
			\int_{\mathcal{I}(B(h,r_i),\widehat{Q}^{k+n}_\alpha)}\|b_{d,Q_{\a}^{k+n}}(g)\|_{L_1(\M)}dm(g)\\
			&\ \ \ +\frac{1}{m(B(h,r_{i+1}))}\int_{\mathcal{I}(B(h,r_i)\setminus B(h,r_{i-1}),\widehat{Q}^{n+k}_\alpha)}\|b_{d,Q_{\a}^{k+n}}(g)\|_{L_1(\M)}dm(g).
		\end{split}
	\end{equation}
	Let $i_0\in\mathcal{S}_n$ such that $\mathcal{I}(B(h,r_i),\widehat{Q}^{n+k}_\alpha)\neq \emptyset$ (if such an $i_0$ does not exist then the desired estimate is valid). Clearly, $\cup_{i:\mathcal{I}(B(h,r_i),\widehat{Q}^{n+k}_\alpha)\neq \emptyset}\mathcal{I}(B(h,r_i),\widehat{Q}^{n+k}_\alpha)\subseteq B(h,r_{i_0}+2C_1\delta^{n+k+1})\setminus B(h,r_{i_0}-2C_1\delta^{n+k+1})$ and $\mathcal{I}(B(h,r_i),\widehat{Q}^{k+n}_\alpha) \subseteq B(h,r_{i_0}+2C_1\delta^{n+k+1})\setminus B(h,r_{i_0}-2C_1\delta^{n+k+1})$. Summing over all $i$ in \eqref{short-bd}, we get
	\begin{equation*}
		\begin{split}
			&\sum_{i\in\mathcal{S}_n}\|\zeta(M_{r_i}b_{d,Q_{\a}^{k+n}}-M_{r_{i+1}}b_{d,Q_{\a}^{k+n}})\zeta\|_{L_1(\N)}\\
			&\lesssim\int_G\frac{1}{m(B(h,\delta^n))}\int_{B(h,r_{i_0}+2C_1\delta^{n+k+1})\setminus B(h,r_{i_0}-2C_1\delta^{n+k+1})}\|b_{d,Q_{\a}^{k+n}}(g)\|_{L_1(\M)}dm(g)dm(h)\\
			&\lesssim D\delta^{\epsilon k}\|b_{d,Q_{\a}^{k+n}}\|_{L_1(\N)},
		\end{split}
	\end{equation*}
	where the last inequality follows from \eqref{int} and the proof of~\eqref{ineq1}. This completes the proof.
\end{proof}
\subsubsection{\bf{Weak type estimate for off-diagonal terms $b_{\it{off}}$}: \eqref{weak-seq2}}\quad

\medskip

The approach for \eqref{weak-seq2} differs significantly from \eqref{weak-eq2}. In contrast to \eqref{weak-eq2}, our analysis employs the $L_2$-norm method. The proof hinges on the following version of the almost orthogonality principle (see, e.g., \cite{Hong-Liu21,HM1}).
	\begin{lem}\label{L1}
	Let $(T_{n,i})_{n\in\mathbb N,i\in\mathbb Z}$ be a sequence of bounded linear operators on $L_2(\mathcal{N})$. Let $(u_{k})_{k\in\Z}$ and $(v_{k})_{k\in\Z}$ are two sequences of operators in $L_{2}(\mathcal{N})$ such that $f=\sum_ku_k$ and $\sum_k\|v_k\|_2^2<\8$. Assume that for every $n> N$, there exists a positive sequence $(\sigma(k))_{k\in\mathbb N}$ with $\sigma=\sum_{k\in\mathbb Z}a(k)<\8$ such that
	\begin{equation*}
		\|(T_{n,i}(u_{k+n}))_i\|^2_{L_2(\N;\ell_2^{rc})}\le a(k)\|v_{k+n}\|_1,
	\end{equation*}
	then
	\begin{equation*}
		\sum_{n>N}\|(T_{n,i}(f))_i\|^2_{L_2(\N;\ell_2^{rc})}\le \sigma^2\sum_{k\in\mathbb Z}\|v_k\|_1.
	\end{equation*}
\end{lem}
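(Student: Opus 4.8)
\textbf{Proof proposal for Lemma \ref{L1}.}

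The plan is to exploit the decomposition $f=\sum_k u_k$ together with the triangle inequality in the mixed space $L_2(\mathcal N;\ell_2^{rc})$, and then a change of summation index so that the $\ell^1$-convolution structure of the exponents $(\sigma(k))_k$ can be brought to bear. First, fix $n>N$. By linearity $T_{n,i}(f)=\sum_{j\in\mathbb Z}T_{n,i}(u_j)$; writing $j=k+n$ and using the triangle inequality for $\|\cdot\|_{L_2(\mathcal N;\ell_2^{rc})}$ (which is a genuine norm when $p=2$, since $L_2(\mathcal N;\ell_2^{r})=L_2(\mathcal N;\ell_2^{c})=L_2(\mathcal N;\ell_2^{rc})$ with equal norms, as recalled in Section~\ref{pre}), we obtain
\[
\big\|(T_{n,i}f)_i\big\|_{L_2(\mathcal N;\ell_2^{rc})}\le \sum_{k\in\mathbb Z}\big\|(T_{n,i}(u_{k+n}))_i\big\|_{L_2(\mathcal N;\ell_2^{rc})}\le \sum_{k\in\mathbb Z}\sigma(k)\,\|v_{k+n}\|_1^{1/2},
\]
where in the last step we invoked the hypothesis $\|(T_{n,i}(u_{k+n}))_i\|^2_{L_2(\mathcal N;\ell_2^{rc})}\le \sigma(k)^2\|v_{k+n}\|_1$, i.e.\ the square root bound $\|(T_{n,i}(u_{k+n}))_i\|_{L_2(\mathcal N;\ell_2^{rc})}\le \sigma(k)\|v_{k+n}\|_1^{1/2}$. (I am reading the hypothesis with the convolution weight $a(k)=\sigma(k)$; if the statement intends $a(k)=\sigma(k)^2$, the same computation goes through after renaming.)

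Next, I square the inequality and apply Cauchy--Schwarz to the sum over $k$, splitting the weight $\sigma(k)=\sigma(k)^{1/2}\cdot\sigma(k)^{1/2}$:
\[
\big\|(T_{n,i}f)_i\big\|^2_{L_2(\mathcal N;\ell_2^{rc})}\le\Big(\sum_{k\in\mathbb Z}\sigma(k)\Big)\Big(\sum_{k\in\mathbb Z}\sigma(k)\,\|v_{k+n}\|_1\Big)=\sigma\sum_{k\in\mathbb Z}\sigma(k)\,\|v_{k+n}\|_1.
\]
Now I sum over $n>N$. Using Tonelli's theorem to interchange the two (nonnegative) sums and then shifting the inner index $m=k+n$,
\[
\sum_{n>N}\big\|(T_{n,i}f)_i\big\|^2_{L_2(\mathcal N;\ell_2^{rc})}\le \sigma\sum_{k\in\mathbb Z}\sigma(k)\sum_{n>N}\|v_{k+n}\|_1\le \sigma\sum_{k\in\mathbb Z}\sigma(k)\sum_{m\in\mathbb Z}\|v_{m}\|_1=\sigma^2\sum_{m\in\mathbb Z}\|v_m\|_1,
\]
which is exactly the claimed bound. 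All the manipulations are legitimate because every quantity involved is nonnegative, so Tonelli applies and no conditional-convergence issue arises; the only analytic input is the triangle inequality and Cauchy--Schwarz in $\ell^2(\mathbb Z)$, plus the coincidence of the row, column, and mixed norms at $p=2$.

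The main (and essentially only) delicate point is bookkeeping: one must make sure the index shift $j\mapsto k+n$ is applied consistently, that the hypothesis is read in its square-root form, and that the double sum over $(n,k)$ is rearranged into a product of two unconditionally convergent sums — here the restriction $n>N$ is harmless since enlarging the range of summation over $n$ only increases a sum of nonnegative terms, giving $\sum_{n>N}\|v_{k+n}\|_1\le\sum_{m\in\mathbb Z}\|v_m\|_1$. No further structure of the operators $T_{n,i}$ (boundedness on $L_2$, or any algebraic property) is needed beyond linearity in the decomposition step; the quantitative content is entirely encoded in the almost-orthogonality hypothesis.
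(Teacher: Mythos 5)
Your proof is correct and follows essentially the same route as the paper: decompose $f=\sum_k u_{k}$, apply the triangle inequality in $L_2(\mathcal N;\ell_2^{rc})$, insert the hypothesis in square-root form, and then bound the resulting convolution-type double sum — the paper cites Young's inequality for the $\ell_2$-norm, while your Cauchy--Schwarz-plus-Tonelli argument is just that inequality proved by hand. Your remark about reading the weight as $\sigma(k)$ versus $a(k)$ correctly identifies a harmless notational inconsistency already present in the statement (and in the paper's own proof, where the final constant should be $\big(\sum_k a(k)^{1/2}\big)^2$), and it does not affect the applications.
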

\begin{proof}
	With the conditions in hand, using the triangle inequality for $L_2(\mathcal{N};\ell_{2}^{rc})$-norm and the Young inequality for $\ell_{2}$-norm, we have
	\begin{align*}
		\sum_{n>N}\|(T_{n,i}(f))_i\|^2_{L_2(\N;\ell_2^{rc})}\leq& \sum_{n>N}\bigg(\sum_{k\in\Z}\|(T_{n,i}(u_{k+n}))_{i}\|_{L_2(\mathcal{N};\ell_{2}^{rc})}\bigg)^{2}\\
		\leq&\sum_{n>N}\bigg(\sum_{k\in\Z}a(k)^{\frac12}\|v_{k+n}\|^{\frac12}_{1}\bigg)^{2}\\
		\leq&\ \bigg(\sum_{k\in\Z}a(k)\bigg)^{2}\bigg(\sum_{k\in\Z}\|v_{k}\|_{1}\bigg),
	\end{align*}
	which finishes the proof.
\end{proof}

We now proceed to prove~\eqref{weak-seq2}.

\begin{proof}[Proof of \eqref{weak-seq2}]
Applying Chebyshev's inequality along with the orthogonality of $\varepsilon_{i}$, \eqref{weak-seq2} reduces to showing that
\begin{equation*}
	\begin{split}
		\widetilde{\varphi}\big(\chi_{(\lambda,\infty)}(|\zeta Tb_{\it{off}}\zeta|)\big)&\le\frac{\sum_{n>n_2}\sum_{i\in\mathcal S_n}\|\zeta T_ib_{\it{off}}\zeta\|^2_2}{\lambda^2}.
	\end{split}
\end{equation*}
It suffices to show
	\begin{equation}\label{s-boff-1}
		\sum_{n>n_2}\sum_{i\in\mathcal S_n}\|\zeta T_ib_{\it{off}}\zeta\|^2_2\lesssim \lambda\sum_n\|p_nfp_n\|_1
	\end{equation}
	since by \eqref{decom}
	\begin{equation*}
		\sum_n\|p_nfp_n\|_1\le\|f\|_1.
	\end{equation*}

To establish \eqref{s-boff-1}, as in Proposition \ref{CZ}, we first express $b_{\it{off}}=\sum_kb_{\it{off},k}=\sum_{k}\big(q_kfp_k+p_kfq_k\big)$. Letting $(T_{n,i})_{n,i}=(M_{r_{i+1}}-M_{r_i})_{i\in\mathcal{S}_n}$, $N=n_2$, $u_k=b_{\it{off},k}$ and $v_k=p_kfp_k$ in Lemma~\ref{L1}, it is sufficient to show that
	\begin{equation}\label{short-boff}
		\sum_{i\in\mathcal{S}_n}\|M_{r_{i+1}}b_{\it{off},k+n}-M_{r_i}b_{\it{off},k+n}\|_2^2\le \lambda a(k)\|p_{k+n}fp_{k+n}\|_1,
	\end{equation}
	where
	\begin{equation*}
		a(k)=\left\{
		\begin{array}{ll}
			0, & \hbox{$k>k_4$;} \\
			D(1+\delta^{k_4})^{\epsilon}, & \hbox{$-k_4\le k\le k_4$;} \\
			D\delta^{\epsilon k}, & \hbox{$k<-k_4$.}
		\end{array}
		\right.
	\end{equation*}
We now turn to proving \eqref{short-boff}.\\

	{\noindent \bf{Case}} $k>k_4$. The desired estimate follows directly from the proof of the case $k>k_4$ presented in estimate \eqref{b-off-es1}.\\
	
	{\noindent \bf{Case}} $-k_4\le k\le k_4$. Recall that $b_{\textit{off},k+n}=q_{k+n}fp_{k+n}+p_{k+n}fq_{k+n}$, by symmetry, we only consider to the term $p_{k+n}fq_{k+n}$. According to \eqref{aver-in}, we first set
	\begin{align*}
		&M^1_{i}f(h)=\Big(\frac{1}{m(B(h,r_i))}-\frac{1}{m(B(h,r_{i+1}))}\Big)\int_{B(h,r_i)}f(g)dm(g)\\
		& M^2_{i}f(h)=\frac{1}{m(B(h,r_{i+1}))}\int_{B(h,r_{i+1})\setminus B(h,r_i)}f(g)dm(g).
	\end{align*}
	Therefore,
	\begin{align*}
		\|M_{r_{i+1}}p_{k+n}fq_{k+n}-M_{r_i}p_{k+n}fq_{k+n}\|_2^2\le 2\|M^1_{i}p_{k+n}fq_{k+n}\|_2^2+2\|M^2_{i}p_{k+n}fq_{k+n}\|^2_2.
	\end{align*}
	We first focus on the term $\|M^1_{i}p_{k+n}fq_{k+n}\|_2^2$. Observing that
	\begin{equation*}
		\int_{B(h,r_i)}p_{k+n}(g)f(g)q_{k+n}(g)dm(g)=\int_{\mathcal{I}_1(B(h,r_i),k+n)}
		\mathsf{E}_{k+n}(p_{k+n}f\chi_{B(h,r_i)}q_{k+n})(g)dm(g),
	\end{equation*}
	and by analogy with \eqref{boff-2}, we derive
	\begin{equation*}
			\begin{split}
			|q_{{k+n}}g_{{k+n}}p_{{k+n}}|^2&\le p_{{k+n}}g_{{k+n}}p_{{k+n}}\|q_{{k+n}}g_{{k+n}}q_{{k+n}}\|_{\M}\\
			&\le p_{{k+n}}f_{{k+n}}p_{{k+n}}\|q_{{k+n}}f_{{k+n}}q_{{k+n}}\|_{\M}\\
			&\le\lambda p_{{k+n}}f_{{k+n}}p_{{k+n}}.
		\end{split}
	\end{equation*}
Combining \eqref{meas} and \eqref{short1} with the observations above, we derive
	\begin{equation}\label{sh-boff}
		\begin{split}
			&\frac{1}{m(B(h,\delta^n))}\Big\|\int_{B(h,r_i)}p_{k+n}(g)f(g)q_{k+n}(g)dm(g)\Big\|^2_{L_{2}(\M)}\\
			&\lesssim \lambda(1+\delta^{k_4})^\epsilon\int_{\mathcal{I}_1(B(h,r_i),k+n)}\|\mathsf{E}_{k+n}(p_{k+n}f\chi_{B(w,r_i)}p_{k+n})(g)\|_{L_1(\M)}dm(g).
		\end{split}
	\end{equation}
	Since $f$ is positive and $\mathcal{I}_1(B(h,r_i),k+n)\in \mathcal{F}_{k+n}$, then by the definition of $\mathsf{E}_{k+n}$, one can see that
	\begin{equation}\label{sh-boff1}
		\begin{split}
			&\int_{\mathcal{I}_1(B(h,r_i),k+n)}\|\mathsf{E}_{k+n}(p_{k+n}f\chi_{B(h,r_i)}p_{k+n})(g)\|_{L_1(\M)}dm(g)\\
			&\leq \int_{B(h,r_i)}p_{k+n}(g)f(g)p_{k+n}(g)dm(g).
		\end{split}
	\end{equation}
The combination of \eqref{int}, \eqref{sh-boff}, and \eqref{sh-boff1} leads to
\begin{align*}
		\|M^1_{i}p_{k+n}fq_{k+n}(h)\|^2_{L_2(\M)}&\lesssim \lambda(1+\delta^{k_4})^\epsilon\Big(\frac{1}{m(B(h,r_i))}-\frac{1}{m(B(h,r_{i+1}))}\Big)\\
		&\times\int_{B(h,\delta^{n+1})}\|p_{k+n}(g)f(g)p_{k+n}(g)\|_{L_1(\M)}dm(g).
\end{align*}
	Summing $i\in\mathcal{S}_n$ over the above inequality, and applying \eqref{int} and Lemma \ref{aver}, we obtain
	\begin{equation*}
		\sum_{i\in\mathcal{S}_n}\|M^1_{i}p_{k+n}fq_{k+n}\|^2_{2}\lesssim \lambda D(1+\delta^{k_4})^\epsilon\|p_{k+n}fp_{k+n}\|_1.
	\end{equation*}
	The preceding arguments remain valid when applied to the term $\|M^2_{i}p_{k+n}fq_{k+n}\|_2^2$. Let us explain it briefly. The main observations are $\mathcal{I}_1(B(h,r_i)\setminus B(h,r_{i-1}),k+n)\subseteq \mathcal{I}_1(B(h,r_i),k+n)\cup \mathcal{I}_1(B(h,r_{i-1}),k+n)$ and
	\begin{align*}
		&\frac{1}{m(B(h,\delta^n))}\Big\|\int_{B(h,r_i)\setminus B(h,r_{i-1})}p_{k+n}(g)f(g)q_{k+n}(g)dm(g)\Big\|^2_{L_{2}(\M)}\\
		&\lesssim \lambda(1+\delta^{k_4})^\epsilon\int_{\mathcal{I}_1(B(h,r_i)\setminus B(h,r_{i-1}),k+n)}\|\mathsf{E}_{k+n}(p_{k+n}f\chi_{B(h,r_i)\setminus B(h,r_{i-1})}p_{k+n})(g)\|_{L_1(\M)}dm(g).
	\end{align*}
	So, summing $i\in\mathcal{S}_n$ over the above inequality, we have
	\begin{equation*}
		\sum_{i\in\mathcal{S}_n}\|M^2_{i}p_{k+n}fq_{k+n}\|^2_{2}\lesssim \lambda D(1+\delta^{k_4})^\epsilon\|p_{k+n}fp_{k+n}\|_1.
	\end{equation*}
	The desired estimate is proved.\\
	
	{\noindent \bf{Case}} $k<-k_4$. The argument for the case $-k_4\le k\le k_4$ remains valid here. Observing that using \eqref{meas1}, \eqref{short1} and \eqref{boff-2}, we obtain
		\begin{align*}
		&\frac{1}{m(B(h,\delta^n))}\Big\|\int_{B(h,r_i)}p_{k+n}(y)f(g)q_{k+n}(g)dm(g)\Big\|^2_{L_{2}(\M)}\\
		&\lesssim \lambda\delta^{k\epsilon}\int_{\mathcal{I}_1(B(h,r_i),k+n)}\|\mathsf{E}_{k+n}(p_{k+n}f\chi_{B(h,r_i)}p_{k+n})(g)\|_{L_1(\M)}dm(g).
	\end{align*}
	and
	\begin{align*}
		&\frac{1}{m(B(h,\delta^n))}\Big\|\int_{B(h,r_i)\setminus B(h,r_{i-1})}p_{k+n}(y)f(g)q_{k+n}(g)dm(g)\Big\|^2_{L_{2}(\M)}\\
		&\lesssim \lambda\delta^{k\epsilon}\int_{\mathcal{I}_1(B(h,r_i)\setminus B(h,r_{i-1}),k+n)}\|\mathsf{E}_{k+n}(p_{k+n}f\chi_{B(h,r_i)\setminus B(h,r_{i-1})}p_{k+n})(g)\|_{L_1(\M)}dm(g).
	\end{align*}
	We have therefore established \eqref{short-boff}, which completes the proof.\end{proof}

\subsubsection{\bf{Weak type estimate for the good function $g$}: \eqref{weak-seq3}}\quad

Before Proving \eqref{weak-seq3}, we require the following proposition.
\begin{prop}\label{S-2}
		Let $h\in L_{2}(\mathcal N)$. Then
		$$\|\mathcal{T}h\|_{L_{2}(L_{\infty}(\Omega)\overline{\otimes}\mathcal N)}\lesssim\|h\|_{2}.$$
	\end{prop}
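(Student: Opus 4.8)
The plan is to follow the template used for Proposition~\ref{L-2} (see \cite[Lemma 3.1]{Xu23} and \cite[Section 4]{Hong-Liu21}): reduce the claim to an $L_2$ square--function bound and prove it by a Littlewood--Paley/Cotlar almost--orthogonality argument against the dyadic martingale, with the geometric decay supplied by the boundary Propositions~\ref{boundary},~\ref{measure estimate of cube},~\ref{measure estimate of annulus} and the annular--decay hypothesis~\eqref{decay property}. First, by the orthogonality of the Rademacher variables in $L_2$,
\[
\|\mathcal{T}h\|_{L_2(L_\infty(\Omega)\overline{\otimes}\mathcal N)}^2=\sum_{i\in\mathcal S}\|T_ih\|_{L_2(\mathcal N)}^2=\sum_{n>n_2}\Big\|(T_ih)_{i\in\mathcal S_n}\Big\|_{L_2(\mathcal N;\ell_2^{rc})}^2,
\]
so it suffices to bound the right--hand side by $\|h\|_2^2$, uniformly in $\mathcal S$. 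Next decompose $h=\sum_{j\in\mathbb Z}\mathsf D_jh$ with $\mathsf D_jh=\mathsf E_jh-\mathsf E_{j+1}h$; since $m(G)=\infty$ these differences are pairwise orthogonal in $L_2(\mathcal N)$ and $\|h\|_2^2=\sum_j\|\mathsf D_jh\|_2^2$. Writing $T_ih=\sum_jT_i(\mathsf D_{n+j}h)$ for $i\in\mathcal S_n$ and arguing exactly as in Lemma~\ref{L1}---but in the $L_2$ form obtained by the same triangle--and--Young computation, with $u_k=v_k=\mathsf D_kh$ and $\|v_{k+n}\|_1$ replaced by $\|v_{k+n}\|_2^2$---the claim reduces to finding a summable sequence $(\sigma(j))_{j\in\mathbb Z}$, $\sum_j\sigma(j)<\infty$, such that for all $n>n_2$ and $j\in\mathbb Z$,
\[
\Big\|(T_i\mathsf D_{n+j}h)_{i\in\mathcal S_n}\Big\|_{L_2(\mathcal N;\ell_2^{rc})}\le\sigma(j)\,\|\mathsf D_{n+j}h\|_2.
\]

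For $|j|$ bounded by a fixed constant $J_0$ (depending only on the structural constants) this is the $L_2$--analogue of Lemma~\ref{short-in}: expanding $T_i$ through~\eqref{aver-in}, applying Cauchy--Schwarz, using the pairwise disjointness of the shells $B(h,r_{i+1})\setminus B(h,r_i)$ and the doubling estimate~\eqref{int}, one gets $\sum_{i\in\mathcal S_n}\|T_ig\|_2^2\lesssim D\delta^{\epsilon}\|g\|_2^2$ for any $g\in L_2(\mathcal N)$, whence $\sigma(j)$ is a constant in this range. For $j<-J_0$ the scale $\delta^{n+j}$ is much smaller than the averaging scale $\delta^n$: since $\mathsf D_{n+j}h$ has vanishing average over every level--$(n+j+1)$ cube, both $M_{r_i}(\mathsf D_{n+j}h)$ and $M_{r_{i+1}}(\mathsf D_{n+j}h)$ collapse to averages over the boundary shells $\mathcal I_1(B(h,r),n+j+1)$; combining~\eqref{aver-in} (telescoping the $\big(\frac{1}{m(B(h,r_i))}-\frac{1}{m(B(h,r_{i+1}))}\big)$ terms and exploiting disjointness for the remaining shell terms) with the measure bound $m(\mathcal I_1(B(h,r),n+j+1))\lesssim\delta^{\epsilon j}m(B(h,r))$ from Proposition~\ref{measure estimate of annulus} and the reasoning behind~\eqref{short1}--\eqref{short}, we obtain $\sigma(j)\lesssim\delta^{\epsilon j/2}$.

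The decisive case is $j>J_0$, where $\delta^{n+j}$ is much larger than $\delta^n$. Here $\mathsf D_{n+j}h$ is constant on each level--$(n+j)$ cube, so $T_i(\mathsf D_{n+j}h)(h)=0$ whenever the ball $B(h,\delta^{n+1})$ lies inside the level--$(n+j)$ cube containing $h$; the only contribution comes from the $2\delta^{n+1}$--neighbourhood of $\bigcup_\alpha\partial Q_\alpha^{n+j}$, on which one cannot afford to estimate each $\|T_i\mathsf D_{n+j}h\|_2$ separately. Instead, at each point $h$ write $M_{r_i}(\mathsf D_{n+j}h)(h)-M_{r_{i+1}}(\mathsf D_{n+j}h)(h)=\int(\mathsf D_{n+j}h-c_h)\,d(\mu_{h,r_i}-\mu_{h,r_{i+1}})$, where $\mu_{h,r}=m(B(h,r))^{-1}\chi_{B(h,r)}dm$ and $c_h$ is the value of $\mathsf D_{n+j}h$ on the level--$(n+j)$ cube of $h$; summing over $i\in\mathcal S_n$ and using that the total variation $r\mapsto\mu_{h,r}$ changes slowly within a dyadic annulus, $\sum_i\|\mu_{h,r_i}-\mu_{h,r_{i+1}}\|\le 2\,m(B(h,\delta^{n+1}))/m(B(h,\delta^n))\lesssim\delta^\epsilon$ by~\eqref{int}, yields the pointwise bound $\sum_i|T_i\mathsf D_{n+j}h(h)|^2\lesssim\delta^{2\epsilon}$ times the squared oscillation of $\mathsf D_{n+j}h$ over $B(h,\delta^{n+1})$. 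Since geometric doubling forces only boundedly many level--$(n+j)$ cubes to meet $B(h,\delta^{n+1})$, that squared oscillation is dominated by the sum of the squared values of the nearby cubes, and integrating over the thin neighbourhood, whose relative measure is $\lesssim\delta^{-\eta j}$ by Proposition~\ref{boundary}, gives $\sigma(j)\lesssim\delta^{\epsilon-\eta j/2}$. Summing the three contributions, $\sum_j\sigma(j)<\infty$, which closes the argument. The main obstacle is exactly this coarse regime: the naive estimate $\|T_i\mathsf D_{n+j}h\|_2^2\lesssim\delta^{-\eta j}\|\mathsf D_{n+j}h\|_\infty^2$ both over--counts the indices $i\in\mathcal S_n$ and fails to return from $L_\infty$ to $L_2$, so one must sum the short--variation of the averages pointwise before integrating while keeping the $L_2$--mass of $\mathsf D_{n+j}h$ confined to the boundary shell, and---because the cubes of Proposition~\ref{dyadic cube} need not enjoy Christ's small--boundary property---the requisite decay must be extracted solely from Propositions~\ref{boundary},~\ref{measure estimate of annulus} and hypothesis~\eqref{decay property}.
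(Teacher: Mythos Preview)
Your proposal is correct and follows essentially the same approach that the paper indicates: the paper omits the proof of Proposition~\ref{S-2}, pointing to the classical case in \cite{Hong-Liu21}, and your sketch---orthogonality of the Rademachers to reduce to $\sum_{n}\sum_{i\in\mathcal S_n}\|T_ih\|_2^2$, martingale decomposition $h=\sum_j\mathsf D_jh$, and an almost--orthogonality/Cotlar estimate with geometric decay furnished by Propositions~\ref{boundary}, \ref{measure estimate of cube}, \ref{measure estimate of annulus} and~\eqref{decay property}---is precisely that template. Your identification of the three regimes and of the coarse case $j>J_0$ as the delicate one (where one must sum the short variation pointwise before integrating over the thin boundary shell) matches the standard argument; the paper's own handling of the off--diagonal term $b_{\it off}$ in \eqref{short-boff} shows the same mechanism at work.
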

This proposition corresponds to the Hilbert-valued square function inequalities, and the proof follows an analogous approach to the classical case (see e.g. \cite{Hong-Liu21}). We omit the details for brevity.
\begin{proof}[Proof of estimate \eqref{weak-seq3}]

Following the approach for \eqref{weak-eq3}, application of Chebyshev's inequality and proposition \ref{S-2} gives
 $$\widetilde{\varphi}(\chi_{(\lambda,\8)}(|\mathcal{T}g|))\leq\frac{\|\mathcal{T}g\|^{2}_{L_{2}(L_{\infty}(\Omega)\overline{\otimes}\mathcal N)}}{\lambda^{2}}=\frac{\bigg\|\bigg(\sum_{n>n_2}\sum_{i\in\mathcal S_n}|T_if|^2\bigg)^{\frac12}\bigg\|^2_2}{\lambda^{2}}\lesssim\frac{\|g\|^2_2}{\lambda^{2}}\lesssim\frac{\|f\|_{1}}{\lambda}.$$
 
 \end{proof}

\section{Proof of Theorem~\ref{long-ineq} and \ref{short-ineq}: $(L_{\infty},\mathrm{BMO})$ and strong type $(p,p)$ estimates}\label{BMO}

In this section, we show the $(L_\8,\mathrm{BMO})$ and strong type $(p,p)$ estimates stated in Theorem~\ref{long-ineq} and \ref{short-ineq}, respectively.

\subsection{$(L_{\infty},\mathrm{BMO})$ estimate}\label{BMO-est}\quad

\medskip

Let $\mathcal A$ be the von Neumann algebra equipped with the  trace $\varphi$. Let $(\N_k)_{k\in\Z}$ be a filtration of $\N$ given by Subsection~\ref{nonmar} . It is clear that $(\N_k)_{k\in\Z}$ is a filtration of $\mathcal A$. By abuse of notation, we continue to let $\mathsf{E}_k$ stand for the conditional expectation with respect to $\mathcal{A}_k$. Let us recall the definition of noncommutative $\mathrm{BMO}$ space associated with the filtration $(\mathcal{A}_k)_{k\in\Z}$.
\begin{align*}
  &\mathrm{BMO}_c(\mathcal A)=\{f\in L_\infty(\mathcal{A}):\|f\|_{\mathrm{BMO}_c(\mathcal A)}=\sup_{k}\|\mathsf{E}_k(|f-\mathsf{E}_{k+1}f|^2)\|_{\8}^{1/2}<\8\}\\
&\mathrm{BMO}_r(\mathcal A)=\{f\in L_\infty(\mathcal{A}):\|f\|_{\mathrm{BMO}_r(\mathcal A)}=\|f^*\|_{\mathrm{BMO}_c(\mathcal A)}<\8\}.
\end{align*}
The space $\mathrm{BMO}(\mathcal A)$ is defined as
\begin{equation*}
  \mathrm{BMO}(\mathcal A)=\mathrm{BMO}_c(\mathcal A)\cap \mathrm{BMO}_r(\mathcal A),
\end{equation*}
equipped with the intersection norm
\begin{equation}\label{bmo}
	\|f\|_{\mathrm{BMO}(\mathcal A)}=\max\{\|f\|_{\mathrm{BMO}_c(\mathcal A)},\|f\|_{\mathrm{BMO}_r(\mathcal A)}\}.
\end{equation}

A direct computation establishes the following expressions for the $\mathrm{BMO}$ norms
\begin{equation}\label{BMO-Norm}
\begin{split}
  &\|f\|_{\mathrm{BMO}_c(\mathcal A)}=\sup_{Q_\a^k\in\mathcal{F}}\Big\|\frac{1}{m(Q_\a^k)}\int_{Q_\a^k}\Big|f(h) -f_{\widehat{Q}_\alpha^{k}}\Big|^2dm(h)\Big\|^{\frac12}_{\M},\\
&\|f\|_{\mathrm{BMO}_r(\mathcal A)}=\sup_{Q_\a^k\in\mathcal{F}}\Big\|\frac{1}{m(Q_\a^k)}\int_{Q_\a^k}\Big|f^*(h) -f^*_{\widehat{Q}_\alpha^{k}}\Big|^2dm(h)\Big\|^{\frac12}_{\M}.
\end{split}
\end{equation}

We now proceed to prove Theorem~\ref{long-ineq}(ii).

\begin{proof}[Proof of~Theorem~\ref{long-ineq}(ii)]
Let $f\in L_{\8}(\mathcal A)$. By the definition of $\mathrm{BMO}$ space, it is enough to establish
\begin{equation}\label{BMO-C}
	\|Lf\|_{\mathrm{BMO}_c(\mathcal A)}\lesssim\|f\|_{\8}
\end{equation}
and
\begin{equation}\label{BMO-R}
	\|Lf\|_{\mathrm{BMO}_r(\mathcal A)}\lesssim\|f\|_{\8}.
\end{equation}
It suffices to prove \eqref{BMO-C}. To see this, suppose \eqref{BMO-C} holds. Then, by the fact that $\|f\|_{\mathrm{BMO}_r(\mathcal A)}=\|f^*\|_{\mathrm{BMO}_c(\mathcal A)}$, we obtain
\begin{equation*}
	\|Lf\|_{\mathrm{BMO}_r(\mathcal A)}=\|(Lf)^*\|_{\mathrm{BMO}_c(\mathcal A)}=\|Lf^*\|_{\mathrm{BMO}_c(\mathcal A)}\lesssim \|f^*\|_{\infty}=\|f\|_{\infty},
\end{equation*}
which establishes \eqref{BMO-R}. 

We establish~\eqref{BMO-C} in the following. Exploiting the operator convexity of the square function $w\mapsto|w|^{2}$, we obtain for any $c\in\mathcal A$
	\begin{equation*}
		\begin{split}
			\frac{1}{m(Q_\a^k)}&\int_{Q_\a^k}\Big|f(h)-f_{\widehat{Q}_\alpha^{k}}\Big|^2dm(h)\le \frac{2}{m(Q_\a^k)}\int_{Q_\a^k}\Big|f(h) -c\Big|^2d\m(h)+2|f_{\widehat{Q}_\alpha^{k}}-c|^2\\
			&\le\frac{2}{m(Q_\a^k)}\int_{Q_\a^k}\Big|f(h) -c\Big|^2dm(h)+ \frac{2}{m(\widehat{Q}_\a^k)}\int_{\widehat{Q}_\a^k}\Big|f(h)-c\Big|^2dm(h).
		\end{split}
	\end{equation*}
Combining the above estimate with \eqref{BMO-Norm}, we conclude
	\begin{equation}\label{BMO-Es}
		\begin{split}
			\|Lf\|_{\mathrm{BMO}_c(\mathcal A)}&\lesssim\sup_{Q_\a^k\in\mathcal{F}}\inf_{c\in\mathcal A}\Big(\Big\|\frac{1}{m(Q_\a^k)}\int_{Q_\a^k}|Lf-c|^2dm(h)\Big\|^{\frac12}_{\M}\\
			&+\Big\|\frac{1}{m(\widehat{Q}_\a^k)}\int_{\widehat{Q}_\a^k}|Lf-c|^2dm(h)\Big\|^{\frac12}_{\M}\Big).
		\end{split}
	\end{equation}
	Fixing  a dyadic cube' $Q^k_\beta$, we recall that $\widehat{Q}_\beta^{k}$ is the parent of $Q_\beta^{k}$ and $k_2=\min\{k:a_0\delta^k>r_0\}$. Let $\widetilde{Q}_\b^k=\{h\in \G:d(h,z_\b^k)\le 4C_1\delta^{k+1}\}$. We then define
	\begin{equation*}
		{Q^k_\beta}^*=\left\{
		\begin{array}{ll}
			\widetilde{{Q}}^k_\beta, & \hbox{$k>k_2$;}\\
			{Q}^k_\beta, & \hbox{$k\le k_2$,}
		\end{array}
		\right.
	\end{equation*}%
	and decompose $f$ as
	$$f=f\mathds{1}_{{Q^k_\beta}^*}+f\mathds{1}_{\G\setminus {Q^k_\beta}^*}:=f_1+f_2.$$
	By applying the operator convexity of square function $w\mapsto|w|^{2}$ again, we obtain
	\begin{equation}\label{L-BMO2}
		\begin{split}
			|Lf-c|^2\le2|Lf_1|^2
			+2|Lf_2-c|^2.
		\end{split}
	\end{equation}
	Combining \eqref{BMO-Es} with \eqref{L-BMO2}, \eqref{BMO-C} reduces to proving the two equalities
	\begin{equation}\label{L-BMO1}
		\begin{split}
			\sup_{Q_\a^k\in\mathcal{F}}\inf_{c\in\mathcal A}&\Big(\Big\|\frac{1}{m(Q_\a^k)}\int_{Q_\a^k}|Lf_1(h)|^2dm(h)\Big\|^{\frac12}_{\M}+\Big\|\frac{1}{m(\widehat{Q}_\a^k)}\int_{\widehat{Q}_\a^k}|Lf_1(h)|^2dm(h)\Big\|^{\frac12}_{\M}\Big)\\
			&\lesssim \|f\|_\8
		\end{split}
	\end{equation}
	and
	\begin{equation}\label{LBMO}
		\begin{split}
			\sup_{Q_\a^k\in\mathcal{F}}\inf_{c\in\mathcal A}&\Big(\Big\|\frac{1}{m(Q_\a^k)}\int_{Q_\a^k}|Lf_2(h)-c|^2dm(h)\Big\|^{\frac12}_{\M}+\Big\|\frac{1}{m(\widehat{Q}_\a^k)}\int_{\widehat{Q}_\a^k}|Lf_2(h)-c|^2dm(h)\Big\|^{\frac12}_{\M}\Big)\\
			&\lesssim \|f\|_\8.
		\end{split}
	\end{equation}
We now turn our attention to \eqref{L-BMO1}. It is known that the elements of the von Neumann algebra $\M$ can be seen as bounded linear operators on $L_2(M)$ by right or left multiplication. By Proposition \ref{L-2}, we obtain
	\begin{equation*}
		\begin{split}
			\Big\|\int_{Q^k_{\alpha}}|Lf_1(h)|^2dm(h)\Big\|_{\M}&=\sup_{\|a\|_{L_2(\M)}\le 1}\tau\bigg(\int_{Q^k_{\alpha}}|Lf_1(h)a|^2dm(h)\bigg)\\
			&\leq\sup_{\|\|_{L_2(\M)}\le 1}\tau\bigg(\int_{G}|L(f_1a)(h)|^2dm(h)\bigg)\\
			&\lesssim \sup_{\|a\|_{L_2(\M)}\le 1}\|f_1a\|^2_{L_2(\G;L_2(\M))}\\
			&\lesssim m({Q^k_\beta}^*)\|f\|^2_\8.
		\end{split}
	\end{equation*}
Similarly, we have
$$\Big\|\int_{\widehat{Q}_\a^k}|Lf_1(h)|^2dm(h)\Big\|_{\M}\lesssim m({Q^k_\beta}^*)\|f\|^2_\8.$$
By applying~\eqref{int}, the above two inequalities imply	\begin{equation*}
		\begin{split}
			\Big\|\int_{Q^k_{\alpha}}|Lf_1(h)|^2dm(h)\Big\|_{\M}^{\frac12}+\Big\|\int_{\widehat{Q}_\a^k}|Lf_1(h)|^2dm(h)\Big\|_{\M}^{\frac12}\lesssim \Big(\frac{m({Q^k_\beta}^*)}{m(Q_\a^k)}+\frac{m({Q^k_\beta}^*)}{m(\widehat{Q}_\a^k)}\Big)^{\frac12}\|f\|_\8\lesssim\|f\|_\8,
		\end{split}
	\end{equation*}
	thereby proving~\eqref{L-BMO1}.
	
It remains to prove \eqref{LBMO}. Letting $c=Lf_2(z_\beta^k)$, we reduce \eqref{LBMO} to proving 
	\begin{equation}\label{BMO-ineq}
		\sup_{h\in \widehat{Q}_\b^k}\Big\|Lf_2(h)-Lf_2({z}_\b^k)\Big\|_{\M}\lesssim\|f\|_{\8}.
	\end{equation}
	Indeed, with this estimate established, we obtain
	\begin{equation*}		\begin{split}
			&\inf_{c\in\mathcal A}\Big(\Big\|\frac{1}{m(Q_\a^k)}\int_{Q_\a^k}\Big|Lf_2(h)-c\Big|^2dm(h)\Big\|^{\frac12}_{\M} +\Big\|\frac{1}{m(\widehat{Q}_\a^k)}\int_{\widehat{Q}_\a^k}\Big|Lf_2(h)-c\Big|^2dm(h)\Big\|^{\frac12}_{\M}\Big)\\
			&\le2\Big(\sup_{h\in \widehat{Q}_\b^k}\Big\||Lf_2(h)-Lf_2({z}_\b^k)|^2\Big\|_{\M}\Big)^{\frac12}\\
			&\lesssim \|f\|_\8.
		\end{split}
	\end{equation*}
Observe that 
	$$Lf_2(h)-Lf_2({z}_\b^k)=\sum_{n>n_{r_0}}v_n\big(M_{\delta^{n}}f_2(h)-\mathsf{E}_{n}f_2(h)-(M_{\delta^{n}}f_2({z}_\b^k)-\mathsf{E}_{n}f_2({z}_\b^k))\big).$$
Set $n_3=\max\{n_{r_0},k_2+[\log_{\delta}2C_1\delta]+1\}$. We split $n$ into two case: $n_{r_0}<n\le n_3$ and $n>n_3$. For $n_{r_0}<n\le n_3$, by the triangle inequality of the $L_{\infty}$-norm, we have
	\begin{align*}
		&\Big\|v_n\big(M_{\delta^{n}}f_2(h)-\mathsf{E}_{n}f_2(h)-(M_{\delta^{n}}f_2({z}_\b^k)-\mathsf{E}_{n}f_2({z}_\b^k))\big)\Big\|_{\M}\\
		&\leq \sum_{n_{r_0}<n\le n_3}\|(v_n)_n\|_{\ell^\infty}\|M_{\delta^{n}}f_2(h)-\mathsf{E}_{n}f_2(h)-(M_{\delta^{n}}f_2({z}_\b^k)-\mathsf{E}_{n}f_2({z}_\b^k))\Big\|_{\M}\\
		&\lesssim\sum_{n_{r_0}<n\le n_3}\sup_{h\in \widehat{Q}^k_\beta}\big(\|M_{\delta^{n}}f_2(h)\|_{\M}+\|\mathsf{E}_{n}f_2(h)\|_{\M}\big)\\
		&\lesssim (n_3-n_{r_0})\|f\|_\8.
	\end{align*}
	It remains to deal with the case $n>n_3$. We first claim that
	\begin{equation}\label{Canc}
		\mathsf{E}_{n}f_2(h)-\mathsf{E}_{n}f_2(z^k_\beta)=0,~\forall h\in \widehat{Q}^k_\beta.
	\end{equation}
	Let $ Q_{\a}^{n}\ni h$. If $n\le k$, then by Proposition~\ref{dyadic cube}(ii), $Q_{\a}^{n}\subseteq \widehat{Q}^k_\beta$. Note that $k>n_3>k_2$, one can check at once that $Q_{\a}^{n}\subseteq\widehat{Q}^k_\beta\subseteq \widetilde{Q}^k_\beta$. Thus by definition of $f_2$, we have $\mathsf{E}_{n}f_2(h)=\mathsf{E}_{n}f_2(z^k_\beta)=0$. If $n>k$, using Proposition~\ref{dyadic cube}(ii) again, we have $\widehat{Q}^k_\beta\subseteq Q_{\a}^{n}$, thus for all $h\in \widehat{Q}^k_\beta$, $\mathsf{E}_nf_2(h)$ is a constant operator, and $\mathsf{E}_{n}f_2(h)-\mathsf{E}_{n}f_2(z^k_\beta)=0$, which proves the claim.
	
	Under this claim, \eqref{BMO-ineq} reduces to proving the following inequality.
	\begin{equation}\label{BMO-ineq-1}
		\sum_{n>n_3}\Big\|M_{\delta^{n}}(f_2)(h)-M_{\delta^{n}}(f_2)(z_\b^k)\Big\|_{\M}\lesssim\|f\|_{\8}.
	\end{equation} 
	Set $n_4=\min\{n:\delta^n>2C_1\delta^{k+1}\}$. We analyze two cases: $n_3\ge n_4$ and $n_3<n_4$.\\
	
	\noindent{\bf{Case}} $n_3\ge n_4$. Applying \eqref{aver-in}, we obtain
	\begin{align*}
		&M_{\delta^{n}}(f_2)(h)-M_{\delta^{n}}(f_2)(z_\b^k)\\
		&=\frac{1}{m(B(h,\delta^{n}))}\Big(\int_{B(h,\delta^{n})}f_2(g)dm(g)-\int_{B(z_\b^k,\delta^{n})}f_2(g)dm(g)\Big)\\
		&\ \ \ +\Big(\frac{1}{m(B(h,\delta^{n}))}-\frac{1}{m(B(z_\beta^k,\delta^{n}))}\Big)
		\int_{B(z_\beta^k,\delta^{n})}f_2(g)dm(g).
	\end{align*}
	Thus 
	\begin{align*}
		&\Big\|M_{\delta^{n}}(f_2)(h)-M_{\delta^{n}}(f_2)(z_\b^k)\Big\|_{\M}\\
		&\le\frac{m(B(h,\delta^{n})\triangle B(z_\beta^k,\delta^{n}))}{m(B(h,\delta^{n}))^2}\|f\|_\8
		+\big|\frac{m(B(z_\beta^k,\delta^{n}))}{m(B(h,\delta^{n}))}-1\big\|f\|_\8\\
		&\le 2\frac{m(B(h,\delta^{n})\triangle B(z_\beta^k,\delta^{n}))}{m(B(h,\delta^{n}))}\|f\|_{\8}.
	\end{align*}
	Note that
	\begin{align*}
		&B(h,\delta^{n})\triangle B(z_\beta^k,\delta^{n})\\
		&\subseteq \bigg(B(h,d(h,z_\beta^k)+\delta^{n})\setminus B(h,\delta^{n})\bigg)\cup \bigg(B(z_\beta^k,d(h,z_\beta^k)+\delta^{n})\setminus B(z_\beta^k,\delta^{n})\bigg),
	\end{align*}
	and for every $h\in\widehat{Q}_\b^k$, $d(h,z_\beta^k)\le 2C_1\delta^{k+1}$. Then we have $B(z_\b^k,\delta^{n_3})\subseteq B(h,\delta^{n_3}+2C_1\delta^{k+1})$. Moreover, for $n>n_3$, then $\delta^{n}>\delta^{n_3}>2C_1\delta^{k+1}$. Using \eqref{int}, we obtain
	\begin{equation*}
		\frac{m(B(z_\beta^k,\delta^{n}))}{m(B(h,\delta^{n}))}\le \frac{m(B(h,\delta^{n}+2C_1\delta^{k+1})}{m(B(h,\delta^{n}))}\le 2^\epsilon(K+1).
	\end{equation*}
	Moreover, the above two observations and condition~\eqref{decay property}  yields
	\begin{align*}
		&\frac{m(B(h,\delta^{n})\triangle B(z_\beta^k,\delta^{n}))}{m(B(h,\delta^{n}))}
		\lesssim \bigg(\frac{2C_1\delta^{k+1}}{\delta^{n}}\bigg)^{\epsilon}.
	\end{align*}
	It follows that for every $h\in\widehat{Q}_\b^k$
	\begin{equation}\label{S-BMO1}
		\Big\|M_{\delta^{n}}(f_2)(h)-M_{\delta^{n}}(f_2)(z_\b^k)\Big\|_{\M}\lesssim \bigg(\frac{2C_1\delta^{k+1}}{\delta^{n}}\bigg)^{\epsilon}\|f\|_\8.
	\end{equation}
	We conclude
	\begin{align*}
		\sum_{n>n_3}\Big\|M_{\delta^{n}}(f_2)(h)-M_{\delta^{n}}(f_2)(z_\b^k)\Big\|_{\M}
		\lesssim\sum_{n>n_3}\bigg(\frac{2C_1\delta^{k+1}}{\delta^{n}}\bigg)^{\epsilon}\|f\|_\8
		\lesssim \|f\|_{\8},
	\end{align*}
	which proves \eqref{BMO-ineq-1}.\\
	
	\noindent{\bf{Case} $n_3< n_4$}. Note that, in this case, we have $2C_1\delta^{k+1}\ge \delta^{n_4-1}\ge\delta^{n_3}$. This fact yields $k\ge k_2$. So $f_2=f\mathds{1}_{\G\setminus {\widetilde{Q}^k_\beta}}$. We prove for every $n_3<n<n_4$
	\begin{equation}\label{BMO-ineq1}
		M_{\delta^{n}}f_2(h)=0, ~\forall h\in \widehat{Q}_\beta^k.
	\end{equation}
	Indeed, for every $h\in \widehat{Q}_\beta^k$ and $g\in B(h,\delta^{n})$, one has
	\begin{equation*}
		d(g,z_\beta^k)\le d(g,h)+d(h,z_\beta^k)\le \delta^{n}+2C_1\delta^{k+1}\le 4C_1\delta^{k+1}.
	\end{equation*}
It follows that for every $h\in \widehat{Q}_\beta^k$,  we have $B(h,\delta^{n})\subseteq \widetilde{Q}^k_\beta$, thus proving \eqref{BMO-ineq1} is proved. Additionally, by integrating \eqref{BMO-ineq1} with the previous proof, it also follows that
	\begin{align*}
		\sum_{n>n_3}\Big\|M_{\delta^{n}}(f_2)(h)-M_{\delta^{n}}(f_2)(z_\b^k)\Big\|_{\M}=\sum_{n\geq n_4}\Big\|M_{\delta^{n}}(f_2)(h)-M_{\delta^{n}}(f_2)(z_\b^k)\Big\|_{\M}\lesssim \|f\|_{\8},
			\end{align*}
which proves \eqref{BMO-ineq-1}.
	
\end{proof}

Prior to proving Theorem~\ref{long-ineq}(ii), we introduce additional notation. Let $\mathcal A=\N\widebar{\otimes}\mathcal{B}(\ell_2)$ be the von Neumann algebra equipped with the tensor product trace $\psi=\varphi\otimes tr$ where $tr$ is the canonical trace on $\mathcal{B}(\ell_2)$. Let $(\N_k)_{k\in\Z}$ be a filtration of $\N$ given by Subsection~\ref{nonmar}. Set $\mathcal{A}_k=\N_k\widebar{\otimes}\mathcal{B}(\ell_2)$. We observe that $(\mathcal{A}_k)_{k\in\Z}$ constitutes a filtration of $\mathcal A$. Let $\mathsf{E}_k$ be the conditional expectation with respect to $\mathcal{A}_k$. The corresponding $\mathrm{BMO}$ norm is defined as in \eqref{bmo}. Similarly to \eqref{BMO-Norm}, we have
\begin{equation}\label{BMO-Norm-s}
\begin{split}
  &\|f\|_{\mathrm{BMO}_c(\mathcal A)}=\sup_{Q_\a^k\in\mathcal{F}}\Big\|\frac{1}{m(Q_\a^k)}\int_{Q_\a^k}\Big|f(h) -f_{\widehat{Q}_\alpha^{k}}\Big|^2dm(h)\Big\|^{\frac12}_{\M\widebar{\otimes}\mathcal{B}(\ell_2)},\\
&\|f\|_{\mathrm{BMO}_r(\mathcal A)}=\sup_{Q_\a^k\in\mathcal{F}}\Big\|\frac{1}{m(Q_\a^k)}\int_{Q_\a^k}\Big|f^*(h) -f^*_{\widehat{Q}_\alpha^{k}}\Big|^2dm(h)\Big\|^{\frac12}_{\M\widebar{\otimes}\mathcal{B}(\ell_2)}.
\end{split}
\end{equation} 


\begin{proof}[Proof of~Theorem~\ref{short-ineq}(ii)]
Let $f\in L_{\8}(\mathcal A)$. Recall that $T_i=M_{r_{i-1}}-M_{r_{i}}$ and $\mathcal{S}=\cup_{n>n_{r_0}}\mathcal{S}_{n}$, where $\mathcal{S}_n=\{i: [r_{i-1},r_{i})\subseteq [\delta^{n},\delta^{n+1})\}$. Since for each operator $T_i$ satisfies $(T_i(f))^*=T_i(f^*)$, then $T_{i}(f)\otimes e_{1i}=\big(T_i(f^*)\otimes e_{i1}\big)^*$. Combining this with the proof of \eqref{BMO-R}, the $(L_\8,\mathrm{BMO})$ estimate for $(T_i)_{i\in\mathcal S}$ follows from the two inequalities below
\begin{equation}\label{BMO-C-s}
	\Big\|\sum_{i\in\mathcal S} T_if \otimes e_{i1}\Big\|_{\mathrm{BMO}_c(\mathcal A)}\lesssim\|f\|_{\8}
\end{equation}
and
\begin{equation}\label{BMO-R-s}
	\Big\|\sum_{i\in\mathcal S} T_if \otimes e_{i1}\Big\|_{\mathrm{BMO}_r(\mathcal A)}\lesssim\|f\|_{\8}.
\end{equation}

We now establish~\eqref{BMO-C-s}. The proof closely mirrors that of~\eqref{BMO-C}, requiring only minor adjustments; a sketch is included for completeness.

 Let $ f=f_1+f_2$, where $f_1=f\mathds{1}_{{Q^k_\beta}^*}$ and $f_2=f\mathds{1}_{\G\setminus {Q^k_\beta}^*}$. Following the structure of \eqref{BMO-C}, the proof of \eqref{BMO-C-s} is reduced to verifying:
	\begin{equation}\label{BMOS-1}
		\begin{split}
			\sup_{Q_\a^k\in\mathcal{F}}\inf_{c\in\mathcal A}\Big(&\Big\|\frac{1}{m(Q_\a^k)}\int_{Q_\a^k}\Big|\sum_{i\in\mathcal S}T_i(f_1)(h)\otimes e_{i1}\Big|^2dm(h)\Big\|^{\frac12}_{\M\widebar{\otimes}\mathcal{B}(\ell_2)}\\
			&\ \ \ +\Big\|\frac{1}{m(\widehat{Q}_\a^k)}\int_{\widehat{Q}_\a^k}\Big|\sum_{i\in\mathcal S}T_i(f_1)(h)\otimes e_{i1}\Big|^2dm(h)\Big\|^{\frac12}_{\M\widebar{\otimes}\mathcal{B}(\ell_2)}\Big)\\
			&\lesssim \|f\|_\8
		\end{split}
	\end{equation}
	and
	\begin{equation}\label{BMOS-2}
		\begin{split}
			\sup_{Q_\a^k\in\mathcal{F}}\inf_{c\in\mathcal A}\Big(&\Big\|\frac{1}{m(Q_\a^k)}\int_{Q_\a^k}\Big|\sum_{i\in\mathcal S}T_i(f_2)(h)\otimes e_{i1}-c\Big|^2dm(h)\Big\|^{\frac12}_{\M\widebar{\otimes}\mathcal{B}(\ell_2)}\\
			&\ \ \  +\Big\|\frac{1}{m(\widehat{Q}_\a^k)}\int_{\widehat{Q}_\a^k}\Big|\sum_{i\in\mathcal S}T_i(f_2)(h)\otimes e_{i1}-c\Big|^2dm(h)\Big\|^{\frac12}_{\M\widebar{\otimes}\mathcal{B}(\ell_2)}\Big)\\
			&\lesssim \|f\|_\8.
		\end{split}
	\end{equation}

 Observe that $|\sum_{i\in\mathcal S}T_if\otimes e_{i1}|^2=\sum_{i\in\mathcal S}|T_if|^2\otimes e_{11}$. It follows that 
	\begin{equation*}
		\begin{split}
			\Big\|\int_{\G}\Big|\sum_{i\in\mathcal S}T_if(h)\otimes e_{i1}\Big|^2dm(h)\Big\|_{\M\widebar{\otimes}\mathcal{B}(\ell_2)}=\Big\|\int_{\G}\sum_{i\in\mathcal S}\Big|T_if(h)\Big|^2dm(h)\Big\|_{\M}.
		\end{split}
	\end{equation*}
	As observed above, the proof of \eqref{BMOS-1} proceeds along the same lines as that of \eqref{L-BMO1}; the verification is left to the reader. The proof of \eqref{BMOS-2} is similar to that of \eqref{LBMO}. We now highlight the key step.
		
	Letting $c=\sum_{i\in\mathcal S}T_i(f_2)({z}_\b^k)\otimes e_{i1}$, we restrict our attention to the case $n_3\ge n_4$, i.e., $\delta^{n_3}>2C_1\delta^{k+1}$, as the case $n_3<n_4$ follows by analogous reasoning. It therefore suffices to show that for all $h\in\widehat{Q}_\b^k$
	\begin{equation*}
		\begin{split}
			&\bigg(\sum_{i\in\mathcal{S}}\Big\|\big|M_{r_{i-1}}(f_2)(h)-M_{r_{i}}(f_2)(h)-\big(M_{r_{i-1}}(f_2)(z_\b^k)-
			M_{r_{i}}(f_2)(z_\b^k)\big)\big|^2\Big\|_{\M}\bigg)^{\frac12}\lesssim \|f\|_{\8}.
		\end{split}
	\end{equation*}
We establish a stronger result.
	\begin{equation}\label{S-BMO-1}
		\begin{split}
			&\sum_{i\in\mathcal{S}_n}\Big\|\big|M_{r_{i-1}}(f_2)(h)-M_{r_{i}}(f_2)(h)-\big(M_{r_{i-1}}(f_2)(z_\b^k)-
			M_{r_{i}}(f_2)(z_\b^k)\big)\big|^2\Big\|_{\M}\\
			&\lesssim \bigg(\frac{2C_1\delta^{k+1}}{\delta^{n}}\bigg)^{\epsilon}\|f\|^2_{\8}.
		\end{split}
	\end{equation}
	Divide $i\in\mathcal{S}_n$ into two cases: $J_1=\{i:r_{i}-r_{i-1}\le (2C_1\delta^{k+1})^\epsilon/\delta^{(\epsilon-1)n}\}$ and $J_2=\{i:r_{i}-r_{i-1}> (2C_1\delta^{k+1})^\epsilon/\delta^{(\epsilon-1)n}\}$.
	
	We first deal with the case $i\in J_1$. Note that for every $z\in\widehat{Q}_\b^k$, \eqref{aver-in} and the operator convexity of square function $w\rightarrow |w|^2$ gives
	\begin{align*}
		\big|M_{r_{i-1}}(f_2)(z)-M_{r_{i}}(f_2)(z)\big|^2&\lesssim \big|1-\frac{m(B(z,r_{i-1}))}{m(B(z,r_{i}))}\big|^2\|f\|^2_{\8}\\
		&\le \Big|\int_{m(B(z,r_{i-1}))}^{m(B(z,r_{i}))}\frac{1}{u}du\Big|^2\|f\|^2_{\8}.
	\end{align*}
	Moreover, by the H\"{o}lder inequality and condition \eqref{decay property}, one has
	\begin{align*}
		\Big|\int_{m(B(z,r_{i-1}))}^{m(B(z,r_{i}))}\frac{1}{u}du\Big|^2&\le \Big(m(B(z,r_{i}))-m(B(z,r_{i-1}))\Big)\int_{m(B(z,r_{i-1}))}^{m(B(z,r_{i}))}\frac{1}{u^2}du\\
		&\lesssim \Big(\frac{r_{i}-r_{i-1}}{r_{i-1}}\Big)^\epsilon\int_{m(B(z,r_{i-1}))}^{m(B(z,r_{i}))}\frac{m(B(z,\delta^{n+n_{t_0}+1}))}{u^2}du\\
		&\le\Big(\frac{2C_1\delta^{k+1}}{\delta^{n}}\Big)^{2\epsilon}\int_{m(B(z,r_{i-1}))}^{m(B(z,r_{i}))}
		\frac{m(B(z,\delta^{n+n_{t_0}+1}))}{u^2}du.
	\end{align*}
Thus,
	\begin{equation*}
		\big|M_{r_{i-1}}(f_2)(z)-M_{r_{i}}(f_2)(z)\big|^2\lesssim \|f\|^2_\8 \Big(\frac{2C_1\delta^{k+1}}{\delta^{n}}\Big)^{2\epsilon}\int_{m(B(z,r_{i-1}))}^{m(B(z,r_{i}))}
		\frac{m(B(z,\delta^{n+1}))}{u^2}du.
	\end{equation*}
	Moreover, using the operator convexity of square function $w\rightarrow |w|^2$ again and \eqref{int}, one has
	\begin{align*}
		&\sum_{i\in J_1}\Big\|\big|M_{r_{i-1}}(f_2)(h)-M_{r_{i}}(f_2)(h)-M_{r_{i-1}}(f_2)(z_\b^k)+
		M_{r_{i}}(f_2)(z_\b^k)\big|^2\Big\|_{\M}\\
		&\lesssim \|f\|^2_\8 \Big(\frac{2C_1\delta^{k+1}}{\delta^{n}}\Big)^{2\epsilon}\sup_{z\in\widehat{Q}_\b^k}\sum_{i\in J_1}\int_{m(B(z,r_{i-1}))}^{m(B(z,r_{i}))}
		\frac{m(B(z,\delta^{n+1}))}{u^2}du\\
		&\le \|f\|^2_\8 \Big(\frac{2C_1\delta^{k+1}}{\delta^{n}}\Big)^{2\epsilon}\sup_{z\in\widehat{Q}_\b^k}\int_{m(B(z,\delta^{n}))}^{m(B(z,\delta^{n+1}))}
		\frac{m(B(z,\delta^{n+1}))}{m(B(z,\delta^{n}))^2}du\\
		&\lesssim \Big(\frac{2C_1\delta^{k+1}}{\delta^{n}}\Big)^{2\epsilon} \|f\|^2_\8.
	\end{align*}
	For the case $i\in J_2$. Note that the numbers $\#\{J_2\}\le{(\delta-1)\delta^{\epsilon n}}/{(2C_1\delta^{k+1})^\epsilon}$. Then combining  the operator convexity of square function $w\rightarrow |w|^2$ with \eqref{S-BMO1}, we conclude
	\begin{equation*}
		\begin{split}
			&\sum_{i\in J_2}\Big\|\big|M_{r_{i-1}}(f_2)(h)-M_{r_{i}}(f_2)(h)-M_{r_{i-1}}(f_2)(z_\b^k)+
			M_{r_{i}}(f_2)(z_\b^k)\big|^2\Big\|_{\M}\\
			&\le 2\sum_{i\in J_2}\Big(\Big\|\big|M_{r_{i}}(f_2)(h)-M_{r_{i}}(f_2)(z_\b^k)\big|^2\Big\|_{\M}
			+\Big\|\big|M_{r_{i-1}}(f_2)(h)-M_{r_{i-1}}(f_2)(z_\b^k)\big|^2\Big\|_{\M}\Big)\\
			&\lesssim\frac{\delta^{\epsilon n}}{(2C_1\delta^{k+1})^\epsilon}
			\bigg(\frac{2C_1\delta^{k+1}}{\delta^{n+n_{t_0}}}\bigg)^{2\epsilon}\|f\|^2_\8\\
			&\le  \bigg(\frac{2C_1\delta^{k+1}}{\delta^{n}}\bigg)^{\epsilon}\|f\|^2_\8.
		\end{split}
	\end{equation*}
Combining the estimates for cases $i\in J_1$ and $i\in J_2$, \eqref{S-BMO-1} is proved. 
	
It remains to show \eqref{BMO-R-s}. By analogy with~\eqref{BMO-C-s}, it suffices to establish the following two inequalities.
	\begin{equation}\label{SBMO-1}
		\begin{split}
			\sup_{Q_\a^k\in\mathcal{F}}\inf_{c\in\mathcal A}\Big(&\Big\|\frac{1}{m(Q_\a^k)}\int_{Q_\a^k}\Big|\sum_{i\in\mathcal S}T_i(f^*_1)(h)\otimes e_{1i}\Big|^2dm(h)\Big\|^{\frac12}_{\M\widebar{\otimes}\mathcal{B}(\ell_2)}\\
			&\ \ \ +\Big\|\frac{1}{m(\widehat{Q}_\a^k)}\int_{\widehat{Q}_\a^k}\Big|\sum_{i\in\mathcal S}T_i(f^*_1)(h)\otimes e_{1i}\Big|^2dm(h)\Big\|^{\frac12}_{\M\widebar{\otimes}\mathcal{B}(\ell_2)}\Big)\\
			&\lesssim \|f\|_\8.
		\end{split}
	\end{equation}
	and
	\begin{equation}\label{SBMO-2}
		\begin{split}
			\sup_{Q_\a^k\in\mathcal{F}}\inf_{c\in\mathcal A}\Big(&\Big\|\frac{1}{m(Q_\a^k)}\int_{Q_\a^k}\Big|\sum_{i\in\mathcal S}T_i(f^*_2)(h)\otimes e_{1i}-c\Big|^2dm(h)\Big\|^{\frac12}_{\M\widebar{\otimes}\mathcal{B}(\ell_2)}\\
			&\ \ \ +\Big\|\frac{1}{m(\widehat{Q}_\a^k)}\int_{\widehat{Q}_\a^k}\Big|\sum_{i\in\mathcal S}T_i(f^*_2)(h)\otimes e_{1i}-c\Big|^2dm(h)\Big\|^{\frac12}_{\M\widebar{\otimes}\mathcal{B}(\ell_2)}\Big)\\
			&\lesssim \|f\|_\8.
		\end{split}
	\end{equation}
	We just prove \eqref{SBMO-1} since \eqref{SBMO-2} can be treated as before. Note that
	\begin{equation*}
		\Big|\sum_{i\in\mathcal S}T_i(f^*_1)\otimes e_{1i}\Big|^2=\sum_{i,j\in\mathcal S}T_i(f_1)T_j(f^*_1)\otimes e_{ij}.
	\end{equation*}
	Moreover, $\sum_{i,j\in\mathcal S}T_i(f_1)T_j(f^*_1)\otimes e_{ij}$ can be seen as a positive operator acting on $\ell_2(L_2(\M))$. Set $\Lambda=\sum_{i,j\in\mathcal S}T_i(f_1)T_j(f^*_1)\otimes e_{ij}$. Then
	\begin{align*}
		\|\Lambda\|_{\M\widebar{\otimes}\mathcal{B}(\ell_2)}&=\sup_{\|a\|_{\ell_2(L_2(\M))}\le 1}\langle\Lambda a, a\rangle\\
		&=\sup_{\|a\|_{\ell_2(L_2(\M))}\le 1}\tau\Big[\Big(\sum_ia_i^*\otimes e_{1i}\Big)\Lambda\Big(\sum_ja_j\otimes e_{j1}\Big)\Big]\\
		&=\sup_{\|a\|_{\ell_2(L_2(\M))}\le 1}\int_G\tau(|\sum_{i\in\mathcal S}T_i(f^*_1)(h)a_i|^2)dm(h)\\
		&=\sup_{{\begin{subarray}{c}
					\|a\|_{\ell_2(L_2(\M))}\le 1 \\\|g\|_{L_2(\mathcal A)}\le 1
		\end{subarray}}}\Big[\int_G\tau\Big(\sum_{i\in\mathcal S}T_i(f^*_1)(h)a_ig(h)\Big)dm(h)\Big]^2\\
		&=\sup_{{\begin{subarray}{c}
					\|a\|_{\ell_2(L_2(\M))}\le 1 \\\|g\|_{L_2(\mathcal A)}\le 1
		\end{subarray}}}\Big[\int_G\tau\Big(\sum_{i\in\mathcal S}f^*_1(h)a_iT_ig(h)\Big)dm(h)\Big]^2\\
		&\le \sup_{{\begin{subarray}{c}
					\|a\|_{\ell_2(L_2(\M))}\le 1 \\\|g\|_{L_2(\mathcal A)}\le 1
		\end{subarray}}}\|(T_{i}g)_{i\in\mathcal S}\|^2_{L_2(\mathcal A;\ell_2^{rc})}\|(f_1^*a_i)_{i\in\mathcal S}\|^2_{L_2(\mathcal A;\ell_2^{rc})}\\
		&\lesssim \sup_{\|a\|_{\ell_2(L_2(\M))}\le 1}\int_G\tau\sum_{i\in\mathcal S}|f_1^*(h)a_i|^2dm(h)\\
		&\le \m({Q^k_\beta}^*)\|f\|^2_\8,
	\end{align*}
	where last equation follows from the property that $\mu$ is a translation invariant measure and the last but one inequality follows from Proposition \ref{L-2}. On account of this estimate, we have
	\begin{align*}
		\Big(&\Big\|\frac{1}{m(Q_\a^k)}\int_{Q_\a^k}\Big|\sum_{i\in\mathcal S}T_i(f^*_1)(h)\otimes e_{1i}\Big|^2dm(h)\Big\|^{\frac12}_{\M\widebar{\otimes}\mathcal{B}(\ell_2)}\\
		&\ \ \ +\Big\|\frac{1}{m(\widehat{Q}_\a^k)}\int_{\widehat{Q}_\a^k}\Big|\sum_{i\in\mathcal S}T_i(f^*_1)(h)\otimes e_{1i}\Big|^2dm(h)\Big\|^{\frac12}_{\M\widebar{\otimes}\mathcal{B}(\ell_2)}\Big)\\
		&\lesssim \Big(\frac{m({Q^k_\beta}^*)}{m(Q_\a^k)}+\frac{m({Q^k_\beta}^*)}{m(\widehat{Q}_\a^k)}\Big)^{\frac12}\|f\|_\8\\
		&\lesssim\|f\|_\8,
	\end{align*}
	which proves \eqref{SBMO-1}.
	
\end{proof}
\begin{remark}\rm{
Let $L_nf=M_{\delta^{n+n_{r_0}}}f-\mathsf{E}_{n+n_{r_0}}f$. The $(L_{\infty},\mathrm{BMO})$ estimate 
\begin{align*}
			\Big\|\sum_{n\in\mathbb N} L_nf\otimes e_{1n}\Big\|_{\mathrm{BMO}_{d}(\mathcal{R})}&+\Big\|\sum_{n=1}^\8 L_nf \otimes e_{n1}\Big\|_{\mathrm{BMO}_{d}(\mathcal{R})}\le  C_{p}\,\|f\|_\infty,\; \forall f\in L_{\infty}(\mathcal N);
		\end{align*}
can be established using methods similar to those in Theorem \ref{short-ineq}(ii), where the cancellation proposition \eqref{Canc} will be used. We omit the proof.

}

\end{remark}

\subsection{Strong type $(p,p)$ estimates}\quad

\medskip

We conclude the proofs of Theorems \ref{long-ineq} and \ref{short-ineq} by establishing their respective strong type $(p,p)$ estimates. Specifically, the following proposition will verify both Theorem \ref{long-ineq}(iii) and Theorem \ref{short-ineq}(iii).

\begin{prop}\label{strong-type}
	Let $1<p<\infty$.  Then $L$ is bounded as an operator from $L_{p}(\mathcal{N})$ to $L_{p}(\mathcal{N})$, and $(S_i)_i$ is bounded as an operator from $L_{p}(\mathcal{N})$ to $L_p(\mathcal{N};\ell_{2}^{rc})$.
\end{prop}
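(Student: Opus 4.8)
The plan is to deduce both statements from the endpoint estimates already established, by interpolation: the weak type $(1,1)$ bounds (Theorem~\ref{long-ineq}(i), Theorem~\ref{short-ineq}(i)), the $(L_\infty,\mathrm{BMO})$ bounds (Theorem~\ref{long-ineq}(ii), Theorem~\ref{short-ineq}(ii)), and the $L_2$ bounds (Proposition~\ref{L-2}, Proposition~\ref{S-2}). Since the scale $L_p(\mathcal N;\ell_2^{rc})$ is built differently below and above $p=2$, I would treat the two ranges separately.

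\emph{The range $1<p<2$.} Here I would interpolate the weak type $(1,1)$ and the $L_2$ estimates. For the long operator $L$ this is immediate: having $L\colon L_1(\mathcal N)\to L_{1,\infty}(\mathcal N)$ and the $L_2$-bound of Proposition~\ref{L-2}, the noncommutative Marcinkiewicz interpolation theorem (cf.~\cite{Pis-Xu03}) gives $L\colon L_p(\mathcal N)\to L_p(\mathcal N)$. For $(S_i)_i$ I would first linearise by randomisation: by the noncommutative Khintchine inequalities in $L_p$ and in $L_{1,\infty}$ (Proposition~\ref{nonkin} and Lemma~\ref{kin1-weak}), the norm $\|(T_if)_i\|_{L_p(\mathcal N;\ell_2^{rc})}$ is, for every $1\le p<\infty$, comparable to $\|\mathcal Tf\|_{L_p(L_\infty(\Omega)\overline\otimes\mathcal N)}$ with $\mathcal Tf=\sum_{i\in\mathcal S}\varepsilon_iT_if$. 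Since $\mathcal T\colon L_1(\mathcal N)\to L_{1,\infty}(L_\infty(\Omega)\overline\otimes\mathcal N)$ (Theorem~\ref{short-ineq}(i) and Lemma~\ref{kin1-weak}) and $\mathcal T\colon L_2(\mathcal N)\to L_2(L_\infty(\Omega)\overline\otimes\mathcal N)$ (Proposition~\ref{S-2}), Marcinkiewicz interpolation yields $\mathcal T\colon L_p(\mathcal N)\to L_p(L_\infty(\Omega)\overline\otimes\mathcal N)$ for $1<p<2$, and undoing the randomisation returns the claimed $L_p(\mathcal N;\ell_2^{rc})$ bound.

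\emph{The range $2\le p<\infty$.} Duality is not available here: the formal adjoint of $M_r$ with respect to $\varphi$ is an integral operator whose kernel carries $m(B(h,r))^{-1}$ rather than $m(B(g,r))^{-1}$, hence is not comparable to an averaging operator in the non-doubling setting. Instead I would interpolate against $\mathrm{BMO}$, the key tool being the interpolation theorem for noncommutative $\mathrm{BMO}$ attached to a filtration (cf.~\cite{JMX}): with equivalent norms, $[\mathrm{BMO}(\mathcal Y),L_2(\mathcal Y)]_{2/p}=L_p(\mathcal Y)$ for $2\le p<\infty$, where $\mathcal Y$ is any semifinite von Neumann algebra with a filtration and $\mathrm{BMO}(\mathcal Y)=\mathrm{BMO}_c(\mathcal Y)\cap\mathrm{BMO}_r(\mathcal Y)$. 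Applied with $\mathcal Y=\mathcal N$ to $L$, which is bounded $L_\infty(\mathcal N)\to\mathrm{BMO}_d(\mathcal N)$ (Theorem~\ref{long-ineq}(ii)) and on $L_2(\mathcal N)$ (Proposition~\ref{L-2}), this gives $L\colon L_p(\mathcal N)\to L_p(\mathcal N)$. For $(S_i)_i$ I would amplify: on $\mathcal A=\mathcal N\overline\otimes\mathcal B(\ell_2)$ set $T^cf=\sum_{i\in\mathcal S}T_if\otimes e_{i1}$ and $T^rf=\sum_{i\in\mathcal S}T_if\otimes e_{1i}$, so that $\|T^cf\|_{L_p(\mathcal A)}=\|(T_if)_i\|_{L_p(\mathcal N;\ell_2^c)}$, $\|T^rf\|_{L_p(\mathcal A)}=\|(T_if)_i\|_{L_p(\mathcal N;\ell_2^r)}$, and for $p\ge2$ the $\ell_2^{rc}$ norm is the maximum of these. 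Theorem~\ref{short-ineq}(ii) states precisely that $T^c$ and $T^r$ map $L_\infty(\mathcal N)$ into the \emph{full} $\mathrm{BMO}(\mathcal A)$ — the two-term estimate controls simultaneously the $e_{i1}$-- and the $e_{1i}$--arrangements — while Proposition~\ref{S-2} gives $T^c,T^r\colon L_2(\mathcal N)\to L_2(\mathcal A)$; the $\mathrm{BMO}$ interpolation theorem then yields $T^c,T^r\colon L_p(\mathcal N)\to L_p(\mathcal A)$ for $2\le p<\infty$, which is the desired estimate. Specialising $(v_n)_n$ to a Rademacher sequence and invoking Proposition~\ref{nonkin} finally converts the bound on $L$ into the square-function inequality $\|(M_{\delta^{n}}f-\mathsf E_{n}f)_n\|_{L_p(\mathcal N;\ell_2^{rc})}\lesssim\|f\|_p$.

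The main obstacle is the range $p\ge2$: one has to be sure that $L$ and the amplified operators $T^c,T^r$ genuinely land in the \emph{full} (row and column) noncommutative $\mathrm{BMO}$, so that $[\mathrm{BMO},L_2]_{2/p}=L_p$ really returns $L_p$ and not a proper Hardy-type subspace — this is exactly what the two-term $\mathrm{BMO}$ estimates of Theorems~\ref{long-ineq}(ii) and~\ref{short-ineq}(ii) were designed to provide, and it is why the elementary duality argument available in the doubling case must be abandoned. A secondary, routine point is to check that the real (Marcinkiewicz) interpolation used for $1<p<2$ is legitimate for the weak spaces involved and compatible with the randomisation identifying $\|(T_if)_i\|_{L_p(\mathcal N;\ell_2^{rc})}$ with an honest noncommutative $L_p$-norm; this is standard (cf.~\cite{Pis-Xu03,JMX}).
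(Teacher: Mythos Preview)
Your proposal is correct and follows essentially the same approach as the paper: real interpolation between the weak $(1,1)$ and $L_2$ endpoints for $1<p<2$ (after linearising the square function via Khintchine), and complex interpolation between $L_2$ and $\mathrm{BMO}$ for $2<p<\infty$ (after amplifying to column/row operators on $\mathcal N\overline\otimes\mathcal B(\ell_2)$). The paper cites \cite{JM} rather than \cite{JMX} for the $[\mathrm{BMO},L_2]_{2/p}=L_p$ interpolation step, but the argument is otherwise identical.
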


\begin{proof}
We focus our proof on the operator $L$.  Combining the weak type $(1,1)$ estimate of $L$ with Proposition \ref{L-2}, we obtain the strong type $(p,p)$ estimate for $1<p<2$ via real interpolation \cite{Pis-Xu03}. Similarly, by applying Proposition~\ref{L-2} and the $(L_\infty,\mathrm{BMO})$ estimates, we derive the strong type $(p,p)$ estimate for $2<p<\infty$ via complex interpolation \cite{JM}.

It remains to estimate $(S_i)_i$. Let $\mathcal T=\sum_{i\in\mathcal S}\varepsilon_i T_i$.  For $1<p<2$, by combining the weak type $(1,1)$ estimate of $\mathcal T$ with Proposition \ref{S-2}, we deduce that $\mathcal T$ is bounded from $L_{p}(\mathcal{N})$ to $L_p(L_{\infty}(\Omega)\overline{\otimes}\mathcal{N})$ via real interpolation \cite{Pis-Xu03}. Consequently, invoking the noncommutative Khintchine inequalities (Proposition~\ref{nonkin}), we obtain that $(L_n)_{n\in\mathbb N}$ is bounded from $L_{p}(\mathcal{N})$ to $L_p(\mathcal{N};\ell_{2}^{rc})$.

	We now turn to the case $2<p<\infty$. Define the operators $\mathcal T_{c}f=\sum_{i\in \mathcal S} T_if \otimes e_{i1}$ and $\mathcal T_{r}f=\sum_{i\in \mathcal S} T_if \otimes e_{1i}$. By Proposition~\ref{S-2} and $(L_\infty,\mathrm{BMO})$ estimates, both $\mathcal T_{c}$ and $\mathcal T_{r}$ are bounded from $L_p(\mathcal N)$ to $L_p(\mathcal{N}\overline{\otimes}\mathcal B({\ell_2}))$ via complex interpolation \cite{JM}. Thus, $(T_i)_{i\in\mathcal S}$ is bounded from $L_p(\mathcal N)$ to $L_p(\mathcal{N};
	\ell_{2}^{rc})$ for all $2< p<\infty$.
\end{proof}

We emphasize that the use of noncommutative Khintchine inequalities (Proposition~\ref{nonkin}) enables Theorem~\ref{long-ineq} to extend the main result of Hong and Xu~\cite{HX} to a general metric space framework. This generalization is achieved by employing noncommutative Doob inequalities~\cite[Theorem 0.2]{Jun}, adapting the techniques from~\cite[Proof of Corollary 1.4]{HX}, and leveraging the structural insights in~\cite[Proposition 2.3]{HWW}. Consequently, Theorem~\ref{long-ineq} establishes both weak type $(1,1)$ and strong type $(p,p)$ estimates for the operator-valued maximal function. This recovers, through our alternative approach, a result originally proved in~\cite{HLW} via the analysis of random dyadic systems. The maximal ergodic inequalities for polynomial group actions are then derived by applying a noncommutative generalization of Calder\'{o}n's transference principle, as detailed in~\cite{HLW}. Finally, an application of the Banach principle yields the following pointwise ergodic theorem.
\begin{cor}\label{cor3}
	Let $G$ be generated by a symmetric compact subset $V$ and be of polynomial growth. Let $m$ be a Haar measure on $G$. Assume that $\a$ is an action of $G$ on the associated $L_p$-spaces $L_p(\M)$. Define the ergodic averages
	\begin{equation*}
		A_nx=\frac{1}{m(V^n)}\int_{V^n}\a_hxdm(h).
	\end{equation*}
	\begin{enumerate}[\noindent]
		\item\emph{(i)} Fix $1<p<\8$. If $\alpha$ is a strongly continuous and uniformly bounded action of $G$ on $L_p(\M)$. Then ergodic
		sequence $(A_{\delta^n}x)_n$ converges bilaterally almost uniformly for $x\in L_p(\M)$.
		\item\emph{(ii)} If $\a$ is a strong continuous action of $G$ on $\M$ by $\tau$-preserving automorphisms, so $\a$ extends to an isometric actions on the spaces $L_p(\M)$ for all $1\le p\le\8$. Then ergodic sequence $(A_{\delta^n}x)_n$ converges almost uniformly for $x\in L_p(\M)$ for all $1\le p<\8$.
	\end{enumerate}
\end{cor}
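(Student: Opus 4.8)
The plan is to reduce Corollary \ref{cor3} to an operator-valued \emph{maximal} inequality for the averages $A_{\delta^n}$, obtained by putting back the martingale part in Theorem \ref{long-ineq}, and then to combine the noncommutative Calder\'on transference principle with the noncommutative Banach principle. \textbf{Step 1 (from the square function to a maximal inequality on $\mathcal N$).} Taking $v_n=\varepsilon_n$ a Rademacher sequence in Theorem \ref{long-ineq} and applying the noncommutative Khintchine inequalities (Proposition \ref{nonkin}) in $L_p$ and in $L_{1,\infty}$, one gets $\|(M_{\delta^{n}}f-\mathsf{E}_{n}f)_{n>n_{r_0}}\|_{L_p(\mathcal N;\ell_2^{rc})}\lesssim\|f\|_p$ for $1<p<\infty$ together with its weak type $(1,1)$ analogue. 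Writing $M_{\delta^n}f=(M_{\delta^n}f-\mathsf{E}_nf)+\mathsf{E}_nf$, I bound the $L_p(\mathcal N;\ell_\infty)$-norm (the noncommutative maximal function) of the first summand by its $\ell_2^{rc}$-norm via \cite[Proposition 2.3]{HWW}, and I control the reverse martingale $(\mathsf{E}_nf)_n$ by the noncommutative Doob maximal inequality \cite[Theorem 0.2]{Jun}; the finitely many scales $n\le n_{r_0}$ contribute only a bounded term by Lemma \ref{aver}. This yields $\|(M_{\delta^n}f)_{n}\|_{L_p(\mathcal N;\ell_\infty)}\lesssim\|f\|_p$ for $1<p<\infty$ and the weak $(1,1)$ version.

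\textbf{Step 2 (transference to the action).} I transfer the maximal inequality from $\mathcal N=L_\infty(G)\widebar{\otimes}\M$ to the action $\alpha$ on $L_p(\M)$ following the noncommutative Calder\'on transference argument of \cite{HLW}. Since $(B_{\delta^n})_n$ is a F\o lner sequence (Section \ref{pre}), for each large F\o lner set $F_N$ one embeds $L_p(\M)$ isometrically into $L_p(\mathcal N)$ by $x\mapsto \chi_{F_N}(\cdot)\,\alpha_{(\cdot)^{-1}}x$, under which $A_{\delta^n}x$ is modelled by $M_{\delta^n}$ up to a boundary error that vanishes as $N\to\infty$ by \eqref{asymptotically invariant}; passing to the limit produces the maximal inequality for $(A_{\delta^n}x)_n$. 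In case (i), strong continuity and uniform boundedness of $\alpha$ on the reflexive space $L_p(\M)$, $1<p<\infty$, give $\|(A_{\delta^n}x)_n\|_{L_p(\M;\ell_\infty)}\lesssim\|x\|_p$. In case (ii), the $\tau$-preserving hypothesis makes $\alpha$ extend isometrically and positively to $L_p(\M)$ for all $1\le p\le\infty$, so one obtains both the strong $(p,p)$ maximal inequality for $1<p<\infty$ and the weak $(1,1)$ maximal inequality.

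\textbf{Step 3 (dense convergence and the Banach principle).} Decompose $L_p(\M)=F_p\oplus\overline{N_p}$, where $F_p=\{x:\alpha_gx=x\ \forall g\in G\}$ and $N_p=\mathrm{span}\{y-\alpha_gy:y\in L_p(\M),\,g\in G\}$; this is the mean ergodic decomposition, whose density is provided by the mean ergodic theorem for polynomial growth (hence amenable) group actions, itself contained in Theorem \ref{main-thm2}(i) (for $p=1$ in case (ii) one also uses the $L_1$--$L_\infty$ scale). On $F_p$ one has $A_{\delta^n}x=x$, so convergence is trivial; on a generator $z=y-\alpha_gy$ of $N_p$ the F\o lner property gives $\|A_{\delta^n}z\|_p\to0$, and together with the maximal inequality of Step 2 this yields $A_{\delta^n}z\to0$ bilaterally almost uniformly (resp.\ almost uniformly in case (ii)) by the standard argument (cf.\ \cite{JX,HLW}). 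By the noncommutative Banach principle, the set of $x\in L_p(\M)$ for which $(A_{\delta^n}x)_n$ converges bilaterally almost uniformly is norm-closed; since it contains the dense subspace $F_p+N_p$, the convergence holds for all $x\in L_p(\M)$. In case (ii) the additional weak $(1,1)$ maximal inequality upgrades the conclusion to almost uniform convergence for all $1\le p<\infty$.

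\textbf{Main obstacle.} The delicate point is Step 2: transferring a \emph{maximal} rather than merely a square-function inequality in the noncommutative setting, where $\|\cdot\|_{L_p(\M;\ell_\infty)}$ is the asymmetric maximal norm and is sensitive to the order structure. One must verify that the averages $A_{\delta^n}$ on $L_p(\M)$ are uniformly modelled by $M_{\delta^n}$ on $L_p(\mathcal N)$ and that the F\o lner boundary errors are controlled in this asymmetric norm --- which is exactly where the geometric input of Section \ref{pre} (geometric doubling, annular decay, and the F\o lner property of $(B_{\delta^n})$) enters, following \cite{HLW}. A secondary subtlety, confined to case (i), is that $\alpha$ need not be positive, so the maximal function must be interpreted through reflexive duality and interpolation, which is why only bilateral almost uniform convergence is available there and only for $p>1$.
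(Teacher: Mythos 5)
Your proposal follows essentially the same route as the paper: the paper likewise obtains the operator-valued maximal inequalities (weak $(1,1)$ and strong $(p,p)$) for $(M_{\delta^n}f)_n$ from Theorem \ref{long-ineq} via the noncommutative Khintchine inequalities, the Doob inequality of \cite{Jun}, \cite[Proposition 2.3]{HWW} and the technique of \cite{HX}, then transfers them to the action by the noncommutative Calder\'on transference principle of \cite{HLW}, and concludes by the Banach principle (the dense-subspace/mean-ergodic step you spell out is exactly what is delegated to \cite{HLW}). So your argument is correct and matches the paper's intended proof, just written out in more detail.
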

We note that Corollary \ref{cor3} is a specific instance of the main result in \cite[Theorem 1.2]{HLW}, where the reader may consult \cite{HLW} for definitions of undefined terms such as maximal inequality and bilaterally almost uniform convergence.

\section{Proof of Theorem \ref{thm:trans}}\label{thisissec5}

In this section, we investigate the noncommutative analogue of the classical transference principle established by Coifman and Weiss \cite{CoWe}. Recall that a sequence of compact sets $\{F_n\}_{n\in\mathbb{N}}$ with positive Haar measures in a locally compact group $G$ is called a F${\o}$lner sequence if, for every $g\in G$,
\begin{equation*}
 \lim_{i}\frac{m((F_ng)\triangle F_n)}{m(F_n)}=0,
\end{equation*}
or equivalently
for all compact set $K$ in $G$,
\begin{equation}\label{amen}
  \lim_{n}\frac{m(F_nK)}{m(F_n)}=1.
\end{equation}
A group  $G$ is said to be amenable if it admits a F${\o}$lner sequence.

\subsection{Strong type inequalities.}\quad

\medskip

The transference principle for strong type $(p,p)$ inequalities requires careful consideration of bounded linear operator extensions. This foundational result follows directly from noncommutative Khintchine inequalities (Proposition \ref{nonkin}), a standard tool familiar to specialists in functional analysis. We omit the detailed proof as it constitutes an exercise for advanced readers.


\begin{lem}\label{ex-lin}
	Let $1\le p<\8$. Assume that $T$ is a bounded linear operator on $L_p(\mathcal M)$. Then $T$ extends to a bounded operator on $L_p(\mathcal M;\ell^{rc}_2)$.
\end{lem}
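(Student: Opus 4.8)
\textbf{Proof proposal for Lemma \ref{ex-lin}.}

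The plan is to reduce the statement to the two column/row cases and then use Khintchine, exactly as for the scalar-to-$\ell_2$ transfer that is standard in noncommutative analysis. First I would recall that $L_p(\mathcal M;\ell_2^c)$ is isometric to $L_p(\mathcal M\,\overline{\otimes}\,B(\ell_2))$ restricted to the first column: a finite sequence $(x_n)$ corresponds to $y=\sum_n x_n\otimes e_{n1}$, and $\|(x_n)\|_{L_p(\mathcal M;\ell_2^c)}=\|y\|_{L_p(\mathcal M\,\overline{\otimes}\,B(\ell_2))}$. The operator $T$ acts on such a $y$ entrywise, i.e. $T\otimes \mathrm{id}_{B(\ell_2)}$ sends $\sum_n x_n\otimes e_{n1}$ to $\sum_n (Tx_n)\otimes e_{n1}$. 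Thus the real content is the boundedness of $T\otimes \mathrm{id}_{B(\ell_2)}$ on $L_p(\mathcal M\,\overline{\otimes}\,B(\ell_2))$, equivalently the complete boundedness of $T$. However, a bounded linear map on $L_p(\mathcal M)$ need not be completely bounded in general, so a direct appeal to $cb$-norms will not work; instead one goes through Rademacher averaging.

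The key steps, in order, are as follows. Fix a finite sequence $(x_n)_{n=1}^N$ in $L_p(\mathcal M)$ and a Rademacher sequence $(\varepsilon_n)$ on a probability space $(\Omega,P)$. By the noncommutative Khintchine inequalities (Proposition \ref{nonkin}), $\|(x_n)\|_{L_p(\mathcal M;\ell_2^{rc})}\approx \|\sum_n \varepsilon_n x_n\|_{L_p(L_\infty(\Omega)\,\overline{\otimes}\,\mathcal M)}$, with constants depending only on $p$. Now $\mathrm{id}_{L_\infty(\Omega)}\otimes T$ is a bounded operator on $L_p(L_\infty(\Omega)\,\overline{\otimes}\,\mathcal M)$ with norm at most $\|T:L_p(\mathcal M)\to L_p(\mathcal M)\|$: indeed, writing $F(\omega)=\sum_n \varepsilon_n(\omega)x_n\in L_p(\mathcal M)$ as an $L_p(\mathcal M)$-valued function, one has $\|F\|_{L_p(L_\infty(\Omega)\,\overline{\otimes}\,\mathcal M)}^p=\int_\Omega\|F(\omega)\|_p^p\,dP(\omega)$, and $(\mathrm{id}\otimes T)F(\omega)=T(F(\omega))$, so the norm is controlled pointwise in $\omega$ by $\|T\|$. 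Applying this to $F=\sum_n\varepsilon_n x_n$ gives $\sum_n\varepsilon_n (Tx_n)=(\mathrm{id}\otimes T)F$, hence $\|\sum_n\varepsilon_n Tx_n\|_{L_p(L_\infty(\Omega)\,\overline{\otimes}\,\mathcal M)}\le \|T\|\,\|\sum_n\varepsilon_n x_n\|_{L_p(L_\infty(\Omega)\,\overline{\otimes}\,\mathcal M)}$. Combining with the two-sided Khintchine estimate yields $\|(Tx_n)\|_{L_p(\mathcal M;\ell_2^{rc})}\le c_p'c_p^{-1}\|T\|\,\|(x_n)\|_{L_p(\mathcal M;\ell_2^{rc})}$, which is the desired inequality on finite sequences; a standard density argument (as in Remark \ref{app}, cf. \cite[Section 6.A]{JMX}) then extends the estimate to all of $L_p(\mathcal M;\ell_2^{rc})$ and defines $T\otimes\mathrm{id}$ there.

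The only mild subtlety — and the step I would be most careful about — is the case $p=1$, where $L_1(\mathcal M;\ell_2^{rc})$ carries a sum (rather than intersection) norm and one must check that the Khintchine equivalence in Proposition \ref{nonkin} is still two-sided on $L_1$; this is precisely Cadilhac's result quoted there, so the argument goes through verbatim, the constants now being absolute for the lower bound and $p$-dependent for the upper one. No genuine obstacle arises because at no point do we need complete boundedness of $T$: the Rademacher trick converts the $\ell_2^{rc}$-norm into an honest $L_p$-norm on which $T$ acts diagonally. This completes the proof.
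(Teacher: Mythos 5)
Your argument is correct and is exactly the route the paper has in mind: Lemma \ref{ex-lin} is stated there without proof, with the remark that it ``follows directly from noncommutative Khintchine inequalities (Proposition \ref{nonkin})'', and your Rademacher linearization $\|(x_n)\|_{L_p(\M;\ell_2^{rc})}\approx\|\sum_n\varepsilon_n x_n\|_{L_p(L_\infty(\Omega)\overline{\otimes}\M)}$ combined with the pointwise action of $\mathrm{id}\otimes T$ on $L_p(\Omega;L_p(\M))$ is precisely that intended argument. Your closing remarks on $p=1$ and on density of finite sequences are accurate, so nothing further is needed.
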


\begin{proof}[Proof of Theorem \ref{thm:trans}(i)]
	Let $p\in(1,\8)$ and $x\in L_p(\mathcal M)$. By the standard approximation argument stated in Remark \ref{app}, it suffices to show for any fixed integer $i_0$,
	\begin{equation}\label{st-trans}
		\|(A_{r_i}x-A_{r_{i+1}}x)_{1\le i\le i_0}\|_{L_{p}(\M;\ell_2^{rc})}\lesssim \|x\|_{L_p(\M)}.
	\end{equation}
	Let $\a$ be a strongly continuous and uniformly bounded action of $G$ on $L_p(M)$. Fix $x\in L_p(\M)$ and a compact set $A$. Let $K$ be a compact set such that $B_{r_{i_0+1}}\subseteq K$. Define $L_p(\M)$-valued function $F_{AK}$ on $G$ as
	\begin{equation*}
		F_{AK}(g)=\chi_{AK}(g)\a_g x.
	\end{equation*}
	By the metric invariance property of $d$, we have $hB_r=B(h,r)$, for all $h\in G$. Furthermore, since $m$ is a Haar measure, it follows that $\forall h\in G, m(B_r)=m(B(h,r))$. This leads to the following results for $1\le i\le i_0+1$,
	\begin{equation}\label{avera-eq}
		\a_h A_{r_i}x=\frac{1}{m(B_{r_i})}\int_{B_{r_i}}\a_h\a_gxdm(g)=\frac{1}{m(B_{r_i})}\int_{B_{r_i}}F_{AK}(hg)dm(g)=M_{r_i}F_{AK}(h).
	\end{equation}
	Consequently, we obtain
	\begin{equation*}
		\a_h(A_{r_{i}}x-A_{r_{i+1}}x)=M_{r_{i}}F_{AK}(h)-M_{r_{i+1}}F_{AK}(h).
	\end{equation*}
	By combining Lemma \ref{ex-lin} with the uniform boundedness assumption of  $\a_g$ on $L_p(\M)$, specifically $\sup_{g\in G}\|\a_g\|_{L_p(\M)\rightarrow L_p(\M)}<\8$, we derive the following estimate
	\begin{equation*}
		\begin{split}
			\|(A_{r_i}x-A_{r_{i+1}}x)_{1\le i\le i_0}\|_{L_{p}(\M;\ell_2^{rc})}&\lesssim \|(\a_gA_{r_i}x-\a_gA_{r_{i+1}}x)_{1\le i\le i_0}\|_{L_{p}(\M;\ell_2^{rc})}\\
			&=\|(M_{r_i}F_{AK}(g)-M_{r_{i+1}}F_{AK}(g))_{1\le i\le i_0}\|_{L_{p}(\M;\ell_2^{rc})}.
		\end{split}
	\end{equation*}
	Furthermore, applying the strong type $(p,p)$ square function inequality for the translation action yields
	\begin{equation*}
		\|(M_{r_i}F_{AK}-M_{r_{i+1}}F_{AK})_{1\le i\le i_0}\|_{L_{p}(L_\8(G)\overline{\otimes}\M;\ell_2^{rc})}\lesssim \|F_{AK}\|_{L_{p}(L_\8(G)\overline{\otimes}\M)}.
	\end{equation*}
	We now establish \eqref{st-trans}. By applying the Fubini theorem together with the noncommutative Khintchine inequalities (Proposition~\ref{nonkin}), we obtain $L_p(L_\8(G)\overline{\otimes}\M;\ell_2^{rc})\approx L_p(G;L_p(\M;\ell_2^{rc}))$. Therefore,
	\begin{align*}
		&\|(A_{r_i}x-A_{r_{i+1}}x)_{1\le i\le i_0}\|^p_{L_{p}(\M;\ell_2^{rc})}\\
		&\lesssim\frac{1}{m(A)}\int_A \|(M_{r_i}F_{AK}(g)-M_{r_{i+1}}F_{AK}(g))_{1\le i\le i_0}\|^p_{L_{p}(\M;\ell_2^{rc})}dm(g)\\
		&\le \frac{1}{m(A)}\int_G \|(M_{r_i}F_{AK}(g)-M_{r_{i+1}}F_{AK}(g))_{1\le i\le i_0}\|^p_{L_{p}(\M;\ell_2^{rc})}dm(g)\\
		&\approx\frac{1}{m(A)}\|(M_{r_i}F_{AK}-M_{r_{i+1}}F_{AK})_{1\le i\le i_0}\|^p_{L_{p}(L_\8(G)\overline{\otimes}\M;\ell_2^{rc})}\\
		&\lesssim \frac{1}{m(A)}\|F_{AK}\|^p_{L_{p}(L_\8(G)\overline{\otimes}\M)}\lesssim \frac{m(AK)}{m(A)}\|x\|^p_p.
	\end{align*}
	Since $G$ is an amenable group, by~\eqref{amen}, for any $\varepsilon>0$, we may select a subset $A\subset G$ such that $m(AK)/m(A)\le(1+\varepsilon)$. Letting $\varepsilon$ tend to $0$, we conclude the proof of~\eqref{st-trans}.
\end{proof}

\medskip

\begin{remark}\rm
Our transference principle extends naturally to the vector-valued setting. Specifically, let $\mathbb B$ be a $p$-uniformly convex Banach spaces with $p\ge 2$. For any $x\in\mathbb B$, we define the average operator $A_rx=\frac{1}{m(B_r)}\int_{B_r}\a_gxdm(g)$, where $\a_g$ represents a $G$-action satisfying $\sup_{g\in G}\|\a_g:\mathbb B\rightarrow \mathbb B\|<\8$. Define the vector-valued averaging operator $\mathcal{A}_r$ on $L^p(G;\mathbb B)$ by 
 $$\mathcal{A}_rf(h)=\frac{1}{m(B_r)}\int_{B_r}f(hg) dm(g),~\forall f\in L^p(G;\mathbb B).$$
 A slight modification of the transference principle's proof technique allows us to establish the following implication between norm estimates. From the variational norm estimate 
 \begin{equation}\label{norm-var-1}
 	\sup_{(r_i)}\Big(\sum_i\|\mathcal{A}_{r_{i+1}}f-\mathcal{A}_{r_i}f\|_{L^p(G;\mathbb B)}^p\Big)^{\frac1p}\lesssim\|f\|^p_{L^p(G;\mathbb B)},
 \end{equation}
 we can deduce the corresponding estimate 
 \begin{equation}\label{norm-var-2}
 	 \sup_{(r_i)}\Big(\sum_i\|A_{r_{i+1}}x-A_{r_i}x\|_{\mathbb B}^p\Big)^{\frac1p}\lesssim\|x\|_{\mathbb B}.
 \end{equation}
 To the best of our knowledge, inequality \eqref{norm-var-2} was initially established by Jones et al.~\cite{JOR96} in Hilbert spaces, where $A_nx=\sum_{i=1}^n T^nx$ with $T$ being an isometry. The generalization to $p$-uniformly convex Banach spaces by Avigad and Rute~\cite{AR} required $T$ to be power bounded from above and below. Furthermore, Avigad and Rute utilized \eqref{norm-var-2} to establish upper bounds on the number of $\varepsilon$-fluctuations in ergodic average sequences $(A_{n_i}x)_i$. For more details we refer the reader to \cite[Section 5 and 6]{AR}.
 
 On the other hand, if $G$ is a group of polynomial growth with a symmetric compact generating set, or more generally satisfies the conditions of Theorem~\ref{main-thm1}, \eqref{norm-var-1} holds in this case by virtue of the methods developed in \cite{AR}, along with the boundary properties established in Propositions \ref{boundary} and~\ref{measure estimate of cube}. Consequently, \eqref{norm-var-2} follows from the aforementioned transference principle. Furthermore, employing the techniques from \cite{Hong-Liu21} and \cite{HM1}, we derive the following vector-valued variational inequality: Let $\mathbb B$ be a $p$-uniformly convex Banach space. Then for all $p_0>p$ and $1<q<\infty$, 
   $$\norm{\sup_{(r_i)}\Big(\sum_i\|\mathcal{A}_{r_{i+1}}f-\mathcal{A}_{r_i}f\|_{\mathbb B}^{p_0}\Big)^{\frac1{p_0}}}_{q}\lesssim\|f\|_{L^q(G;\mathbb B)}$$ 
\end{remark}

\subsection{Weak type inequalities.}\quad

\medskip

We now investigate the transference principle for weak-type $(p,p)$ inequalities associated with the strongly continuous action $\a_h$ of $G$ on $\M$, where $\a_h$ is induced by $\tau$-preserving automorphisms of $\M$. It is well-known that $\a_h$ extends to an isometric action on $L_p(\M)$ for all $1\le p\le \8$.
Our analysis relies crucially on the fundamental result concerning automorphisms (see   Proposition 3.3 in \cite{Pa-Su}).
\begin{prop}\cite[Proposition 3.3]{Pa-Su}\label{auto}
	Let $\M$ be a von Neumann algebra and $\a$ a $*$-automorphism on $\M$. Then for any $a\in L_0(\M)$ which is self-adjoint and Borel function $f:\mathbb{R}\rightarrow\mathbb C$ which is bounded on bounded subsets of $\mathbb R$,
	\begin{equation*}
		\a(f(a))=f(\a(a)).
	\end{equation*}
\end{prop}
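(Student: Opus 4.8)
\textbf{Proof plan for Proposition \ref{auto}.}

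The plan is to prove the identity $\alpha(f(a)) = f(\alpha(a))$ in three stages, moving from polynomials, to continuous functions, to general Borel functions bounded on bounded sets, using the spectral calculus for the self-adjoint $\tau$-measurable operator $a$ together with the fact that a $*$-automorphism of $\M$ extends to a $*$-isomorphism of the $\ast$-algebra $L_0(\M)$ of $\tau$-measurable operators. First I would recall why $\alpha$ has such an extension: $\alpha$ preserves the lattice of projections, hence (by the standard correspondence between strong-operator Cauchy sequences in the measure topology and the measure topology on $L_0(\M)$, see \cite{Fack-Kos86}) it extends to a map $\widetilde\alpha$ on $L_0(\M)$ which is additive, multiplicative, $\ast$-preserving, and continuous in the measure topology; moreover $\widetilde\alpha$ sends the spectral projection $\chi_E(a)$ of a self-adjoint $a$ to $\chi_E(\widetilde\alpha(a))$, because $\widetilde\alpha$ carries the spectral resolution $a=\int \lambda\, dE_\lambda$ to that of $\widetilde\alpha(a)$ (the projections $\widetilde\alpha(E_\lambda)$ form an increasing right-continuous resolution of the identity whose integral is $\widetilde\alpha(a)$ by continuity of $\widetilde\alpha$ on the approximating simple operators).

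The second stage treats $f$ a polynomial: here $f(a)$ is a finite algebraic combination of powers of $a$ with scalar coefficients, so $\alpha(f(a)) = f(\alpha(a))$ follows immediately from additivity, multiplicativity, and scalar-linearity of $\widetilde\alpha$ (note $\alpha(1_\M)=1_\M$). Next, for $f$ continuous and bounded on bounded sets, I would truncate: fix $N$, let $p_N = \chi_{[-N,N]}(a)$, so $a p_N \in \M$ is bounded with norm $\le N$; by Stone--Weierstrass approximate $f|_{[-N,N]}$ uniformly by polynomials $q_k$, so $q_k(a p_N) \to f(a p_N) = f(a)p_N$ in the operator norm of $\M$ (using that $f(a)p_N = (f\cdot\chi_{[-N,N]})(a)$ by the Borel functional calculus). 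Applying $\alpha$, which is norm-continuous on $\M$, and the polynomial case to $q_k(ap_N)$, together with $\alpha(p_N) = \chi_{[-N,N]}(\alpha(a))$ from the first stage, gives $\alpha(f(a))\,\alpha(p_N) = f(\alpha(a))\,\alpha(p_N)$. Finally, letting $N\to\infty$: $p_N \uparrow 1_\M$ strongly, hence $\alpha(p_N)\uparrow 1_\M$, and both $\alpha(f(a))\alpha(p_N)\to \alpha(f(a))$ and $f(\alpha(a))\alpha(p_N)\to f(\alpha(a))$ in the measure topology, yielding the identity for continuous $f$. For the general Borel case I would invoke the monotone class / functional calculus uniqueness argument: the set of bounded Borel $f$ (on a fixed bounded interval) for which the identity holds is closed under bounded pointwise limits and contains the continuous functions, hence contains all bounded Borel functions; a truncation in $N$ as above then removes the boundedness-of-interval restriction for $f$ bounded on bounded sets.

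The main obstacle is the rigorous justification that $\alpha$ intertwines spectral projections — i.e. $\widetilde\alpha(\chi_E(a)) = \chi_E(\widetilde\alpha(a))$ — and more generally that the extension $\widetilde\alpha$ to $L_0(\M)$ is well-behaved with respect to the measure topology (so that the limiting arguments in $N$ and $k$ are legitimate). This is where one must be careful that $a$ is only $\tau$-measurable, not bounded, so $f(a)$ need not lie in $\M$ and all convergences must be interpreted in $L_0(\M)$; the key technical input is the continuity of $\widetilde\alpha$ in the measure topology and the description of convergence of spectral integrals, both of which are available from the theory in \cite{Fack-Kos86, Fack-Kos86}. Once the spectral-projection intertwining is in hand, the polynomial and Stone--Weierstrass steps are routine, and the monotone-class step is standard.
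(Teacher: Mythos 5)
The paper does not actually prove this proposition: it is quoted directly from de Pagter and Sukochev \cite{Pa-Su}, so there is no in-paper argument to compare yours with line by line. Your outline is essentially the standard functional-calculus proof and is sound in its main structure, but the one point you flag as ``the main obstacle'' is indeed the place where your justification is too thin: the existence and measure-topology continuity of the extension $\widetilde\alpha$ to $L_0(\M)$ does \emph{not} follow merely from $\alpha$ preserving the projection lattice. The class of $\tau$-measurable operators and the measure topology are defined relative to the fixed trace $\tau$, and a general $*$-automorphism need not map $\tau$-measurable operators to $\tau$-measurable operators; one needs compatibility with the trace, e.g.\ $\tau\circ\alpha=\tau$ (or $\tau\circ\alpha\le C\tau$), to get a measure-continuous extension. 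In the paper's application (Section 5) the automorphisms are $\tau$-preserving, so this is harmless, but your write-up should either add this hypothesis explicitly or note that it is implicitly required for $\alpha(a)$ to be defined at all when $a$ is unbounded.

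Once $\widetilde\alpha$ is available and you have the spectral intertwining $\widetilde\alpha(\chi_B(a))=\chi_B(\widetilde\alpha(a))$ (which follows from uniqueness of the spectral resolution of a self-adjoint element of $L_0(\M)$, since $\widetilde\alpha$ carries the resolution of $a$ to a resolution of $\widetilde\alpha(a)$), your three-stage ladder (polynomials, Stone--Weierstrass, monotone class) can be compressed considerably: approximate $f$ uniformly on $[-N,N]$ by simple Borel functions, use the intertwining plus linearity to get the identity for $f\cdot\chi_{[-N,N]}$, and let $N\to\infty$ in the measure topology using $\tau\bigl(\chi_{(N,\infty)}(|a|)\bigr)\to 0$. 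Also a minor slip in your continuous-function stage: $f(ap_N)$ and $f(a)p_N$ differ by $f(0)(1_\M-p_N)$ unless $f(0)=0$; this discrepancy is annihilated once you multiply by $\alpha(p_N)$, so the conclusion you extract still holds, but the intermediate equality as written is not correct. With these two repairs your argument gives a complete and correct proof, essentially the same one as in \cite{Pa-Su}.
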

We now proceed to prove Theorem \ref{thm:trans}(ii).
\begin{proof}[Proof of Theorem \ref{thm:trans}(ii)]
	Let $1\le p<\8$. By employing the methodology outlined in the preceding subsection, it suffices to establish that for any fixed integer $i_0$, the following weak-type estimate holds
	\begin{equation}\label{weak-trans}
		\|(A_{r_i}x-A_{r_{i+1}}x)_{1\le i\le i_0}\|_{L_{p,\8}(\M;\ell_2^{rc})}\lesssim \|x\|_{L_p(\M)}.
	\end{equation}
Since every element $x \in \mathcal{M}$ can be expressed as a linear combination of four positive elements, it suffices to demonstrate the inequality for non-negative $x$.  Moreover, choosing a compact set $K$ such that $B_{r_{i_0+1}}\subseteq K$. Let $A$ be a compact set. Define $L_p(\M)$-valued function $F_{AK}(g)=\chi_{AK}(g)\a_g x$. Similar to \eqref{avera-eq}, we obtain for all $h\in G$ and $1\le i\le i_0+1$, $\a_h A_{r_i}x=M_{r_i}F_{AK}(h)$ and $\a_h(A_{r_{i}}x-A_{r_{i+1}}x)=M_{r_{i}}F_{AK}(h)-M_{r_{i+1}}F_{AK}(h)$. Moreover, using the assumption that the weak type $(p,p)$ square function inequality hold for the translation action, we get
	\begin{equation}\label{w-transla}
		\|(M_{r_i}F_{AK}-M_{r_{i+1}}F_{AK})_{1\le i\le i_0}\|_{L_{p,\8}(L_\8(G)\overline{\otimes}\M;\ell_2^{rc})}\lesssim \|F_{AK}\|_{L_{p}(L_\8(G)\overline{\otimes}\M)}.
	\end{equation}
	
	To prove \eqref{weak-trans}, we first consider the case $2\le p<\8$. For brevity, we set
	\begin{align*}
		a_{i}(h)=M_{r_i}F_{AK}(h)-M_{r_{i+1}}F_{AK}(h)\, ~\textit{and}~\, b_i=A_{r_{i}}x-A_{r_{i+1}}x.
	\end{align*}
	Then, for each $h\in G$, we have $\a_{h^{-1}}a_{i}(h)=b_i$. Let $\lambda>0$. By Proposition \ref{auto} and the trace-preserving property of the automorphisms $\a_h$, we infer that
	\begin{equation}\label{weakt}
		\begin{split}
			\tau\Big(\chi_{(\lambda,\8)}\Big(\Big(\sum_{i=1}^{i_0}|b_i|^2\Big)^{\frac12}\Big)\Big)&=
			\frac{1}{m(A)}\int_A\tau\Big(\chi_{(\lambda,\8)}\Big(\Big(\sum_{i=1}^{i_0}|b_i|^2\Big)^{\frac12}\Big)\Big)dm(h)\\
			&=\frac{1}{m(A)}\int_A\tau\Big(\chi_{(\lambda,\8)}\Big(\Big(\sum_{i=1}^{i_0}|\a_{h^{-1}}a_i(h)|^2\Big)^{\frac12}\Big)\Big)dm(h)\\
			&=\frac{1}{m(A)}\int_A\tau\Big(\chi_{(\lambda,\8)}\Big(\Big(\sum_{i=1}^{i_0}|a_i(h)|^2\Big)^{\frac12}\Big)\Big)dm(h).
		\end{split}
	\end{equation}
	Multiplying both sides of the preceding equation by $\lambda^p$ and subsequently taking the supremum over$\lambda$,  we may apply Fubini's theorem to obtain
		\begin{equation*}
		\|(b_i)_{1\le i\le i_0}\|^p_{L_{p,\8}(\M;\ell_2^{c})}\le\frac{1}{m(A)}\|(a_i)_{1\le i\le i_0}\|^p_{L_{p,\8}(L_\8(G)\overline{\otimes}\M;\ell_2^{c})}.
	\end{equation*}
	Observe that $\a_h^*=\a_{h^{-1}}$, and replacing  $a_i$ with $a_i^*$ in the preceding inequality, we obtain the following result
	\begin{equation*}
		\|(b_i)_{1\le i\le i_0}\|^p_{L_{p,\8}(\M;\ell_2^{r})}\le\frac{1}{m(A)}\|(a_i)_{1\le i\le i_0}\|^p_{L_{p,\8}(L_\8(G)\overline{\otimes}\M;\ell_2^{r})}.
	\end{equation*}
	Combining the above estimates with \eqref{w-transla}, we derive
	\begin{align*}
		\|(b_i)_{1\le i\le i_0}\|^p_{L_{p,\8}(\M;\ell_2^{rc})}&\lesssim \frac{1}{m(A)}\|(a_i)_{1\le i\le i_0}\|^p_{L_{p,\8}(L_\8(G)\overline{\otimes}\M;\ell_2^{rc})}\lesssim \frac{1}{m(A)}\|F_{AK}\|^p_{L_{p}(L_\8(G)\overline{\otimes}\M)}\\
		&=\frac{m(AK)}{m(A)}\|x\|_{L_p(\M)}^p\lesssim\|x\|_{L_p(\M)}^p,
	\end{align*}
	where the last inequality follows form \eqref{amen} and a similar argument as in the proof of strong type inequalities.
	
To complete the proof, it remains to establish \eqref{weak-trans} for the case $1\le p<2$. We retain the notations $a_i$ and $b_i$ as defined in the case $2\le p<\8$. Let $\varepsilon>0$, by the definition of norm $\|\cdot\|_{L_{p,\8}(L_\8(G)\overline{\otimes}\M;\ell_2^{rc})}$, the exists a factorization $a_i=f_i+g_i$ such that
	\begin{equation}\label{wea-ineq}
		\begin{split}
			&\|(f_{i})_{1\leq i\leq i_0}\|^p_{L_{p,\8}(L_\8(G)\overline{\otimes}\M;\ell_2^{r})}+\|(g_{i})_{1\leq i\leq i_0}\|^p_{L_{p,\8}(L_\8(G)\overline{\otimes}\M;\ell_2^{c})}\\
			& \lesssim\big\|(a_i)_{1\le i\le i_0}\big\|^p_{L_{p,\8}(L_\8(G)\overline{\otimes}\M;\ell_2^{rc})}+\varepsilon^p.
		\end{split}
	\end{equation}
Since $b_i=\a_{h^{-1}}a_i(h)$ and applying the same technique as in \eqref{weakt}, we derive the following results
\begin{align*}
		\tau\Big(\chi_{(\lambda,\8)}\Big(\Big(\sum_{i=1}^{i_0}|b_i|^2\Big)^{\frac12}\Big)\Big)
		&=\frac{1}{m(A)}\int_A\tau\Big(\chi_{(\lambda,\8)}\Big(\Big(\sum_{i=1}^{i_0}|a_i(h)|^2\Big)^{\frac12}\Big)\Big)dm(h)
	\end{align*}
	and 
	\begin{align*}
		\tau\Big(\chi_{(\lambda,\8)}\Big(\Big(\sum_{i=1}^{i_0}|b^*_i|^2\Big)^{\frac12}\Big)\Big)
		&=\frac{1}{m(A)}\int_A\tau\Big(\chi_{(\lambda,\8)}\Big(\Big(\sum_{i=1}^{i_0}|a^*_i(h)|^2\Big)^{\frac12}\Big)\Big)dm(h).
	\end{align*}
This implies that
\begin{align*}
		&\|(b_i)_{1\le i\le i_0}\|^p_{L_{p,\8}(\M;\ell_2^{rc})}\\
		&\lesssim \frac{1}{m(A)}\|(a_i)_{1\le i\le i_0}\|^p_{L_{p,\8}(L_\8(G)\overline{\otimes}\M;\ell_2^{c})}+\frac{1}{m(A)}\|(a^*_i)_{1\le i\le i_0}\|^p_{L_{p,\8}(L_\8(G)\overline{\otimes}\M;\ell_2^{c})}.
	\end{align*}
Since $a_i=f_i+g_i$, by applying the quasi-triangle inequality for the weak $L_p$ norm $\|\cdot\|_{L_{p,\8}(L_\8(G)\overline{\otimes}\M;\ell_2^{c})}$ together with \eqref{wea-ineq} and \eqref{w-transla}, we obtain
	\begin{align*}
		m(A)\|(b_i)_{1\le i\le i_0}\|^p_{L_{p,\8}(\M;\ell_2^{rc})}&\lesssim \|(a_i)_{1\le i\le i_0}\|^p_{L_{p,\8}(L_\8(G)\overline{\otimes}\M;\ell_2^{c})}+\|(a^*_i)_{1\le i\le i_0}\|^p_{L_{p,\8}(L_\8(G)\overline{\otimes}\M;\ell_2^{c})}\\
		&\lesssim \|(f_i)_{1\le i\le i_0}\|^p_{L_{p,\8}(L_\8(G)\overline{\otimes}\M;\ell_2^{c})}+\|(g_i)_{1\le i\le i_0}\|^p_{L_{p,\8}(L_\8(G)\overline{\otimes}\M;\ell_2^{c})}\\
		&\ \  \ +\|(f^*_i)_{1\le i\le i_0}\|^p_{L_{p,\8}(L_\8(G)\overline{\otimes}\M;\ell_2^{c})}+\|(g^*_i)_{1\le i\le i_0}\|^p_{L_{p,\8}(L_\8(G)\overline{\otimes}\M;\ell_2^{c})}\\
		&\lesssim\big\|(a_i)_{1\le i\le i_0}\big\|^p_{L_{p,\8}(L_\8(G)\overline{\otimes}\M;\ell_2^{rc})}+\varepsilon^p\\
		&\lesssim\|F_{AK}\|^p_{L_{p}(L_\8(G)\overline{\otimes}\M)}+\varepsilon^p.
	\end{align*}
	By the arbitrariness of $\varepsilon$, we have
	\begin{equation*}
		\|(A_{r_i}x-A_{r_{i+1}}x)_{1\le i\le i_0}\|^p_{L_{p,\8}(\M;\ell_2^{rc})}\lesssim \frac{m(AK)}{m(A)}\|x\|^p_p\lesssim\|x\|^p_p,
	\end{equation*}
	which completes the proof.
\end{proof}
\begin{remark}\rm
We present an alternative approach to proving \eqref{weak-trans} for $p=1$. Recall that the definitions of $a_i$ and $b_i$  are given above. By the noncommutative Khintchine inequalities (see Proposition~\ref{nonkin}), it suffices to show
	\begin{equation}\label{weak1}
		\|\sum_{i=1}^{i_0}\varepsilon_ib_i\|_{L_{1,\8}(L_{\infty}(\Omega)\overline{\otimes}\M)}\lesssim \|x\|_{L_1(\M)}.
	\end{equation}
By the same argument as in \eqref{weakt}, we obtain
	\begin{align*}
		\int_{\Omega}\tau\Big(\chi_{(\lambda,\8)}\Big(\sum_{i=1}^{i_0}\varepsilon_i(t)b_i\Big)\Big)dm(h)dP(t)
		=\frac{1}{m(A)}\int_{\Omega}\int_A\tau\Big(\chi_{(\lambda,\8)}\Big(\sum_{i=1}^{i_0}\varepsilon(t)a_i(h)\Big)\Big)dm(h)dP(t).
	\end{align*}
	It follows form \eqref{w-transla} and Proposition~\ref{nonkin} that
	\begin{align*}
		\|\sum_{i=1}^{i_0}\varepsilon_ib_i\|_{L_{1,\8}(L_{\infty}(\Omega)\overline{\otimes}\M)}&\le
		\frac{1}{m(A)}\|\sum_{i=1}^{i_0}\varepsilon_ia_i\|_{L_{1,\8}(L_{\infty}(\Omega)\overline{\otimes}L_{\8}(G)\overline{\otimes}\M)}\\
		&\lesssim\frac{m(AK)}{m(A)}\|x\|_{L_1(\M)}\\
		&\lesssim\|x\|_{L_1(\M)},
	\end{align*}
	which proves \eqref{weak1}.
\end{remark}

\section{Lamperti representation}\label{laprepsec}
\begin{defi}\label{def:lamperti}
Let $1 \leq p < \infty$.  
A bounded linear operator $
   T $ on $L_p(\mathcal{M})$ is said to be a \emph{Lamperti operator} if for all $\tau$-finite projections $e,f \in \mathcal{M}$ with $ef = 0$, one has
\[
   (Te)^* (Tf) = 0 
   \qquad \text{and} \qquad 
   (Te)(Tf)^* = 0.
\]
\end{defi} 
\begin{thm}\cite{HongRayWang2023}\label{thm:lamperti-decomposition}
Let $1 \leq p < \infty$ and let $
   T : L_p(\mathcal{M},\tau) \;\to\; L_p(\mathcal{M},\tau)$
be a Lamperti operator of norm $\|T\|_{p\to p}^p = C$.  
Then there exist uniquely:
 a partial isometry $w \in \mathcal{M}$,
   a positive self-adjoint operator $b$ affiliated with $\mathcal{M}$,
     and a normal Jordan $*$-homomorphism $J : \mathcal{M} \to \mathcal{M}$, such that the following hold:
\begin{enumerate}
    \item $w^* w = J(1) = s(b)$, where $s(b)$ denotes the support of $b$.  
    Moreover, if $T$ is positive, then $w = J(1) = s(b)$.
    
    \item Every spectral projection of $b$ commutes with $J(x)$ for all $x \in \mathcal{M}$.
    
    \item For all $x \in \mathcal{S}_{\mathcal{M}}$, one has
    \begin{equation}\label{lampertstrrep}
        T(x) = w \, b \, J(x).
  \end{equation}
    
    \item For all $x \in \mathcal{M}_+$,
    $
       \tau\!\bigl( b^p \, J(x) \bigr) \;\leq\; C^p \, \tau(x).
$
    
\end{enumerate}
\end{thm}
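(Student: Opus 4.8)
The plan is to reconstruct the triple $(w,b,J)$ from the action of $T$ on the lattice of $\tau$-finite projections, following the scheme of Yeadon's structure theorem for isometries and its refinements to Lamperti operators. First I would fix a $\tau$-finite projection $e$ and take the polar decomposition $Te=u_e|Te|$ in $L_p(\mathcal M)$, with $u_e\in\mathcal M$ a partial isometry and $|Te|\in L_p(\mathcal M)_+$, writing $l(Te)=u_eu_e^*$ and $r(Te)=u_e^*u_e=s(|Te|)$. The two orthogonality conditions of Definition~\ref{def:lamperti} say precisely that $ef=0$ forces $l(Te)\perp l(Tf)$ and $r(Te)\perp r(Tf)$; together with the identity $|T(e+f)|^2=|Te|^2+|Tf|^2$ (valid since $(Te)^*(Tf)=0$) this gives, for orthogonal $e,f$, that $u_{e+f}=u_e+u_f$, $|T(e+f)|=|Te|+|Tf|$ and $r(T(e+f))=r(Te)+r(Tf)$. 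Hence $J_0(e):=r(Te)$ is an orthogonality-preserving, finitely additive map on the $\tau$-finite projections; and since $e_n\uparrow e$ ($e$ $\tau$-finite) implies $e_n\to e$ and $Te_n\to Te$ in $L_p$, $J_0$ also respects increasing suprema of $\tau$-finite families.

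The decisive step is to upgrade $J_0$ to a normal Jordan $*$-homomorphism $J\colon\mathcal M\to\mathcal M$. For this I would verify that $J_0$ restricts to an orthoisomorphism of the projection lattice of $e\mathcal M e$ onto that of $J_0(e)\mathcal M J_0(e)$ for every $\tau$-finite $e$ (preservation of relative orthocomplements, joins and meets), and then invoke the Dye-type extension theorem that an orthoadditive, suprema-respecting map of a projection lattice extends to a normal Jordan $*$-homomorphism on the weak-$*$ closed span of projections, hence on $\mathcal M$, with $J(\1_{\mathcal M}):=\sup_e J_0(e)$. I expect this to be the main obstacle: one has to treat the type $I_2$ summand of $\mathcal M$ by hand, and — since the Lamperti hypothesis is only imposed on $\tau$-finite projections — build $J$ on each $\tau$-finite corner first and glue using semifiniteness, monitoring coherence and normality throughout.

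With $J$ available I would recover $b$ and $w$ from coherent local data. For $e\le f$ both $\tau$-finite, $|Tf|=|Te|+|T(f-e)|$ together with $s(|Te|)=J(e)\perp J(f-e)=s(|T(f-e)|)$ yields $|Tf|J(e)=|Te|=J(e)|Tf|$, so $(|Tf|)_f$ is an increasing net of positive operators affiliated with $\mathcal M$; its supremum $b$ is a positive self-adjoint operator affiliated with $\mathcal M$ with $bJ(e)=J(e)b=|Te|$ for all $\tau$-finite $e$, whence every spectral projection of $b$ commutes with every $J(e)$ and, by spectral approximation and normality of $J$, with $J(x)$ for all $x\in\mathcal M$, and $s(b)=\sup_e J(e)=J(\1_{\mathcal M})$. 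Likewise $u_fJ(e)=u_e$ for $e\le f$, so the $u_f$ glue to a partial isometry $w$ with $w^*w=J(\1_{\mathcal M})$ and $wbJ(e)=w|Te|=Te$; if $T\ge0$ then $Te\ge0$ forces $u_e=J(e)$, hence $w=J(\1_{\mathcal M})=s(b)$. Passing from projections to $x\in\mathcal{S}_{\mathcal M}$ through spectral resolutions, linearity, and $L_p$-continuity of $T$ gives \eqref{lampertstrrep}.

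Finally, for the norm bound (4) I would test $T$ on a $\tau$-finite projection: using $Te=wbJ(e)$ and the commutation of $b$ with $J(e)$, $\|Te\|_p^p=\bigl\|\,|Te|\,\bigr\|_p^p=\tau\bigl((bJ(e))^p\bigr)=\tau\bigl(b^pJ(e)\bigr)\le\|T\|_{p\to p}^p\,\tau(e)$, and then approximate a general $x\in\mathcal M_+$ from below by simple combinations of $\tau$-finite projections, invoking normality of $\tau$ and of $J$, to obtain $\tau(b^pJ(x))\le\|T\|_{p\to p}^p\,\tau(x)$. Uniqueness follows by reading off polar decompositions: any other triple $(w',b',J')$ with the stated properties must satisfy $b'J'(e)=|Te|$ and $J'(e)=s(|Te|)=J(e)$ for $\tau$-finite $e$, so $J'=J$ by normality, then $b'=\sup_e b'J'(e)=\sup_e|Te|=b$, and $w'|Te|=Te=w|Te|$ on the ranges of all $|Te|$, whose closed span is $\mathrm{ran}\,J(\1_{\mathcal M})$, forcing $w'=w$.
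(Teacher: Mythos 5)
The paper itself offers no proof of Theorem \ref{thm:lamperti-decomposition}: it is imported verbatim from \cite{HongRayWang2023}, so your proposal has to be measured against the Yeadon-type argument given there. Your overall scheme is the same one: define $J_0(e)=s(|Te|)$ on $\tau$-finite projections, use the two Lamperti orthogonality relations to get $l(Te)\perp l(Tf)$, $r(Te)\perp r(Tf)$ and hence additivity of $u_e$, $|Te|$ and $J_0$ over orthogonal projections, build $b$ as the coherent supremum of the $|Tf|$ and $w$ by gluing the $u_f$, obtain (4) by testing on projections and using normality, and read uniqueness off the polar decompositions. All of those portions of your outline are sound and match the reference.

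The genuine gap is exactly the step you flag as ``the main obstacle'' and then defer: upgrading $J_0$ to a normal Jordan $*$-homomorphism by an abstract lattice-extension theorem. Dye's theorem concerns ortho-isomorphisms between projection lattices, not arbitrary orthoadditive projection-valued maps, and the Gleason/Bunce--Wright linear-extension results you would actually need exclude type $I_2$ summands --- which are not excluded by the hypotheses here, and for which orthoadditive projection-to-projection maps genuinely need not extend linearly. Saying you will ``treat the type $I_2$ summand by hand'' is therefore not a routine verification but the missing idea itself. The proof in \cite{HongRayWang2023} (following Yeadon) avoids the issue by never trying to extract linearity of $J$ from the projection lattice alone: once $w$ and $b$ are constructed from the projection data, $J$ is recovered from $T$ itself via $bJ(x)=w^{*}T(x)$, e.g. $J(x)=\lim_{\varepsilon\to 0}(b+\varepsilon)^{-1}w^{*}T(x)$ for $x\in\mathcal{S}_{\mathcal M}$, so linearity and normal extension of $J$ are inherited from $T$; since this linear normal map sends projections to projections and is additive on orthogonal ones, it is multiplicative on each commutative subalgebra generated by a spectral resolution, hence a Jordan $*$-homomorphism, with no type $I_2$ caveat. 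If you replace your extension step by this device, the rest of your construction of $b$, $w$, the trace inequality and the uniqueness argument goes through essentially as you wrote it.
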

\begin{remark} In Theorem \eqref{thm:lamperti-decomposition} the decomposition in \eqref{lampertstrrep} will be referred to as Lamperti decomposition of the operator $T.$
\end{remark}
\begin{prop}\label{moduluslamp}
Let $1 \leq p < \infty$ and let $T : L_p(\mathcal M) \to L_p(\mathcal M)$ be a Lamperti operator with decomposition $T = w b J$.  
Define the operator $|T|$  by
\[
   |T|(x) := b J(x), \qquad x \in \mathcal{S}_{\mathcal M}.
\]
Then $|T|$ extends to a positive Lamperti operator on $L_p(\mathcal M)$, and one has $|Tx| = ||T|x|=|T||x|$ for all $x\in L_p(\mathcal M).$
Moreover,
\[
   \|T\|_{p \to p} = \||T|\|_{p \to p}.
\]
\end{prop}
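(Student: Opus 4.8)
The plan is to leverage the Lamperti decomposition $T = wbJ$ from Theorem \ref{thm:lamperti-decomposition} and to exploit the structural relations it provides, most crucially the commutation of every spectral projection of $b$ with $J(x)$ and the partial-isometry identity $w^*w = J(1) = s(b)$. First I would record the pointwise modulus identities. For $x \in \mathcal{S}_{\mathcal M}$ we have $Tx = wbJ(x)$, so $|Tx|^2 = (Tx)^*(Tx) = J(x)^* b w^* w b J(x) = J(x)^* b J(1) b J(x)$. Since $J(1) = s(b)$ and $b s(b) = b$, this equals $J(x)^* b^2 J(x) = (bJ(x))^*(bJ(x)) = ||T|x|^2$, using once more that spectral projections of $b$ commute with $J(\,\cdot\,)$ so that $b$ and $J(x)$ may be rearranged as needed; hence $|Tx| = ||T|x|$. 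For the identity $||T|x| = |T||x|$ I would use that $J$ is a normal Jordan $*$-homomorphism, so it preserves the absolute value on its range in the appropriate sense together with the fact that $b$ commutes with the spectral projections of $J(x)$: writing $J(x) = v_J h_J$ for the polar decomposition of $J(x)$ inside $J(1)\mathcal{M}J(1)$, one gets $|bJ(x)| = |b v_J h_J| $ and, because $b$ commutes with $h_J$ (a function of $|J(x)|$, whose spectral projections commute with $b$), this reduces to $b h_J = b J(|x|)$ after identifying $J(|x|) = |J(x)| = h_J$ (Jordan $*$-homomorphisms preserve absolute values of self-adjoint elements and more generally $J(|x|) = |J(x)|$ holds for the canonical modulus); thus $||T|x| = |bJ(|x|)| = |T||x|$. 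By density of $\mathcal{S}_{\mathcal M}$ in $L_p(\mathcal M)$ and continuity of all operators involved, the identities extend to all $x \in L_p(\mathcal M)$.

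Next, from the identity $|Tx| = ||T|x|$ I would deduce the norm equality: $\|Tx\|_p = \|\,|Tx|\,\|_p = \|\,||T|x|\,\|_p = \||T|x\|_p$ for every $x$, whence $\|T\|_{p\to p} = \||T|\|_{p\to p}$. In particular, since $T$ is bounded, $|T|$ is bounded on $\mathcal{S}_{\mathcal M}$ and extends to a bounded operator on $L_p(\mathcal M)$; the quantitative bound $\tau(b^p J(x)) \le C^p \tau(x)$ from Theorem \ref{thm:lamperti-decomposition}(4) is exactly $\|\,|T|x\,\|_p^p \le C^p\|x\|_p$ for positive $x$, giving an independent check. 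Positivity of $|T|$ is immediate: for $x \in \mathcal{S}_{\mathcal M}$ with $x \ge 0$ we have $|T|x = bJ(x)$, and since $J$ is a Jordan $*$-homomorphism it is positivity-preserving, while $b \ge 0$ commutes with $J(x)$ (spectral projections of $b$ commute with $J(\mathcal M)$), so $bJ(x) = b^{1/2}J(x)b^{1/2} \ge 0$.

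Finally I would verify that $|T|$ is a Lamperti operator. Let $e, f$ be $\tau$-finite projections with $ef = 0$. Then $|T|e = bJ(e)$ and $|T|f = bJ(f)$. Because $J$ is a Jordan $*$-homomorphism, $J(e)$ and $J(f)$ are projections and the disjointness $ef = 0$ forces $J(e)J(f) = J(f)J(e) = 0$ (this is the standard fact that Jordan $*$-homomorphisms send orthogonal projections to orthogonal projections, which follows from $J(e+f) = J(e) + J(f)$ being a projection together with $J(e)^2 = J(e)$). Using that $b$ commutes with both $J(e)$ and $J(f)$, we compute $(|T|e)^*(|T|f) = J(e) b \cdot b J(f) = b^2 J(e)J(f) = 0$ and similarly $(|T|e)(|T|f)^* = bJ(e)J(f)b = 0$. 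Hence $|T|$ satisfies Definition \ref{def:lamperti}.

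The main obstacle I anticipate is the identity $||T|x| = |T||x|$, or equivalently $J(|x|) = |J(x)|$ for non-self-adjoint $x$: one must be careful that a Jordan $*$-homomorphism need not be multiplicative, so the polar decomposition does not transport naively under $J$. The clean route is to avoid polar decompositions of general elements and instead argue via $|Tx|^2 = J(x)^* b^2 J(x)$ and $|x|$ self-adjoint: since $|T||x| = bJ(|x|)$ and $J$ restricted to the von Neumann subalgebra generated by $x^*x$ is a $*$-homomorphism (a Jordan $*$-homomorphism is multiplicative on commutative subalgebras), $J(|x|) = J((x^*x)^{1/2}) = (J(x^*x))^{1/2} = (J(x)^*J(x))^{1/2}$ is a legitimate functional-calculus identity; then $||T||x|\,|^2 = J(|x|) b^2 J(|x|) = b^2 J(x^*x) = b^2 J(x)^*J(x)$, and this matches $||T|x|^2 = J(x)^*b^2J(x) = b^2 J(x)^*J(x)$ after commuting $b$ past $J(x)$, since spectral projections of $b$ commute with all of $J(\mathcal M)$. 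Taking square roots finishes it.
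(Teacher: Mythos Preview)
Your overall strategy matches the paper's: compute $(Tx)^*(Tx)$, use $w^*w = J(1) = s(b)$ to eliminate $w$, obtain $|Tx| = ||T|x|$ on $\mathcal{S}_{\mathcal M}$, read off the norm equality, and extend by density together with the $L_p$-continuity of $x \mapsto |x|$ (the paper cites \cite{Kosaki1984,CaspersPotapovSukochevZanin2015} for this last point). Your direct verification that $|T|$ is positive and Lamperti is more explicit than the paper, which simply refers to \cite[Remark~3.4]{HongRayWang2023}; the argument you give for that part is correct.

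There is, however, a genuine gap precisely where you anticipated one. Your reduction of $||T|x| = |T||x|$ rests on the identity $J(x^*x) = J(x)^*J(x)$, and this fails for general Jordan $*$-homomorphisms: a normal Jordan $*$-homomorphism decomposes as $J_1 \oplus J_2$ with $J_1$ a $*$-homomorphism and $J_2$ a $*$-antihomomorphism supported on orthogonal central projections, and for the antihomomorphism part one has $J_2(x^*x) = J_2(x)J_2(x)^*$, not $J_2(x)^*J_2(x)$. Concretely, take $\mathcal M = M_2(\mathbb C)$ with the normalized trace, $J(x) = x^T$, and $b = w = \mathbf 1$; then $T = |T| = J$ is a Lamperti isometry, and for $x = e_{12}$ one computes $|Tx| = ||T|x| = e_{11}$ while $|T||x| = J(e_{22}) = e_{22}$. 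So the chain $|Tx| = ||T|x| = |T||x|$ actually breaks at the second equality for non-normal $x$. The paper's own proof asserts all three equalities in a single line after computing only the first, so this is not a divergence from the paper but a step that is unjustified in both. The identity does hold trivially for $x \ge 0$ (since then $|T|x \ge 0$ gives $||T|x| = |T|x = |T||x|$), and that restricted version is what the subsequent applications in the paper actually require; for general $x$ the claim as stated cannot be proved.
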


\begin{proof}
Take $x \in \mathcal{S}_{\mathcal M}$. Then
\[
   (Tx)^* (Tx) 
      = J(x)^* b w^* w b J(x) 
      = J(x)^* b^2 J(x) 
      = |b J(x)|^2 
      = \bigl(|T|x\bigr)^* \bigl(|T|x\bigr).
\]
Here, the second equality uses parts (1) and (2) of Theorem~\ref{thm:lamperti-decomposition}.  
By uniqueness of the square root, it follows that
\begin{equation}\label{modforsm}
   |Tx| = |\,|T|x\,|=|T||x|, \qquad x \in \mathcal{S}_{\mathcal M}.
\end{equation}
Consequently,
\[
   \|Tx\|_p = \||T|x\|_p, \qquad x \in\mathcal{S}_{\mathcal M}.
\]
Hence $|T|$ extends uniquely to a bounded operator on $L_p(\mathcal M)$.  
By construction, $|T|$ is positive, and it is again a Lamperti operator (see \cite[Remark~3.4]{HongRayWang2023} for details) with $\|T\|_{p\to p}=\||T|\|_{p\to p}.$

Finally, to prove \ref{modforsm} for arbitrary $x \in L_p(\mathcal M)$, choose a sequence $(x_n) \subseteq \mathcal{S}_{\mathcal M}$ with $\|x_n - x\|_p \to 0$.  
By continuity of the map $x \mapsto |x|$ in $L_p(\mathcal M)$  \cite{Kosaki1984,CaspersPotapovSukochevZanin2015}, we deduce
\[
   |Tx| = \lim_{n \to \infty} |Tx_n| 
         = \lim_{n \to \infty} ||T|x_n|
         = ||T|x|.
\]
This proves the claim.
\end{proof}
\begin{remark} For a Lamperti operator $T$ on $L_p(\mathcal{M}),$ the operator $|T|$ defined in Proposition \eqref{moduluslamp} is called the modulus of $T.$
\end{remark}
Let $G$ be a locally compact group and $1\leq p<\infty.$ Let $\alpha$ be a strongly continuous uniformly bounded action of $G$ on $L_p(\mathcal{M}).$ We say $\alpha$ is (positive) Lamperti if $\alpha_g$ is a (positive) Lamperti operator for all $g\in G.$

\begin{prop}
Let \(1 \leq p < \infty\) and let \(\alpha: G \to \mathcal{L}(L_p(\mathcal{M}))\) be a Lamperti representation. Then the map 
\[
g \mapsto |\alpha|_g := |\alpha_g|
\]
is again a Lamperti representation.
\end{prop}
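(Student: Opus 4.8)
The plan is to verify directly that the assignment $g\mapsto |\alpha|_g=|\alpha_g|$ is a strongly continuous, uniformly bounded representation consisting of Lamperti operators, using the pointwise modulus identity established in Proposition~\ref{moduluslamp}. The key observation is the relation $|\alpha_g x|=|\,|\alpha_g|\,x\,|=|\alpha_g|\,|x|$ for all $x\in L_p(\mathcal M)$; iterating it will give the multiplicativity of the modulus.

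First I would check that each $|\alpha|_g$ is a positive Lamperti operator with $\||\alpha|_g\|_{p\to p}=\|\alpha_g\|_{p\to p}$, which is immediate from Proposition~\ref{moduluslamp}; in particular $\sup_{g\in G}\||\alpha|_g\|_{p\to p}=\sup_{g\in G}\|\alpha_g\|_{p\to p}<\infty$, giving uniform boundedness. Next, for the representation property, fix $g,h\in G$ and $x\in L_p(\mathcal M)_+$ (which suffices, since $L_p(\mathcal M)$ is spanned by its positive part and all operators involved are linear). Applying the identity of Proposition~\ref{moduluslamp} twice,
\[
|\alpha|_g\bigl(|\alpha|_h x\bigr)=|\alpha|_g\bigl(|\alpha_h x|\bigr)=\bigl|\,\alpha_g\bigl(|\alpha_h x|\bigr)\,\bigr|=\bigl|\,|\alpha_g|\,|\alpha_h x|\,\bigr|=\bigl|\,\alpha_g(\alpha_h x)\,\bigr|=\bigl|\,\alpha_{gh}x\,\bigr|=|\alpha|_{gh}x,
\]
where the third equality uses $|\alpha_g y|=|\alpha_g|\,|y|$ with $y=|\alpha_h x|\ge 0$ so that $|y|=y$. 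Hence $|\alpha|_g|\alpha|_h=|\alpha|_{gh}$ on the positive cone, and by linearity on all of $L_p(\mathcal M)$; also $|\alpha|_e=|\mathrm{id}|=\mathrm{id}$. Thus $g\mapsto|\alpha|_g$ is a group homomorphism into $\mathcal L(L_p(\mathcal M))$.

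For strong continuity, fix $x\in L_p(\mathcal M)$ and a net $g_i\to g$ in $G$. Using $|\alpha|_{g_i}x=|\alpha_{g_i}x|$ and $|\alpha|_g x=|\alpha_g x|$ together with the norm-continuity of the absolute value map $y\mapsto |y|$ on $L_p(\mathcal M)$ for $1\le p<\infty$ (the same fact invoked in the proof of Proposition~\ref{moduluslamp}, cf.\ \cite{Kosaki1984,CaspersPotapovSukochevZanin2015}), we get
\[
\bigl\||\alpha|_{g_i}x-|\alpha|_g x\bigr\|_p=\bigl\|\,|\alpha_{g_i}x|-|\alpha_g x|\,\bigr\|_p\longrightarrow 0,
\]
since $\alpha_{g_i}x\to\alpha_g x$ in $L_p(\mathcal M)$ by strong continuity of $\alpha$. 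Finally, each $|\alpha|_g$ is Lamperti by Proposition~\ref{moduluslamp}, so $g\mapsto|\alpha|_g$ is a Lamperti representation. The only mildly delicate point is the continuity of $y\mapsto |y|$ in $L_p$-norm, but this is precisely the quoted result already used earlier, so no new work is required there; the rest is a routine but clean bookkeeping of the modulus identity.
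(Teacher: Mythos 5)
Your argument is correct and follows essentially the same route as the paper: multiplicativity of the modulus via the identity $|\alpha_g x|=|\alpha_g|\,|x|$ applied on positive elements and extended by linearity, together with strong continuity deduced from the $L_p$-norm continuity of $y\mapsto|y|$. One small caveat: in the continuity step the identity $|\alpha|_g x=|\alpha_g x|$ is valid only for $x\ge 0$, so you should argue on the positive cone and extend by linearity (the same implicit step the paper takes), which does not affect the correctness of the approach.
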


\begin{proof}
First, recall that \(\alpha\) is a representation, so for all \(g,h \in G\) and \(x \in L_p(\mathcal{M})\),
\[
\alpha_g(\alpha_h(x)) = \alpha_{gh}(x).
\]
By Proposition~\eqref{moduluslamp}, for every positive element \(x \in L_p(\mathcal{M})_+\), we have
\[
|\alpha_g(\alpha_h(x))| = |\alpha_g|\,|\alpha_h|\,x,
\]
and
\[
|\alpha_{gh}(x)| = |\alpha_{gh}|\,x.
\]
Combining these two equalities gives
\[
|\alpha_g|\,|\alpha_h|\,x = |\alpha_{gh}|\,x, \qquad \forall\, x \in L_p(\mathcal{M})_+.
\]
Since positive elements span \(L_p(\mathcal{M})\), this identity extends to all \(x \in L_p(\mathcal{M})\), and we conclude that
\[
|\alpha_g|\,|\alpha_h| = |\alpha_{gh}|,
\]
showing that \(g \mapsto |\alpha_g|\) is a representation.

It remains to check strong continuity. Since \(\alpha\) is strongly continuous, for any \(x \in L_p(\mathcal{M})\),
\[
\|\alpha_g(x) - x\|_p \to 0 \quad \text{as } g \to e.
\]
In particular, this holds for \(x \in L_p(\mathcal{M})_+\). But the continuity of the map \(x \mapsto |x|\) we get
\[
\big\||\alpha_g(x)| - x\big\|_p \to 0 \quad \text{as } g \to e.
\]
Since \(|\alpha_g(x)| = |\alpha_g|(x)\), we obtain
\[
\||\alpha_g|(x) - x\|_p \to 0, \ \text{for all} x\in L_p(\mathcal{M})_{+}.
\]
Thus \(g \mapsto |\alpha_g|\) is strongly continuous.
This completes the proof of the proposition.
\end{proof}
The following lemma allows us to extend the Lamperti representation \eqref{lampertstrrep} 
to all measurable operators in the finite case.

\begin{lemma}[{\cite{HongRayWang2023}}]
Let $\mathcal{M}$ be a finite von Neumann algebra and $\tau$ a normal faithful tracial state on $\mathcal{M}$. 
Let $1 \leq p < \infty$. 
Let $T \colon L_p(\mathcal{M}) \to L_p(\mathcal{M})$ be a positive Lamperti operator with decomposition
\[
T(x) = b\,J(x), \qquad x \in \mathcal{M}.
\]
Then $J$ and $T$ extend continuously to maps on $L_0(\mathcal{M})$ with respect to the topology of convergence in measure. 
Moreover,
\[
T(x) = b\,J(x), \qquad x \in L_0(\mathcal{M}).
\]
\end{lemma}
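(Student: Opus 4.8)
The final statement to prove is the extension lemma: for a finite von Neumann algebra $\mathcal{M}$ with normal faithful tracial state $\tau$, a positive Lamperti operator $T(x)=bJ(x)$ on $L_p(\mathcal{M})$ extends continuously to $L_0(\mathcal{M})$ (measure topology) with the same formula $T(x)=bJ(x)$ for all $x\in L_0(\mathcal{M})$.

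\medskip

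The plan is to first establish that the normal Jordan $*$-homomorphism $J\colon \mathcal{M}\to\mathcal{M}$ extends to a map on $L_0(\mathcal{M})$ that is continuous in measure. The key point is that $J$ is trace-scaling in a controlled way: since $\mathcal{M}$ is finite and $J(1)=s(b)$ is a projection, $J$ restricted to the corner $s(b)\mathcal{M}s(b)$ is a Jordan $*$-homomorphism onto a subalgebra, and normality plus the finiteness of $\tau$ give that $J$ is normal and hence $\sigma$-weakly continuous. The crucial quantitative input is that $J$ does not increase distribution functions too badly: because $J$ is a normal Jordan $*$-homomorphism, it sends spectral projections of a self-adjoint $x$ to spectral projections of $J(x)$ (this is essentially Proposition~\ref{auto}'s Jordan analogue, available from the structure of Jordan homomorphisms as direct sums of a $*$-homomorphism and a $*$-antihomomorphism), so for self-adjoint $x$ one has $\mu_t(J(x))\le \mu_t(x)$ in terms of generalized singular numbers, where $\mu_t$ is taken with respect to the restriction of $\tau$. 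First I would verify this on $\mathcal{S}_{\mathcal{M}}$ or on $\mathcal{M}$ directly using the spectral calculus identity $J(f(x))=f(J(x))$ for bounded Borel $f$, then deduce that $J$ is a contraction for the measure topology on bounded sets, and extend by density of $\mathcal{M}$ in $L_0(\mathcal{M})$ (which holds since $\tau$ is a finite trace).

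\medskip

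Next I would handle the multiplication by the positive (generally unbounded) operator $b$ affiliated with $\mathcal{M}$. Here the point is that left multiplication $y\mapsto by$ is already a well-defined continuous map on $L_0(\mathcal{M})$: if $y_n\to 0$ in measure then $by_n\to 0$ in measure, because $b$ is $\tau$-measurable in the finite setting (its spectral projections $\chi_{(R,\infty)}(b)$ have trace tending to $0$, as $b$ is affiliated with a finite algebra and $b^p$ has the integrability from part (4) of Theorem~\ref{thm:lamperti-decomposition}), and multiplication by a fixed $\tau$-measurable operator is continuous in measure. Since every spectral projection of $b$ commutes with $J(\mathcal{M})$ by part (2) of Theorem~\ref{thm:lamperti-decomposition}, the composition $x\mapsto bJ(x)$ is well-defined on $L_0(\mathcal{M})$ and is continuous in measure as a composition of two measure-continuous maps. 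Then, given $x\in L_0(\mathcal{M})$, pick $x_n\in\mathcal{M}$ with $x_n\to x$ in measure; we get $T(x_n)=bJ(x_n)\to bJ(x)$ in measure, and this is the desired extended formula. One should also check consistency: the extension agrees with the original $L_p$-operator $T$ on $L_p(\mathcal{M})\cap L_0(\mathcal{M})=L_p(\mathcal{M})$, which follows because $L_p$-convergence implies convergence in measure in the finite setting, so both the $L_p$-continuous extension and the measure-continuous extension of $T|_{\mathcal{S}_{\mathcal{M}}}$ must coincide.

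\medskip

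The main obstacle I expect is making rigorous the claim that $J$ is contractive (or at least uniformly continuous on bounded sets) for the measure topology, i.e., controlling $\mu_t(J(x))$ by $\mu_t(x)$. This requires using the Jordan structure carefully: a normal Jordan $*$-homomorphism between von Neumann algebras decomposes via a central projection $z$ into a $*$-homomorphism on $z\mathcal{M}$ and a $*$-antihomomorphism on $(1-z)\mathcal{M}$ (a theorem of Kadison/Størmer), and on each piece the spectral-projection-preservation $J(\chi_B(x))=\chi_B(J(x))$ holds for self-adjoint $x$, which yields $\tau(\chi_{(s,\infty)}(|J(x)|))\le \tau(\chi_{(s,\infty)}(|x|))$ and hence the singular-number domination. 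For non-self-adjoint $x$ one splits into real and imaginary parts, or uses $|J(x)|^2 = J(x)^*J(x)$ together with a Kadison–Schwarz type inequality valid for Jordan $*$-homomorphisms. Once this domination is in hand, the rest is routine: $\|J(x)-J(y)\|_{\text{measure}}$ is controlled by $\|x-y\|_{\text{measure}}$ on bounded sets, and a standard truncation-plus-density argument extends $J$ to all of $L_0(\mathcal{M})$. I would cite the relevant facts about $\tau$-measurable operators and continuity of multiplication from \cite{Fack-Kos86} and the Jordan decomposition from the standard operator algebra literature rather than reproving them.
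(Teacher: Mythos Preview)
The paper does not give a proof of this lemma; it is quoted from \cite{HongRayWang2023}. So there is nothing to compare against, and your proposal must be judged on its own.

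Your overall architecture is correct: extend $J$ to $L_0(\mathcal{M})$ by continuity in measure, observe that left multiplication by the $\tau$-measurable element $b$ is measure-continuous, compose, and check consistency on $L_p$. The part about $b$ and the density argument are fine.

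The gap is in your justification that $J$ is continuous in measure. You claim the singular-number domination $\mu_t(J(x))\le \mu_t(x)$, arguing that the spectral identity $J(\chi_B(x))=\chi_B(J(x))$ gives $\tau(\chi_{(s,\infty)}(|J(x)|))\le \tau(\chi_{(s,\infty)}(|x|))$. But the spectral identity only gives \emph{equality} $\tau(\chi_{(s,\infty)}(|J(x)|))=\tau\big(J(\chi_{(s,\infty)}(|x|))\big)$, and there is no reason for $\tau\circ J\le \tau$ on projections. Indeed this fails already for $\mathcal{M}=\mathbb{C}^2$ with the tracial state $\tau(a,b)=\tfrac13 a+\tfrac23 b$ and the flip $J(a,b)=(b,a)$: here $\tau(J(1,0))=\tfrac23>\tfrac13=\tau(1,0)$, and correspondingly $\mu_t(J(1,0))>\mu_t(1,0)$ for $\tfrac13<t<\tfrac23$. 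This $J$ is a perfectly good unital normal Jordan $*$-automorphism arising from a bounded positive Lamperti operator on $L_p(\mathcal{M})$.

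What \emph{does} work is to use normality of $J$ directly. Since $\tau\circ J$ is a positive normal functional on the finite algebra $\mathcal{M}$, there is $h\in L_1(\mathcal{M},\tau)_+$ with $\tau(J(x))=\tau(hx)$ for all $x\in\mathcal{M}$. If $e_n$ are projections with $\tau(e_n)\to 0$, split $h=h\chi_{[0,M]}(h)+h\chi_{(M,\infty)}(h)$ to get
\[
\tau(J(e_n))=\tau(he_n)\le M\,\tau(e_n)+\|h\chi_{(M,\infty)}(h)\|_1,
\]
and send $n\to\infty$ then $M\to\infty$. Combined with the spectral identity (reduce to self-adjoint $x_n$ by taking real and imaginary parts, where $J(|x_n|)=|J(x_n)|$ holds for Jordan $*$-homomorphisms), this gives $J(x_n)\to 0$ in measure whenever $x_n\to 0$ in measure. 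That is the continuity you need; the rest of your argument then goes through.
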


\begin{lemma}\label{lamperrepdeo}

Let $\mathcal{M}$ be a finite von Neumann algebra, let $1 \leq p < \infty$, and let 
$\alpha \colon G \to \mathcal{L}(L_p(\mathcal{M}))$ be a positive Lamperti representation 
with decompositions $\alpha_g = b_g J_g$ for $g \in G$. 
Then, for all $g,h \in G$,
\[
b_g J_g(b_h) = b_{gh}
\quad \text{and} \quad
J_g \circ J_h = J_{gh}.
\]
\end{lemma}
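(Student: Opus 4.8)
The plan is to exploit the uniqueness of the Lamperti decomposition established in Theorem~\ref{thm:lamperti-decomposition}, applied to the composition $\alpha_g \circ \alpha_h = \alpha_{gh}$. Since $\alpha$ is a positive Lamperti representation, each $\alpha_g$ has the form $\alpha_g(x) = b_g J_g(x)$ with $b_g$ positive self-adjoint affiliated with $\mathcal{M}$, $J_g$ a normal Jordan $*$-homomorphism, and $w_g = J_g(1) = s(b_g)$ (the positive case of Theorem~\ref{thm:lamperti-decomposition}(1)). First I would compute $\alpha_{gh}(x) = \alpha_g(\alpha_h(x)) = \alpha_g(b_h J_h(x)) = b_g J_g(b_h J_h(x))$ for $x \in \mathcal{S}_{\mathcal M}$, and then invoke the extension lemma (the finite von Neumann algebra case, stated just above the target lemma) to make sense of $J_g$ applied to the measurable operator $b_h J_h(x)$; since $J_g$ extends continuously to $L_0(\mathcal{M})$ and is multiplicative on the commuting pieces, I want to argue $J_g(b_h J_h(x)) = J_g(b_h)\, J_g(J_h(x))$. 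Here one must be slightly careful: $J_g$ is only a Jordan $*$-homomorphism, not an algebra homomorphism, so multiplicativity $J_g(ab) = J_g(a)J_g(b)$ needs $a$ and $b$ to lie in a commuting situation or needs the spectral-commutation structure; but $b_h$ commutes with $J_h(x)$ by Theorem~\ref{thm:lamperti-decomposition}(2), and Jordan homomorphisms are multiplicative on commuting elements, so $J_g(b_h J_h(x)) = J_g(b_h)\,J_g(J_h(x))$ follows.

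Next I would show that the element $b_g J_g(b_h)$ is positive self-adjoint and affiliated with $\mathcal{M}$: positivity comes from the fact that $b_g$ and $J_g(b_h)$ commute — indeed every spectral projection of $b_g$ commutes with $J_g(y)$ for all $y$ by Theorem~\ref{thm:lamperti-decomposition}(2), and $J_g(b_h)$ is a limit (in measure) of elements $J_g(y)$ with $y$ positive, hence $J_g(b_h) \geq 0$ and commutes with $b_g$, so their product is a positive self-adjoint operator. Likewise $J_g \circ J_h$ is a normal Jordan $*$-homomorphism as a composition of such maps. Then I would verify the structural compatibility conditions of Theorem~\ref{thm:lamperti-decomposition}(1): one needs $(J_g\circ J_h)(1) = s(b_g J_g(b_h))$. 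Computing, $(J_g \circ J_h)(1) = J_g(J_h(1)) = J_g(s(b_h))$, and on the other hand $s(b_g J_g(b_h)) = s(b_g) \wedge s(J_g(b_h)) = J_g(1) \wedge J_g(s(b_h)) = J_g(s(b_h))$ since $s(b_h) \leq 1 = J_h(1)$ and $J_g$ is a normal Jordan $*$-homomorphism preserving supports of positive elements and order. Thus the pair $(b_g J_g(b_h), \ J_g\circ J_h)$ — with the ``partial isometry'' being $J_g(s(b_h)) = (J_g\circ J_h)(1)$ itself, consistent with the positive case — gives a valid Lamperti decomposition of $\alpha_{gh}$.

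Finally, by the \emph{uniqueness} clause of Theorem~\ref{thm:lamperti-decomposition}, comparing with the given decomposition $\alpha_{gh} = b_{gh} J_{gh}$, I conclude $b_{gh} = b_g J_g(b_h)$ and $J_{gh} = J_g \circ J_h$, which is exactly the assertion of Lemma~\ref{lamperrepdeo}. The main obstacle I anticipate is the technical point in the first paragraph: justifying the identity $J_g(b_h J_h(x)) = J_g(b_h) J_g(J_h(x))$ rigorously. This requires (a) the continuous extension of $J_g$ to $L_0(\mathcal{M})$ from the preceding lemma, (b) the multiplicativity of Jordan $*$-homomorphisms on pairs of commuting operators — a standard fact, but one that must be transferred from the bounded setting to affiliated operators via spectral truncation and continuity in measure — and (c) the commutation $[b_h, J_h(x)] = 0$ from Theorem~\ref{thm:lamperti-decomposition}(2). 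A secondary, more bookkeeping-type obstacle is confirming the support identity $s(b_g J_g(b_h)) = J_g(s(b_h))$, which hinges on $J_g$ being normal and order-preserving on positive affiliated operators so that it commutes with taking supports; I would handle this by approximating $b_h$ by bounded spectral truncations $b_h \chi_{[0,n]}(b_h)$ and passing to the limit, using normality of $J_g$.
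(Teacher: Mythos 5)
Your proposal is correct and follows essentially the same route as the paper: write $\alpha_g\alpha_h x = b_g\,J_g(b_h)\,J_gJ_h(x)$ using the $L_0$-extension of $J_g$ (the finite-case lemma) and multiplicativity of Jordan $*$-homomorphisms on commuting elements, exhibit $\bigl(b_gJ_g(b_h),\,J_g\circ J_h\bigr)$ as a Lamperti decomposition of $\alpha_{gh}$, and conclude by uniqueness in Theorem~\ref{thm:lamperti-decomposition}. The only difference is that the paper bypasses your direct support-matching computation by observing that each $\alpha_g$ is invertible (being part of a group representation), so by \cite{HongRayWang2023} every $b_g$, hence also $J_g(b_h)$ and the product $b_gJ_g(b_h)$, has full support $1$, which makes the compatibility $(J_g\circ J_h)(1)=s\bigl(b_gJ_g(b_h)\bigr)$ immediate.
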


\begin{proof}
Since each $\alpha_g$ is invertible, both $b_g$ and $b_h$ have full support $1$ 
(see \cite[Proposition~6.1]{HongRayWang2023}). 
Moreover, the support of $J_g(b_h)$ coincides with that of $b_h$ 
(see the proof of \cite[Proposition~6.1]{HongRayWang2023}). 

For any $x \in L_p(\mathcal{M})$, the identity $\alpha_g(\alpha_h x) = \alpha_{gh} x$ for all $g,h \in G$ 
is equivalent to
\[
b_g J_g(b_h)\, J_g J_h(x) = b_{gh} J_{gh}(x), \qquad g,h \in G,
\]
where we have used parts (2) and (3) of Theorem~\eqref{thm:lamperti-decomposition}. 
Note that $b_g J_g(b_h)$ is a positive element of $L_0(\mathcal{M})$ whose support is $1$, 
since both $b_g$ and $J_g(b_h)$ are positive with full support and they commute. 
Furthermore, $J_g J_h$ is again a normal Jordan $*$-homomorphism. 
Hence the result follows from the uniqueness of the Lamperti decomposition 
(Theorem~\eqref{thm:lamperti-decomposition}). 
\end{proof}

\begin{prop}
Let $\mathcal{M}$ be a finite von Neumann algebra, let $1 \leq p < \infty$, 
and let $\alpha \colon G \to \mathcal{L}(L_p(\mathcal{M}))$ 
be a positive Lamperti representation with decompositions $\alpha_g = b_g J_g$ for $g \in G$. 
Let $\gamma \geq 1$ and set $\mu = \tfrac{p}{\gamma}$. 
Define $\alpha_g^{(\mu)}$ by
\[
\alpha_g^{(\mu)}(x) := b_g^{\mu} J_g(x), \qquad x \in \mathcal{S}_{\mathcal{M}}.
\]
Then the following hold:
\begin{itemize}
\item[(1)] The map $\alpha^{(\mu)} \colon G \to \mathcal{L}(L_\gamma(\mathcal{M}))$, 
given by $\alpha^{(\mu)}(g) := \alpha_g^{(\mu)}$, 
is a Lamperti representation of $G$ on $L_\gamma(\mathcal{M})$.

\item[(2)] 
\[
\sup_{g \in G} \|\alpha_g^{(\mu)}\|_{\gamma \to \gamma} 
= \sup_{g \in G} \|\alpha_g\|_{p \to p}.
\]
\end{itemize}
\end{prop}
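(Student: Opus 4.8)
The plan is to verify, one by one, the properties that make $\alpha^{(\mu)}$ a strongly continuous, uniformly bounded positive Lamperti representation of $G$ on $L_\gamma(\mathcal{M})$, and then to compute the operator norm of each $\alpha_g^{(\mu)}$. All computations are carried out first on the $w^*$-dense $*$-subalgebra $\mathcal{S}_{\mathcal{M}}$ and extended by density. I will use repeatedly: (a) each $J_g$ is a normal unital Jordan $*$-homomorphism (unital because $J_g(1)=s(b_g)=1$, since invertibility of $\alpha_g$ forces $b_g$ to have full support), hence it preserves the continuous functional calculus of positive elements, sends orthogonal projections to orthogonal projections, and — via the central decomposition of a Jordan $*$-homomorphism into a $*$-homomorphism and a $*$-anti-homomorphism — satisfies $J_g(ab)=J_g(a)J_g(b)$ whenever $ab=ba$; (b) every spectral projection of $b_g$ commutes with $J_g(\mathcal{M})$ (Theorem~\ref{thm:lamperti-decomposition}(2)), so the same holds for $b_g^\mu$ and its powers; (c) the cocycle relations $b_g\,J_g(b_h)=b_{gh}$ and $J_g\circ J_h=J_{gh}$ of Lemma~\ref{lamperrepdeo}; and (d) that a positive Lamperti operator $T$ satisfies $|Tx|=T|x|$ (Proposition~\ref{moduluslamp} with $w=1$), hence attains its operator norm on the positive cone.

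I would first verify the representation property. For $x\in\mathcal{S}_{\mathcal{M}}$, using (b) to write $J_g(b_h^\mu J_h(x))=J_g(b_h^\mu)\,J_gJ_h(x)$, then $J_g(b_h^\mu)=J_g(b_h)^\mu$ and $J_gJ_h=J_{gh}$ by (a) and (c), and finally $b_g^\mu J_g(b_h)^\mu=(b_gJ_g(b_h))^\mu=b_{gh}^\mu$ by (b) and (c),
\[
\alpha_g^{(\mu)}\alpha_h^{(\mu)}(x)=b_g^\mu\,J_g\!\bigl(b_h^\mu J_h(x)\bigr)=b_g^\mu\,J_g(b_h)^\mu\,J_{gh}(x)=b_{gh}^\mu\,J_{gh}(x)=\alpha_{gh}^{(\mu)}(x).
\]
Positivity is immediate: if $x\ge 0$ then $J_g(x)\ge 0$ commutes with the positive operator $b_g^\mu$, so $\alpha_g^{(\mu)}(x)=b_g^\mu J_g(x)\ge 0$. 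For the Lamperti relations, if $e,f$ are $\tau$-finite projections with $ef=0$ then $J_g(e),J_g(f)$ are orthogonal projections, $b_g^\mu$ commutes with each, and
\[
\alpha_g^{(\mu)}(e)^*\alpha_g^{(\mu)}(f)=b_g^{2\mu}\,J_g(e)J_g(f)=0,\qquad \alpha_g^{(\mu)}(e)\,\alpha_g^{(\mu)}(f)^*=b_g^{2\mu}\,J_g(e)J_g(f)=0.
\]
Boundedness into $L_\gamma(\mathcal{M})$ follows from $|\alpha_g^{(\mu)}(x)|^\gamma=b_g^{\mu\gamma}|J_g(x)|^\gamma=b_g^{p}|J_g(x)|^\gamma$, the bound $|J_g(x)|^\gamma\le J_g(|x|^\gamma)+J_g(|x^*|^\gamma)$ coming from the $*$-hom/$*$-anti-hom splitting of $J_g$, and Theorem~\ref{thm:lamperti-decomposition}(4) for $\alpha_g$, which yields $\|\alpha_g^{(\mu)}(x)\|_\gamma^\gamma=\tau\!\bigl(b_g^p|J_g(x)|^\gamma\bigr)\le 2\,\|\alpha_g\|_{p\to p}^{p}\,\|x\|_\gamma^\gamma$. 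Thus $\alpha_g^{(\mu)}$ extends to a bounded operator on $L_\gamma(\mathcal{M})$ with triple $(1,b_g^\mu,J_g)$, hence is a positive Lamperti operator on $L_\gamma(\mathcal{M})$, and the representation identity above extends from $\mathcal{S}_{\mathcal{M}}$ to $L_\gamma(\mathcal{M})$ by density.

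For the norm and for strong continuity the crucial point is the pointwise intertwining relation, valid for $y\in L_\gamma(\mathcal{M})_+$ (so $y^{\gamma/p}\in L_p(\mathcal{M})_+$): since $b_g$ commutes with $J_g(y)$ and $J_g$ preserves powers of positive elements,
\[
\alpha_g^{(\mu)}(y)=b_g^{p/\gamma}J_g(y)=\bigl(b_g\,J_g(y)^{\gamma/p}\bigr)^{p/\gamma}=\bigl(b_g\,J_g(y^{\gamma/p})\bigr)^{p/\gamma}=\alpha_g\!\bigl(y^{\gamma/p}\bigr)^{p/\gamma}.
\]
Using $\|w^{p/\gamma}\|_\gamma=\|w\|_p^{p/\gamma}$ for $w\ge0$, the fact that $y\mapsto y^{\gamma/p}$ is a bijection of the positive cones of $L_\gamma$ and $L_p$, and (d), one obtains, for each $g$,
\[
\|\alpha_g^{(\mu)}\|_{\gamma\to\gamma}^{\gamma}=\sup_{y\ge0,\,\|y\|_\gamma=1}\tau\!\bigl(b_g^pJ_g(y^\gamma)\bigr)=\sup_{u\ge0,\,\tau(u)=1}\tau\!\bigl(b_g^pJ_g(u)\bigr)=\sup_{w\ge0,\,\|w\|_p=1}\tau\!\bigl(b_g^pJ_g(w^p)\bigr)=\|\alpha_g\|_{p\to p}^{p},
\]
where both outer suprema equal the full operator norm because $\alpha_g^{(\mu)}$ and $\alpha_g$ attain their norms on positive elements by (d); taking the supremum over $g\in G$ gives the norm identity (2). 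Strong continuity of $g\mapsto\alpha_g^{(\mu)}$ follows from the same intertwining: for fixed $y\ge0$, the map $g\mapsto\alpha_g(y^{\gamma/p})$ is $L_p$-continuous by strong continuity of $\alpha$, and $z\mapsto z^{p/\gamma}$ is continuous from $L_p(\mathcal{M})_+$ into $L_\gamma(\mathcal{M})_+$ \cite{Kosaki1984,CaspersPotapovSukochevZanin2015}; since the positive cone spans $L_\gamma(\mathcal{M})$ and $\sup_g\|\alpha_g^{(\mu)}\|_{\gamma\to\gamma}<\infty$, an $\varepsilon/3$ argument upgrades this to strong continuity on all of $L_\gamma(\mathcal{M})$.

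The main obstacle I expect is technical rather than conceptual: because each $J_g$ is only a Jordan $*$-homomorphism, one must carry the central $*$-hom/$*$-anti-hom splitting through the boundedness estimate and through every manipulation involving moduli of non-self-adjoint images — which is why it is cleanest to perform the norm computation on the positive cone, where $|J_g(y)|^\gamma=J_g(y^\gamma)=J_g(y)^\gamma$ — and one must justify the passage from $\mathcal{S}_{\mathcal{M}}$ to $L_\gamma(\mathcal{M})$, relying on the continuity of the relevant functional-calculus and power maps in the measure topology and in $\|\cdot\|_\gamma$, together with the extension of $J_g$ and $\alpha_g$ to $L_0(\mathcal{M})$ recorded just before Lemma~\ref{lamperrepdeo}.
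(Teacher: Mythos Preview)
Your proposal is correct and follows essentially the same approach as the paper: the representation property via the cocycle relations of Lemma~\ref{lamperrepdeo}, boundedness via Theorem~\ref{thm:lamperti-decomposition}(4), and strong continuity via continuity of power maps. Two minor differences worth noting: for the boundedness step the paper invokes \cite[Lemma~4.1]{HongRayWang2023} to get the exact identity $\|\alpha_g^{(\mu)}(x)\|_\gamma^\gamma=\tau(b_g^pJ_g(|x|^\gamma))$ for \emph{all} $x\in\mathcal{S}_{\mathcal{M}}$, avoiding your factor~$2$ and the detour through the positive cone; and for strong continuity the paper cites Ricard \cite{Ricard2018,Ricard2021} (on fractional powers in noncommutative $L_p$) rather than \cite{Kosaki1984,CaspersPotapovSukochevZanin2015}, which are more precisely tailored to the map $z\mapsto z^{p/\gamma}$ here. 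Conversely, your treatment of the norm identity~(2) via the common supremum $\sup_{u\ge0,\,\tau(u)=1}\tau(b_g^pJ_g(u))$ is more explicit than the paper's, which simply asserts it is ``straightforward to verify.''
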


\begin{proof}
For $x \in \mathcal{S}_{\mathcal{M}}$, we have
\[
\|\alpha_g^{(\mu)}(x)\|_\gamma^\gamma 
= \tau(b_g^{\mu \gamma} J_g(|x|^\gamma)) 
= \tau(b_g^p J_g(|x|^\gamma))
\leq \|\alpha_g\|_{p \to p}^p \, \tau(|x|^\gamma)
= \|\alpha_g\|_{p \to p}^p \|x\|_\gamma^\gamma.
\]
Here, the first equality follows from \cite[Lemma~4.1]{HongRayWang2023}, 
and the inequality follows from Theorem~\eqref{thm:lamperti-decomposition}. 
Thus each $\alpha_g^{(\mu)}$ is a bounded Lamperti operator and
\[
\sup_{g \in G} \|\alpha_g^{(\mu)}\|_{\gamma \to \gamma} 
< \infty.
\]
Moreover, it is straightforward to verify that
\[
\sup_{g \in G} \|\alpha_g^{(\mu)}\|_{\gamma \to \gamma}
= \sup_{g \in G} \|\alpha_g\|_{p \to p}.
\]

We now show that $g \mapsto \alpha_g^{(\mu)}$ is a group homomorphism.  
For any $x \in L_p(\mathcal{M})$,
\[
\alpha_g^{(\mu)}(\alpha_h^{(\mu)}(x))
= b_g^{\mu} J_g(b_h^{\mu} J_h(x))
= b_g^{\mu} J_g(b_h^{\mu}) J_g J_h(x).
\]
Hence, $\alpha_g^{(\mu)} \alpha_h^{(\mu)} = \alpha_{gh}^{(\mu)}$ 
if and only if 
\[
b_g^{\mu} J_g(b_h^{\mu}) = b_{gh}^{\mu}
\quad \text{and} \quad
J_g J_h = J_{gh},
\]
for all $g,h \in G$.  
Since each $J_g$ is a normal Jordan $*$-homomorphism 
that is continuous with respect to convergence in measure, 
we have
\[
(b_g J_g(b_h))^{\mu} = b_g^{\mu} J_g(b_h^{\mu}).
\]
The result then follows from Lemma~\ref{lamperrepdeo}. The continuity of $g\mapsto\alpha_g^{(\mu)}$ follows from \cite[Theorem 3.2]{Ricard2018} and \cite[Corollary 2.10]{Ricard2021}.
\end{proof}

 Let $\alpha:G\to \mathcal{L}(L_p(\mathcal M))$ be a uniformly bounded Lamperti representation and $G$  be a group of polynomial growth with a symmetric compact generating set $V.$ Define the ergodic averages
\[
A_n(\alpha) x = \frac{1}{m(V^n)} \int_{V^n} \alpha_g(x) \, dm(g), \quad x \in L_p(\mathcal M), \; n \in \mathbb N.
\]
The following theorem generalizes Theorem 3.5 in \cite{Templeman2015}.
\begin{cor}\label{lamprepmainres}Let $\mathcal{M}$ be a finite von Neumann algebra and $G$  be a group of polynomial growth with a symmetric compact generating set $V.$ Fix \(1 < p < \infty\). Let \(\alpha\) be a strongly continuous and uniformly bounded Lamperti representation of \(G\) on \(L_p(\mathcal M)\). Let $1\leq\gamma<\infty$ and $\mu=\frac{p}{\gamma}.$ Then there exists a constant \(C_{p,\gamma} > 0\) such that
\[
\sup_{(n_i)_i} 
\Big\| \big( A_{n_{i+1}}(|\alpha|^{(\mu)})x - A_{n_i}(|\alpha|^{(\mu)})x \big)_i \Big\|_{L_\gamma(\mathcal M; \ell_2^{rc})} 
\le C_{p,\gamma} \, \|x\|_\gamma, 
\quad \forall x \in L_\gamma(\mathcal M).
\]
where the supremum is taken over all increasing subsequences \((n_i)_i \subset \mathbb{N}\).

\end{cor}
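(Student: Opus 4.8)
The plan is to reduce Corollary \ref{lamprepmainres} directly to part (i) of Theorem \ref{main-thm2}, applied not to $\alpha$ itself but to the positive power-of-modulus representation $|\alpha|^{(\mu)}$ living on $L_\gamma(\mathcal M)$. First, by Proposition \ref{moduluslamp} each $|\alpha_g| = b_g J_g$ is a positive Lamperti operator on $L_p(\mathcal M)$ with $\||\alpha_g|\|_{p\to p} = \|\alpha_g\|_{p\to p}$; since $\alpha$ is a group representation, $|\alpha_g|$ is invertible with inverse $|\alpha_{g^{-1}}|$, so $g\mapsto|\alpha_g|$ is again a strongly continuous, uniformly bounded positive Lamperti representation of $G$ on $L_p(\mathcal M)$. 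Next, with $\mu = p/\gamma$, the proposition preceding the corollary shows that $g\mapsto|\alpha|_g^{(\mu)}(x) = b_g^\mu J_g(x)$ is a strongly continuous Lamperti representation of $G$ on $L_\gamma(\mathcal M)$, uniformly bounded with
\[
\sup_{g\in G}\big\||\alpha|_g^{(\mu)}\big\|_{\gamma\to\gamma} \;=\; \sup_{g\in G}\big\||\alpha_g|\big\|_{p\to p} \;=\; \sup_{g\in G}\|\alpha_g\|_{p\to p} \;<\; \infty .
\]

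With these structural facts in hand, fix $1 < \gamma < \infty$. Since $G$ has polynomial growth and $V$ is a symmetric compact generating set, Theorem \ref{main-thm2}(i) applies with the exponent $\gamma$ in the role of $p$ and the action $|\alpha|^{(\mu)}$ in the role of $\alpha$; the averages \eqref{er-aver} for this action are precisely the operators $A_n(|\alpha|^{(\mu)})$ in the statement. Hence we obtain a constant $C_\gamma>0$ (depending only on $\gamma$, on $G$, $V$, and on $\sup_g\|\alpha_g\|_{p\to p}$) such that
\[
\sup_{(n_i)_i}\Big\|\big(A_{n_{i+1}}(|\alpha|^{(\mu)})x - A_{n_i}(|\alpha|^{(\mu)})x\big)_i\Big\|_{L_\gamma(\mathcal M;\ell_2^{rc})} \le C_\gamma\,\|x\|_\gamma, \qquad x\in L_\gamma(\mathcal M),
\]
which is the asserted inequality with $C_{p,\gamma}:=C_\gamma$. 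For the limiting exponent $\gamma = 1$ (so $\mu = p$), part (4) of Theorem \ref{thm:lamperti-decomposition} shows that the operators $|\alpha|_g^{(p)}(x) = b_g^p J_g(x)$ extend to positive, uniformly bounded operators on $L_1(\mathcal M)$; the corresponding estimate is then obtained by running the same scheme through the weak-type package, i.e. the transference principle of Section \ref{thisissec5} combined with the $p=1$ conclusions of Theorems \ref{main-thm1}, \ref{long-ineq} and \ref{short-ineq}.

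The analytic heart of the statement has thus already been packaged into Theorems \ref{main-thm1}, \ref{thm:trans} and \ref{main-thm2} (non-homogeneous Calder\'on--Zygmund theory plus transference), so the only genuine work here is to certify that $|\alpha|^{(\mu)}$ really is a strongly continuous, uniformly bounded representation on $L_\gamma(\mathcal M)$ of the type to which Theorem \ref{main-thm2} applies. \textbf{The one point that requires care} is the multiplicativity $|\alpha|_g^{(\mu)}|\alpha|_h^{(\mu)} = |\alpha|_{gh}^{(\mu)}$: this rests on the identity $(b_g J_g(b_h))^\mu = b_g^\mu J_g(b_h^\mu)$ — valid because $b_g$ and $J_g(b_h)$ are commuting positive operators and $J_g$ is a normal Jordan $*$-homomorphism continuous for convergence in measure — together with the uniqueness part of Theorem \ref{thm:lamperti-decomposition} applied to $\alpha_{gh}=\alpha_g\alpha_h$ (this is the content of Lemma \ref{lamperrepdeo}). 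It is precisely here that the finiteness hypothesis on $\mathcal M$ is used, since it guarantees the measure-topology extension of the maps $J_g$ on which this identity depends.
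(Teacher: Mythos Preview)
Your argument for $1<\gamma<\infty$ is exactly the paper's intended route: the preceding proposition certifies that $|\alpha|^{(\mu)}$ is a strongly continuous, uniformly bounded (Lamperti) representation of $G$ on $L_\gamma(\mathcal M)$, and then Theorem~\ref{main-thm2}(i) with $\gamma$ in the role of $p$ gives the square function inequality directly. Your emphasis on why multiplicativity of $g\mapsto|\alpha|_g^{(\mu)}$ holds (via Lemma~\ref{lamperrepdeo} and the measure-topology extension, hence the finiteness hypothesis on $\mathcal M$) is also the paper's reasoning.

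The endpoint $\gamma=1$, however, is a genuine gap in your write-up. You invoke ``the weak-type package'' of Theorems~\ref{main-thm1}, \ref{long-ineq}, \ref{short-ineq} together with the transference of Section~\ref{thisissec5}, but this delivers at best an $L_{1,\infty}(\mathcal M;\ell_2^{rc})$ bound, not the strong $L_1(\mathcal M;\ell_2^{rc})$ bound asserted in the corollary. Moreover, the weak-type transference principle (Theorem~\ref{thm:trans}(ii)) is stated only for actions by $\tau$-preserving automorphisms, whereas $|\alpha|^{(p)}_g=b_g^pJ_g$ need not be trace-preserving (part~(4) of Theorem~\ref{thm:lamperti-decomposition} only gives $\tau(b_g^pJ_g(x))\le C^p\tau(x)$, an inequality), so that route is not available here. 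In short, the argument you sketch for $\gamma=1$ does not yield the claimed strong-type inequality; either the endpoint should be excluded, or an additional argument specific to $\gamma=1$ is required that the tools you cite do not supply.
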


\section{Dilations of Families of Operators}\label{dilationfamily}

We begin with the dilation of a single operator and then extend the notion to several operators or families of operators simultaneously.

\begin{defi}\label{jdilations}
Let $\mathcal{T}_1,\ldots,\mathcal{T}_n$ be collections of bounded linear operators on a Banach space $X$.  
We say that $\mathcal{T}_1,\ldots,\mathcal{T}_n$ are \emph{pairwise commuting} if
\[
   TS = ST \qquad \text{for all } T \in \mathcal{T}_i, \; S \in \mathcal{T}_j, \; 1 \leq i \neq j \leq n.
\]
\end{defi}

\begin{defi}
Let $\mathcal{X}$ be a class of Banach spaces and fix $X \in \mathcal{X}$.
\begin{enumerate}
    \item[(a)] Let $\mathbf{T} = (T_1,\ldots,T_n)$ be a commuting $n$-tuple of operators on $X$, i.e. $T_iT_j = T_jT_i$ for all $1 \leq i,j \leq n$.  
    We say that $\mathbf{T}$ \emph{admits a joint dilation in $\mathcal{X}$} if there exist $Y \in \mathcal{X}$, contractive maps
    \[
       J : X \to Y, \qquad Q : Y \to X,
    \]
    and a commuting $n$-tuple of isometries $\mathbf{U} = (U_1,\ldots,U_n) \subseteq \mathcal{L}(Y)$ such that for all multi-indices $(i_1,\ldots,i_n) \in \mathbb{N}_0^n$,
    \[
        T_1^{i_1} \cdots T_n^{i_n} \;=\; Q\, U_1^{i_1} \cdots U_n^{i_n} J.
    \]

    \item[(b)] Let $(\mathcal{T}_1,\ldots,\mathcal{T}_n)$ be pairwise commuting families of operators on $X$.  
    We say that $(\mathcal{T}_1,\ldots,\mathcal{T}_n)$ \emph{admit a joint simultaneous dilation in $\mathcal{X}$} if there exist $Y \in \mathcal{X}$, contractive maps
    \[
       J : X \to Y, \qquad Q : Y \to X,
    \]
    and, for each $1 \leq i \leq n$ and $T \in \mathcal{T}_i$, an isometry $U_T \in \mathcal{L}(Y)$ such that:
    \begin{enumerate}
        \item for every finite sequence $T_1,\ldots,T_m$ with $T_k \in \mathcal{T}_{j_k}$ for some $1 \leq j_k \leq n$, one has
        \[
            T_1 \cdots T_m = Q\, U_{T_1} \cdots U_{T_m} J,
        \]
        \item and $U_T U_S = U_S U_T$ whenever $T \in \mathcal{T}_i$, $S \in \mathcal{T}_j$ with $i \neq j$.
    \end{enumerate}
\end{enumerate}
\end{defi}
%

\begin{defi}\label{def:N-dilation-joint}
Let $\mathcal{X}$ be a class of Banach spaces and fix $X \in \mathcal{X}$.
\begin{enumerate}
    \item[(a)] Let $\mathbf{T} = (T_1,\ldots,T_n)$ be a commuting $n$-tuple of operators on $X$.  
    We say that $\mathbf{T}$ \emph{admits a joint $N$-dilation in $\mathcal{X}$} if there exist $Y \in \mathcal{X}$, contractive maps
    \[
       J : X \to Y, \qquad Q : Y \to X,
    \]
    and a commuting $n$-tuple of isometries $\mathbf{U} = (U_1,\ldots,U_n) \subseteq \mathcal{L}(Y)$ such that for all multi-indices $(i_1,\ldots,i_n) \in \mathbb{N}_0^n$ with $\sum_{k=1}^n i_k \leq N$,
    \[
        T_1^{i_1} \cdots T_n^{i_n} \;=\; Q\, U_1^{i_1} \cdots U_n^{i_n} J.
    \]

    \item[(b)] Let $(\mathcal{T}_1,\ldots,\mathcal{T}_n)$ be pairwise commuting families of operators on $X$.  
    We say that $(\mathcal{T}_1,\ldots,\mathcal{T}_n)$ \emph{admit a joint simultaneous $N$-dilation in $\mathcal{X}$} if there exist $Y \in \mathcal{X}$, contractive maps
    \[
       J : X \to Y, \qquad Q : Y \to X,
    \]
    and, for each $1 \leq i \leq n$ and $T \in \mathcal{T}_i$, an isometry $U_T \in \mathcal{L}(Y)$ such that:
    \begin{enumerate}
        \item for every finite sequence $T_1,\ldots,T_m$ with $0 \leq m \leq N$ and $T_k \in \mathcal{T}_{j_k}$ for some $1 \leq j_k \leq n$, one has
        \[
            T_1 \cdots T_m = Q\, U_{T_1} \cdots U_{T_m} J,
        \]
        \item and $U_T U_S = U_S U_T$ whenever $T \in \mathcal{T}_i$, $S \in \mathcal{T}_j$ with $i \neq j$.
    \end{enumerate}
\end{enumerate}
\end{defi}
\begin{remark}
The case $n=1$ is included in the above definitions \eqref{jdilations} and \eqref{def:N-dilation-joint}. In this situation the commutativity condition is vacuous, 
and one simply says that an operator (or a family of operators) \emph{admits a dilation or an $N$-dilation} or 
\emph{simultaneous dilation or simultaneous $N$-dilation} in $\mathcal{X}$, dropping the word ``joint.'' 

\end{remark}
\begin{remark}
Note that  definitions \eqref{jdilations} and \eqref{def:N-dilation-joint} naturally extends corresponding single variable notions introduced in \cite{FacklerGluck2019}.
\end{remark}
\begin{defi} Let $\mathcal{X}$ be a class of Banach spaces. We say that $\mathcal{X}$ under finite $\ell_p$-sums if For every $X \in \mathcal{X}$ and every $n \in \mathbb{N}$, the finite direct sum $\ell_p^n(X)$ also belongs to $\mathcal{X}$.
\end{defi}
For notational simplicity, we denote the set $\{1,\dots, n\}$ by $[n].$ Define the set of functions
\begin{equation}\label{alphadefn}
\mathcal{A} := \{\alpha:[N]\to [m]\}.
\end{equation}
For $\alpha\in\mathcal{A}$, and $(\lambda_i)_{i=1}^m\in[0,1]^m$ set
\[
\Lambda(\alpha) := \prod_{k=1}^N \lambda_{\alpha(k)}.
\]Let $\sigma:[N]\to[N]$ be the $N$-cycle $(1\dots N).$
The proof of the following lemma can be found in \cite[Proof of Theorem 4.1]{FacklerGluck2019}
\begin{lemma}\label{onevaridila} Let $\mathcal{T}\subseteq\mathcal{L}(X)$ and $T_i\in\mathcal{T}$ for $1\leq i\leq m.$  Then for all $0\leq n\leq N,$ we have the identity
\begin{equation}
T^n \;=\; 
\sum_{\alpha \in A} \frac{\Lambda(\alpha)}{N}
   \sum_{k=1}^{N} 
      \prod_{j=1}^{n} T_{\alpha(\sigma^{k-1}(j))},
\end{equation}
where $(\lambda_i)_{i=1}^m\in[0,1]^m$ and $T=\sum_{i=1}^m\lambda_iT_i.$
\end{lemma}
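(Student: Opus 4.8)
The plan is to prove the identity by expanding both sides as linear combinations of \emph{ordered} words $T_{w_1}\cdots T_{w_n}$ with $w=(w_1,\dots,w_n)\in[m]^n$ and then comparing coefficients; crucially, no commutation relations among the $T_i$ are needed, since the products appearing on either side are all taken in a fixed order. Before starting I should record that the stated identity tacitly requires $\sum_{i=1}^m\lambda_i=1$: taking $n=0$, the empty product on the right is $I$ and the right-hand side collapses to $\big(\sum_i\lambda_i\big)^N I$, which equals $T^0=I$ only when $(\lambda_i)$ is a probability vector. Thus throughout one works with a genuine convex combination $T=\sum_{i=1}^m\lambda_iT_i$, which is precisely the setting in which the lemma is applied (convex hull of the family $\mathcal{T}$).

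The first step is to expand the left side by multilinearity: $T^n=\sum_{w\in[m]^n}\big(\prod_{j=1}^n\lambda_{w_j}\big)\,T_{w_1}\cdots T_{w_n}$, so the coefficient of the ordered product $T_{w_1}\cdots T_{w_n}$ is $\prod_{j=1}^n\lambda_{w_j}$. The second step is to put the right side into the same language. For $\alpha\in\mathcal{A}$ and $k\in[N]$, since $\sigma$ is the $N$-cycle $(1\,2\,\cdots\,N)$ we have $\sigma^{k-1}(j)\equiv j+k-1\pmod N$, hence $\prod_{j=1}^n T_{\alpha(\sigma^{k-1}(j))}=T_{\alpha(k)}T_{\alpha(k+1)}\cdots T_{\alpha(k+n-1)}$, i.e.\ the ordered product read off the length-$n$ cyclic window of the word $\alpha$ starting at position $k$ (indices mod $N$). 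Writing $\mathrm{win}_k(\alpha)\in[m]^n$ for this window, the right side becomes $\tfrac1N\sum_{\alpha\in\mathcal{A}}\Lambda(\alpha)\sum_{k=1}^N T_{\mathrm{win}_k(\alpha)}$, and the coefficient there of $T_{w_1}\cdots T_{w_n}$ is $\tfrac1N\sum_{k=1}^N\sum_{\alpha:\,\mathrm{win}_k(\alpha)=w}\Lambda(\alpha)$.

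The final step is the evaluation of the inner sum, and this is where the hypothesis $n\le N$ is used: for each fixed $k$ the positions $k,k+1,\dots,k+n-1$ (mod $N$) are pairwise distinct, so imposing $\mathrm{win}_k(\alpha)=w$ fixes $\alpha$ on exactly these $n$ coordinates — contributing the factor $\prod_{j=1}^n\lambda_{w_j}$ — and leaves the remaining $N-n$ coordinates free; summing $\Lambda(\alpha)$ over the free coordinates yields $\big(\sum_i\lambda_i\big)^{N-n}=1$. Hence $\sum_{\alpha:\,\mathrm{win}_k(\alpha)=w}\Lambda(\alpha)=\prod_{j=1}^n\lambda_{w_j}$ for every $k$, the sum over $k$ and the factor $\tfrac1N$ cancel, and the coefficient of $T_{w_1}\cdots T_{w_n}$ on the right equals $\prod_{j=1}^n\lambda_{w_j}$, matching the left side; this proves the identity for all $0\le n\le N$ (the cases $n=0$ and $n=N$ being included automatically). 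The only genuinely delicate point is the cyclic index bookkeeping together with the observation that $n\le N$ prevents any coordinate of $\alpha$ from being constrained twice — everything else is a direct expansion — and this is exactly the argument carried out in \cite[Proof of Theorem~4.1]{FacklerGluck2019}.
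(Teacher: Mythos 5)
Your proof is correct and is essentially the same cyclic-window counting argument that the paper relies on: the paper gives no proof of its own but defers to \cite[Proof of Theorem 4.1]{FacklerGluck2019}, where exactly this multilinear expansion, identification of $\prod_{j=1}^{n}T_{\alpha(\sigma^{k-1}(j))}$ with the length-$n$ cyclic window of $\alpha$ starting at $k$, and coefficient comparison are carried out. Your observation that the identity tacitly requires $\sum_{i=1}^{m}\lambda_i=1$ (needed so that the $N-n$ unconstrained coordinates sum to $1$ when $n<N$) is accurate and consistent with how the lemma is used in the proof of Theorem \ref{thm:convex-dilation}, where the $\lambda_{i,j}$ are convex coefficients.
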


\begin{proof}[Proof of Theorem \eqref{thm:convex-dilation}]
Since $(\mathcal{T}_1,\ldots,\mathcal{T}_n)$ admits a joint simultaneous dilation, we may assume without loss of generality that each $\mathcal{T}_i$ consists of isometries. 

For each $i$, we can write
\[
T_i=\sum_{j=1}^m\lambda_{i,j}T_{i,j},
\]
where $\lambda_{i,j}\in[0,1]$ and $\sum_{j=1}^m\lambda_{i,j}=1$, with $T_{i,j}\in\mathcal{T}_i$. For each $i\in\{1,\dots,n\}$, define the set of functions
$\mathcal{A}_i :=\mathcal A,$ where $mathcal{A}$ is as in \eqref{alphadefn}.
For $\alpha_i\in\mathcal{A}_i$, set
\[
\Lambda(\alpha_i) := \prod_{k=1}^N \lambda_{i,\alpha_i(k)}.
\]
Then one can check that
\begin{equation}\label{eq:lambda}
\sum_{\alpha_i\in\mathcal{A}_i}\Lambda(\alpha_i)=1.
\end{equation}
Let us define
\[
Y := \ell_p^{N^n m^{nN}}(X).
\]
Since $\mathcal{X}$ is stable under finite $\ell_p$-sums, we have $Y \in \mathcal{X}$.

Define $J:X\to Y$ by
\[
(Jx)_{(\alpha_1,\dots,\alpha_n),(i_1,\dots,i_n)}
=\Bigg(\frac{\prod_{i=1}^n \Lambda(\alpha_i)}{N^n}\Bigg)^{1/p} x,
\]
where $(\alpha_1,\dots,\alpha_n)\in\mathcal{A}^n$ and $(i_1,\dots,i_n)\in[N]^n$.  

We check that $J$ is an isometry:
\[
\|Jx\|_p^p
= \sum_{(\alpha_1,\dots,\alpha_n)} \sum_{(i_1,\dots,i_n)}
\frac{\prod_{i=1}^n \Lambda(\alpha_i)}{N^n}\,\|x\|_X^p.
\]
The inner sum over $(i_1,\dots,i_n)$ has $N^n$ terms, hence cancels the denominator. Using \eqref{eq:lambda},
\[
\|Jx\|_p^p
= \Bigg(\prod_{i=1}^n \sum_{\alpha_i\in\mathcal{A}_i}\Lambda(\alpha_i)\Bigg)\|x\|_X^p
= \|x\|_X^p.
\]
Thus $J$ is an isometry.
Suppose $q \in (1,\infty)$ satisfies $1/p + 1/q = 1$. Define the operator 
$Q:Y \to X$ by 
\[
Qy := \sum_{(\alpha_1,\dots,\alpha_n)} \sum_{(i_1,\dots,i_n)} 
\left(\frac{\prod_{j=1}^n \Lambda(\alpha_j)}{N^n}\right)^{1/q}
\, y_{(\alpha_1,\dots,\alpha_n),(i_1,\dots,i_n)},
\]
for 
\[
y = \Big(y_{(\alpha_1,\dots,\alpha_n),(i_1,\dots,i_n)}\Big)_{(\alpha_1,\dots,\alpha_n)\in \mathcal{A}^n,\,(i_1,\dots,i_n)\in[N]^n}.
\]
We show now that $Q$ is a contraction.

By the triangle inequality, we have
\[
\|Qy\|
 \leq \sum_{(\alpha_1,\dots,\alpha_n)} \sum_{(i_1,\dots,i_n)} 
\left(\frac{\prod_{j=1}^n \Lambda(\alpha_j)}{N^n}\right)^{1/q}
\big\|y_{(\alpha_1,\dots,\alpha_n),(i_1,\dots,i_n)}\big\|.
\]
Applying Hölder’s inequality with exponents $p$ and $q$ yields
\[
\|Qy\|
\leq \Bigg(\sum_{(\alpha_1,\dots,\alpha_n)} \sum_{(i_1,\dots,i_n)}
\frac{\prod_{j=1}^n \Lambda(\alpha_j)}{N^n}\Bigg)^{1/q}
\Bigg(\sum_{(\alpha_1,\dots,\alpha_n)} \sum_{(i_1,\dots,i_n)}
\big\|y_{(\alpha_1,\dots,\alpha_n),(i_1,\dots,i_n)}\big\|^p\Bigg)^{1/p}.
\]

Now observe that by \eqref{eq:lambda}
\[
\sum_{(\alpha_1,\dots,\alpha_n)} \sum_{(i_1,\dots,i_n)}
\frac{\prod_{j=1}^n \Lambda(\alpha_j)}{N^n}
= \sum_{(\alpha_1,\dots,\alpha_n)} \frac{\prod_{j=1}^n \Lambda(\alpha_j)}{N^n} \cdot N^n
= \sum_{(\alpha_1,\dots,\alpha_n)} \prod_{j=1}^n \Lambda(\alpha_j) = 1.
\]
Therefore, we obtain that
\[
\|Qy\| \leq \Bigg(\sum_{(\alpha_1,\dots,\alpha_n)} \sum_{(i_1,\dots,i_n)}
\big\|y_{(\alpha_1,\dots,\alpha_n),(i_1,\dots,i_n)}\big\|^p\Bigg)^{1/p}
= \|y\|_Y.
\]
Thus $Q$ is a contraction.

For $1 \leq r \leq n$, define $U_r : Y \to Y$ by
\begin{equation}\label{U_rformulae}
\big(U_r x\big)_{(\alpha_1,\dots,\alpha_n),(i_1,\dots,i_n)}
:= T_{r,\alpha_r(i_r)} \,
x_{(\alpha_1,\dots,\alpha_n),(i_1,\dots,\sigma(i_r),\dots,i_n)},
\end{equation}
where 
\[
x = \Big(x_{(\alpha_1,\dots,\alpha_n),(i_1,\dots,i_n)}\Big)_{(\alpha_1,\dots,\alpha_n)\in\mathcal{A}_1\times\cdots\times\mathcal{A}_n,\;\,(i_1,\dots,i_n)\in\{1,\dots,N\}^n}\in Y,
\]
and $\sigma:\{1,\dots,N\}\to\{1,\dots,N\}$ denotes the $N$-cycle $
\sigma = (1\;2\;\cdots\;N).$

Fix $r\in\{1,\dots,n\}$. Linearity of $U_r$ is immediate from the linearity of each $T_{r,j}$ and the definition of $U_r$ as in \eqref{U_rformulae}. We check that $U_r$ is an isometry. Recall
\[
\|x\|_Y^p \;=\;
\sum_{(\alpha_1,\dots,\alpha_n)} \sum_{(i_1,\dots,i_n)}
\big\|x_{(\alpha_1,\dots,\alpha_n),(i_1,\dots,i_n)}\big\|_X^p,
\]
and that each $T_{r,j}$ is an isometry on $X$ by hypothesis for all $1\leq j\leq m$. Hence
\begin{align*}
\|U_r x\|_Y^p
&= \sum_{(\alpha_1,\dots,\alpha_n)} \sum_{(i_1,\dots,i_n)}
\big\| \big(U_r x\big)_{(\alpha_1,\dots,\alpha_n),(i_1,\dots,i_n)} \big\|_X^p \\
&= \sum_{(\alpha_1,\dots,\alpha_n)} \sum_{(i_1,\dots,i_n)}
\big\| T_{r,\alpha_r(i_r)} \,
x_{(\alpha_1,\dots,\alpha_n),(i_1,\dots,\sigma(i_r),\dots,i_n)} \big\|_X^p \\
&= \sum_{(\alpha_1,\dots,\alpha_n)} \sum_{(i_1,\dots,i_n)}
\big\| x_{(\alpha_1,\dots,\alpha_n),(i_1,\dots,\sigma(i_r),\dots,i_n)} \big\|_X^p,
\end{align*}
where we used that $T_{r,\alpha_r(i_r)}$ is an isometry to drop it inside the norm. Now observe that the map
\[
(\alpha_1,\dots,\alpha_n,\; i_1,\dots,i_n)\longmapsto
(\alpha_1,\dots,\alpha_n,\; i_1,\dots,\sigma(i_r),\dots,i_n)
\]
is a bijection of the finite index set $\mathcal{A}^n\times[N]^n$ as it is the identity on all coordinates except the $r$-th $i$-coordinate where it applies the permutation $\sigma$. Therefore the double sum above is equal to
\[
\sum_{(\alpha_1,\dots,\alpha_n)} \sum_{(i_1,\dots,i_n)}
\big\| x_{(\alpha_1,\dots,\alpha_n),(i_1,\dots,i_n)} \big\|_X^p
= \|x\|_Y^p.
\]
Thus $\|U_r x\|_Y=\|x\|_Y$ for all $x\in Y$, so $U_r$ is an isometry.

We now prove that $U_r$ and $U_s$ commute for $r\neq s$. Fix \(r\neq s\) and an index
\(((\alpha_1,\dots,\alpha_n),(i_1,\dots,i_n))\). We compute the corresponding coordinate of \(U_rU_s x\).
First,
\[
(U_s x)_{(\alpha_1,\dots,\alpha_n),(i_1,\dots,i_n)}
= T_{s,\alpha_s(i_s)} \,
x_{(\alpha_1,\dots,\alpha_n),(i_1,\dots,\sigma(i_s),\dots,i_n)}.
\]
Therefore
\[
\begin{aligned}
\big(U_r(U_s x)\big)_{(\alpha_1,\dots,\alpha_n),(i_1,\dots,i_n)}
&= T_{r,\alpha_r(i_r)} \big( U_s x \big)_{(\alpha_1,\dots,\alpha_n),(i_1,\dots,\sigma(i_r),\dots,i_n)}\\
&=  \big( T_{r,\alpha_r(i_r)}\, T_{s,\alpha_s(i_s)}\big)\,
x_{(\alpha_1,\dots,\alpha_n),(i_1,\dots,\sigma(i_r),\dots,\sigma(i_s),\dots,i_n)}.
\end{aligned}
\]
Interchanging the roles of $r$ and $s$ gives
\[
\big(U_s(U_r x)\big)_{(\alpha_1,\dots,\alpha_n),(i_1,\dots,i_n)}
= \big( T_{s,\alpha_s(i_s)} T_{r,\alpha_r(i_r)} \big)\,
x_{(\alpha_1,\dots,\alpha_n),(i_1,\dots,\sigma(i_r),\dots,\sigma(i_s),\dots,i_n)}.
\]
By the hypothesis that the families $\mathcal{T}_1,\dots,\mathcal{T}_n$ are pairwise commuting, every operator from $\mathcal{T}_r$ commutes with every operator from $\mathcal{T}_s$. In particular,
\[
T_{r,\alpha_r(i_r)} T_{s,\alpha_s(\sigma(i_r))}
\;=\;
T_{s,\alpha_s(\sigma(i_r))} T_{r,\alpha_r(i_r)}.
\]
 Thus the two coordinate values
\[
\big(U_r(U_s x)\big)_{(\alpha_1,\dots,\alpha_n),(i_1,\dots,i_n)}
\quad\text{and}\quad
\big(U_s(U_r x)\big)_{(\alpha_1,\dots,\alpha_n),(i_1,\dots,i_n)}
\]
are equal for every index. Since this holds for every index, we conclude $U_rU_s x=U_sU_r x$ for all $x\in Y$, i.e. $U_rU_s=U_sU_r$.

Next we prove that for any $0\leq j_1,\dots,j_n\leq N$ such that $j_1+\dots+j_n=N,$ we have the identity 
\begin{equation}
T_1^{j_1}\dots T_n^{j_n}x=QU_1^{j_1}\dots U_n^{j_n}Jx, \text{for all}\ x\in X.
\end{equation}
To describe the action of the dilating operators, let 
\[
x = \bigl(x_{(\alpha_1,\dots,\alpha_n),\, (i_1,\dots,i_n)}\bigr).
\]  
Then, for a fixed $r\in\{1,\dots,n\}$, the iterates of $U_r$ act as  
\begin{equation} \label{eq:Ur-action}
\bigl(U_r^{\,j_r}x\bigr)_{(\alpha_1,\dots,\alpha_n),\, (i_1,\dots,i_n)}
   = \Biggl(\prod_{l_r=1}^{j_r} 
        T_{r,\,\alpha_r\!\bigl(\sigma^{\,i_r-1}(l_r)\bigr)}
     \Biggr) 
     \,x_{(\alpha_1,\dots,\alpha_n),\, (i_1,\dots,\sigma^{j_r}(i_r),\dots,i_n)}.
\end{equation}
In above we have used the fact that $\sigma^{k-1}(j)=\sigma^{j-1}(k)$ for all $j,k\in\{1,\dots,N\}.$ \begin{align}
QU_1^{j_1}\cdots U_n^{j_n}Jx
  &\nonumber = \sum_{\alpha \in \mathcal{A}^n}\;\sum_{i \in [N]^n}
   \left(\frac{\prod_{j=1}^n \Lambda(\alpha_j)}{N^n}\right)^{1/q}
   \left(\frac{\prod_{j=1}^n \Lambda(\alpha_j)}{N^n}\right)^{1/p}
   \Biggl(\prod_{r=1}^n \prod_{l_r=1}^{j_r} 
      T_{r,\,\alpha_r\!\bigl(\sigma^{\,i_r-1}(l_r)\bigr)}\Biggr)x\\\nonumber
      &=\sum_{\alpha \in \mathcal{A}^n}\;\sum_{i \in [N]^n}
   \left(\frac{\prod_{j=1}^n \Lambda(\alpha_j)}{N^n}\right)
   \Biggl(\prod_{r=1}^n \prod_{l_r=1}^{j_r} 
      T_{r,\,\alpha_r\!\bigl(\sigma^{\,i_r-1}(l_r)\bigr)}\Biggr)x\\\nonumber
      &=\prod_{r=1}^n\Big(\sum_{\alpha_r\in\mathcal A}\sum_{i_r\in[N]}\frac{ \Lambda(\alpha_r)}{N} \prod_{l=1}^{j_r} 
      T_{r,\,\alpha_r\!\bigl(\sigma^{\,i_r-1}(l)\bigr)}\Big)x\\ \label{eqn7id}
      &=T_1^{j_1}\dots T_n^{j_n}x.
\end{align}
In above we have used Lemma \eqref{onevaridila} to obtain \eqref{eqn7id}.
\end{proof}

\begin{cor}\label{cmunidilasur}
Let $p \in (1,\infty)$ and  and $(\mathcal{U}_1,\ldots,\mathcal{U}_n)$ be a tuple of pairwise commuting families of isometries on $L_p(\mathcal M)$ consisting of isomteries. Then, every tuple of operators  $\mathbf{T}=(T_1,\dots,T_n)$ where $T_i$ belonging to the convex hull of $\mathcal{U}_i,$ where $1\leq i\leq n,$ also admits a joint $N$-dilation for all $N\in\mathbb{N}$ on a bigger noncommutative $L_p$-space.
\end{cor}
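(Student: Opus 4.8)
\textbf{Proof proposal for Corollary \ref{cmunidilasur}.}

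The plan is to deduce this immediately from Theorem \ref{thm:convex-dilation}, once we verify that the class of noncommutative $L_p$-spaces satisfies the hypotheses and that a tuple of pairwise commuting families of \emph{isometries} always admits a joint simultaneous $N$-dilation (in fact an honest joint simultaneous dilation) in that class. First I would fix $p \in (1,\infty)$ and take $\mathcal{X}$ to be the class of all spaces of the form $L_p(\mathcal{N})$ for $\mathcal{N}$ a semifinite von Neumann algebra. This class is stable under finite $\ell_p$-sums: given $L_p(\mathcal{N})$, the space $\ell_p^k(L_p(\mathcal{N}))$ is isometrically $L_p(\mathcal{N} \overline{\otimes} \ell_\infty^k)$, which is again of the required form (the finite-dimensional abelian algebra $\ell_\infty^k$ carries the obvious trace, and the tensor trace realizes the $\ell_p$-sum). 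So $\mathcal{X}$ meets the standing assumption of Theorem \ref{thm:convex-dilation}.

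Next I would observe that the tuple of pairwise commuting families $(\mathcal{U}_1,\ldots,\mathcal{U}_n)$ of isometries already admits a joint simultaneous $N$-dilation in $\mathcal{X}$ for every $N$: indeed one may take $Y = X = L_p(\mathcal{M})$ itself, $J = Q = \mathrm{id}_X$, and $U_T = T$ for each $T \in \mathcal{U}_i$. The factorization property in Definition \ref{def:N-dilation-joint}(b)(i) holds trivially since $Q U_{T_1} \cdots U_{T_m} J = T_1 \cdots T_m$, and the commutation relation (b)(ii), $U_T U_S = U_S U_T$ for $T \in \mathcal{U}_i$, $S \in \mathcal{U}_j$ with $i \neq j$, is exactly the hypothesis that the families $\mathcal{U}_1,\ldots,\mathcal{U}_n$ are pairwise commuting. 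Thus all the hypotheses of Theorem \ref{thm:convex-dilation} are in place. Applying that theorem with $\mathcal{T}_i = \mathcal{U}_i$, we conclude that every tuple $\mathbf{T} = (T_1,\ldots,T_n)$ with $T_i$ in the convex hull of $\mathcal{U}_i$ admits a joint $N$-dilation in $\mathcal{X}$, i.e., on a larger noncommutative $L_p$-space of the form $\ell_p^{N^n m^{nN}}(L_p(\mathcal{M})) = L_p(\mathcal{M} \overline{\otimes} \ell_\infty^{N^n m^{nN}})$, with the dilating operators being the commuting isometries $U_1,\ldots,U_n$ constructed in the proof of Theorem \ref{thm:convex-dilation}.

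The only point requiring a small remark is that ``convex hull'' in the statement should be read as \emph{finite} convex combinations, so that each $T_i = \sum_{j=1}^{m} \lambda_{i,j} T_{i,j}$ with $\sum_j \lambda_{i,j} = 1$, $\lambda_{i,j} \geq 0$, $T_{i,j} \in \mathcal{U}_i$ — this is precisely the input format used in the construction in Theorem \ref{thm:convex-dilation}; a common index bound $m$ for all the $T_i$ can be arranged by padding with zero coefficients. No genuine obstacle arises here: since the underlying simultaneous dilation is the trivial one, the content is entirely carried by Theorem \ref{thm:convex-dilation}, and the only thing to check by hand is the elementary stability of $\mathcal{X}$ under $\ell_p$-sums, which I indicated above. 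I would also note in passing that the same argument applies verbatim with $L_p(\mathcal{M})$ replaced by a Hilbert space (the class of Hilbert spaces is stable under finite $\ell_2$-sums), giving the Hilbert-space version mentioned in the introduction.
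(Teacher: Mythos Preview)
Your proposal is correct and follows exactly the approach implicit in the paper: the corollary is stated there without proof as an immediate consequence of Theorem~\ref{thm:convex-dilation}, and you have spelled out precisely the two verifications needed---stability of the class of noncommutative $L_p$-spaces under finite $\ell_p$-sums, and the trivial joint simultaneous $N$-dilation of commuting families of isometries via $Y=X$, $J=Q=\mathrm{id}$, $U_T=T$.
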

\begin{remark} As in \cite{FacklerGluck2019}, one can use an ultraproduct argument to obtain joint dilations for SOT-limits of such commuting tuples; this will be addressed elsewhere.
\end{remark}
\section{Quantitative ergodic theorem for semigroup of operators}\label{quanergforopsemi}
Let $1\leq p<\infty.$ Let us denote $\mathbb{R}^d_{+}:=\{(t_1,\dots,t_d):t_i\geq 0,\ 1\leq i\leq d\}.$ Let $\alpha:\mathbb{R}^d_{+}\to \mathcal{L}(L_p(\mathcal M))$ be a strongly continuous uniformly bounded semigroup of operators. Define for any $t>0,$ \[
A_{t}(\alpha) x: = \frac{1}{t^d} \int_{0}^{t}\dots\int_{0}^{t} \alpha_{(s_1,\dots,s_d)}(x) \, ds_1\dots ds_d, \quad x \in L_p(\mathcal M).\] For any commuting tuple of operators $\mathbf{T}=(T_1,\dots,T_d)$ in $\mathcal{L}(L_p(\mathcal M))$ we define \[A_{n}(\mathbf{T})x:=\frac{1}{n^d} \sum_{j_1=0}^{n-1}\dots\sum_{j_d=0}^{n-1} T_1^{j_1}\dots T_d^{j_d}(x), \quad x \in L_p(\mathcal M).\] 
\begin{prop}\label{discretetocont} Let $\alpha:\mathbb{R}^d_{+}\to \mathcal{L}(L_p(\mathcal M))$ be a strongly continuous uniformly bounded semigroup of operators such that there exists a positive constant $C>0$ so that for all nonnegative integers $\mathbf{k}:=(k_1,\dots, k_d)$
\begin{equation}\label{contave}
\sup_{(n_i)_i} 
\Big\| \big( A_{n_{i+1}}(\mathbf{T}_{\frac{1}{\mathbf{k}}})x - A_{n_i}(\mathbf{T}_{\frac{1}{\mathbf{k}}})x \big)_i \Big\|_{L_p(\mathcal M; \ell_2^{rc})} 
\le C \, \|x\|_p, 
\quad \forall x \in L_p(\mathcal M).
\end{equation}
where the supremum is being taken over all increasing sequences of positive integers \((n_i)_{i \in \mathbb{N}} \subset (0, \infty)\)  and
$\mathbf{T}_{\frac{1}{\mathbf{k}}}:=(\alpha_{(\frac{1}{k},\dots,0)},\dots,\alpha_{(0,\dots,\frac{1}{k}})).$ Then 
\begin{equation}\label{discretization}
\sup_{(t_i)_i} 
\Big\| \big( A_{t_{i+1}}(\alpha)x - A_{t_{i}}(\alpha)x \big)_i \Big\|_{L_p(\mathcal M; \ell_2^{rc})} 
\le C \, \|x\|_p, 
\quad \forall x \in L_p(\mathcal M).
\end{equation}
where the supremum being taken over all increasing sequences \((t_i)_{i \in \mathbb{N}} \subset (0, \infty)\)
\end{prop}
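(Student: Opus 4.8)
The goal is a standard discretization argument: pass from the continuous averages $A_t(\alpha)$ to the discrete averages $A_n(\mathbf T_{1/\mathbf k})$ and exploit the hypothesis \eqref{contave}. First I would fix $x\in L_p(\mathcal M)$ and an increasing sequence $(t_i)_i\subset(0,\infty)$, and, for a given large integer $k$, compare $A_{t}(\alpha)x$ with $A_{\lfloor kt\rfloor}(\mathbf T_{1/\mathbf k})x$, where $\mathbf T_{1/\mathbf k}=(\alpha_{(1/k,0,\dots,0)},\dots,\alpha_{(0,\dots,0,1/k)})$. The point is that
\[
A_{n}(\mathbf T_{1/\mathbf k})x=\frac{1}{n^d}\sum_{j_1=0}^{n-1}\cdots\sum_{j_d=0}^{n-1}\alpha_{(j_1/k,\dots,j_d/k)}(x)
\]
is a Riemann sum for $A_{n/k}(\alpha)x$ with mesh $1/k$. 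By strong continuity of $\alpha$ and uniform boundedness, for each fixed $t$ one has $\|A_{\lfloor kt\rfloor}(\mathbf T_{1/\mathbf k})x - A_{t}(\alpha)x\|_p\to 0$ as $k\to\infty$; here one uses that $s\mapsto \alpha_s(x)$ is uniformly continuous on the compact box $[0,t]^d$ together with $\sup_s\|\alpha_s\|<\infty$ to control the boundary error coming from $\lfloor kt\rfloor/k\neq t$.

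Second, with the sequence $(t_i)_i$ fixed, set $n_i=n_i(k)=\lfloor k t_i\rfloor$ (perturbing slightly if necessary so that the $n_i(k)$ are strictly increasing, which is possible for $k$ large since the $t_i$ are distinct), and apply \eqref{contave} with this choice to get
\[
\Big\|\big(A_{n_{i+1}(k)}(\mathbf T_{1/\mathbf k})x-A_{n_i(k)}(\mathbf T_{1/\mathbf k})x\big)_i\Big\|_{L_p(\mathcal M;\ell_2^{rc})}\le C\|x\|_p
\]
uniformly in $k$. Third, I would let $k\to\infty$ and show that the left-hand side converges to $\big\|\big(A_{t_{i+1}}(\alpha)x-A_{t_i}(\alpha)x\big)_i\big\|_{L_p(\mathcal M;\ell_2^{rc})}$. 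As noted in Remark \ref{app}, it suffices to treat finite sequences $(t_i)_{1\le i\le i_0}$, so the $\ell_2^{rc}$-norm involves only finitely many terms; then the convergence $A_{n_i(k)}(\mathbf T_{1/\mathbf k})x\to A_{t_i}(\alpha)x$ in $L_p(\mathcal M)$ for each of the finitely many indices, combined with the fact that the map $(y_1,\dots,y_{i_0})\mapsto\|(y_j)_j\|_{L_p(\mathcal M;\ell_2^{rc})}$ is a (quasi-)norm and hence continuous on $L_p(\mathcal M)^{i_0}$, yields the claimed bound $C\|x\|_p$ in the limit. Taking the supremum over finite increasing sequences $(t_i)_i$ gives \eqref{discretization}.

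\textbf{Main obstacle.} The delicate point is the uniform-in-$k$ Riemann-sum approximation, specifically ensuring that $A_{n_i(k)}(\mathbf T_{1/\mathbf k})x\to A_{t_i}(\alpha)x$ in $L_p(\mathcal M)$ and not merely in measure or weakly; this requires only strong continuity of $\alpha$ and the uniform bound $\sup_s\|\alpha_s\|_{p\to p}<\infty$, via a dominated-convergence / uniform-continuity argument on the compact box $[0,t_i]^d$, but the bookkeeping of the boundary terms (the discrepancy between the discrete box $\{0,\dots,n_i-1\}^d$ rescaled by $1/k$ and the cube $[0,t_i]^d$) must be handled with care, uniformly over the finitely many indices $i\le i_0$. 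A secondary technical nuisance is the possibility that $\lfloor kt_i\rfloor=\lfloor kt_{i+1}\rfloor$ for small $k$; this is harmless because we only need the estimate for all sufficiently large $k$ before passing to the limit, and for large $k$ the floors separate since $t_{i+1}>t_i$. Once these points are dispatched, the rest is the routine continuity-of-norm passage to the limit described above.
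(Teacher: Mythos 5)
Your proposal is correct and follows essentially the same route as the paper's proof: discretize via $n_k(t)=\lfloor kt\rfloor$, prove the Riemann-sum approximation $A_{n_k(t)}(\mathbf T_{1/\mathbf k})x\to A_t(\alpha)x$ in $L_p$ using strong continuity and uniform boundedness, apply the discrete hypothesis uniformly in $k$, reduce to finite sequences, and pass to the limit term by term in the $\ell_2^{rc}$-norm (the paper controls the error term by the triangle inequality after a Khintchine linearization, which is the same continuity-of-norm argument you invoke). The minor points you flag (floors coinciding for small $k$, boundary discrepancy of the discrete box) are indeed harmless for large $k$, exactly as you say.
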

\begin{proof}
Let \( 0 < t_1 < \dots < t_m \) be a finite increasing sequence.  
It suffices to prove \eqref{discretization} with a constant independent of the choice of the sequence. For any \( t > 0 \) and \( k \in \mathbb{N} \), set $
n_k(t) := \lfloor k t \rfloor,$ where \( \lfloor s \rfloor \) denotes the greatest integer not exceeding \( s \).

We first prove the following approximation.
For every \( t > 0 \) and \( x \in L_p(\mathcal{M}) \), we claim that
\begin{equation}\label{eq:approx}
\big\| A_{n_k(t)}(\mathbf{T}_{1/\mathbf{k}})x - A_t(\alpha)x \big\|_p \longrightarrow 0
\quad \text{as } k \to \infty.
\end{equation}
Indeed, by the strong continuity and uniform boundedness of \( \alpha \),  
the Riemann integral
\[
\int_0^t \!\!\cdots\! \int_0^t 
\alpha_{(s_1,\dots,s_d)}(x)\, ds_1 \cdots ds_d
\]
is approximated by the Riemann sums
\[
\frac{1}{k^d} 
\sum_{j_1=0}^{n_k(t)-1}\!\cdots\!\sum_{j_d=0}^{n_k(t)-1}
\alpha_{\left(\frac{j_1}{k}, \dots, \frac{j_d}{k}\right)}(x)
= A_{n_k(t)}(\mathbf{T}_{1/\mathbf{k}})x.
\]
Since \( |n_k(t)/k - t| \to 0 \) as \( k \to \infty \), the convergence in \eqref{eq:approx} follows.

To this end, we observe by using the triangle inequality, we have
\begin{align*}
&\Big\| \big( A_{t_{i+1}}(\alpha)x - A_{t_i}(\alpha)x \big)_{i=0}^{m-1} \Big\|_{L_p(\mathcal M; \ell_2^{rc})} \\
&\quad\le
\Big\| 
\big(
A_{t_{i+1}}(\alpha)x - A_{t_i}(\alpha)x
- \big(
A_{n_k(t_{i+1})}(\mathbf{T}_{1/\mathbf{k}})x 
- A_{n_k(t_i)}(\mathbf{T}_{1/\mathbf{k}})x
\big)
\big)_{i=0}^{m-1}
\Big\|_{L_p(\mathcal M; \ell_2^{rc})} \\
&\qquad
+ \Big\| 
\big(
A_{n_k(t_{i+1})}(\mathbf{T}_{1/\mathbf{k}})x 
- A_{n_k(t_i)}(\mathbf{T}_{1/\mathbf{k}})x
\big)_{i=0}^{m-1}
\Big\|_{L_p(\mathcal M; \ell_2^{rc})}.
\end{align*}
By the discrete assumption of the proposition, i.e. \eqref{discretization} the second term is bounded by \( C\|x\|_p \).  
Hence,
\begin{align*}
&\Big\| \big( A_{t_{i+1}}(\alpha)x - A_{t_i}(\alpha)x \big)_{i=0}^{m-1} \Big\|_{L_p(\mathcal M; \ell_2^{rc})} \\
&\quad\le
\Big\|
\big(
A_{t_{i+1}}(\alpha)x - A_{t_i}(\alpha)x
- \big(
A_{n_k(t_{i+1})}(\mathbf{T}_{1/\mathbf{k}})x 
- A_{n_k(t_i)}(\mathbf{T}_{1/\mathbf{k}})x
\big)
\big)_{i=0}^{m-1}
\Big\|_{L_p(\mathcal M; \ell_2^{rc})}
+ C\|x\|_p.
\end{align*}

We now handle the error term.
By the noncommutative Khintchine inequality and the convergence \eqref{eq:approx}, we obtain
\begin{align*}
&\Big\| 
\big(
A_{t_{i+1}}(\alpha)x - A_{t_i}(\alpha)x
- \big(
A_{n_k(t_{i+1})}(\mathbf{T}_{1/\mathbf{k}})x 
- A_{n_k(t_i)}(\mathbf{T}_{1/\mathbf{k}})x
\big)
\big)_{i=0}^{m-1}
\Big\|_{L_p(\mathcal M; \ell_2^{rc})} \\
&\quad\lesssim
\Big\|
\sum_i \varepsilon_i \otimes 
\Big(
A_{t_{i+1}}(\alpha)x - A_{t_i}(\alpha)x
- \big(
A_{n_k(t_{i+1})}(\mathbf{T}_{1/\mathbf{k}})x 
- A_{n_k(t_i)}(\mathbf{T}_{1/\mathbf{k}})x
\big)
\Big)
\Big\|_{L_p(L_\infty(\Omega)\,\overline{\otimes}\,\mathcal{M})} \\
&\quad\le
\sum_i 
\Big\|
A_{t_{i+1}}(\alpha)x - A_{t_i}(\alpha)x
- \big(
A_{n_k(t_{i+1})}(\mathbf{T}_{1/\mathbf{k}})x 
- A_{n_k(t_i)}(\mathbf{T}_{1/\mathbf{k}})x
\big)
\Big\|_p 
\;\longrightarrow\; 0,
\quad \text{as } k \to \infty.
\end{align*}

\smallskip
\noindent
Combining the above estimates and letting \( k \to \infty \), we obtain
\[
\Big\| \big( A_{t_{i+1}}(\alpha)x - A_{t_i}(\alpha)x \big)_{i=0}^{m-1} 
\Big\|_{L_p(\mathcal M; \ell_2^{rc})} 
\le C\|x\|_p.
\]
Since the constant \( C \) is independent of the finite sequence \( (t_i)_{i=0}^m \), the desired supremum bound in \eqref{contave} follows.  
This completes the proof.
\end{proof}
The proof of the following lemma can be proved easily as in \cite[Lemma 7.3]{HLX}.
\begin{lemma}\label{isometrytup}
Let \(1 < p < \infty\) and let $\mathbf{U}=(U_1,\dots,U_n)$ be a commuting tuple of isometries on \(L_p(\mathcal M)\). 
Let \((n_i)_{i \in \mathbb N}\) be any increasing sequence of positive integers. 
Then 
\[
\Big\| \big( A_{n_i}(\mathbf{U})x - A_{n_{i+1}}(\mathbf{U})x \big)_{i \in \mathbb N} \Big\|_{L_p(\mathcal M; \ell_2^{rc})} 
\le C_p \, \|x\|_{L_p(\mathcal M)}, 
\quad \forall x \in L_p(\mathcal M).
\]
\end{lemma}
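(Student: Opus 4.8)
The plan is to transfer the problem to the cube averages of a translation action and then invoke the operator-valued square function bounds already established in this paper. By the standard approximation argument (Remark \ref{app}) it suffices to bound the square function over a fixed finite increasing sequence $n_1<\cdots<n_m$, so only the powers $U_l^{j}$ with $j<n_m$ enter. Since the isometries $U_1,\dots,U_n$ pairwise commute, the formula $\alpha_{(k_1,\dots,k_n)}:=U_1^{k_1}\cdots U_n^{k_n}$, $k\in\mathbb{N}_0^n$, defines a representation of the cancellative amenable monoid $\mathbb{N}_0^n$ on $L_p(\mathcal M)$ \emph{by isometries}, and $A_r(\mathbf U)x=\tfrac1{r^n}\sum_{k\in C_r}\alpha_k x$ with $C_r=\{0,\dots,r-1\}^n$. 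The sets $(C_r)_r$ form a (left) F\o lner sequence for $\mathbb{N}_0^n$, which is all the transference step will use.

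First I would run the transference argument of Theorem \ref{thm:trans}(i) with $G$ replaced by the monoid $\mathbb{N}_0^n$ and the balls $B_r$ replaced by the cubes $C_r$: the proof there only uses that the averaging sets form a F\o lner sequence and that $\alpha$ is isometric (hence uniformly bounded) together with Lemma \ref{ex-lin}, all of which hold verbatim here. (Equivalently, one may first dilate $\mathbf U$ to a commuting tuple of \emph{invertible} isometries — applying the natural-extension/Lamperti construction of \cite{HongRayWang2023} simultaneously to the family — and transfer from the group $\mathbb{Z}^n$; this is the route of \cite[Lemma 7.3]{HLX}.) Extending functions on $\mathbb{N}_0^n$ by zero to $\mathbb{Z}^n$, this reduces the asserted inequality for $(A_{n_i}(\mathbf U)x-A_{n_{i+1}}(\mathbf U)x)_i$ to the operator-valued square function inequality
$$\big\|\big(M_{n_i}f-M_{n_{i+1}}f\big)_i\big\|_{L_p(L_\infty(\mathbb{Z}^n)\overline{\otimes}\mathcal M;\ell_2^{rc})}\le C_p\|f\|_p,\qquad M_rf(h)=\tfrac{1}{r^n}\sum_{k\in C_r}f(h+k),$$
for the translation action of $\mathbb{Z}^n$.

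To obtain this last inequality I would equip $\mathbb{Z}^n$ with the symmetric generating set $V=\{-1,0,1\}^n\setminus\{0\}$; then the induced word metric is $\|\cdot\|_\infty$, the metric balls $B_r$ are precisely the symmetric cubes $\{k:\|k\|_\infty\le r\}$, and $(\mathbb{Z}^n,\|\cdot\|_\infty,\#)$ is a polynomial-growth group satisfying \eqref{geo-doubling} and the $(\epsilon,1)$-annular decay property (Section \ref{pre}). Theorem \ref{main-thm1}(iii) then yields the square function bound for the symmetric-cube averages $\frac{1}{|B_r|}\sum_{\|k\|\le r}f(h+k)$. Passing from these to the one-sided cube averages $M_r$ is routine: $C_r$ equals, up to a fixed translation and a parity correction, the symmetric cube of radius $\lfloor(r-1)/2\rfloor$; translations are isometric and pass harmlessly through the square function; and the discrepancy between $M_r$ and the symmetric-cube average at the nearest integer radius is, like the discrepancy between two consecutive scales, controlled by the annular decay estimate, so one splits $M_{n_i}-M_{n_{i+1}}$ into three pieces (two single-scale corrections plus a symmetric-cube difference) exactly as in the ``short one'' estimates of Section \ref{weak-type}.

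The main obstacle is purely bookkeeping: verifying that the transference proof of Theorem \ref{thm:trans}(i) indeed survives the passage from an amenable group to the amenable monoid $\mathbb{N}_0^n$ (or, on the alternative route, that a commuting family of $L_p$-isometries admits a simultaneous dilation to commuting invertible isometries), and carrying out the comparison of one-sided with symmetric cube averages across all scales. Neither step introduces any difficulty beyond those already handled in this paper, which is precisely why the argument is ``the same as in \cite[Lemma 7.3]{HLX}.''
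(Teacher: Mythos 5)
Your overall architecture --- transfer the commuting tuple of isometries to the translation action on $\mathbb{Z}^n$ (via $\mathbb{N}_0^n$) and then invoke an operator-valued square function inequality for cube averages --- is indeed the intended route; the paper itself only points to \cite[Lemma 7.3]{HLX} for the one-variable argument. Two caveats, the second of which is a real gap. First, the transference step is not ``verbatim'' Theorem \ref{thm:trans}(i): since the $U_i$ need not be invertible, you only have a monoid action of $\mathbb{N}_0^n$, and the step in the paper's proof that passes from $\|(A_{n_i}x-A_{n_{i+1}}x)_i\|$ to $\|(\alpha_h(A_{n_i}x-A_{n_{i+1}}x))_i\|$ uses $\alpha_{h^{-1}}$ together with Lemma \ref{ex-lin}; uniform boundedness alone does not give this lower bound without inverses. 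For isometries it is repairable --- linearize with Rademachers via Proposition \ref{nonkin}, use that $\|\alpha_h(\sum_i\varepsilon_i(\omega)b_i)\|_p=\|\sum_i\varepsilon_i(\omega)b_i\|_p$ pointwise in $\omega$, then average over a F\o lner cube --- but this has to be said; and your parenthetical alternative (simultaneously extending a commuting tuple of non-invertible $L_p$-isometries to commuting invertible isometries on a noncommutative $L_p$-space) is not established in this paper and is precisely the kind of multivariable dilation question Section \ref{dilationfamily} shows to be delicate.

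The genuine gap is the claimed ``routine'' passage from centered $\ell_\infty$-ball averages (Theorem \ref{main-thm1}(iii) on $\mathbb{Z}^n$) to the one-sided cube averages $M_rf(h)=r^{-n}\sum_{k\in[0,r)^n}f(h+k)$ that the transference actually produces. Recentring $[0,r)^n$ requires a translation by roughly $\tfrac r2\,\mathbf 1$, which depends on the scale $r$; a scale-dependent translation does not pass through the $\ell_2^{rc}$ square function (translating different components by different amounts can change $\|(u_i)_i\|_{L_p(\mathcal N;\ell_2^{rc})}$ by an unbounded factor for $p\neq 2$: components disjointly supported before the shifts can be stacked after them). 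If you decompose $M_{n_i}f-M_{n_{i+1}}f$ into a translate of a symmetric-cube difference plus a fixed-scale symmetric average evaluated at two points, the translations vary with $i$, and the two evaluation points in the second family are separated by a distance comparable to the radius, so those terms are $O(1)$ in norm, are not boundary-size, are not controlled by the annular decay property \eqref{decay property}, and do not sum over scales. Hence the one-sided cube square function inequality on $\mathbb{Z}^n$ does not follow formally from the centered-ball case; it must be proved in its own right --- e.g.\ by rerunning the long/short decomposition and the comparison with the dyadic martingale of Sections 3--5 for one-sided cubes (which is what the one-variable argument in \cite{HLX} in effect does), or by a coordinate-wise iteration that then needs maximal/dual Doob input. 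As written, this is the missing step in your proposal.
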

Combining above lemma with similar computtaions as in \cite[Proof of Theorem 1.4]{HLX} we obtain the following.
\begin{cor}
Let $1<p<\infty$ and $\mathbf{S}:=(S_1,\dots,S_n)$ be strong limits of operators $\mathbf{T}$ as in Corollary \eqref{cmunidilasur} then 
\[
\Big\| \big( A_{n_i}(\mathbf{S})x - A_{n_{i+1}}(\mathbf{S})x \big)_{i \in \mathbb N} \Big\|_{L_p(\mathcal M; \ell_2^{rc})} 
\le C_p \, \|x\|_{L_p(\mathcal M)}, 
\quad \forall x \in L_p(\mathcal M),
\]
where \((n_i)_{i \in \mathbb N}\) is any increasing sequence of positive integers. 
\end{cor}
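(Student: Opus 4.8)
The plan is to mimic \cite[Proof of Theorem 1.4]{HLX}: first prove the inequality for a tuple $\mathbf{T}=(T_1,\dots,T_n)$ of the form appearing in Corollary~\ref{cmunidilasur} (each $T_k$ in the convex hull of a pairwise commuting family of isometries) by transferring the isometric case through the joint $N$-dilation, and then pass to strong operator limits. By Remark~\ref{app} it suffices to bound $\big\|\big(A_{n_i}(\cdot)x-A_{n_{i+1}}(\cdot)x\big)_{i=1}^{m-1}\big\|_{L_p(\mathcal M;\ell_2^{rc})}$ uniformly over all finite increasing sequences $n_1<\dots<n_m$ of positive integers and all $x\in L_p(\mathcal M)$.

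\textbf{The dilated inequality.} Fix such a finite sequence and such a tuple $\mathbf{T}$, and set $N:=n(n_m-1)$. By Corollary~\ref{cmunidilasur}, $\mathbf{T}$ admits a joint $N$-dilation: there are a noncommutative $L_p$-space $Y$, contractions $J:L_p(\mathcal M)\to Y$ and $Q:Y\to L_p(\mathcal M)$, and a commuting tuple of isometries $\mathbf{U}=(U_1,\dots,U_n)$ on $Y$ with $T_1^{j_1}\cdots T_n^{j_n}=Q\,U_1^{j_1}\cdots U_n^{j_n}\,J$ whenever $j_1+\dots+j_n\le N$. Every monomial occurring in $A_{n_i}(\mathbf{T})$ has total degree at most $n(n_i-1)\le N$, so by linearity of $J$ and $Q$ one gets $A_{n_i}(\mathbf{T})x=Q\,A_{n_i}(\mathbf{U})(Jx)$ for all $i\le m$, hence $A_{n_i}(\mathbf{T})x-A_{n_{i+1}}(\mathbf{T})x=Q\big(A_{n_i}(\mathbf{U})Jx-A_{n_{i+1}}(\mathbf{U})Jx\big)$. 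The point that needs verification is that $J$ and $Q$ remain contractive (and $J$ isometric) after amplification to $L_p(\cdot;\ell_2^{rc})$-valued spaces; this follows from the explicit form of $J$ and $Q$ constructed in the proof of Theorem~\ref{thm:convex-dilation}, namely $J$ is $x\mapsto(c_\nu x)_\nu$ over a finite index set with $\sum_\nu c_\nu^{\,p}=1$ and $Q$ is $(y_\nu)_\nu\mapsto\sum_\nu d_\nu y_\nu$ with $\sum_\nu d_\nu^{\,q}=1$ ($q$ the conjugate exponent), since $L_p(Y;\ell_2^{rc})$ splits isometrically into the corresponding finite $\ell_p$-sum of $L_p(\mathcal M;\ell_2^{rc})$ and the contraction estimates are just Hölder's inequality in the Banach space $L_p(\mathcal M;\ell_2^{rc})$ (and likewise for its row and column components). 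Combining this with Lemma~\ref{isometrytup} applied to the commuting isometric tuple $\mathbf{U}$ on $Y$ yields
\[
\big\|\big(A_{n_i}(\mathbf{T})x-A_{n_{i+1}}(\mathbf{T})x\big)_{i=1}^{m-1}\big\|_{L_p(\mathcal M;\ell_2^{rc})}\le\big\|\big(A_{n_i}(\mathbf{U})Jx-A_{n_{i+1}}(\mathbf{U})Jx\big)_{i=1}^{m-1}\big\|_{L_p(Y;\ell_2^{rc})}\le C_p\|Jx\|_{L_p(Y)}\le C_p\|x\|_p,
\]
with $C_p$ the constant of Lemma~\ref{isometrytup}, independent of the sequence, of $\mathbf{T}$, and of the chosen $N$.

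\textbf{Passage to strong limits.} Let $\mathbf{S}=(S_1,\dots,S_n)$ with each $S_k$ a strong operator limit of a net $(T_k^{(\beta)})_\beta$ of operators as in Corollary~\ref{cmunidilasur}; then $\|S_k\|\le1$, $\|T_k^{(\beta)}\|\le1$, and taking SOT-limits in $T_k^{(\beta)}T_l^{(\beta)}=T_l^{(\beta)}T_k^{(\beta)}$ shows $\mathbf{S}$ is a commuting tuple. Since products of uniformly bounded operators are jointly strongly continuous in each factor, $T_1^{j_1,(\beta)}\cdots T_n^{j_n,(\beta)}x\to S_1^{j_1}\cdots S_n^{j_n}x$ in $L_p(\mathcal M)$ for every fixed multi-index, so $A_{n_i}(\mathbf{T}^{(\beta)})x\to A_{n_i}(\mathbf{S})x$ in $L_p(\mathcal M)$ for each $i\le m$. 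As the finite sequence has only $m-1$ entries, $\|\cdot\|_{L_p(\mathcal M;\ell_2^{rc})}$ depends continuously on those entries, whence, applying the bound of the previous step to each $\mathbf{T}^{(\beta)}$,
\[
\big\|\big(A_{n_i}(\mathbf{S})x-A_{n_{i+1}}(\mathbf{S})x\big)_{i=1}^{m-1}\big\|_{L_p(\mathcal M;\ell_2^{rc})}=\lim_\beta\big\|\big(A_{n_i}(\mathbf{T}^{(\beta)})x-A_{n_{i+1}}(\mathbf{T}^{(\beta)})x\big)_{i=1}^{m-1}\big\|_{L_p(\mathcal M;\ell_2^{rc})}\le C_p\|x\|_p.
\]
Taking the supremum over all finite increasing sequences and invoking Remark~\ref{app} gives the assertion.

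\textbf{Main obstacle.} The only genuinely delicate point is the vector-valued transfer in the second step: the joint $N$-dilation a priori only intertwines the operators, so one must use the concrete scalar-built form of $J$ and $Q$ from Theorem~\ref{thm:convex-dilation} to push the $\ell_2^{rc}$-valued square-function inequality from $Y$ back to $L_p(\mathcal M)$, while keeping the degree truncation $N=n(n_m-1)$ synchronised with every average in the chosen finite sequence. The strong-limit passage and the reduction to finite sequences are routine.
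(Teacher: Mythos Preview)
Your proposal is correct and matches the paper's intended argument, which is not written out but simply referred to as ``similar computations as in \cite[Proof of Theorem 1.4]{HLX}''. The two-step structure (transfer through the joint $N$-dilation from Corollary~\ref{cmunidilasur} to the isometric case of Lemma~\ref{isometrytup}, followed by the SOT-limit passage on finite sequences via Remark~\ref{app}) is exactly what that reference entails. Your explicit verification that the scalar-weight maps $J$ and $Q$ from the proof of Theorem~\ref{thm:convex-dilation} remain contractive on $L_p(\cdot;\ell_2^{rc})$ is the right way to handle the vector-valued transfer; note that one could alternatively invoke Lemma~\ref{ex-lin} (Khintchine) at the cost of an extra absolute constant depending only on $p$, which would be harmless here.
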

As an application of the Lemma \eqref{isometrytup}, \eqref{cmunidilasur} and the above corollary we can prove a large class of semigroup satisfies quantitative ergodic theorem by using  Proposition \eqref{discretetocont}. However, we state the following general fact, which is off course valid for single variable semigroup of Lamperti contractions or multivaribale semigroups of contractions coming from Corollary \eqref{isometrytup}. 
\begin{cor}\label{semiofR6d} Let $1<p\neq 2<\infty$ and $\alpha:\mathbb{R}^d_{+}\to \mathcal{L}(L_p(\mathcal M))$ be a strongly continuous semigroup of contractions such that for all $k\in\mathbb N,$ the commuting tuple of contractions $\mathbf{T}_{\frac{1}{\mathbf{k}}}:=(\alpha_{(\frac{1}{k},\dots,0)},\dots,\alpha_{(0,\dots,\frac{1}{k}}))$ admits a joint dilation to some other noncommutative $L_p$-space, then 
\begin{equation}\label{lampertisemigroup}
\sup_{(t_i)_i} 
\Big\| \big( A_{t_{i+1}}(\alpha)x - A_{t_{i}}(\alpha)x \big)_i \Big\|_{L_p(\mathcal M; \ell_2^{rc})} 
\le C \, \|x\|_p, 
\quad \forall x \in L_p(\mathcal M).
\end{equation}
where the supremum being taken over all increasing sequences \((t_i)_{i \in \mathbb{N}} \subset (0, \infty)\)

\end{cor}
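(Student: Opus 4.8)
\textbf{Proof strategy for Corollary \ref{semiofR6d}.}
The plan is to reduce the continuous quantitative ergodic inequality \eqref{lampertisemigroup} to the discrete one via Proposition \ref{discretetocont}, and then to verify the discrete hypothesis \eqref{contave} by combining the joint dilation assumption with the dilation transfer result of Theorem \ref{thm:convex-dilation} (or, more directly, with Lemma \ref{isometrytup}). Concretely, fix $k\in\mathbb N$ and consider the commuting tuple of contractions $\mathbf{T}_{\frac1{\mathbf k}}=(\alpha_{(\frac1k,\dots,0)},\dots,\alpha_{(0,\dots,\frac1k)})$. By hypothesis this tuple admits a joint dilation to some noncommutative $L_p$-space $L_p(\mathcal{M}_k)$: there are contractions $J_k\colon L_p(\mathcal M)\to L_p(\mathcal M_k)$, $Q_k\colon L_p(\mathcal M_k)\to L_p(\mathcal M)$ and a commuting tuple of isometries $\mathbf U_k=(U_{k,1},\dots,U_{k,d})$ on $L_p(\mathcal M_k)$ with
\[
\alpha_{(\frac1k,\dots,0)}^{j_1}\cdots\alpha_{(0,\dots,\frac1k)}^{j_d}=Q_k\,U_{k,1}^{j_1}\cdots U_{k,d}^{j_d}\,J_k
\]
for all multi-indices $(j_1,\dots,j_d)\in\mathbb N_0^d$. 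Averaging this identity over the dyadic box $\{0,\dots,n-1\}^d$ gives $A_n(\mathbf T_{\frac1{\mathbf k}})=Q_k\,A_n(\mathbf U_k)\,J_k$ for every $n$, and hence, for any increasing sequence $(n_i)_i$,
\[
A_{n_{i+1}}(\mathbf T_{\frac1{\mathbf k}})x-A_{n_i}(\mathbf T_{\frac1{\mathbf k}})x
=Q_k\bigl(A_{n_{i+1}}(\mathbf U_k)J_kx-A_{n_i}(\mathbf U_k)J_kx\bigr).
\]

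The next step is to apply the column/row square-function estimate. Since $Q_k$ is a contraction on $L_p(\mathcal M_k)$, by Lemma \ref{ex-lin} it extends to a contraction on $L_p(\mathcal M_k;\ell_2^{rc})$ (here one must be slightly careful: a single contraction extends with a dimension-free constant, by the standard tensorization/Khintchine argument recalled in Section \ref{thisissec5}; alternatively one invokes the fact that $Q_k$ can be taken to be a positive contraction or a conditional-expectation-type map, which is the case in the Fackler--Gl\"uck-type construction, so that the extension is completely contractive). Therefore
\[
\bigl\|\bigl(A_{n_{i+1}}(\mathbf T_{\frac1{\mathbf k}})x-A_{n_i}(\mathbf T_{\frac1{\mathbf k}})x\bigr)_i\bigr\|_{L_p(\mathcal M;\ell_2^{rc})}
\le\bigl\|\bigl(A_{n_{i+1}}(\mathbf U_k)J_kx-A_{n_i}(\mathbf U_k)J_kx\bigr)_i\bigr\|_{L_p(\mathcal M_k;\ell_2^{rc})}.
\]
Now Lemma \ref{isometrytup} applied to the commuting tuple of isometries $\mathbf U_k$ on $L_p(\mathcal M_k)$ bounds the right-hand side by $C_p\|J_kx\|_p\le C_p\|x\|_p$, where $C_p$ depends only on $p$ and $d$ and in particular \emph{not on $k$}. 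This is exactly the uniform-in-$\mathbf k$ hypothesis \eqref{contave} required by Proposition \ref{discretetocont}.

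Finally, feeding this uniform discrete bound into Proposition \ref{discretetocont} yields \eqref{lampertisemigroup} with the same constant $C=C_p$, taking the supremum over all finite increasing sequences $(t_i)_i\subset(0,\infty)$ and then over all such sequences by the approximation argument of Remark \ref{app}. The main obstacle, and the point that needs the most care, is ensuring that the constant coming from Lemma \ref{isometrytup} is genuinely independent of the auxiliary space $L_p(\mathcal M_k)$ and of $k$ — this is why one needs the \emph{quantitative} isometric square-function inequality (with a constant depending only on $p$) rather than a mere qualitative statement — together with the verification that the co-dilation map $Q_k$ extends with a uniformly bounded norm to the $\ell_2^{rc}$-valued spaces; both are addressed by the Khintchine-based extension principle (Proposition \ref{nonkin}, Lemma \ref{ex-lin}) exactly as in the proof of Theorem \ref{semiofR6d1}. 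A secondary point is to note that when $d=1$ the hypothesis is automatically satisfied for semigroups of Lamperti contractions by the dilation theorem of \cite{HongRayWang2023}, which recovers the single-variable case and connects with \textbf{Theorem B}.
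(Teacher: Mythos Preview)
Your proposal is correct and follows essentially the same route the paper indicates: the paper does not spell out a proof of this corollary but explicitly states it is obtained by combining Lemma \ref{isometrytup} with Proposition \ref{discretetocont} (and the dilation hypothesis), which is precisely the reduction you carry out. Your added care about the uniform-in-$k$ constant and the $\ell_2^{rc}$-extension of $Q_k$ via Lemma \ref{ex-lin} fills in details the paper leaves implicit, but the overall architecture is the same.
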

\textbf{Acknowledgement:} The first author is partially supported by National Natural Science
Foundation of China (No. 12071355, No. 12325105, No. 12031004, No. W2441002). The third
named author thanks the DST-INSPIRE Faculty Fellowship DST/INSPIRE/04/2020/001132, Prime Minister Early Career Research Grant Scheme ANRF/ECRG/2024/000699/PMS and ANRF/ARGM/2025/000895/MTR.

\end{document}